\numberwithin{equation}{section}
\newtheorem{thm}{Theorem}[section]
\newtheorem{lem}[thm]{Lemma}
\newtheorem{prop}[thm]{Proposition}
\newtheorem{cor}[thm]{Corollary}
\newtheorem{rem}[thm]{Remark}
\newtheorem{exam}[thm]{Example}
\newtheorem{exam-nota}[thm]{Example-Notation}
\newtheorem{rem-nota}[thm]{Remark-Notation}
\newtheorem{nota}[thm]{Notation}
\newtheorem{dfn}[thm]{Definition}
\newtheorem{dfn-nota}[thm]{Definition-Notation}
\newtheorem{dfn-lem}[thm]{Lemma-Definition}
\newtheorem{dfn-prop}[thm]{Proposition-Definition}
\newcommand{\beqa}{\begin{eqnarray*}}
\newcommand{\eeqa}{\end{eqnarray*}}
\newcommand{\lspan}{\mbox{${\rm span}$}}
\newcommand{\tsigma}{\mbox{$\tilde{\sigma}$}}
\newcommand{\fk}{\mbox{${\mathfrak k}$}}
\newcommand{\fg}{\mbox{${\mathfrak g}$}}
\newcommand{\fl}{\mbox{${\mathfrak l}$}}
\newcommand{\fs}{\mbox{${\mathfrak s}$}}
\newcommand{\fsl}{\mbox{${\fs\fl}$}}
\newcommand{\fh}{\mbox{${\mathfrak h}$}}
\newcommand{\fp}{\mbox{${\mathfrak p}$}}
\newcommand{\fr}{\mbox{${\mathfrak r}$}}
\newcommand{\fb}{\mbox{${\mathfrak b}$}}
\newcommand{\fz}{\mbox{${\mathfrak z}$}}
\newcommand{\fm}{\mbox{${\mathfrak m}$}}
\newcommand{\Gr}{{\rm Gr}}
\newcommand{\supp}{{\rm supp}}
\newcommand{\eps}{\epsilon}
\newcommand{\C}{\mbox{${\mathbb C}$}}
\newcommand{\Ad}{{\rm Ad}}
\newcommand{\fgl}{\mathfrak{gl}}
\newcommand{\B}{\mathcal{B}}
\newcommand{\fso}{\mathfrak{so}}
\newcommand{\he}{\hat{e}}
\newcommand{\orbitbundle}{\mathcal{O}(Q_{\fr},Q_{\fl})}
\newcommand{\F}{\mathcal{F}}
\newcommand{\te}{\widetilde{e}}
\newcommand{\OGr}{\mbox{OGr}}
\newcommand{\SPILzero}{SPIL(\ell \cup \{ 0 \})}
\newcommand{\SPILell}{SPIL(\ell)}
\newcommand{\Morbitspace}{M\backslash \B}
\newcommand{\Borbitspace}{B_{n-1}\backslash\B_{n}}
\title{$B_{n-1}$-bundles on the Flag Variety, II}
\author[M. Colarusso]{Mark Colarusso}
\address{Department of Math and Stats, University of South Alabama, Mobile, AL, 36608}
\email{mcolarusso@southalabama.edu}
\author[S. Evens]{Sam Evens}
\address{Department of Mathematics, University of Notre Dame, Notre Dame, IN, 46556}
\email{sevens@nd.edu}
\subjclass[2020]{14M15, 14L30, 20G20, 05A15}
\keywords{$K$-orbits on flag variety, algebraic group actions, exponential generating functions, closure relations}
\begin{document}
\maketitle

\begin{abstract}
This paper is the sequel to ``$B_{n-1}$-bundles on the flag variety, I".  
We continue our study of the orbits of a Borel subgroup $B_{n-1}$ of $G_{n-1}=GL(n-1)$  (resp.  $SO(n-1)$) acting on the flag variety $\B_{n}$ of $G=GL(n)$ (resp. $SO(n)$).  We begin by using the results of the first paper to obtain a complete combinatorial model of the $B_{n-1}$-orbits on $\B_{n}$ in terms of partitions into lists.  The model allows us to obtain explicit formulas for the number of orbits as well as the exponential generating functions for the sequences $\{|\Borbitspace|\}_{n\geq 1}$ .  We then use the combinatorial description of the orbits to construct a canonical set of representatives of the orbits in terms of flags.  These representatives allow us to understand an extended monoid action on $\Borbitspace$ using simple roots of both $\fg_{n-1}$ and $\fg$ and show that the closure ordering on $\Borbitspace$ is
the standard ordering of Richardson and Springer.



\end{abstract}

\section{Introduction}\label{s:intro}

Let $G=G_{n}=GL(n)$ or $ SO(n)$ and let $B_{n-1}$ be the standard upper triangular Borel subgroup of 
$G_{n-1}\subset G$.  In this paper, we continue our study of the $B_{n-1}$-orbits on the flag variety $\B_{n}$ of 
$G$ using the results of the accompanying paper ``$B_{n-1}$-bundles on the flag variety, I".  We obtain a combinatorial model for the set of orbits $B_{n-1}\backslash\B_{n}$ and use this model to develop a canonical set of representatives for the orbits in terms of flags.   We then use these representatives 
along with our results from \cite{CE21I} to show that every $B_{n-1}$-orbit arises from a zero dimensional orbit using an extended monoid action by simple roots of $\fk:=\fg_{n-1}$ and $\fg$.  Given this result, we show using a theorem of Richardson and Springer in \cite{RS} that the closure ordering on $B_{n-1}\backslash\B_{n}$ is given by the standard order (Corollary \ref{c:bruhatisstandard}).  In the general linear case, $B_{n-1}$-orbits are described in \cite{Hashi} and in \cite{MWZ}.  In a subsequent paper \cite{Magyar}, Magyar gives a combinatorial description of the closure relations.  Our approach is more geometric and has the advantage that both the general linear and orthogonal cases can be described in an essentially uniform manner.  We also establish a complete picture of the extended monoid action.  It would be interesting to understand the relations between the different descriptions of the orbits in the general linear case.  We note that the orthogonal case does not seem to be accessible using the previous approaches.    



In the future, we plan to use the closure relations for these orbits to understand the category of $(\fg, B_{n-1})$-modules, i.e., $U(\fg)$-modules with a locally finite action of the group $B_{n-1}$.  Such modules can be constructed using local systems on $B_{n-1}$-orbits on $\B_{n}$ and the Beilinson-Bernstein correspondence.  
The bundle description of orbits from \cite{CE21I} and our extended monoid action should both be essential ingredients in describing the $(\fg, B_{n-1})$-modules.  The category of $(\fg, B_{n-1})$-modules is closely related to the category of Gelfand-Zeitlin (GZ) modules which quantize the complex GZ integrable systems.  GZ modules were first introduced by Drozd, Futorny, and Ovsienko in \cite{DFO} and are increasingly becoming a broad area of research.

In more detail, in the case $\fg=\fgl(n)$, we develop a
bijection between $\Borbitspace$ and the set of 
all partitions of $\{1,\dots, n\}$ into ordered subsets (Theorem \ref{thm:counttypeA}).  
We refer to any such partition as a \emph{PIL} (partition into lists) of the set $\{1,\dots, n\}$.  
In Theorem \ref{thm:orthocount}, we extend this result to the orthogonal case, associating to each $B_{n-1}$-orbit on $\B_{n}$ a partition into certain signed lists (\emph{SPIL}) of the set $\{1,\dots, \ell\}$ where $\ell=\mbox{rank}(\fso(n))$. These combinatorial models allow us to compute explicit
formulas and exponential generating functions for the sequence $\{|\Borbitspace|\}_{n\in\mathbb{N}}$ in the various cases.  When $\fg$ is a type A or D Lie algebra these formulas can be expressed succinctly in terms of the classical Lah numbers, which play an important role in enumerative and applied combinatorics (Propositions \ref{p:formulaA} and \ref{p:formulaD}).  In type B a variant of Lah numbers is required.  

We then use the combinatorial models to develop explicit representatives for the $B_{n-1}$-orbits 
in terms of flags.  We call these representatives flags in standard form and note that they depend on the type of $\fg$.  We show that any $B_{n-1}$-orbit contains a unique flag in standard form by first using the parameterization of
$\Borbitspace$ by PILs and SPILs to show that $|\Borbitspace|$ is the same as the cardinality of 
the set of all flags in standard form.  We then show that any two flags in standard form contained in a given 
$B_{n-1}$-orbit must coincide.  The key ingredient in this last step is the classification of the $B_{n-1}$-orbits on the Grassmannian of (isotropic) $i$-planes in $\C^{n}$.

The representatives of $\Borbitspace$ in standard form facilitate easy computation of certain instances of the monoid action that are not covered by the results of \cite{CE21I} (Section \ref{s:monoid}).  These computations along with the results of \emph{loc.cit.} allow us to prove that the closure ordering on $\Borbitspace$ is given by the standard order (Corollary \ref{c:bruhatisstandard}).  

The paper is organized as follows.  In Section \ref{s:Prelim}, we review some standard facts about the Lie
algebras $\fgl(n)$ and $\fso(n)$ and discuss the parameterization of $K:=G_{n-1}$-orbits on $\B_{n}$ using flags and isotropic flags.  In Section \ref{s:PILS}, we develop the combinatorial models for $\Borbitspace$ by PILs and SPILs and prove that our models classify the orbits.  In \ref{ss:formulae}, we give explicit formulas for the sequence $\{|\Borbitspace|\}_{n\in\mathbb{N}}$ in the various cases and compute the corresponding exponential generating functions.  Section \ref{s:std} is devoted to proving that every element of $\Borbitspace$ contains a unique flag in standard from.  The various types are discussed separately.  In Section \ref{s:monoid}, after reviewing the monoid action on both $K\backslash \B_{n}$ and $\Borbitspace$ via simple roots of $\fg$, we use the representatives of $B_{n-1}$-orbits in standard form to compute certain monoid actions on $\Borbitspace$ that are needed later.  The results of Section \ref{s:monoid} along with the results of \emph{loc.cit.} are then used in Section \ref{s:closure} to prove that closure ordering on $\Borbitspace$ coincides with the standard ordering.  These results use crucially the extended monoid action and provide an answer to a conjecture posed by Hashimoto in the Introduction and Section 4 of \cite{Hashi}.   Section \ref{s:closure} begins with a review of the work of Richardson and Springer and then we prove that the minimal elements for the extended monoid action are the zero dimensional orbits.  Finally, in Section \ref{s:graphs}, we give examples of Bruhat graphs for the set $\Borbitspace$ in all types in low rank, labelling the different monoid actions and relations in the standard order that cannot be obtained from the weak order.

We would like to thank Jeb Willenbring and David Galvin for helpful discussions on the material in Section \ref{s:PILS}.  We would also like to thank Jacopo Gandini, Guido Pezzini, and Friedrich Knop for discussions relevant to the material in this paper, and thank Michael Finkelberg for bringing the work of Magyar in \cite{Magyar} to our attention.  

During the preparation of this paper, the first author was supported in part by the National Security Agency grant H98230-16-1-0002 and the second author was supported in part by the Simons Foundation grant 359424. 

\section{Preliminaries}\label{s:Prelim}

In this paper, we study the complex general linear and orthogonal Lie algebras.  
We let $G=G_{n}=GL(n)$  (resp. $SO(n)$) and let $\fg=\fg_{n}=\fgl(n)$ (resp. $\fso(n)$) be 
the corresponding Lie algebras.  The group $G$ acts on its Lie algebra $\fg$ via
the adjoint action $\Ad(g)X=gXg^{-1}$ for $g\in G$, $X\in \fg$.  

\subsection{Realizations of Lie algebras}\label{ss:realization}
We denote the standard basis of $\C^{n}$ by 
$\{e_{1},\dots, e_{n}\}$.  The Lie algebra $\fgl(n)$ is 
the Lie algebra of $n\times n$ complex matrices.  To realize 
the orthogonal Lie algebra, $\fso(n)$, we consider the non-degenerate, 
symmetric bilinear form $\beta$ on $\C^{n}$ given by
\begin{equation}\label{eq:beta}
\beta(x,y)=x^{T} S_{n} y, 
\end{equation}
where $x, y$ are $n\times 1$ column vectors and $S_{n}$ is the permutation matrix mapping $e_i$ to $e_{n+1-i}$ for $i=1, \dots, n$.
The special orthogonal group is 
$$
SO(n):=\{g\in SL(n): \, \beta(gx, gy)=\beta(x,y)\; \forall\; x,\, y \in \C^{n}\}
$$
and its Lie algebra is
$$
\fso(n)=\{Z\in\fgl(n):\, \beta(Zx, y)=-\beta(x,Zy)\,\forall\; x, \, y\in\C^{n}\}.  
$$
It follows that $\fso(n)$ is the Lie algebra of all $n\times n$ complex matrices
which are skew-symmetric about the skew-diagonal.  

For $\fg=\fgl(n)$ or $\fso(n)$,  
let $\fh$ be the Cartan subalgebra of diagonal matrices in $\fg$.  We denote the rank of the Lie algebra $\fg$ by 
$\mbox{rk}(\fg)=\dim\fh$.  If $\fg=\fgl(n)$, then $\mbox{rk}(\fg)=n$, and 
$\fh=\{\mbox{diag}[x_{1},\dots, x_{n}], \, x_{i}\in\C\}$.  We let 
$\eps_{i}\in\fh^{*}$ be the linear functional $\eps_{i}(\mbox{diag}[x_{1},\dots, x_{n}])=x_{i}$.  
Then the standard simple roots for $\fg$ are $\Pi_{\fg}=\{\alpha_{1},\dots, \alpha_{n-1}\}$, with 
$\alpha_{i}=\eps_{i}-\eps_{i+1}$.  

For $\fg=\fso(2\ell)$ (type D), $\mbox{rk}(\fg)=\ell$.  
In this case, it is convenient to relabel part of the basis of $\C^{2\ell}$ as $e_{-j}:=e_{2\ell+1-j}$ for 
$j=1,\dots, \ell$.  Then the Cartan subalgebra $\fh=\{\mbox{diag}[x_{1},\dots, x_{\ell}, -x_{\ell},\dots, -x_{1}],\, x_{i}\in\C\}$, 
and we let $\eps_{i}\in\fh^{*}$ be $\eps_{i}(\mbox{diag}[x_{1},\dots, x_{\ell}, -x_{\ell},\dots, -x_{1}])=x_{i}$ for 
$i=1,\dots, \ell$.  The standard simple roots of $\fg$ are 
$\Pi_{\fg}=\{\alpha_{1},\dots, \alpha_{\ell-1}, \alpha_{\ell}\}$ with 
$\alpha_{i}=\eps_{i}-\eps_{i+1}$, $i=1,\dots, \ell-1$, and $\alpha_{\ell}=\eps_{\ell-1}+\eps_{\ell}$.  

For $\fg=\fso(2\ell + 1)$ (type B), $\mbox{rk}(\fg)=\ell$.  In this case, we relabel the basis 
of $\C^{2\ell+1}$ as $e_{-j}:=e_{2\ell+2-j}$ for $j=1,\dots, \ell$ and $e_{0}:=e_{\ell+1}$.  
The Cartan subalgebra $\fh=\{\mbox{diag}[x_{1},\dots, x_{\ell}, 0, -x_{\ell},\dots, -x_{1}], x_{i}\in\C\}$ and 
$\eps_{i}\in\fh^{*}$ is given by $\eps_{i}(\mbox{diag}[x_{1},\dots, x_{\ell}, 0, -x_{\ell},\dots, -x_{1}])=x_{i}$.  
The standard simple roots for $\fg=\fso(2\ell+1)$ are 
$\Pi_{\fg}=\{\alpha_{1},\dots, \alpha_{\ell-1}, \alpha_{\ell}\}$ with $\alpha_{i}=\eps_{i}-\eps_{i+1}$ for 
$i=1,\dots, \ell-1$ and $\alpha_{\ell}=\eps_{\ell}$.  
\begin{rem}\label{r:beta}
The form $\beta$ in Equation (\ref{eq:beta}) with respect to the standard
bases of $\C^{2\ell}$ and $\C^{2\ell+1}$ is determined by the property that $\beta(e_{j}, e_{k})=\delta_{j,-k}$.  
\end{rem}

Let $H\subset G$ be the Cartan subgroup of diagonal matrices with Lie algebra $\fh$. 
Let $W=W_{G}=N_{G}(H)/H$ be the Weyl group of $G$ with respect to $H$.  

\begin{nota}\label{nota:Weyl}
 For an element $w\in W$, let $\dot{w}\in N_{G}(H)$ be a representative of $w$.   
 If $\fm\subset\fg$ is a Lie subalgebra which is normalized by $H$, then $\Ad(\dot{w})\fm$ is independent of the choice of representative of $w$,
 and we denote it by $w(\fm)$. 
\end{nota}

\subsection{Borel subalgebras and flag varieties}\label{ss:Borels}
We denote the standard Borel subalgebra of upper triangular matrices 
in $\fgl(n)$ by $\fb_{n}$.   For $\fg=\fso(n)$, we let $\fb_{n}$ denote the upper triangular matrices in $\fg$ with respect to the ordered basis $e_1, \dots, e_{\ell}, e_{-\ell}, \dots, e_{-1}$ when $n=2\ell$ is even, and with respect to the ordered basis $e_1, \dots, e_{\ell}, e_0, e_{-\ell}, \dots, e_{-1}$ when $n=2\ell + 1$ is odd.    Note that $\fh\subset\fb_{n}$ and the standard simple roots $\Pi_{\fg}$ given above are simple roots for $\fb_{n}$.  We let $\B_{n}:=\B_{\fg}$ be the variety of all 
Borel subalgebras of $\fg$.  
When $\fg=\fgl(n)$, the variety $\B_{n}$ is $G$-equivariantly isomorphic to the
variety of all full flags on $\C^{n}$, i.e., flags of the form
$$
\{\mathcal{F}=(V_{1}\subset\dots\subset V_{i}\subset\dots\subset V_{n}) |\, \dim V_{i}=i,\, V_{i}\subset \C^{n}\}.
$$
When $\fg=\fso(2\ell+1)$, the variety 
$\B_{2\ell+1}$ is $G$-equivariantly isomorphic to the variety of all maximal isotropic flags in $\C^{2\ell+1}$, i.e., partial flags of the form
$$
\mathcal{F}=( V_{1}\subset\dots\subset V_{i}\subset\dots\subset V_{\ell}),
$$
with $\dim V_i=i$ and $\beta(u,w)=0$ for all $u,w \in V_i.$ 
  In the type D case, the variety
$\B_{2\ell}$ is $G$-equivariantly isomorphic to the variety of all partial isotropic flags in $\C^{2\ell}$ of the form: 
$$
\mathcal{F}=( V_{1}\subset\dots\subset V_{i}\subset\dots\subset V_{\ell-1}).
$$
This is a different realization of $\B_{2\ell}$ than the one used in \cite{CE21I}.
We make heavy use of the following shorthand notation for flags throughout the paper.  

\begin{nota} \label{nota:standard}
  Let 
   $$
  \mathcal{F}=(V_{1}\subset V_{2}\subset\dots\subset V_{i}\subset V_{i+1}\subset \dots)
   $$
be a flag in $\C^{n}$, with $\dim V_{i}=i$ and $V_{i}=\mbox{span}\{v_{1},\dots, v_{i}\}$, with each $v_{j}\in\C^{n}$.  We will denote this flag $\mathcal{F}$ 
by
   $$
  \mathcal{F}=  (v_{1}\subset  v_{2}\subset\dots\subset v_{i}\subset v_{i+1}\subset\dots ). 
   $$
\end{nota}

\begin{nota}\label{note:uppertriangular}
Identifying $\B_{n}$ with various flag varieties as above, the standard Borel subalgebra of 
upper triangular matrices is identified with the flags
\begin{equation}\label{eq:upper}
\F_{+}:=\left\lbrace\begin{array}{ll} (e_{1}\subset e_{2}\subset \dots \subset e_{n})  &\mbox{ for } \fg=\fgl(n)\\
(e_{1}\subset\dots\subset e_{\ell}) &\mbox{ for } \fg=\fso(2\ell+1)\\ 
(e_{1}\subset \dots\subset e_{\ell-1}) &\mbox{ for } \fg=\fso(2\ell)\end{array}\right\rbrace_{.}
\end{equation}

\end{nota}

\subsection{$K$-orbits on $\B_{n}$}\label{ss:Korbits}
We recall that $\fk:=\fg_{n-1}=\fgl(n-1)$ or $\fso(n-1)$ can be embedded 
in $\fg$ as a symmetric subalgebra (up to centre in $\fgl(n)$ case).   
For more details, see Section 2.3 of \cite{CE21I}.  The 
corresponding algebraic group $K=GL(n-1)$ or $SO(n-1)$ acts on $\B_{n}$ via the adjoint action with 
finitely many orbits.  These are classified in Propositions 2.8-2.10 of \emph{loc. cit.}, and we
follow the notation used there.  For our work here, it will be useful to have 
explicit representatives of $K$-orbits in terms of flags.  
For $\fg=\fgl(n)$, these representatives are given in Equations (2.6) and (2.9) of \emph{loc. cit.}.  
We restate these here for the convenience of the reader.  There are $n$ closed $K$-orbits on $\B_{n}$ which 
we denote by $Q_{i}$ for $i=1,\dots, n$.  The orbit $Q_{i}$ contains the flag 
\begin{equation}\label{eq:typeAflagclosed}
\mathcal{F}_{i}:=(e_{1}\subset\dots \subset e_{i-1}\subset\underbrace{e_{n}}_{i}\subset e_{i}\subset \dots \subset e_{n-1}). 
\end{equation}
There are ${n\choose 2}$ non-closed $K$-orbits on $\B_{n}$ which we denote by $Q_{i,j}$ for $1\leq i<j\leq n$.
The orbit $Q_{i,j}$ contains the flag
\begin{equation}\label{eq:typeAflag}
\mathcal{F}_{i,j}:=(e_{1}\subset\dots\subset e_{i-1} \subset \underbrace{e_{j-1}+e_{n}}_{i}\subset e_{i}\subset\dots\subset e_{j-2}\subset\underbrace{e_{n}}_{j}\subset e_{j}\subset\dots\subset e_{n-1}). 
\end{equation}
For ease of notation, we sometimes denote the closed $K$-orbit $Q_{i}$ by $Q_{i,i}$, so that 
the $K$-orbits on $\B_{n}$ are denoted by $Q_{i,j}$ with $1\leq i\leq j\leq n$. 

For $\fg=\fso(2\ell+1)$, the closed $K$-orbits are $Q_{+}$ and $Q_{-}$ (see Part (2) of Proposition 2.9 of \emph{loc. cit.}) and are represented by the maximal isotropic flags
$\mathcal{F}_{+}$, and
$\mathcal{F}_{-}:=(e_{1}\subset \dots\subset e_{\ell-1}\subset e_{-\ell})$ respectively.  
We denote the non-closed $K$-orbits  by $Q_{i}$ with $i=0,\dots, \ell-1$ (see Part (3) of Proposition 2.9 of \emph{loc. cit.}) and can be represented by the maximal isotropic flags 
\begin{equation}\label{eq:typeBflag}
\mathcal{F}_{i}=(e_{1}\subset \dots \subset e_{i}\subset\underbrace{ e_{\ell}+\sqrt{2}e_{0}-e_{-\ell}}_{i+1-position}\subset e_{i+1}\subset \dots\subset e_{\ell-1}) \mbox{ for } i=0, \dots, \ell-1.
\end{equation}
This can be verified using Equation (2.11) and Remark 2.11 in \emph{loc. cit.}. 
For $\fg=\fso(2\ell)$, the unique closed $K$-orbit, $Q_{+}$ contains the 
partial isotropic flag $\mathcal{F}_{+}$ (see 
Part (2) of Proposition 2.10 of \emph{loc. cit.}).  The non-closed orbits are denoted by $Q_{i}$ with $i=1,\dots, \ell-1$ and 
can be represented by the partial isotropic flags
\begin{equation}\label{eq:typeDflag}
 \mathcal{F}_{i}=(e_{1}\subset\dots\subset e_{i-1}\subset\underbrace{ e_{\ell}}_{i-position}\subset e_{i}\subset \dots\subset e_{\ell-2}) \mbox{ for } i=1,\dots, \ell-1.
\end{equation}
This can be verified using Equation (2.13) of \emph{loc. cit.}.  

We also recall the notion of a length of a $K$-orbit.
\begin{dfn}\label{d:length}
Let $Q_{K}$ be a $K$-orbit on $\B_{n}$. We define the \emph{length} of $Q_{K}$,
$\ell(Q_{K})$, by 
\begin{equation}\label{eq:Klength}
\ell(Q_{K}):=\dim Q_{K}-\dim \B_{\fk},
\end{equation}
where $\B_{\fk}$ denotes the flag variety of $\fk$.
\end{dfn}
\begin{rem}\label{r:Korbitlength}
For $\fg=\fgl(n)$, $\ell(Q_{i,j})=j-i$.  For $\fg=\fso(n)$, $\ell(Q_{\pm})=0$ and 
$\ell(Q_{i})=\ell-i$.  These assertions can be verified using Part (3) of Propositions 2.8-2.10 of 
\emph{loc. cit.} and the well-known fact that if $Q_{K}$ is closed then $Q_{K}\cong \B_{\fk}$ ( \cite{CEexp} for example).  
\end{rem}


\begin{nota}\label{note:orbitnote}
Let $H$ be an algebraic group and $X$ an $H$-variety.  We denote the 
set of $H$-orbits on $X$ by $H\backslash X$.
\end{nota}

In this paper, we study $\Borbitspace$, where $B_{n-1}:=K\cap B_{n}$ and $B_{n}\subset G$ is the standard 
upper triangular Borel subgroup of $G$ which stabilizes the flag $\F_{+}$ in (\ref{eq:upper}).  It follows from Section 2.2 of \cite{CE21I} that $B_{n-1}\subset K$ is the standard Borel subgroup of upper triangular matrices in $K$.

\section{Combinatorics of $B_{n-1}$-orbits on $\B_{n}$}\label{s:PILS}

In this section, we introduce sets $PIL(n)$ and $SPIL(n)$ which count the number of $B_{n-1}$-orbits on $\B_n$ in the general linear and orthogonal cases.  
As a consequence, we obtain formulas counting the number of $B_{n-1}$-orbits on
$\B_n$ and study the corresponding exponential generating functions.  It will be convenient to declare $B_{n}$ to be the trivial group 
for $n<0$.  In particular, $|B_{-1}\backslash \B_{0}|=1$.


\subsection{$PIL(n)$ and the $GL(n)$ case}\label{ss:gl}
 We first consider the case where $\fg=\fgl(n)$ and $K=GL(n-1)$.

\begin{dfn}\label{dfn:lists}
\begin{enumerate}
\item We define a list on a subset $\{a_{1},\dots, a_{k}\}$ of the non-negative integers to be a $k$-tuple $\sigma=(a_{i_{1}},\dots, a_{i_{k}}) \in \mathbb{Z}_{\geq 0}^k$.  We regard a list as an ordering on the corresponding subset.  For the list $\sigma$ above, we say
$\supp(\sigma)=\{ a_{i_1}, \dots, a_{i_k} \}$,  the length $\ell(\sigma)=k$,
and $\sigma(j)=a_{i_j}$ for $1 \le j \le k$.
\item  Let $A\subset \mathbb{Z}_{\geq 0}$ be any finite subset.  We define a PIL (partition into lists) of the set $A$ to be a partition of $A$ into lists.  
We shall denote a PIL consisting of $k$ lists $\sigma_{1},\dots, \sigma_{k}$ by $\Sigma=\{\sigma_{1},\, \dots, \, \sigma_{k}\}$.  
\item For a finite subset $A\subset\mathbb{Z}_{\geq 0}$, we denote the set consisting of all PIL's of $A$ by $PIL(A)$.  
In the case where $A=\{1,\dots, n\}$, we denote $PIL(A)$ by $PIL(n)$.  
We also denote the one-element set $PIL(\{0\})$ by $PIL(0)$.
\end{enumerate}
\end{dfn} 


\begin{exam}\label{ex:lists}
If $n=2$, then $|PIL(2)|=3$.  The three elements of $PIL(2)$ are $\{(1,2)\},\, \{(2,1)\}, \,$ and $\{(1), (2)\}$. 
If $n=3$, then $|PIL(3)|=13$.  There are six PIL's of the form $\{(i_1, i_2, i_3 )\; \}$, 
six of the form $\{(i_{1} i_{2}), (i_{3})\}$, and lastly, the PIL $\{(1), \, (2),\, (3)\}$.
\end{exam}

\begin{thm}\label{thm:counttypeA} 
The number of $B_{n-1}$-orbits on the flag variety $\B_n$ of $\fgl(n)$ is $|PIL(n)|$.
\end{thm}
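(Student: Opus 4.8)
The plan is to establish a bijection between $\Borbitspace$ and $PIL(n)$ by induction on $n$, exploiting the fibration structure coming from the accompanying paper \cite{CE21I}. The key observation is that $\B_n$ fibers over the Grassmannian of lines (or the relevant partial flag variety) in a way compatible with the $K$-action, and more to the point, that each $B_{n-1}$-orbit on $\B_n$ lies over a unique $K$-orbit on $\B_n$, so that $\Borbitspace$ is partitioned according to $K\backslash\B_n$. So first I would recall from \cite{CE21I} the bundle description: for each $K$-orbit $Q_K$ on $\B_n$, the set of $B_{n-1}$-orbits lying in $Q_K$ is in bijection with $B_{n-1}\backslash(K/(K\cap B_n^{\mathcal{F}}))$ for a representative flag $\mathcal{F}$, which by the orbit-counting philosophy reduces to $(B_{n-1}\cap \text{stab})$-orbits, ultimately to a smaller instance of the same problem — orbits of a Borel of $GL(n-2)$ on a flag variety of $GL(n-1)$, i.e. $|B_{n-2}\backslash\B_{n-1}|$, possibly with a shift.

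Second, I would set up the combinatorial recursion on the $PIL$ side to match. A PIL of $\{1,\dots,n\}$ is determined by: which list contains the element $n$, and where $n$ sits in that list. If we delete $n$ from its list we get a PIL of $\{1,\dots,n-1\}$ together with the data of (a) which list of that smaller PIL we appended to (or whether $n$ forms a singleton new list) and (b) the position of $n$ within the enlarged list. Counting these choices gives the standard Lah-type recursion $|PIL(n)| = \sum_k (\text{ways to insert } n)$, and the claim is that this matches exactly the geometric recursion: the $K$-orbit $Q_{i,j}$ (or $Q_i$) index corresponds to the insertion data, and the $B_{n-1}$-orbits fibered over $Q_{i,j}$ correspond bijectively to $PIL(n-1)$ (or an appropriate subset), via the bundle map. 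Concretely, I expect that the closed orbits $Q_i$ contribute one "family" each and the non-closed $Q_{i,j}$ contribute according to how the monoid-action/bundle structure from \cite{CE21I} stratifies the fiber.

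Third, I would verify the base cases ($n=0,1$ with the convention $|B_{-1}\backslash\B_0|=1$, and $n=2$ against Example \ref{ex:lists} which gives $3$) and then check that both sides satisfy the same recursion with the same initial data, concluding equality of cardinalities. Strictly speaking the theorem only asserts equality of \emph{numbers}, so I do not need the bijection to be canonical here — though the paper will later (in Section \ref{s:std}) upgrade this to explicit flag representatives, so I would phrase the argument so that the correspondence is as explicit as convenient.

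The main obstacle will be making the fiber analysis precise: showing that the set of $B_{n-1}$-orbits inside a fixed $K$-orbit $Q_K \subset \B_n$ is genuinely in bijection with $PIL(n-1)$ (not merely equinumerous by coincidence), and correctly bookkeeping the contributions of the closed versus non-closed $K$-orbits so the total telescopes to the Lah recursion for $|PIL(n)|$. This requires carefully invoking the bundle/reduction results of \cite{CE21I} — identifying the structure group of the bundle over $Q_K$ and recognizing that $B_{n-1}$-orbits on its total space reduce to Borel-orbits on a flag variety of rank one less — and then matching the resulting combinatorial data (which $K$-orbit, which insertion position) bijectively with the "where does $n$ go" data of a PIL. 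I would expect the bulk of the work to be in this translation, with the pure combinatorics of the Lah recursion being routine.
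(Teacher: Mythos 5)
Your overall skeleton (partition $\Borbitspace$ by the $K$-orbits $Q_{i,j}$, invoke the fibre-bundle description of $B_{n-1}$-orbits inside a fixed $K$-orbit from \cite{CE21I}, and induct) is exactly the paper's strategy, but the two identifications on which your plan hinges are not correct, and this is a genuine gap rather than a matter of polish. First, the fibre is not ``orbits of a Borel of $GL(n-2)$ on a flag variety of $GL(n-1)$'': by Theorems 3.1, 3.5 and 3.8 of \cite{CE21I}, the $B_{n-1}$-orbits inside $Q_{i,j}$ are the orbits $\orbitbundle$ where $Q_{\fr}$ runs over $B_{n-1}$-orbits (Schubert cells) on $K/K\cap R_{i,j}$ and $Q_{\fl}$ runs over $B_{j-i-1}$-orbits on $\B_{j-i}$, so the ``smaller instance'' has rank $j-i=\ell(Q_{i,j})$, which varies from $0$ (closed orbits) up to $n-1$ (only for the open orbit $Q_{1,n}$ is it $B_{n-2}\backslash\B_{n-1}$). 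Consequently
$$|B_{n-1}\backslash Q_{i,j}| = |B_{n-1}\backslash K/K\cap R_{i,j}|\cdot|B_{j-i-1}\backslash\B_{j-i}| = \frac{(n-1)!}{(j-i)!}\cdot|B_{j-i-1}\backslash\B_{j-i}|,$$
which is in general neither $|PIL(n-1)|$ nor any natural subset of $PIL(n-1)$ (e.g.\ for $n=3$ the orbit $Q_{1,2}$ contains $2$ orbits while $|PIL(2)|=3$). So your claimed matching ``$B_{n-1}$-orbits fibered over $Q_{i,j}$ correspond bijectively to $PIL(n-1)$'' fails.

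Second, and relatedly, the combinatorial recursion you chose does not line up with the geometric decomposition. The ``delete $n$ and record where it was inserted'' recursion expresses $|PIL(n)|$ as a sum over $\Sigma\in PIL(n-1)$ of $n+k(\Sigma)$ insertion slots ($k(\Sigma)$ the number of lists), so it is not a function of $|PIL(n-1)|$ alone and it does not decompose term-by-term against the $K$-orbits $Q_{i,j}$; making it do so would force you to track an extra statistic (the number of lists) geometrically, which you have not set up. The decomposition that does match is the one the paper uses: split $PIL(n)$ into the sets $PIL(n)_{i,j}$ of PILs whose list $\sigma_1$ containing $n$ has $\sigma_1(i)=n$ and $\ell(\sigma_1)=n-(j-i)$; the choice of the entire list $\sigma_1$ is counted by $(n-1)!/(j-i)! = |W_K|/|W_{K\cap R_{i,j}}| = |B_{n-1}\backslash K/K\cap R_{i,j}|$ (Bruhat), matching the base, while the remaining lists form a PIL of the $(j-i)$-element complement of $\supp(\sigma_1)$, matching the fibre by the inductive hypothesis $|PIL(j-i)|=|B_{j-i-1}\backslash\B_{j-i}|$. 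Replacing your insertion recursion and your fibre identification with this list-containing-$n$ decomposition is what is needed to close the argument.
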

\begin{proof} For the proof, we adopt the convention that if $\Sigma = \{ \sigma_1, \dots, \sigma_k \} \in PIL(n)$ then $n \in \supp(\sigma_1)$, which is no loss of generality.  Recall from Section \ref{ss:Korbits} that the $K$-orbit decomposition may be written as $\B_n = \sqcup_{1 \le i \le j \le n} Q_{i,j}$.  Since $B_{n-1} \subset K$, it follows that $B_{n-1}\backslash \B_n = \sqcup_{1 \le i \le j \le n} B_{n-1} \backslash Q_{i,j}$.  The subset of $PIL(n)$ corresponding to the $B_{n-1}$-orbits in $Q_{i,j}$ is 
\begin{equation}\label{e.qij}
PIL(n)_{i,j}:= \{ \Sigma = \{ \sigma_1, \dots, \sigma_k \} \in PIL(n) : \sigma_1(i)=n \mbox{ and } \ell(\sigma_1)=n-(j-i)\}.
\end{equation}
The reader can check easily that $PIL(n)=\sqcup_{1\le i\le j \le n} PIL(n)_{i,j}.$
Hence, to prove the theorem, it suffices to verify that $|B_{n-1}\backslash Q_{i,j} |=|PIL(n)_{i,j}|.$  By Theorem 3.1  of \cite{CE21I}, for each $K$-orbit $Q_{i,j}$ in $\B_n$, there is a parabolic subgroup $R_{i,j}$ of $G$ such that
$K\cap R_{i,j}$ is a parabolic subgroup of $K$ with Levi factor
 $GL(j-i) \times (GL(1))^{n-(j-i)-1}$.  Further, by Theorem 3.5 and Notation 3.6 as well as Theorem 3.8 of \cite{CE21I}, the $B_{n-1}$-orbits on $Q_{i,j}$ are classified by
$\orbitbundle$ as $Q_{\fr}$ varies over $B_{n-1}$-orbits on $K/K\cap R_{i,j}$
and $Q_{\fl}$ varies over $B_{j-i-1}$-orbits in $\B_{j-i}$.  Thus, 
\begin{equation}\label{eq:BinKcount}
|B_{n-1} \backslash Q_{i,j} | = |B_{n-1}\backslash K /K\cap R_{i,j} | \cdot |B_{j-i-1}\backslash \B_{j-i} |.
\end{equation}

To compute $|PIL(n)_{i,j}|$, we first consider the set of lists $$\mathfrak{S}_{i,j}:= \{ \mbox { lists } \sigma_1 : \ell(\sigma_1)=n-(j-i),  \sigma_1(i)=n, \mbox { and }\supp(\sigma_1) \subset \{ 1, \dots, n \} \}.$$  
We note that $|\mathfrak{S}_{i,j}|=|B_{n-1}\backslash K/K\cap R_{i,j}|.$  Indeed, the Bruhat decomposition implies that
$|B_{n-1}\backslash K/K\cap R_{i,j}|=|W_K|/|W_{K\cap R_{i,j}}|,$ while
 $|\mathfrak{S}_{i,j}|=(n-1)!/(j-i)!$ since $\mathfrak{S}_{i,j}$ parametrizes the numbers of ways to choose an ordered subset of cardinality $n-(j-i)-1$ from a set with $n-1$ elements.  Since $W_K$ is the symmetric group on $n-1$ letters and $W_{K\cap R_{i,j}}$ is the symmetric group on $j-i$ letters, the assertion follows.  
 From the definition of $PIL(n)_{i,j}$ in (\ref{e.qij}), we have a surjective map
 $$
\pi: PIL(n)_{i,j}\to \mathfrak{S}_{i,j}\mbox{ given by } \{\sigma_{1},\sigma_{2},\dots, \sigma_{n}\}\to \sigma_{1}.
 $$
 For any $\sigma_{1}\in\mathfrak{S}_{i,j}$, the fibre $\pi^{-1}(\sigma_{1})$ is $PIL(\{1,\dots, n\}\setminus \mbox{supp}(\sigma_{1}))$.  Since 
 $\ell(\sigma_{1})=n-(j-i)$, the set $\{1,\dots, n\}\setminus \mbox{supp}(\sigma_{1})$ has cardinality $j-i$.  Thus,
 $|\pi^{-1}(\sigma_{1})|=|PIL(j-i)|$.  By induction, we may assume that 
 $|PIL(j-i)|=|B_{j-i-1}\backslash \B_{j-i}|$.  Thus,
 $$|PIL(n)_{i,j}|=|\mathfrak{S}_{i,j}||PIL(j-i)|= |B_{n-1}\backslash K /K\cap R_{i,j} | \cdot |B_{j-i-1}\backslash \B_{j-i} |,$$ 
 and the result now follows from Equation (\ref{eq:BinKcount}).

\end{proof}

\subsection{Orthogonal Case}\label{ss:orthogonal}

To parameterize the $B_{n-1}$-orbits on $\B_{n}$ in the orthogonal case, we need to introduce the concept of a signed PIL which we 
abbreviate by the acronym SPIL.  We begin with the definition of a signed list.
\begin{dfn}\label{d:signedlistsB}
 Let $A=\{a_{1},\dots, a_{k}\}\subset \mathbb{Z}_{\geq 0}$ be a finite set.  We define a signed list of the set 
$A$ to be an ordered subset $(\lambda_{1},\dots, \lambda_{r})$ of the set $\{\pm a_{1}, \dots, \pm a_{k}\}$ with the property that 
$\lambda_{m}\neq \pm \lambda_{n}$ for $m\neq n$.  
\end{dfn}


\noindent For any signed list $\sigma=(\lambda_{1},\dots, \lambda_{r})$, let $|\sigma|=(|\lambda_{1}|, \dots, |\lambda_{r}|).$
\begin{dfn}\label{d:SPILS}
For any set $A\subset\mathbb{Z}_{\geq 0}$, we define a \emph{signed} $PIL$ (abbreviated SPIL) of $A$ to be a collection
$\Sigma$ of signed lists of $A$ with $\Sigma=\{\sigma_{1},\dots, \sigma_{k}\}$ 
such that $|\Sigma|:=\{|\sigma_{1}|,\dots, |\sigma_{k}|\}\in PIL(A)$ and $\Sigma$ satisfying the following two conditions:
\begin{equation}\label{eq:Sigmaconds}
\begin{split}
& (1)\mbox{ The last entry of any signed list } \sigma_{i} \mbox{ is non-negative.}\\
& (2) \mbox{ If }0\in\sigma_{i}, \mbox{ then }0  \mbox{ is the last entry of } \sigma_{i}.
\end{split}
\end{equation}
\end{dfn}

\begin{nota}
As for PILs, we denote the set of SPILs of the set $\{1,\dots,\ell\}$ by
$SPIL(\ell)$ and the set of SPILs of $\{0, 1,\dots, \ell\}$ by 
$SPIL(\ell\cup\{0\})$.  For convenience, we denote the one-element set $SPIL(\{0\})$ by $SPIL(0)$.  
\end{nota}

\begin{exam}\label{ex:spils} 
First note that $|SPIL(1)|=1$ and $|SPIL(1 \cup \{ 0 \})|=3.$  Indeed,
$SPIL(1 \cup \{ 0 \})$ consists of $\{(\pm 1, 0)\}$ and $\{ (1), (0) \}.$
While we saw above that $|PIL(2)|=3,$
  there are 
five elements in SPIL(2).  They are $\{(\pm 1, 2)\},\, \{(\pm 2, 1)\},\, \mbox{and }\{(1), (2)\}$.  
There are seventeen elements of $SPIL(2\cup\{0\})$.  Indeed, for $a,\, b\in \{1,\, 2\}$ with $a\neq b$, we have
eight SPIL's of the form $\{(\pm a, \pm b, 0)\}$, four of the form $\{(\pm a, 0), (b)\}$, 
and five of the form $\{(0), \Sigma\}$ with $\Sigma\in SPIL(2)$.  One can also 
compute that $|SPIL(3)|=37$, $|SPIL(4)|=361$, $|SPIL(3\cup\{0\})|=139$, and
$|SPIL(4\cup\{0\})|=1473.$  
\end{exam}

To prove the main result of this section, we first require a technical lemma.   
Let $A=\{a_{1},\dots, a_{k}\}\subset \mathbb{N}$.  For any $x\in A$, we define a subset $SPIL_{x}(A)$ of $SPIL(A)$ by 
$$
SPIL_{x}(A):=\{\Sigma\in SPIL(A)|\, \pm x \mbox{ occurs at the beginning of a signed list in $\Sigma$}\}.
$$
\begin{lem}\label{l:shift}
There is a bijection between the sets
$$
SPIL_{x}(A)\leftrightarrow SPIL(A\setminus\{x\} \cup \{0\}).
$$
\end{lem}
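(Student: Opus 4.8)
The strategy is to exhibit an explicit map in each direction and check they are mutually inverse. Given $\Sigma \in SPIL_x(A)$, write the unique signed list of $\Sigma$ beginning with $\pm x$ as $\sigma_1 = (\pm x, \lambda_2, \dots, \lambda_r)$ (with last entry $\lambda_r$ non-negative), and let $\sigma_2, \dots, \sigma_k$ be the remaining signed lists. I would send $\Sigma$ to the collection $\Sigma' = \{ \sigma_1', \sigma_2, \dots, \sigma_k \}$ where $\sigma_1' := (\lambda_2, \dots, \lambda_r, 0)$. The point is that deleting the leading $\pm x$ and appending a $0$ at the end converts a signed list of $A$ beginning with $\pm x$ into a signed list of $(A\setminus\{x\}) \cup \{0\}$ whose last entry is $0$; condition (2) of (\ref{eq:Sigmaconds}) is then automatically satisfied for $\sigma_1'$, and condition (1) holds since $0 \ge 0$. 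The other lists $\sigma_2, \dots, \sigma_k$ are untouched and already satisfy both conditions. One must also check $|\Sigma'| \in PIL((A\setminus\{x\})\cup\{0\})$: indeed $|\sigma_1'| = (|\lambda_2|,\dots,|\lambda_r|,0)$ and $|\sigma_2|,\dots,|\sigma_k|$ together partition $(A\setminus\{x\})\cup\{0\}$ because $\{|\sigma_1|,\dots,|\sigma_k|\} = \{(x,|\lambda_2|,\dots,|\lambda_r|),|\sigma_2|,\dots,|\sigma_k|\}$ partitions $A$.

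For the inverse map, start from $\Sigma' \in SPIL((A\setminus\{x\})\cup\{0\})$. Here $0$ lies in exactly one signed list; by condition (2) it is the \emph{last} entry of that list, say $\tau_1 = (\mu_1, \dots, \mu_{s}, 0)$, and the remaining lists are $\tau_2, \dots, \tau_k$. Send $\Sigma'$ to $\Sigma := \{ (x, \mu_1, \dots, \mu_s), \tau_2, \dots, \tau_k \}$: that is, drop the terminal $0$ and prepend $x$. The resulting first list $(x,\mu_1,\dots,\mu_s)$ is a signed list of $A$ (its entries are distinct up to sign and drawn from $\{\pm a : a \in A\}$, since $x \notin \{|\mu_1|,\dots,|\mu_s|\}$ as $|\mu_i| \in A\setminus\{x\}$), it begins with $+x$ hence $\pm x$, and its last entry $\mu_s$ is non-negative because $\tau_1$ satisfied condition (1) after removing $0$ — wait, one must be slightly careful: $\tau_1$'s last entry before the $0$ need not be non-negative a priori. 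However, if $s = 0$ then the new list is just $(x)$, fine; if $s \ge 1$, the list $(x,\mu_1,\dots,\mu_s)$ must end in a non-negative number for $\Sigma$ to be a valid SPIL, so the correct definition when $\mu_s < 0$ requires replacing $0$ by nothing but this forces a sign constraint. This is the one subtlety to resolve, and I address it next.

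\textbf{The main obstacle.} The delicate point is the interaction between the "last entry non-negative" condition (1) and the bookkeeping of the $0$. In $SPIL_x(A)$, the list $\sigma_1 = (\pm x, \lambda_2,\dots,\lambda_r)$ has $\lambda_r \ge 0$; after the forward map it becomes $(\lambda_2,\dots,\lambda_r,0)$, which ends in $0 \ge 0$ — consistent. Conversely the list $\tau_1 = (\mu_1,\dots,\mu_s,0)$ in $SPIL((A\setminus\{x\})\cup\{0\})$ has no constraint forcing $\mu_s \ge 0$, so prepending $x$ and deleting $0$ could produce $(x,\mu_1,\dots,\mu_s)$ with $\mu_s < 0$, violating (1). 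The resolution is that the forward and backward maps as I defined them are genuinely inverse only if one checks the image lands in the right set; so the honest approach is: (i) define the forward map $F: SPIL_x(A) \to SPIL((A\setminus\{x\})\cup\{0\})$ as above and verify its image satisfies all conditions (this direction is clean, as sketched); (ii) observe $F$ is injective since the original $\sigma_1$ is recovered as $(x)$ followed by the entries of $\sigma_1'$ with its terminal $0$ stripped; (iii) prove $F$ is surjective by checking that for any $\Sigma' \in SPIL((A\setminus\{x\})\cup\{0\})$ with distinguished list $\tau_1 = (\mu_1,\dots,\mu_s,0)$, the preimage $\Sigma = \{(x,\mu_1,\dots,\mu_s),\tau_2,\dots,\tau_k\}$ is a bona fide element of $SPIL_x(A)$ — and here the key observation is that condition (1) for $\Sigma$ demands $\mu_s \ge 0$, which indeed \emph{does} hold, because in $\tau_1$ the entry $\mu_s$ is the second-to-last entry of a signed list whose last entry is $0$, and there is no restriction preventing $\mu_s < 0$... so in fact the clean fix is to note that both sets have the same cardinality via a direct count, or better, to define the bijection so that the terminal-$0$ marker simply \emph{encodes the sign-flexibility of the new leading entry}: since a signed list of $A$ starting with $\pm x$ has two choices of sign for $x$ that are otherwise identical, and a signed list of $(A\setminus\{x\})\cup\{0\}$ ending in $0$ likewise has its penultimate entry unconstrained in sign, the two-to-one-ness matches up. I expect the cleanest writeup pairs $(\epsilon x, \lambda_2,\dots,\lambda_r) \leftrightarrow (\lambda_2,\dots,\lambda_r \cdot \epsilon', 0)$ tracking signs explicitly, or else simply invokes that $F$ is a well-defined injection between finite sets together with a matching surjection built the same way, so that $F$ is a bijection. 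Verifying $|\Sigma|, |\Sigma'|$ remain PILs under both maps is routine given Definition \ref{d:SPILS}.
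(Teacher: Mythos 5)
There is a genuine gap here: the pair of maps you actually commit to is not a bijection, and your attempted repairs do not close the hole. Your forward map (delete the leading $\pm x$, append a $0$) discards the sign of $x$: for any list $(\pm x,\lambda_2,\dots,\lambda_r)$ with $r\geq 2$, both choices of sign are legitimate elements of $SPIL_x(A)$ (condition (1) only constrains the \emph{last} entry $\lambda_r$), and both have the same image $(\lambda_2,\dots,\lambda_r,0)$. So the claim that $F$ is injective "since the original $\sigma_1$ is recovered as $(x)$ followed by \dots" is false. Dually, your proposed inverse (drop the terminal $0$, prepend $+x$) is not well defined into $SPIL_x(A)$, exactly for the reason you yourself flag: the entry $\mu_s$ preceding the $0$ is unconstrained in sign, and if $\mu_s<0$ the output violates condition (1). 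These two failures are the same mismatch seen from both sides, and neither fallback you offer resolves it: no cardinality count is carried out, and the "injection together with a matching surjection" argument cannot be invoked because the injection is not injective and the surjection is not well defined.

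The fix is the one your last sentence gestures at but never executes: the sign of $x$ must be stored in the entry adjacent to the $0$. Concretely, the paper's bijection sends $\{(\lambda_1,\dots,\lambda_{j-1},\lambda_j,0),\sigma_2,\dots,\sigma_k\}$ to $\{(\mathrm{sgn}(\lambda_j)\,x,\lambda_1,\dots,\lambda_{j-1},|\lambda_j|),\sigma_2,\dots,\sigma_k\}$, and inversely sends $\{(\pm x,\mu_1,\dots,\mu_r),\sigma_2,\dots,\sigma_k\}$ to $\{(\mu_1,\dots,\mu_{r-1},\pm\mu_r,0),\sigma_2,\dots,\sigma_k\}$; with this sign transfer, condition (1) holds automatically on both sides (the new last entry is $|\lambda_j|\geq 0$, respectively $0$), and the two maps are visibly mutual inverses. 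Your proposal identifies the obstruction and even names the correct mechanism, but it stops short of defining these corrected maps and verifying they are inverse, which is the entire content of the lemma; as written, the argument does not establish the bijection.
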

\begin{proof}
We first define a map $\Lambda:SPIL(A\setminus\{x\} \cup \{0\})\to SPIL_{x}(A)$ by 
$$
\Lambda(\{(\lambda_{1},\dots ,\lambda_{j-1}, \lambda_{j},0), \sigma_{2},\dots, \sigma_{k}\})=\{(\mbox{sgn}(\lambda_{j})x, \lambda_{1},\dots \lambda_{j-1}, |\lambda_{j}|), \sigma_{2},\dots,  \sigma_{k}\}.
$$
To define the map $\Gamma$ in the other direction, consider $\Sigma\in SPIL_{x}(A)$ 
with\\ $\Sigma=\{(\pm x,\mu_{1},\dots, \mu_{r}), \sigma_{2},\dots, \sigma_{k}\}$.   We then define 
$$
\Gamma(\Sigma):=\{(\mu_{1},\dots, \mu_{r-1}, \pm \mu_{r}, 0), \, \sigma_{2},\dots, \sigma_{k} \}. 
$$
We leave it the reader to check that $\Lambda$ and $\Gamma$ are inverse mappings.

\end{proof}

We can now prove the main result of this section.
\begin{thm}\label{thm:orthocount}

\begin{enumerate}
\item The number of $B_{SO(2\ell-1)}$-orbits on $\B_{\fso(2\ell)}$ is $|SPIL(\ell)|$.  
\item The number of $B_{SO(2\ell)}$-orbits on $\B_{\fso(2\ell+1)}$ is $|SPIL(\ell\cup\{0\})|$.  
\end{enumerate}
\end{thm}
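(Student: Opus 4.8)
The plan is to mirror the proof of Theorem \ref{thm:counttypeA}, reducing the count of $B_{n-1}$-orbits on $\B_n$ in the orthogonal case to a bundle count over $K$-orbits, then matching that count with the combinatorics of SPILs via an induction on rank. Throughout, write $n = 2\ell$ in case (1) and $n = 2\ell+1$ in case (2), so that $K = SO(n-1)$ and $\fk = \fso(n-1)$. First I would recall the $K$-orbit decomposition from Section \ref{ss:Korbits}: in type D, $\B_{2\ell} = Q_+ \sqcup \bigsqcup_{i=1}^{\ell-1} Q_i$, and in type B, $\B_{2\ell+1} = Q_+ \sqcup Q_- \sqcup \bigsqcup_{i=0}^{\ell-1} Q_i$. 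Since $B_{n-1} \subset K$, we get $B_{n-1}\backslash \B_n = \bigsqcup_{Q_K} B_{n-1}\backslash Q_K$, so it suffices to count $B_{n-1}$-orbits in each $K$-orbit and match the total with $|SPIL(\ell)|$ (resp.\ $|SPIL(\ell \cup \{0\})|$).

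Next I would invoke the structural results of \cite{CE21I}: for each non-closed $K$-orbit $Q_i$ there is a parabolic $R_i$ of $G$ with $K \cap R_i$ parabolic in $K$, whose Levi factor is (up to centre/isogeny) a product $\fg_{2i} \times (\fgl(1))^{\text{something}}$ in the orthogonal sense — i.e.\ an orthogonal group of rank $i$ times a torus — and the $B_{n-1}$-orbits on $Q_i$ are classified by $\orbitbundle$ with $Q_{\fr}$ ranging over $B_{n-1}$-orbits on $K/K\cap R_i$ and $Q_{\fl}$ ranging over $B_{\fk_i \cap \fl}$-orbits on $\B_{\fl}$, the flag variety of the semisimple part of the Levi. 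This gives a product formula $|B_{n-1}\backslash Q_i| = |B_{n-1}\backslash K/K\cap R_i| \cdot |B_{2i-1}\backslash \B_{\fso(2i)}|$ (indices adjusted appropriately for type B versus D), and for the closed orbits $|B_{n-1}\backslash Q_\pm| = |B_{n-1}\backslash K/B_{n-1}| = |W_K|$ by the Bruhat decomposition. The factor $|B_{n-1}\backslash K / K\cap R_i|$ is again $|W_K|/|W_{K\cap R_i}|$, a ratio of orders of Weyl groups of orthogonal type, which I would compute as a count of signed lists: the point is that $W_{SO(m)}$ acts on $\pm$-labelled coordinates, so this quotient counts signed lists of a given length drawn from $\{1,\dots,\ell\}$, exactly the combinatorial objects appearing as "the distinguished signed list $\sigma_1$" in a SPIL.

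On the SPIL side I would then stratify $SPIL(\ell)$ (resp.\ $SPIL(\ell\cup\{0\})$) according to which $K$-orbit the corresponding $B_{n-1}$-orbit should lie in, using the length statistic: just as $PIL(n)_{i,j}$ was cut out by $\ell(\sigma_1) = n-(j-i)$, here I would define a stratum indexed by $i$ by a condition on the length of the signed list containing (the absolute value of) the largest element, and a stratum for each closed orbit by the condition that every signed list is a singleton (or that the PIL $|\Sigma|$ is the finest partition). The surjection $\pi$ sending a SPIL to its distinguished signed list $\sigma_1$ has fibre over a length-$r$ signed list equal to $SPIL$ of the complementary set, which by Lemma \ref{l:shift} is in bijection with $SPIL$ of a set of size $i$ with a $0$ adjoined (the $0$ accounting for the removed "pivot" index) — this is precisely where Lemma \ref{l:shift} does its work and matches the appearance of $\B_{\fso(2i)}$ (a type D object in both the type B and type D ambient cases, because the residual flag variety from a maximal-isotropic-type parabolic is of type D). An induction on $\ell$, with base cases $SPIL(0)$ and $SPIL(1)$, $SPIL(1\cup\{0\})$ checked against $|B_{-1}\backslash\B_0| = 1$ etc.\ as in Example \ref{ex:spils}, then closes the argument: $|B_{n-1}\backslash Q_i| = |\mathfrak{S}_i| \cdot |SPIL(\text{something of size } i \text{ with } 0)| = |SPIL\text{-stratum}_i|$.

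The main obstacle I anticipate is bookkeeping the exact shape of the Levi factor of $K \cap R_i$ and matching its flag variety to the correct recursive SPIL set — in particular verifying that the recursion always lands in a type-D SPIL count $SPIL(i)$ regardless of whether the ambient group is $SO(2\ell)$ or $SO(2\ell+1)$, and correctly tracking where the extra $0$ enters (it should come from Lemma \ref{l:shift} applied to the removal of the pivot, not from the type B "$0$-index"). A secondary subtlety is the closed orbits: in type B there are two of them ($Q_+, Q_-$) and I must check that the two corresponding SPIL strata together account for $2|W_K|$, which fits because removing the sign constraint at a single spot doubles the count, consistent with the two choices $\pm$. I would also need to double-check the trivial/edge conventions ($B_n$ trivial for $n<0$, $SPIL(0)$ a one-element set) so the induction is well-founded at the bottom.
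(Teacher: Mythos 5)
Your overall strategy is the same as the paper's: decompose $\B_{n}$ into $K$-orbits, use the fibre-bundle parametrization $(Q_{\fr},Q_{\fl})$ from \cite{CE21I} to get $|B_{n-1}\backslash Q_i| = |B_{n-1}\backslash K/K\cap R_i|\cdot(\text{fibre count})$, identify $|W_K|/|W_{K\cap R_i}|$ with a count of signed lists, stratify the SPILs by the distinguished list, and induct on rank. But there is a genuine gap in how you set up the induction: you assert that the recursion ``always lands in a type-D SPIL count $SPIL(i)$ regardless of whether the ambient group is $SO(2\ell)$ or $SO(2\ell+1)$,'' i.e.\ that the residual fibre is always a type D object. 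This is false, and the induction as you describe it would not close. The two statements of Theorem \ref{thm:orthocount} must be proved by an intertwined induction precisely because the types alternate: for $\fg=\fso(2\ell+1)$ and $Q_K=Q_i$, the Levi has $\fl\cong\fso(2(\ell-i)+1)\oplus\fgl(1)^i$, $\fk\cap\fl\cong\fso(2(\ell-i))\oplus\fgl(1)^i$, and the fibre count is the number of $B_{SO(2(\ell-i)-1)}$-orbits on $\B_{\fso(2(\ell-i))}$, which by the \emph{even} inductive hypothesis is $|SPIL(\ell-i)|$; whereas for $\fg=\fso(2\ell)$ and $Q_K=Q_i$, the Levi has $\fl\cong\fso(2(\ell-i)+2)\oplus\fgl(1)^{i-1}$, $\fk\cap\fl\cong\fso(2(\ell-i)+1)\oplus\fgl(1)^{i-1}$, and the fibre count is the number of $B_{SO(2(\ell-i))}$-orbits on $\B_{\fso(2(\ell-i)+1)}$, which by the \emph{odd} inductive hypothesis is $|SPIL(\ell-i\cup\{0\})|$, a type B count. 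So in the even case the geometric ``$0$'' really does come from the type B fibre, contrary to your stated expectation; an induction using only statement (1) cannot produce it.

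Relatedly, you misplace where Lemma \ref{l:shift} is needed. In the odd (type B) ambient case no shift is required: the fibre of the map $\Sigma\mapsto\sigma_1$ over a signed list of length $i+1$ ending in $0$ is literally $SPIL$ of the complementary set, giving $|SPIL(\ell-i)|$ directly, which matches the type D fibre. The shift lemma is used only in the even (type D) ambient case, on the combinatorial side: there the pivot $\pm\ell$ sits in position $i$ of $\sigma_1$, the fibre of the projection onto the prefix $(b_1,\dots,b_{i-1})$ is $SPIL_{\ell}$ of the complement, and Lemma \ref{l:shift} converts this to $SPIL(\ell-i\cup\{0\})$ --- exactly the type B count demanded by the geometric fibre. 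Your version, in which the shift lemma output is matched against a type D flag variety $\B_{\fso(2i)}$, equates $|SPIL(\cdot\cup\{0\})|$ with $|SPIL(\cdot)|$, which is numerically false already for rank $1$ (cf.\ Example \ref{ex:spils}: $|SPIL(1)|=1$ but $|SPIL(1\cup\{0\})|=3$). Once you restructure the argument as a simultaneous induction on $n$ alternating between the even and odd statements, with the shift lemma confined to the even case, the rest of your outline (Bruhat count $|W_K|/|W_{K\cap R_i}|$ as signed lists, closed-orbit strata giving $|W_K|$ in type D and $2|W_K|$ in type B) goes through as in the paper.
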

\begin{proof}
We prove by induction that the number of $B_{SO(n-1)}$-orbits on $\B_{\fso(n)}$ is given by $SPIL(\frac{n-1}{2} \cup \{ 0 \})$ when $n$ is odd and by $SPIL(\frac{n}{2})$ when $n$ is even.  The cases $n=1$ and $n=2$ are both trivial.

 The inductive argument depends on whether $n$ is even or odd.   We begin with the case when $n=2\ell + 1$ is odd.  In this case, the $K=SO(2\ell)$-orbits are parametrized in Section \ref{ss:Korbits} as $Q_0, \dots, Q_{\ell-1}, Q_+,$ and $Q_-$.  Abusing notation, we define $Q_{\ell}:=Q_{+}\cup Q_{-}$.  Since $B_{SO(2\ell)} \subset K$, it follows that the number of $B_{SO(2\ell)}$-orbits on
$\B_{\fso(2\ell + 1)}$ equals $\sum_{i=0}^{\ell} |B_{SO(2\ell)}\backslash Q_i|$.  First, we compute $|B_{SO(2\ell)}\backslash Q_{\ell}|$.  By Remark \ref{r:Korbitlength}, each orbit 
$Q_{\pm}$ is $K$-equivariantly isomorphic to the flag variety of $K$, so that $|B_{SO(2\ell)}\backslash Q_{\pm}|=|W_{K}|=2^{\ell-1} \ell !$, whence
\begin{equation}\label{eq:closedcount}
|B_{SO(2\ell)}\backslash Q_{\ell}|=2^{\ell} \ell !. 
\end{equation}
Now to compute $|B_{SO(2\ell)}\backslash Q_i|$ for $i\leq \ell-1$, note that in Equations (3.4) and (3.5) and Theorem 3.1 of \cite{CE21I}, we associate to each $Q_i$ a parabolic subgroup $R_i\subset G$ with Levi factor $L$.  The Lie algebra $\fl$ of $L$ is isomorphic to $\fso(2(\ell -i)+1) \oplus \fgl(1)^i$ and $\fl \cap \fk$ is the Lie algebra of a Levi factor of the parabolic subgroup $K\cap R_i$ of $K$, with $\fl \cap \fk \cong \fso(2(\ell -i)) \oplus \fgl(1)^i.$   Then using Theorem 3.5, Notation 3.6, and Theorem 3.8 of {\it loc. cit.}, $B_{SO(2\ell)}$-orbits in $Q_i$ are parametrized by pairs $(Q_{\fr}, Q_{\fl})$, where $Q_{\fr}$ runs through $B_{SO(2\ell)}$-orbits in $K/K\cap R_i$ and $Q_{\fl}$ runs through $B_{SO(2(\ell -i)-1)}$-orbits on $\B_{\fso(2(\ell-i))}.$   By induction using the case (1) where $n$ is even, the number of orbits $Q_{\fl}$ that arise is $|SPIL(\ell - i)|.$  The number of orbits $Q_{\fr}$ that arise in this parametrization is $$\frac{|W_{\fk}|}{|W_{\fk\cap \fl}|} = \frac{2^{\ell-1} \cdot {\ell}!}{2^{\ell - i - 1}\cdot (\ell - i)!} = \frac{2^i \cdot {\ell}! }{(\ell - i)!}.$$ 
Using these observations along with (\ref{eq:closedcount}), we have 
\begin{equation}\label{e.qiorbits}
|B_{SO(2\ell)}\backslash Q_i | =  
|SPIL(\ell - i)| \cdot \frac{2^i \cdot  {\ell}! }{(\ell - i)!} \mbox{ for } i=0,\dots, \ell.
\end{equation}

To show $|B_{SO(2\ell)} \backslash \B_{\fso(2\ell + 1)}|$ coincides with $|SPIL(\ell \cup \{ 0 \} )|$, we give an analogous partition of $SPIL(\ell \cup \{ 0 \}).$  For convenience, we assume that if $\Sigma = \{ \sigma_1, \dots, \sigma_k \} \in SPIL(\ell \cup \{ 0 \}),$ then $0 \in \sigma_1.$
For $i=0, \dots, \ell$, we let $$SPIL(\ell \cup \{ 0 \})^i = \{ \Sigma =  \{ \sigma_1, \dots, \sigma_{k} \} \in SPIL(\ell \cup \{ 0 \}) : \ell(\sigma_1)=i+1 \}.$$
We note that 
$$
{SPIL(\ell \cup \{ 0 \})}^{\ell} = \{ \sigma_1 = (b_1, \dots, b_{\ell}, 0) : \mbox{supp}(|\sigma_{1}|)=\{1,\dots, \ell\}  \}.
$$
\noindent It is clear that $SPIL(\ell \cup \{ 0 \}) =  \sqcup_{i=0}^{\ell} {SPIL(\ell \cup \{ 0 \})}^i$ .  Hence, to complete the case when $n=2\ell + 1$, it suffices to show that
$|{SPIL(\ell \cup \{ 0 \})}^{i}|=|B_{SO(2\ell)}\backslash Q_i|$ for $i=0, \dots, \ell $.
Let $SLI(i, \ell)$ denote the set of signed lists on an $i$-tuple of distinct integers chosen from $\{ 1, \dots, \ell \}.$
We have a surjective map
\begin{equation}\label{eq:mappi}
\pi: SPIL(\ell \cup \{ 0 \})^{i}\to SLI(i,\ell)\mbox{ given by } \pi:\{ \sigma_1, \dots, \sigma_k \}\to (b_1, \dots, b_i), \mbox{where } \sigma_1=(b_1, \dots, b_i, 0).
\end{equation}
For a signed list $(b_1, \dots, b_i)\in SLI(i,\ell)$, the fibre $\pi^{-1}((b_1, \dots, b_i))$ has cardinality $|SPIL(\ell-i)|$.  
Since $|SLI(i, \ell)| = \frac{2^i\cdot \ell !}{(\ell -i)!}$, it follows that 
\begin{equation}\label{eq:countSPILi}
|{SPIL(\ell \cup \{ 0 \})}^{i}|= \frac{2^i \cdot \ell !}{(\ell -i)!} \cdot |SPIL(\ell - i)|,
\end{equation} and this coincides with
$|B_{SO(2\ell)}\backslash Q_i |$ by Equation \eqref{e.qiorbits}.  

It remains to establish the inductive step for the even case $n=2\ell$, and this follows the same outline as in the odd case, although some features of the calculation are different.  On the orbit side, the $K=SO(2\ell-1)$-orbits on 
$\B_{\fso(2\ell)}$ are parametrized in Section \ref{ss:Korbits} as $Q_i$, $i=1, \dots, \ell-1$, and the closed orbit $Q_+$.  For convenience, denote $Q_+$ by $Q_{\ell}$.  On the combinatorial side, we adopt the convention that for an element $\Sigma = \{ \sigma_1, \dots, \sigma_k \}$ in $SPIL(\ell)$, the elements $\pm \ell$ occur in $\sigma_1$.  For $i=1, \dots, \ell$, let ${\SPILell}^i := \{ \Sigma = \{ \sigma_1, \dots, \sigma_k \} : \sigma_1(i)=\pm \ell \}.$  To establish the even case, it suffices to show that $|B_{SO(2\ell - 1)}\backslash Q_i| = |{\SPILell}^i|$ for $i=1, \dots, \ell.$  On the geometric side, in Equation (3.7)  of \cite{CE21I}, we associate to the orbit $Q_i$ a parabolic subgroup $R_i\subset G$ with Levi factor $L$ whose Lie algebra $\fl \cong \fso(2(l-i)+2) \oplus \fgl(1)^{i-1}$
such that $K\cap R_i$ is a parabolic subgroup of $K$ with Levi factor $K\cap L$ with $\mbox{Lie}(K\cap L)=\fk \cap \fl \cong \fso(2(l-i)+1) \oplus \fgl(1)^{i-1}$.  Further, by Theorem 3.5, Notation 3.6,  and Theorem 3.8 of {\it loc.cit.}, the $B_{SO(2\ell-1)}$-orbits on $Q_{i}$ are parametrized by pairs $(Q_{\fr}, Q_{\fl})$ where $Q_{\fr}$ runs through $B_{SO(2\ell - 1)}$-orbits on $K/K\cap R_i$ and $Q_{\fl}$ runs through
 $B_{SO(2(\ell-i))}$-orbits on $\B_{\fso(2(\ell - i)+1)}.$  By induction, the latter number is equal to $|SPIL(\ell - i \cup \{ 0 \})|.$   The number of $B_{SO(2\ell - 1)}$-orbits on $K/K\cap R_i$ is 
$\frac{|W_K|}{|W_{K\cap L}|}=\frac{2^{i - 1}\cdot (\ell -1)!}{(\ell - i)!},$ as follows easily by the above description of $\fk \cap \fl$.   Hence,
\begin{equation}\label{e.Bevencaseorbits} 
|B_{SO(2\ell - 1)}\backslash Q_i|=\frac{2^{i - 1}\cdot (\ell -1)!}{(\ell - i)!}\cdot |SPIL(\ell - i \cup \{ 0 \})|.
\end{equation}
To count the elements on the combinatorial side, for $\Sigma = \{ \sigma_1, \dots, \sigma_k \} \in {\SPILell}^i,$ let $\sigma_1 =(b_1, \dots, b_{i-1}, \pm \ell, b_i, \dots, b_{\ell(\sigma_1)})$.   We consider the surjective map
$$
\pi: SPIL(\ell)^{i}\to SLI(i-1,\ell-1) \mbox{ given by } \pi(\Sigma)=(b_{1},\dots, b_{i-1}).
$$
As we observed above in the case $n=2\ell+1$, $|SLI(i-1,\ell-1)|=\frac{2^{i-1} (\ell-1) !}{(\ell-i) !}.$  
Further, the fibre $\pi^{-1}(\pi(\Sigma))$ is in bijection with the set $SPIL_{\ell}(\{1,\dots, \ell\}\setminus \{b_{1},\dots, b_{i-1}\})$.  
It now follows from Lemma \ref{l:shift} that 
\begin{equation} \label{e:evencombinatorics}
|{\SPILell}^i|=\frac{2^{i - 1}\cdot (\ell -1)!}{(\ell - i)!} \cdot 
|SPIL(\ell - i \cup \{ 0 \}) |.
\end{equation}
The even case now follows by Equations \eqref{e.Bevencaseorbits} and \eqref{e:evencombinatorics}, and this completes the proof.

\end{proof}

\subsection{Formulas and Exponential Generating Functions for $B_{n-1}$-orbits} \label{ss:formulae}

To find formulas for $|B_{n-1}\backslash \B_{n}|$ in the different cases 
using Theorems \ref{thm:counttypeA} and \ref{thm:orthocount}, we introduce the 
\emph{Lah numbers} which play an important role in enumerative combinatorics and applied mathematics (see \cite{daboullah} and other references). 
\begin{dfn}\label{d:Lahnumbers}
Given $n\in \mathbb{N}$ and $1\leq k\leq n$, the $(n,k)$ Lah number, $L(n,k)$, is defined by the formula
\begin{equation}\label{eq:Lahnumber}
L(n,k)=\frac{n !}{k!}{n-1\choose k-1}.
\end{equation}
It is well known that the number $L(n,k)$ is the number of ways to partition a set of 
$n$ elements into $k$ lists, so that $|PIL(n)|=\sum_{k=1}^n L(n,k)$.
\end{dfn}
\noindent The following result follows immediately from Definitions \ref{dfn:lists} and \ref{d:Lahnumbers} and 
Theorem \ref{thm:counttypeA}. 
\begin{prop}\label{p:formulaA}
Let $\fg=\fgl(n)$.  
A formula for the sequence $\{|B_{n-1}\backslash \B_{n}|\}_{n\in \mathbb{N}}$ is
\begin{equation}\label{eq:typeAformula}
|B_{n-1}\backslash \B_{n}|= \displaystyle\sum_{k=1}^{n} L(n,k)=\displaystyle\sum_{k=1}^{n}\frac{n!}{k!}{n-1\choose k-1}.
\end{equation}
\end{prop}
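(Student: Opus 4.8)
The plan is to obtain the formula in Proposition \ref{p:formulaA} as an immediate consequence of Theorem \ref{thm:counttypeA} together with the combinatorial identity $|PIL(n)| = \sum_{k=1}^{n} L(n,k)$. By Theorem \ref{thm:counttypeA}, the number of $B_{n-1}$-orbits on $\B_n$ equals $|PIL(n)|$, so it suffices to show that the number of partitions of an $n$-element set into (ordered) lists is $\sum_{k=1}^n L(n,k)$, where $L(n,k) = \frac{n!}{k!}\binom{n-1}{k-1}$.

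The key step is the count of $|PIL(n)|$ by the number of parts. First I would partition $PIL(n)$ according to the number $k$ of lists in the partition, so that $|PIL(n)| = \sum_{k=1}^n P(n,k)$ where $P(n,k)$ counts PILs of $\{1,\dots,n\}$ with exactly $k$ lists. Then I would show $P(n,k) = L(n,k)$ by a direct enumeration: choosing a partition of $\{1,\dots,n\}$ into $k$ nonempty ordered lists amounts to choosing a linear order on all $n$ elements (in $n!$ ways), then inserting $k-1$ ``dividers'' among the $n-1$ internal gaps to cut the sequence into $k$ blocks (in $\binom{n-1}{k-1}$ ways), and finally dividing by $k!$ because the $k$ blocks are unordered among themselves. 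This gives $P(n,k) = \frac{n!}{k!}\binom{n-1}{k-1} = L(n,k)$, and summing over $k$ yields $|PIL(n)| = \sum_{k=1}^n L(n,k)$.

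Since this interpretation of the Lah numbers is stated as well known in Definition \ref{d:Lahnumbers}, the proof is genuinely immediate: combining Theorem \ref{thm:counttypeA} with $|PIL(n)| = \sum_{k=1}^n L(n,k)$ and substituting the explicit formula \eqref{eq:Lahnumber} gives
\begin{equation*}
|B_{n-1}\backslash \B_{n}| = |PIL(n)| = \sum_{k=1}^n L(n,k) = \sum_{k=1}^n \frac{n!}{k!}\binom{n-1}{k-1}.
\end{equation*}
There is no real obstacle here; the only point requiring any care is the $1/k!$ factor accounting for the unorderedness of the blocks in a PIL, and the boundary conventions (e.g. that lists are nonempty, so $k$ ranges from $1$ to $n$). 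One could alternatively cite the standard reference for Lah numbers, as the authors do, and simply invoke Definition \ref{d:Lahnumbers} and Theorem \ref{thm:counttypeA} directly.
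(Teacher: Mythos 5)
Your proposal is correct and follows the paper's route exactly: the paper derives the formula immediately from Theorem \ref{thm:counttypeA} together with the fact (stated as well known in Definition \ref{d:Lahnumbers}) that $L(n,k)$ counts partitions of an $n$-set into $k$ lists. The only difference is that you spell out the standard ``$n!$ orderings, $\binom{n-1}{k-1}$ dividers, divide by $k!$'' proof of that count, which the paper simply cites.
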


Recall that if $\{ f(n) \}_{n\in \mathbb{Z}_{\ge 0}}$ is a sequence of complex numbers, the corresponding exponential generating function $E_f(x)$ is the formal power series $\sum \frac{f(n) }{n!} x^n.$  
For functions $f, g:{\mathbb{Z}}_{\ge 0} \to \C$, define $s=s(f,g, \cdot):{\mathbb{Z}}_{\ge 0} \to \C$ by $s(n)=\sum_{k=0}^{n}{n\choose k} f(k)g(n-k)$. Then the multiplication principle for exponential generating functions asserts that $E_s(x)=E_f(x)\cdot E_g(x)$ (Proposition 5.1.3 and its proof in \cite{Stanley99}).
Let $P_k(n)$ denote the set of partitions $\lambda_1 + \dots + \lambda_k = n$ of $n$ into $k$ parts. Now for functions $f, g:{\mathbb{Z}}_{\ge 0} \to \C$, assume $g(0)=1$ and $f(0)=0$, and define a new function $h=h(f,g, \circ):{\mathbb{Z}}_{\ge 0} \to \C$ by $h(0)=1$ and $h(n)=\sum_{k=1}^n \sum_{\lambda \in P_k(n)} g(k)\cdot \prod_{i=1}^k f(\lambda_i).$  Then the composition principle of exponential generating functions asserts that $E_h(x)=E_g(E_f(x))$ (see Theorem 5.1.4 in \cite{Stanley99}).

We can now obtain the exponential generating function for the sequence in (\ref{eq:typeAformula}).  Let $f(n) = |PIL(n)|$ for $n\in\mathbb{Z}_{\geq 0}$.  
\begin{prop}\label{p:egf}
The exponential generating function for the sequence 
$\{|B_{n-1}\backslash \B_{n}|\}_{n\in{\mathbb{Z}}_{\ge 0}}$
 is
 $e^{\frac{x}{1-x}}$.  
\end{prop}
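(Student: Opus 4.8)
The plan is to realize the number sequence $f(n) = |PIL(n)|$ as the composition of two standard sequences whose exponential generating functions are known, and then invoke the composition principle for exponential generating functions quoted just above. Specifically, a PIL of $\{1,\dots,n\}$ is obtained by first choosing a set partition of $\{1,\dots,n\}$ into some number $k$ of nonempty blocks, and then choosing a linear order on each block. If the blocks have sizes $\lambda_1,\dots,\lambda_k$ (a partition of $n$ into $k$ parts), there are $\lambda_i!$ orderings of the $i$-th block. So taking $f(\lambda_i) = \lambda_i!$ to be the number of lists on a $\lambda_i$-element set (for $\lambda_i \ge 1$; set $f(0)=0$) and $g(k)=1$ for all $k$ (just "glue the blocks together," with $g(0)=1$), the function $h(f,g,\circ)$ of the excerpt satisfies $h(n) = \sum_{k=1}^n \sum_{\lambda \in P_k(n)} \prod_{i=1}^k \lambda_i! = |PIL(n)|$ for $n \ge 1$, and $h(0)=1 = |PIL(0)|$. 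By Theorem \ref{thm:counttypeA}, $h(n) = |B_{n-1}\backslash \B_n|$ for $n \ge 1$, and the convention $|B_{-1}\backslash\B_0|=1$ matches $h(0)$.

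Next I would compute the two ingredient generating functions. For $g$, since $g(k)=1$ for all $k \ge 0$, we have $E_g(x) = \sum_{k\ge 0} \frac{x^k}{k!} = e^x$. For $f$, since $f(n)=n!$ for $n \ge 1$ and $f(0)=0$, we have $E_f(x) = \sum_{n\ge 1} \frac{n!}{n!}x^n = \sum_{n \ge 1} x^n = \frac{x}{1-x}$. Note the hypotheses needed to apply the composition principle are met: $g(0)=1$ and $f(0)=0$. Therefore
\begin{equation*}
E_h(x) = E_g(E_f(x)) = e^{E_f(x)} = e^{\frac{x}{1-x}}.
\end{equation*}
Since $E_h$ is by the previous paragraph the exponential generating function of $\{|B_{n-1}\backslash\B_n|\}_{n\ge 0}$, this is exactly the claimed formula.

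The argument is essentially bookkeeping once the combinatorial decomposition is set up, so there is no serious obstacle; the only point requiring a little care is making sure the edge cases ($n=0$, and the role of the conventions $f(0)=0$, $g(0)=1$, $|B_{-1}\backslash\B_0|=1$) are consistent with the hypotheses of the composition principle as stated in \cite{Stanley99} and recalled in the excerpt. One could alternatively bypass the composition principle and argue directly: $|PIL(n)| = \sum_{k=1}^n L(n,k)$ from Definition \ref{d:Lahnumbers} and Proposition \ref{p:formulaA}, and it is classical that $\sum_{n\ge 1}\bigl(\sum_{k=1}^n L(n,k)\bigr)\frac{x^n}{n!} = e^{x/(1-x)} - 1$, so that adding the $n=0$ term gives $e^{x/(1-x)}$; but the composition-principle route is cleaner and fits the framework the paper has just developed.
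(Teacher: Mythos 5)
Your proposal is correct and follows essentially the same route as the paper's own proof: decompose a PIL as a set partition with a linear order on each block, take $f(n)=n!$ for $n\ge 1$, $f(0)=0$, $g\equiv 1$, and apply the composition principle with $E_f(x)=\frac{x}{1-x}$, $E_g(x)=e^x$. Your attention to the edge cases ($n=0$ and the conventions $f(0)=0$, $g(0)=1$, $|B_{-1}\backslash\B_0|=1$) is a welcome touch, but the argument itself coincides with the one in the paper.
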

\begin{proof}
By Theorem \ref{thm:counttypeA}, the sequence $\{|B_{n-1}\backslash \B_{n}|\}_{n\in\mathbb{Z}_{\geq 0}}$ coincides 
with the sequence $\{|PIL(n)|\}_{n\in \mathbb{Z}_{\geq 0}}$.   The reader can easily check that $|PIL(n)|=\sum_{k=1}^n \sum_{\lambda \in P_k(n)} \prod_{i=1}^k \lambda_i !$ for $n\geq 1$.  Thus, if we take $f(n)=n!$ for $n > 0$ and $f(0)=0$ and $g(n)=1$ for all $n$, then $h(f,g, \circ)(n)=|PIL(n)|$ by an easy argument.
Since $E_f(x)= \frac{x}{1-x}$ and $E_g(x)= e^x$, the assertion follows by the composition principle.
\end{proof}

\begin{rem}\label{r.anrecursion}
It is well-known that the sequence $f(n)=|PIL(n)|=\sum_{k=1}^{n} L(n,k)$ satisfies the two-term 
recursion relation 
$$
f(n+1)=(2n+1)f(n)-(n^2-n)f(n-1).  
$$
To verify this relation, let $f(x)=e^{\frac{x}{1-x}} = \sum \frac{f(n)}{n!}x^n$. By calculus, $(x-1)^2 f^{\prime}(x) = f(x)$ and if we compare the $x^n$ coefficients in the power series representing each side, we obtain the recursion relation.
Theorem \ref{thm:counttypeA} then implies that the sequence $\{|B_{n-1}\backslash \B_n|\}_{n\in \mathbb{N}}$ satisfies this recursion as well.  
This is the assertion made without proof in the footnote on page 18 of \cite{Hashi}.
\end{rem}

We can also use Lah numbers to develop formulas for the number of $B_{n-1}$-orbits on $\B_n = \B_{\fso(n)}$ 
in the orthogonal cases.  We begin with the type $D$ case, $n=2\ell$.  Let 
$SPIL(\ell,k) = \{ \Sigma = \{ \sigma_1, \dots, \sigma_k \} \in SPIL(\ell) \}.$
 Note that $|SPIL(\ell,k)|=2^{\ell-k}L(\ell,k)$ using Property (1) from Equation \eqref{eq:Sigmaconds} in the definition of $SPIL(\ell).$

\begin{prop}\label{p:formulaD}
For  $\ell \in\mathbb{N}$, 
\begin{equation}\label{eq:formulaD}
|B_{SO(2\ell-1)}\backslash \B_{\fso(2\ell)}|=\displaystyle\sum_{k=1}^{\ell} 2^{\ell-k} L(\ell,k)=\displaystyle\sum_{k=1}^{\ell}  2^{\ell-k} \frac{\ell !}{k !} {\ell-1 \choose k-1}.
\end{equation}
\end{prop}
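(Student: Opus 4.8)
The plan is to obtain \eqref{eq:formulaD} directly from Theorem \ref{thm:orthocount}(1) together with the decomposition of $SPIL(\ell)$ according to the number of lists. By Theorem \ref{thm:orthocount}(1), $|B_{SO(2\ell-1)}\backslash \B_{\fso(2\ell)}| = |SPIL(\ell)|$, and since every element $\Sigma = \{\sigma_1, \dots, \sigma_k\}$ of $SPIL(\ell)$ consists of some number $k$ of signed lists, with $1 \le k \le \ell$, we have the disjoint union $SPIL(\ell) = \sqcup_{k=1}^{\ell} SPIL(\ell, k)$, whence $|SPIL(\ell)| = \sum_{k=1}^{\ell} |SPIL(\ell,k)|$. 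So the entire content of the proof reduces to the claim already flagged in the text: $|SPIL(\ell,k)| = 2^{\ell-k} L(\ell,k)$.

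To establish this cardinality claim, I would set up a fibration of $SPIL(\ell,k)$ over $PIL(\ell,k)$ (the set of partitions of $\{1,\dots,\ell\}$ into exactly $k$ lists, which has cardinality $L(\ell,k)$ by the standard interpretation of Lah numbers recalled in Definition \ref{d:Lahnumbers}). The map sends $\Sigma = \{\sigma_1,\dots,\sigma_k\}$ to $|\Sigma| = \{|\sigma_1|,\dots,|\sigma_k|\}$, which lies in $PIL(\ell)$ by the very definition of SPIL, and clearly has the same number $k$ of lists. It then suffices to show every fibre has exactly $2^{\ell-k}$ elements. Given an unsigned PIL $\{|\sigma_1|,\dots,|\sigma_k|\}$ with lists of lengths $m_1, \dots, m_k$ (so $\sum m_i = \ell$), a lift amounts to assigning a sign to each entry of each $|\sigma_i|$; condition (1) of \eqref{eq:Sigmaconds} forces the last entry of each list to be non-negative, hence $+$, and condition (2) is vacuous since $0 \notin \{1,\dots,\ell\}$. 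So each list of length $m_i$ admits exactly $2^{m_i - 1}$ sign choices, and the total is $\prod_{i=1}^k 2^{m_i - 1} = 2^{\sum_i (m_i-1)} = 2^{\ell - k}$, independent of the chosen PIL. This proves $|SPIL(\ell,k)| = 2^{\ell-k} L(\ell,k)$, and summing over $k$ gives the first equality in \eqref{eq:formulaD}; the second equality is just the substitution of the explicit formula \eqref{eq:Lahnumber} for $L(\ell,k)$.

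I do not anticipate a genuine obstacle here: the argument is purely bookkeeping once one has Theorem \ref{thm:orthocount}(1) and the Lah-number interpretation in hand. The only point requiring a moment of care is making sure the sign-counting correctly incorporates both normalization conditions in \eqref{eq:Sigmaconds} — and in the type $D$ setting condition (2) never applies because $0$ is not among the symbols being partitioned, so the count is the clean $2^{m_i-1}$ per list. One should also note the edge case $\ell = 1$, where $SPIL(1) = SPIL(1,1)$ has a single element and $2^{1-1}L(1,1) = 1$, consistent with Example \ref{ex:spils}; and the value $|SPIL(2)| = 2^1 L(2,1) + 2^0 L(2,2) = 2\cdot 2 + 1 = 5$ agrees with the count of five elements of $SPIL(2)$ given there, which serves as a useful sanity check on the formula before writing it down.
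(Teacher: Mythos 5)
Your proof is correct and follows essentially the same route as the paper: decompose $SPIL(\ell)$ by the number $k$ of lists, establish $|SPIL(\ell,k)|=2^{\ell-k}L(\ell,k)$ via sign-counting using condition (1) of \eqref{eq:Sigmaconds}, and combine with Theorem \ref{thm:orthocount}(1). Your write-up merely spells out in more detail the sign-count that the paper records in one line just before the proposition.
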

\begin{proof}   
Equation (\ref{eq:formulaD}) follows from Part (1) of Theorem \ref{thm:orthocount}, and the above formula for $|SPIL(\ell, k)|.$
\end{proof}
As in the type $A$ case, we can also obtain a generating function for the sequence $\{|B_{SO(2\ell-1)}\backslash \B_{\fso(2\ell)}|\}_{\ell\in\mathbb{Z}_{\geq 0}}$.
 
\begin{prop}\label{p:typeDgenfun}
The exponential generating function for the sequence 
$$\{|B_{SO(2\ell-1)}\backslash \B_{\fso(2\ell)}|\}_{\ell\in\mathbb{Z}_{\geq 0}}\mbox{ is }e^{\frac{x}{1-2x}}.$$
\end{prop}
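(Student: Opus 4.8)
The plan is to follow the proof of Proposition \ref{p:egf} essentially verbatim, with ordered lists replaced by signed lists. The first step is to reduce to a purely combinatorial statement: by Part (1) of Theorem \ref{thm:orthocount}, the sequence $\{|B_{SO(2\ell-1)}\backslash\B_{\fso(2\ell)}|\}_{\ell\ge 0}$ coincides with $\{|SPIL(\ell)|\}_{\ell\ge 0}$ (with the convention $|SPIL(0)|=1$, matching $|B_{-1}\backslash\B_{0}|=1$). So it suffices to prove that the exponential generating function of $\{|SPIL(\ell)|\}$ equals $e^{x/(1-2x)}$.

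Next I would count signed lists on a fixed finite support. If $A\subset\mathbb{N}$ has cardinality $m\ge 1$, then there are exactly $2^{m-1}m!$ signed lists supported on $A$: choose one of the $m!$ orderings of $A$, then a sign for each entry except the last, which must be non-negative by Condition (1) of \eqref{eq:Sigmaconds}, hence positive since $0\notin\{1,\dots,\ell\}$. Accordingly, set $f(m)=2^{m-1}m!$ for $m\ge 1$, $f(0)=0$, and $g(n)=1$ for all $n$. A SPIL of $\{1,\dots,\ell\}$ is precisely an unordered partition of $\{1,\dots,\ell\}$ into blocks, each block equipped with the structure of a signed list, and the set of blocks carries no extra structure. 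Grouping such objects by the multiset of block sizes and arguing exactly as in the proof of Proposition \ref{p:egf} (where $g\equiv 1$ reflects the absence of ordering on the blocks), one obtains $|SPIL(\ell)|=h(f,g,\circ)(\ell)$ for all $\ell\ge 0$.

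Finally I would invoke the composition principle for exponential generating functions recalled in Section \ref{ss:formulae}: since $E_f(x)=\sum_{m\ge 1}\frac{2^{m-1}m!}{m!}x^m=\sum_{m\ge 1}2^{m-1}x^m=\frac{x}{1-2x}$ and $E_g(x)=e^x$, the principle gives $E_h(x)=E_g(E_f(x))=e^{x/(1-2x)}$, which is the asserted generating function.

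I do not anticipate a genuine obstacle; the only point needing a little care is the identification $|SPIL(\ell)|=h(f,g,\circ)(\ell)$, specifically checking that a partition into signed lists is the correct combinatorial composition (so that $g\equiv 1$, not a larger $g$) and that Condition (1) of \eqref{eq:Sigmaconds} is exactly what turns the naive count $2^{m}m!$ into $2^{m-1}m!$. As a consistency check one also recovers $|SPIL(\ell)|=\sum_{k=1}^{\ell}2^{\ell-k}L(\ell,k)$ from Proposition \ref{p:formulaD}. This is the same mild bookkeeping already carried out in the type $A$ case, now with the extra factor $2^{m-1}$ inserted, which is precisely what replaces $\tfrac{x}{1-x}$ by $\tfrac{x}{1-2x}$.
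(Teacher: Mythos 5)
Your proposal is correct and follows essentially the same route as the paper's own proof: reduce to $|SPIL(\ell)|$ via Theorem \ref{thm:orthocount}(1), take $f(m)=2^{m-1}m!$ (the count of signed lists on an $m$-element set, using Condition (1) of \eqref{eq:Sigmaconds}), $g\equiv 1$, and apply the composition principle with $E_f(x)=\tfrac{x}{1-2x}$. Your extra remarks on why the count is $2^{m-1}m!$ rather than $2^m m!$ simply make explicit what the paper leaves implicit.
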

\begin{proof}
By Part (1) of Theorem \ref{thm:orthocount}, for each $\ell\in {\mathbb{Z}}_{\geq 0}$, $|SPIL(\ell)|=|B_{SO(2\ell - 1)}\backslash \B_{\fso(2\ell)}|.$
Note that  $|SPIL(\ell)| = \sum_{k=1}^\ell \sum_{\lambda \in P_k(\ell)} 2^{\lambda_i - 1}\cdot \lambda_i !,$ for $\ell \geq 1.$   
We let $f(\ell)=2^{\ell-1}\cdot \ell!$ and $f(0)=0$ and let $g(\ell)=1$ for all $\ell$, and note that if $h=h(f,g, \circ)$, then $h(\ell)=|SPIL(\ell)|.$    By the composition principle, we conclude that $E_h(x)=E_g(E_f(x)).$ Since $\frac{x}{1-2x} = \sum_{\ell \in {\mathbb N}}  2^{\ell-1}\cdot x^\ell,$ $E_f(x)=\frac{x}{1-2x}$ and the proposition follows.  
\end{proof}


The type $B$ case is slightly more involved than the other cases.  

\begin{prop}\label{p:formulaB}
For  $\ell \in\mathbb{N}$,  
\begin{equation}\label{eq:formulaB}
|B_{SO(2\ell)}\backslash \B_{\fso(2\ell + 1)}|=\displaystyle\sum_{k=1}^{\ell+1}\frac{\ell !}{(k-1) !} {\ell\choose k-1} 2^{\ell-k+1}.
\end{equation}
\end{prop}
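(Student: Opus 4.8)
The plan is to reduce, via Part~(2) of Theorem~\ref{thm:orthocount}, to computing $|\SPILzero|$, and then to establish the refined count
\begin{equation*}
|\SPILzero_k| \;=\; \frac{\ell!}{(k-1)!}\binom{\ell}{k-1}\,2^{\ell-k+1}
\end{equation*}
for each $k$, where $\SPILzero_k\subseteq\SPILzero$ consists of the SPILs made up of exactly $k$ signed lists. Since $\{0,1,\dots,\ell\}$ has $\ell+1$ elements and each list is nonempty, $\SPILzero=\bigsqcup_{k=1}^{\ell+1}\SPILzero_k$, so summing this identity over $k$ gives Equation~\eqref{eq:formulaB}. This parallels the type~$D$ bookkeeping preceding Proposition~\ref{p:formulaD}, where $|SPIL(\ell,k)|=2^{\ell-k}L(\ell,k)$.

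To count $\SPILzero_k$, I would split off the signed list containing $0$ and then delete its last entry~$0$. Using the convention (as in the proof of Theorem~\ref{thm:orthocount}) that $0\in\sigma_1$ for $\Sigma=\{\sigma_1,\dots,\sigma_k\}$, Condition~(2) of \eqref{eq:Sigmaconds} forces $\sigma_1=(\lambda_1,\dots,\lambda_p,0)$ for some $p\ge0$, where $(\lambda_1,\dots,\lambda_p)$ is a signed list on an ordered $p$-tuple of distinct elements of $\{1,\dots,\ell\}$ with underlying set $T$; and since $0$ occupies the unique ``last'' slot of $\sigma_1$, Condition~(1) puts no restriction on the signs of $\lambda_1,\dots,\lambda_p$. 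Erasing the trailing $0$ therefore gives a bijection from $\SPILzero_k$ to the set of pairs $\bigl((\lambda_1,\dots,\lambda_p),\,\{\sigma_2,\dots,\sigma_k\}\bigr)$, where $(\lambda_1,\dots,\lambda_p)$ ranges over $\bigsqcup_{p=0}^{\ell}SLI(p,\ell)$ (the signed lists on $p$-tuples from $\{1,\dots,\ell\}$, in the notation of that proof) and, for each such choice, $\{\sigma_2,\dots,\sigma_k\}$ is an SPIL of $\{1,\dots,\ell\}\setminus T$ into $k-1$ lists (Conditions~(1)--(2) are inherited, and (2) is vacuous for the $\sigma_i$ with $i\ge2$). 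The degenerate cases $p=0$ (so $\sigma_1=(0)$) and $T=\{1,\dots,\ell\}$ (so $k=1$) are covered by the usual conventions.

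Passing to cardinalities, and plugging in the two counts already available --- $|SLI(p,\ell)|=\tfrac{2^p\ell!}{(\ell-p)!}$ from the proof of Theorem~\ref{thm:orthocount}, and $|SPIL(m,j)|=2^{m-j}L(m,j)$ from the discussion before Proposition~\ref{p:formulaD} (this cardinality depends only on $m=\ell-p$, the size of $\{1,\dots,\ell\}\setminus T$) --- I obtain, after substituting $j=\ell-p$,
\begin{equation*}
|\SPILzero_k| \;=\; \sum_{p=0}^{\ell}\frac{2^p\ell!}{(\ell-p)!}\cdot 2^{(\ell-p)-(k-1)}L(\ell-p,k-1)
\;=\; 2^{\ell-k+1}\,\ell!\sum_{j=0}^{\ell}\frac{L(j,k-1)}{j!}.
\end{equation*}
By the Lah-number formula \eqref{eq:Lahnumber}, $\tfrac{L(j,k-1)}{j!}=\tfrac{1}{(k-1)!}\binom{j-1}{k-2}$, and the hockey-stick identity $\sum_{j=k-1}^{\ell}\binom{j-1}{k-2}=\binom{\ell}{k-1}$ collapses the inner sum to $\tfrac{1}{(k-1)!}\binom{\ell}{k-1}$, yielding the claimed value of $|\SPILzero_k|$ and hence the proposition.

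I do not expect a serious obstacle here: once Theorem~\ref{thm:orthocount} is in hand, the argument is a bijection followed by the hockey-stick identity. The one thing requiring care is the boundary bookkeeping in the bijection --- the degenerate list $\sigma_1=(0)$ and the possibly empty complement $\{1,\dots,\ell\}\setminus T$ --- which should be handled with the conventions $L(0,0)=1$ and ``$|SPIL(\emptyset,0)|=1$'', consistently with $|SPIL(0)|=1$. As an alternative that avoids splitting over $k$, one may rewrite Equation~\eqref{eq:countSPILi} as $|\SPILzero|=\sum_{i=0}^{\ell}\binom{\ell}{i}(2^i i!)\,|SPIL(\ell-i)|$, apply the multiplication principle together with Proposition~\ref{p:typeDgenfun} to obtain the exponential generating function $\tfrac{1}{1-2x}\,e^{x/(1-2x)}$ for $\{|\SPILzero|\}$, and then read off the coefficient of $x^\ell$ using $e^{x/(1-2x)}=\sum_{j\ge0}\tfrac{x^j}{j!}(1-2x)^{-j}$.
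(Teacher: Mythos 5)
Your argument is correct, and it reaches the same intermediate target as the paper --- the refined count $|SPIL(\ell\cup\{0\},k)|=2^{\ell-k+1}\frac{\ell!}{(k-1)!}\binom{\ell}{k-1}$ --- but by a genuinely different computation. The paper counts $SPIL(\ell\cup\{0\},k)$ in one stroke: choose a linear order $(i_1,\dots,i_\ell,0)$ on $\{0,1,\dots,\ell\}$ ($\ell!$ ways), cut it at $k-1$ chosen positions ($\binom{\ell}{k-1}$ ways), correct for the $(k-1)!$ reorderings of the lists not containing $0$, and then assign signs to the $\ell-k+1$ non-final entries; the closed form drops out with no summation. You instead peel off the signed list containing $0$, condition on its length $p+1$, and convolve the two counts already in the paper, namely $|SLI(p,\ell)|=\frac{2^p\ell!}{(\ell-p)!}$ from the proof of Theorem \ref{thm:orthocount} and $|SPIL(m,k-1)|=2^{m-k+1}L(m,k-1)$ from the discussion before Proposition \ref{p:formulaD}; the resulting sum collapses via the Lah formula \eqref{eq:Lahnumber} and the hockey-stick identity $\sum_{j=k-1}^{\ell}\binom{j-1}{k-2}=\binom{\ell}{k-1}$. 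In effect you cross the $k$-decomposition with the decomposition by zero-list length used in the proof of Theorem \ref{thm:orthocount}, which buys maximal reuse of earlier results at the price of a binomial identity and some boundary conventions ($L(0,0)=1$, the empty complement when $k=1$), whereas the paper's construction is self-contained and identity-free. Your bijection is sound: since $0$ occupies the final slot of its list, Condition (1) of \eqref{eq:Sigmaconds} indeed imposes no sign restriction on the remaining entries of that list, and Conditions (1)--(2) pass to the residual SPIL on the complement. The generating-function alternative you sketch at the end is essentially Proposition \ref{p:typeBgenfun} run in reverse and would also work, though extracting the coefficient of $x^{\ell}$ takes about as much effort as the finite sum you already handled.
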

\begin{proof}
By Part (2) of Theorem \ref{thm:orthocount}, $|B_{SO(2\ell)}\backslash \B_{\fso(2\ell + 1)}|=|SPIL(\ell\cup\{0\})|$. 
 Let 
$$
SPIL(\ell\cup\{0\},k):=\{\Sigma\in SPIL(\ell\cup\{0\}):\, \Sigma=\{\sigma_{1},\dots, \sigma_{k}\}\}.
$$
Of course, 
\begin{equation}\label{eq:SPILcount}
|SPIL(\ell\cup\{0\})|=\sum_{k=1}^{\ell+1} |SPILS(\ell\cup\{0\},k)|.
\end{equation}
To compute $|SPIL(\ell\cup\{0\},k)|$, we note that we can construct an element in $SPIL(\ell \cup \{ 0 \}, k)$ in steps as follows.   First choose a list
$(i_1, \dots, i_{\ell})$ on the set $\{ 1, \dots, \ell \}$ and identify it with the list $(i_1, \dots, i_{\ell}, 0)$ on the set $\{ 0, 1, \dots, \ell \}$ satisfying 
Condition (2) of Equation \eqref{eq:Sigmaconds}.  We next choose $k-1$ elements $\{ j_1, \dots, j_{k-1} \}$ from $\{ 1, \dots, \ell \}$ and let
$\tsigma_r = (i_{j_{r-1} + 1}, \dots, i_{j_r})$ for $r=1, \dots, k-1$ with $j_{0}=0$.  Let $\tsigma_k = (i_{j_{k-1} + 1}, \dots, i_\ell, 0 ).$   There are $\ell ! \cdot {\ell \choose k-1 }$ ways to choose an ordered sequence of lists $\tilde{\Sigma} = (\tsigma_1, \dots, \tsigma_{k-1},\tsigma_{k})$ in this way.   
The sequence of lists $\tilde{\Sigma}$ defines an element of $PIL(\ell\cup\{0\})$ by simply forgetting the 
ordering of the lists $\tilde{\sigma_{i}}$.  The reader can verify that another sequence of $k$ ordered lists $\Sigma^{\prime}=(\sigma_{1}^{\prime},\dots, \sigma_{k-1}^{\prime}, \sigma_{k}^{\prime})$ 
constructed in this fashion defines the same element of $PIL(\ell\cup\{0\})$ if and only if $\sigma_{i}^{\prime}=\tilde{\sigma}_{w(i)}$, $i\leq k-1$ for some permutation 
$w$ of the set $\{1,\dots, k-1\}$ and $\sigma_{k}^{\prime}=\tilde{\sigma}_{k}$.  
Thus, we can construct exactly $\frac{\ell ! {\ell \choose k-1}}{(k-1) !}$ elements of $PIL(\ell\cup\{0\})$ in this way.  
From each of the $\frac{\ell ! {\ell \choose k-1}}{(k-1) !}$ such $PILs$, we can construct an element of $SPIL(\ell\cup \{0\}, k)$ by
assigning a sign to all of the non-final elements in each of the lists of the $PIL$ .  There 
are evidently $2^{\ell-k+1}$ ways to assign these signs to the $PIL$.  It follows that 
 \begin{equation}\label{eq:SPILkcount}
 |SPIL(\ell\cup\{0\},k)|= 2^{\ell-k+1}\frac{\ell !}{(k-1) !}{\ell\choose k-1}.
 \end{equation}
Equation \eqref{eq:formulaB} now follows from Equations \eqref{eq:SPILcount} and \eqref{eq:SPILkcount}. 
\end{proof}

We can also obtain an exponential generating function for the sequence $\{|B_{SO(2\ell)}\backslash \B_{\fso(2\ell+1)}|\}_{\ell\in\mathbb{Z}_{\geq 0}}.$

\begin{prop}\label{p:typeBgenfun}
The exponential generating function for the sequence
$$\{|B_{SO(2\ell)}\backslash \B_{\fso(2\ell + 1)}|\}_{\ell\in\mathbb{Z}_{\geq 0}}\mbox{ is } \frac{e^{\frac{x}{1-2x}}}{1-2x}.$$
\end{prop}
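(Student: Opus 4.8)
The plan is to combine Proposition \ref{p:formulaB} with the composition and multiplication principles for exponential generating functions already recalled in this section. By Part (2) of Theorem \ref{thm:orthocount}, the sequence in question equals $\{|SPIL(\ell \cup \{0\})|\}_{\ell \geq 0}$, and Equation \eqref{eq:SPILkcount} shows that $|SPIL(\ell \cup \{0\})| = \sum_{k=1}^{\ell+1} 2^{\ell-k+1} \frac{\ell!}{(k-1)!}\binom{\ell}{k-1}$. The natural way to read this combinatorially is via the step-by-step construction in the proof of Proposition \ref{p:formulaB}: an element of $SPIL(\ell \cup \{0\})$ is obtained by first choosing the final list $\sigma_1$ containing $0$ (this is a signed list of length $r+1$ of the form $(b_1, \dots, b_r, 0)$ with the $b_i$ a signed list on an $r$-subset of $\{1,\dots,\ell\}$, contributing a factor counted by $\frac{x}{1-2x}$ shifted appropriately), together with a $SPIL$ on the complementary subset of $\{1,\dots,\ell\}$.

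More precisely, I would set $f(\ell) = 2^{\ell-1}\cdot \ell!$ for $\ell \geq 1$ and $f(0)=0$ (so $E_f(x) = \frac{x}{1-2x}$ as in the proof of Proposition \ref{p:typeDgenfun}), so that $h(f, g_1, \circ)$ with $g_1 \equiv 1$ has exponential generating function $e^{x/(1-2x)}$ and equals $|SPIL(\ell)|$. I would then define $b(\ell) = |SPIL(\ell \cup \{0\})|$ and $c(\ell) = |SLI(\ell)| = 2^\ell \cdot \ell!$, the number of signed lists using all of $\{1,\dots,\ell\}$ (with $c(0)=1$), whose exponential generating function is $\sum 2^\ell x^\ell = \frac{1}{1-2x}$. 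The decomposition of an element of $SPIL(\ell \cup \{0\})$ into (the final list $\sigma_1$ minus its terminal $0$, viewed as a signed list on some subset $S \subseteq \{1,\dots,\ell\}$) together with (a $SPIL$ on $\{1,\dots,\ell\}\setminus S$) gives exactly the binomial convolution $b(\ell) = \sum_{k=0}^{\ell}\binom{\ell}{k} c(k)\, |SPIL(\ell - k)|$, so by the multiplication principle $E_b(x) = E_c(x)\cdot e^{x/(1-2x)} = \frac{e^{x/(1-2x)}}{1-2x}$, which is the claim.

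As an alternative that avoids introducing $SLI$, one can verify the binomial-convolution identity directly from the closed formula \eqref{eq:formulaB}: writing $b(\ell) = \sum_{k=1}^{\ell+1} 2^{\ell-k+1}\frac{\ell!}{(k-1)!}\binom{\ell}{k-1}$ and reindexing $m = k-1$, one checks that $\sum_{m=0}^{\ell}\binom{\ell}{m}(2^m \cdot m!)\,|SPIL(\ell-m)|$ produces the same quantity, using Proposition \ref{p:typeDgenfun} and the Lah-number identity for $|SPIL(\ell-m)|$. Either route reduces the proposition to routine bookkeeping once the correct convolution structure is identified.

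The main obstacle is getting the combinatorial decomposition exactly right, in particular verifying that the final list $\sigma_1$ (the one containing $0$) is genuinely independent of the rest: its non-terminal entries form an \emph{arbitrary} signed list on the chosen subset $S$ with no further sign constraint, which is precisely what makes the factor $\frac{1}{1-2x}$ appear rather than something involving the $SPIL$ sign conditions. One must be careful that Condition (1) of \eqref{eq:Sigmaconds} (last entry non-negative) is automatically satisfied here because the last entry is $0$, and that Condition (2) forces $0$ to be terminal — both of which are consistent with treating $\sigma_1 \setminus \{0\}$ as an unconstrained signed list. Once this independence is checked, the exponential generating function identity is immediate from the multiplication principle cited earlier in the section.
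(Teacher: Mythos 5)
Your proposal is correct and follows essentially the same route as the paper: the paper's proof uses exactly the binomial convolution $|SPIL(\ell\cup\{0\})|=\sum_{i=0}^{\ell}\binom{\ell}{i}2^{i}i!\,|SPIL(\ell-i)|$ (Equation \eqref{eq:long}, obtained by splitting off the signed list containing $0$ as in \eqref{eq:countSPILi}), together with $E_{f}(x)=\tfrac{1}{1-2x}$ for $f(i)=2^{i}i!$, $E_{g}(x)=e^{x/(1-2x)}$ for $g(i)=|SPIL(i)|$, and the multiplication principle. Your check that Conditions (1) and (2) of \eqref{eq:Sigmaconds} impose no constraint on the non-terminal entries of the list ending in $0$ is precisely the point that makes this convolution valid.
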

\begin{proof}
We observed in (\ref{eq:countSPILi}) that 
\begin{equation}\label{eq:long}
|SPIL(\ell\cup\{0\})|=\displaystyle\sum_{i=0}^{\ell} |SPIL(\ell\cup \{0\})^{i}|=\displaystyle\sum_{i=0}^{\ell} \frac{2^{i} \ell ! } {(\ell-i) !} |SPIL(\ell-i)|=\displaystyle\sum_{i=0}^{\ell}{\ell\choose i} 2^{i} i! |SPIL(\ell-i)|.
\end{equation}
Now let $f:\mathbb{Z}_{\geq 0}\to \mathbb{N}$ be the sequence $f(i)=2^{i} i !$, and let $g:\mathbb{Z}_{\geq 0}\to \mathbb{N}$ be the 
sequence $g(i)=|SPIL(i)|$.  Then $E_{f}(x)=\frac{1}{1-2x}$, and $E_{g}(x)=e^{\frac{x}{1-2x}}$ by Proposition \ref{p:typeDgenfun} and Part (2) of Theorem \ref{thm:orthocount}.
The result now follows from Equation \ref{eq:long} and the multiplication principle for 
exponential generating functions.

\end{proof}



\begin{rem}\label{r.bdrecursions}
For $n\in\mathbb{Z}_{\geq 0}$, let $d_n = |B_{SO(2n-1)}\backslash \B_{\fso(2n)}|$, and let $b_n = |B_{SO(2n)}\backslash \B_{\fso(2n+1)}|$.  Then the sequences $\{ d_n \}$ and $\{ b_n \}$ satisfy the recursion relations
\[
d_{n+1}=(1+4n)d_n - 4(n^2 - n)d_{n-1} \mbox{ and } b_{n+1}=(3+4n) b_n - 4n^2  b_{n-1}.
\]
To verify these formulas, let $f(x)=\sum_{n=0} \frac{d_n}{n!} x^n = e^{\frac{x}{1-2x}}$ and verify that $(2x-1)^2 f^{\prime}(x)=f(x).$   Comparison of the $x^n$ coefficient gives the $d_n$-relation.   For the $b_n$-relation, let $f(x)=\sum_{n=0} \frac{b_n}{n!} x^n = \frac{e^{\frac{x}{1-2x}}}{1-2x}$, and observe that $(2x-1)^2 f^{\prime}(x)=(3-4x) f(x)$.  Now compare $x^n$ coefficients.
\end{rem}

\begin{rem}\label{r:Lahtransform}
Given a sequence $\{a_{n}\}_{n\in\mathbb{Z}_{\geq 0}}$, we define its Lah transform to be 
the sequence $\{b_{n}\}_{n\in \mathbb{Z}_{\geq 0}}$ with $b_{n}:=\sum_{k=1}^{n} L(n,k) a_{k}$, and $b_{0}=a_{0}$ where
$L(n,k)$ is the $(n,k)$ Lah number defined in Equation (\ref{eq:Lahnumber}).  It follows immediately 
from Proposition \ref{p:formulaA} that the sequence $\{|B_{\fgl(n-1)}\backslash \B_{\fgl(n)}|\}_{n\in\mathbb{Z}_{\geq 0}}$ is the 
Lah transform of the constant sequence $\{1\}_{n\in\mathbb{Z}_{\geq 0}}$.  
We also claim that the sequence $\{|B_{\fso(2n-1)}\backslash \B_{\fso(2n)}|\}_{n\in\mathbb{Z}_{\geq 0}}$ is the Lah transform of the sequence 
$\{|B_{\fgl(n-1)}\backslash \B_{\fgl(n)}|\}_{n\in\mathbb{Z}_{\geq 0}}$.  
This follows from the following general fact concerning Lah transforms and exponential generating functions.  
If $f(n)=a_{n}$ is a sequence and $g(n)=b_{n}$ is a sequence, then 
$\{b_{n}\}_{n\in\mathbb{Z}_{\geq 0}}$ is the Lah transform of $\{a_{n}\}_{n\in\mathbb{Z}_{\geq 0}}$ if and only 
if $E_{g}(x)=E_{f}(x/1-x)$.   
This assertion can be demonstrated using the well-known recursion relation $L(n+1, k)=(n+k) L(n,k)+L(n,k-1)$ for 
Lah numbers to show that for any smooth function $h(x)$, 
$$
\frac{d^{n}}{(dx)^{n}} h\left( \frac{x}{1-x}\right)\Bigr |_{x=0}=\displaystyle\sum_{k=1}^{n} L(n,k) h^{(n)}(0),
$$
for $n\geq 1$.
\end{rem}

\section{Representatives for $B_{n-1}$-orbits}\label{s:std}

In this section, we give explicit representatives for $B_{n-1}$-orbits on the flag variety $\B_n.$   The constructions and proofs rely on the results on $PIL(n)$ and $SPIL(\ell)$ from the previous section.  The cases of $GL(n), SO(2\ell)$, and $SO(2\ell + 1)$ are discussed separately.

\subsection{Representatives in the case of $GL(n)$}\label{ss:glreps}

We first describe the form of our standard representatives.

\begin{dfn-nota}\label{d:std}

\begin{enumerate}
\item For a standard basis vector $e_{i}\in\C^{n}$ with $i\leq n-1$, we define $\he_{i}:=e_{i}+e_{n}$ and refer to $\he_{i}$ as a hat vector of index $i$.
\item We say that a flag 
\begin{equation}\label{eq:basicflag}
\mathcal{F}:=(v_{1}\subset \dots \subset v_{i}\subset\dots\subset v_{n})
\end{equation}
in the flag variety $\B_n$ for $GL(n)$
is in \emph{standard form} if $v_{i}=e_{j_{i}}$ or $v_{i}=\he_{j_{i}}$ for all $i=1,\dots, n$, and 
$\mathcal{F}$ satisfies the following three conditions:
\begin{enumerate}
\item $v_i = e_n$ for some $i.$
\item If $v_{i}=e_{n}$, then $v_{k}=e_{j_{k}}$ for all $j>i$.
 \item If $i<k$ with $v_{i}=\he_{j_{i}}$ and $v_{k}=\he_{j_{k}}$, then $j_{i}>j_{k}$. 
\end{enumerate}
\end{enumerate}
\end{dfn-nota}

\begin{rem}\label{r.glstandard}
If $\mathcal{F}=(v_{1}\subset \dots \subset v_{i}\subset\dots\subset v_{n})$ is a flag in standard form, then it follows easily  that if $v_i = e_{j_i}$ or $\he_{j_i}$, then $\{ j_1, \dots, j_n \} = \{ 1, \dots, n \}$ consists of $n$ distinct indices.
\end{rem}

\begin{rem}\label{r.schubert}  Let $B=B_n$ be the upper triangular Borel subgroup of $G=GL(n)$.   Then it follows from the Bruhat decomposition that every $B$-orbit on the Grassmannian $\Gr(k,n)$ of $k$-planes in $\C^n$ can be written as $B\cdot \lspan\{e_{j_1}, \dots, e_{j_k}\}$ for a unique subset $\{ j_1, \dots, j_k \}$ of $\{ 1, \dots, n \}.$   Furthermore, every $B$-orbit on a partial flag variety can be written uniquely as the $B$-orbit through a collection of nested subspaces, where each subspace is a span of the vectors of the form $e_{j_i}$.
\end{rem}

It is useful for our purposes to classify $B_{n-1}$-orbits on $\Gr(k,n)$.  Recall from Section 2.2 of \cite{CE21I} the automorphism on $GL(n)$, $\theta(g)=tgt^{-1}$, where $t=\mbox{diag}[1,\dots, 1,-1]$ so that $G_{n-1}=G^{\theta}$ (up to centre).  Consider the $G_{n-1}$-stable decomposition $\C^n = U_1 \oplus U_2$, where $U_1 = (\C^{n})^{t}$ and $U_2=(\C^{n})^{-t}$ are the $n-1$ and $1$ dimensional subspaces consisting of $t$ and $-t$ fixed spaces on $\C^n.$

\begin{lem} \label{l:AGrass} Let $Q=B_{n-1}\cdot W$ be the $B_{n-1}$-orbit through a subspace $W \subset \Gr(k,n)$.
\begin{enumerate}
\item If $W\subset U_1$, then $Q={\mathcal{O}}_{j_1, \dots, j_k} := B_{n-1}\cdot \lspan\{e_{j_1}, \dots, e_{j_k}\}$ for a unique subset $\{ j_1, \dots, j_k \}$ of $\{ 1, \dots, n-1 \}.$
\item If $\dim(W \cap U_1)=k-1$ and $U_2 \subset W$, then $Q={\mathcal{O}}_{j_1, \dots, j_{k-1}, n} := B_{n-1}\cdot \lspan\{e_{j_1}, \dots, e_{j_{k-1}}, e_n \}$ for a unique subset $\{ j_1, \dots, j_{k-1} \}$ of $\{ 1, \dots, n-1 \}.$
\item If $\dim(W \cap U_1)=k-1$ and $U_2 \not\subset W$, then
$Q= {\mathcal{O}}_{j_1, \dots, j_{k-1}, \hat{j}_k} := B_{n-1}\cdot \lspan\{e_{j_1}, \dots, e_{j_{k-1}}, \he_{j_k} \}$ for a unique subset
$\{ j_1, \dots, j_k \}$ of $\{ 1, \dots, n-1\}.$
\end{enumerate}
\end{lem}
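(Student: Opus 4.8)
The statement classifies $B_{n-1}$-orbits on $\Gr(k,n)$ according to the relative position of $W$ with respect to the decomposition $\C^n = U_1 \oplus U_2$. I would first observe that the three cases are exhaustive and mutually exclusive: for any $W \in \Gr(k,n)$, either $W \subset U_1$, or $\dim(W \cap U_1) = k-1$ (since $U_1$ has codimension $1$, these are the only possibilities), and in the latter case $U_2 \subset W$ or $U_2 \not\subset W$. Since $B_{n-1} \subset K = G_{n-1}$ preserves both $U_1$ and $U_2$, each of these three conditions is $B_{n-1}$-stable, so it suffices to analyze each case separately and show that the $B_{n-1}$-orbit through $W$ has the stated representative, with the claimed uniqueness of the indexing subset.

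\textbf{Case (1).} If $W \subset U_1 \cong \C^{n-1}$, then the action of $B_{n-1}$ on $W$ is just the action of the standard Borel of $GL(n-1)$ on a $k$-plane in $\C^{n-1}$; by the Bruhat decomposition (as recalled in Remark \ref{r.schubert}), $Q = B_{n-1} \cdot \lspan\{e_{j_1}, \dots, e_{j_k}\}$ for a unique subset $\{j_1, \dots, j_k\} \subset \{1, \dots, n-1\}$. \textbf{Case (2).} If $\dim(W \cap U_1) = k-1$ and $U_2 \subset W$, write $W = W' \oplus U_2$ where $W' = W \cap U_1$ is a $(k-1)$-plane in $U_1$; applying Case (1) to $W'$ inside $U_1$ gives $B_{n-1} \cdot W' = B_{n-1} \cdot \lspan\{e_{j_1}, \dots, e_{j_{k-1}}\}$ for a unique subset of $\{1, \dots, n-1\}$, and since $B_{n-1}$ fixes $U_2 = \C e_n$ pointwise (as $t$-action is by $-1$ on $e_n$, but $B_{n-1}$ commutes with $t$), we get $Q = B_{n-1} \cdot \lspan\{e_{j_1}, \dots, e_{j_{k-1}}, e_n\}$, with uniqueness inherited from Case (1). \textbf{Case (3).} If $\dim(W \cap U_1) = k-1$ and $U_2 \not\subset W$, choose a vector $v \in W \setminus U_1$; writing $v = v_1 + c\, e_n$ with $v_1 \in U_1$ and $c \neq 0$, we may rescale so that $v = v_1 + e_n$. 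Applying Case (1) to $W \cap U_1$, there is $b \in B_{n-1}$ with $b(W \cap U_1) = \lspan\{e_{j_1}, \dots, e_{j_{k-1}}\}$; then $b\,W = \lspan\{e_{j_1}, \dots, e_{j_{k-1}}, b v_1 + e_n\}$. Now I would use the $B_{n-1}$-action to normalize $b v_1$ modulo $\lspan\{e_{j_1}, \dots, e_{j_{k-1}}\}$: since $b v_1 \in U_1$, by the Bruhat/column-reduction argument its class modulo this $(k-1)$-plane can be brought to $e_{j_k}$ for a unique index $j_k \in \{1, \dots, n-1\} \setminus \{j_1, \dots, j_{k-1}\}$ (or to $0$, but that would force $U_2 \subset W$, contradicting the hypothesis), and the unipotent part of $B_{n-1}$ can be used to clear any component of $bv_1$ lying in $\lspan\{e_{j_1},\dots,e_{j_{k-1}}\}$. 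This yields $Q = B_{n-1} \cdot \lspan\{e_{j_1}, \dots, e_{j_{k-1}}, \he_{j_k}\}$.

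\textbf{Main obstacle.} The delicate point is the uniqueness assertion in Case (3): I must show that the full subset $\{j_1, \dots, j_k\}$ (not merely $\{j_1, \dots, j_{k-1}\}$ together with $j_k$) is an invariant of the orbit, i.e.\ that $\mathcal{O}_{j_1, \dots, j_{k-1}, \hat{j}_k} = \mathcal{O}_{j_1', \dots, j_{k-1}', \hat{j}_k'}$ forces equality of the indexing data. One clean way is to recover the indices intrinsically from the orbit: the set $\{j_1,\dots,j_{k-1}\}$ is determined by $B_{n-1}\cdot(W\cap U_1)$ via Case (1), and then $j_k$ is determined as the unique Schubert index of the line $\mathrm{pr}(W)$ in $U_1/(W\cap U_1)$ where $\mathrm{pr}\colon U_1 \to U_1/(W\cap U_1)$ — here one must check the $B_{n-1}$-equivariance of these constructions and that the ordering on indices used to identify the Schubert cell is compatible. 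I expect the bookkeeping of which index plays the role of "$j_k$" versus being absorbed into the span to be the only real subtlety; the existence half of each case is a routine consequence of Bruhat theory for $GL(n-1)$ applied to $U_1$.
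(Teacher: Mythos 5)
Your proposal is correct and follows essentially the same route as the paper: split into the three $B_{n-1}$-stable cases according to $(\dim(W\cap U_1),\dim(W\cap U_2))$, settle cases (1) and (2) by the Bruhat decomposition for $GL(n-1)$ acting on $U_1$, and reduce case (3) to the Bruhat data of the pair $(W\cap U_1\subset p_1(W))$ (equivalently the line determined by $v_1$ in $U_1/(W\cap U_1)$), with uniqueness coming from the $B_{n-1}$-invariance of that partial-flag data — exactly as in the paper's proof. The only cosmetic slip is the parenthetical in Case (2): commuting with $t$ shows $B_{n-1}$ preserves the line $\C e_n$, not that it fixes $e_n$ pointwise, but preserving the line is all that is needed there.
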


\begin{proof}
By linear algebra, the integers $(\dim(W \cap U_1), \dim(W \cap U_2))$ are
either $(1) \ (k, 0)$, $ (2) \ (k-1, 1)$, or $(3) \ (k-1, 0)$ and these conditions are evidentally invariant under the $B_{n-1}$-action.   The classification of $B_{n-1}$-orbits on planes $W$ satisfying condition (1) is the same as the classification of $B_{n-1}$-orbits on $\Gr(k,n-1)$, and follows from Remark \ref{r.schubert}.  The classification of $B_{n-1}$-orbits on planes $W$ satisfying condition (2) is the same as the classification of $B_{n-1}$-orbits on $\Gr(k-1,n-1)$, and again follows from Remark \ref{r.schubert}.  Lastly, the classification of $B_{n-1}$-orbits on planes $W$ satisfying condition (3) reduces easily to the classification of $B_{n-1}$-orbits of partial flags $(W\cap U_1 \subset p_{1}(W))$ in  $\Gr(k-1, n-1) \times \Gr(k,n-1)$, where $p_{1}:\C^n \to U_1$ is projection off the subspace $U_2.$    By Remark \ref{r.schubert}, we may assume $W \cap U_1$ is the span of vectors $e_{j_1}, \dots, e_{j_{k-1}}$ with each $j_i < n$ and $p_{1}(W)$ is given by adjoining another vector $e_{j_k}$ with $j_k < n.$  The assertion now follows easily.
\end{proof}

\begin{prop}\label{p:AflagsandPILS}
There is a bijection 
\begin{equation}\label{eq:AGamma}
\Gamma: PIL(n) \longleftrightarrow \{\mbox{Complete flags in standard form in $\C^n$}\}.
\end{equation}

\end{prop}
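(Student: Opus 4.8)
The plan is to exhibit $\Gamma$ and its inverse completely explicitly; no induction is needed, though the construction runs parallel to the decomposition $PIL(n)=\bigsqcup_{1\le i\le j\le n}PIL(n)_{i,j}$ from the proof of Theorem~\ref{thm:counttypeA}. The structural fact about standard flags I would record first: if $\mathcal{F}=(v_1\subset\dots\subset v_n)$ is in standard form, then there is a unique position $p$ with $v_p=e_n$; condition (b) of Definition~\ref{d:std} forces every hat vector to lie among positions $1,\dots,p-1$, and condition (c) forces hat indices to decrease as the position grows. Hence the hat vectors sit at positions $q_1<\dots<q_m<p$ with indices $d_1>\dots>d_m$, and, setting $q_{m+1}:=p$, they cut $v_1,\dots,v_{p-1}$ into a (possibly empty) initial run of plain vectors at positions $1,\dots,q_1-1$ together with, for each $g=1,\dots,m$, a block $v_{q_g},\dots,v_{q_{g+1}-1}$ consisting of the hat $\he_{d_g}$ followed by a run of plain vectors; finally $v_{p+1},\dots,v_n$ are plain. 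Conversely such data (a position $p$; a decreasing sequence of hat indices placed at chosen positions before $p$; an assignment of the remaining indices to the remaining slots) determines a flag in standard form.

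Next I would define $\Gamma$. Given $\Sigma=\{\sigma_1,\dots,\sigma_k\}$, let $\sigma_1$ be the unique list with $n\in\supp(\sigma_1)$, say $\sigma_1=(c_1,\dots,c_{a-1},n,c_{a+1},\dots,c_r)$, and reorder the remaining lists so that their first entries strictly decrease --- this is well defined since the lists partition $\{1,\dots,n\}$, so distinct lists have distinct first entries --- writing them as $\sigma_m=(b^{(m)}_{1},\dots,b^{(m)}_{t_m})$, $m=2,\dots,k$, with $b^{(2)}_{1}>\dots>b^{(k)}_{1}$. Set
\[
\Gamma(\Sigma):=\big(e_{c_1}\subset\dots\subset e_{c_{a-1}}\subset C_2\subset\dots\subset C_k\subset e_n\subset e_{c_{a+1}}\subset\dots\subset e_{c_r}\big),
\]
where $C_m$ is the block $\he_{b^{(m)}_{1}}\subset e_{b^{(m)}_{2}}\subset\dots\subset e_{b^{(m)}_{t_m}}$: the plain vectors from the part of $\sigma_1$ before $n$, then for each remaining list a hat vector carrying its first entry followed by plain vectors carrying the rest, then $e_n$, then the plain vectors from the part of $\sigma_1$ after $n$. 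I would check that $\Gamma(\Sigma)$ is a genuine flag (its $n$ vectors have distinct indices exhausting $\{1,\dots,n\}$ and span $\C^n$, since each $e_b=\he_b-e_n$) and is in standard form: $e_n$ occurs and only plain vectors follow it, and its hats are precisely $\he_{b^{(2)}_{1}},\dots,\he_{b^{(k)}_{1}}$ in this position-order, with strictly decreasing indices by our choice of ordering. I would also note that under the convention $n\in\supp(\sigma_1)$ of Theorem~\ref{thm:counttypeA}, for $\Sigma\in PIL(n)_{i,j}$ the vector $e_n$ lands at position $j$ and the first hat (if any) at position $i$, so $\Gamma$ matches the two $(i,j)$-decompositions.

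Finally I would define $\Gamma^{-1}$ by reversing the recipe: given a standard flag $\mathcal{F}$, extract $p$, the hat positions $q_1<\dots<q_m$ and indices $d_1>\dots>d_m$ as above; let $\sigma_1$ be the indices of the plain vectors at positions $1,\dots,q_1-1$ (in order), then $n$, then the indices of the plain vectors at positions $p+1,\dots,n$ (in order); and for $g=1,\dots,m$ let $\sigma_{g+1}=(d_g,\dots)$ be $d_g$ followed by the indices of the plain vectors at positions $q_g+1,\dots,q_{g+1}-1$. Then $\{\sigma_1,\dots,\sigma_{m+1}\}$ is a PIL of $\{1,\dots,n\}$ (every position of $\mathcal{F}$ is used exactly once, and indices are distinct), and since in $\mathcal{F}$ the hat indices already decrease with position the lists $\sigma_2,\dots,\sigma_{m+1}$ are already in the order $\Gamma$ uses; a direct check then shows $\Gamma$ and $\Gamma^{-1}$ are mutually inverse, proving the proposition. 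I expect the only mildly delicate point to be verifying that condition (c) of Definition~\ref{d:std} corresponds exactly to the (well-defined, forced) ordering of the non-$\sigma_1$ lists by decreasing first entry, and that this is the only coupling between the blocks --- the plain vectors inside a block being free to carry any indices in any order, just as the tails of the lists are. The rest is routine verification that the two maps undo each other on the nose.
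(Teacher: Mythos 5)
Your proposal is correct, and it proves the proposition by the same overall strategy as the paper — an explicit map with an explicit inverse, no counting and no induction — but via a genuinely different bijection. In the paper's $\Gamma$ (Equation (\ref{eq:Gammadef})) each list not containing $n$ contributes plain vectors for its initial entries followed by a hat vector carrying its \emph{last} entry, the blocks are sorted by decreasing last entries, and the list containing $n$ is placed intact at the end of the flag; your encoding instead lets the hat vector carry the \emph{first} entry of each list and open its block, sorts the blocks by decreasing first entries, and splits the $n$-containing list around $n$, placing its head before all hat blocks and its tail after $e_n$. Both encodings are legitimate because condition (c) of Definition \ref{d:std} pins down only the relative order of the hat blocks while leaving the plain entries inside each block unconstrained — exactly the point you single out as the delicate one — and your verifications of conditions (a)–(c) and of the two-sided inverse are at the same level of detail as the paper's ``routine to check.'' One genuine advantage of your version: since for $\Sigma\in PIL(n)_{i,j}$ the first hat lands at position $i$ and $e_n$ at position $j$, your $\Gamma$ carries $PIL(n)_{i,j}$ onto the standard flags lying in the $K$-orbit $Q_{i,j}$ (compare Lemma \ref{l:5.1}), so it is compatible with the stratification used in the proof of Theorem \ref{thm:counttypeA}; the paper's $\Gamma$ is not (for $n=3$ it sends $\{(3),(1,2)\}\in PIL(3)_{1,3}$ to $(e_1\subset \he_2\subset e_3)$, which lies in $Q_{2,3}$). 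Since the later uses of the proposition (Theorem \ref{thm:std}, Remark \ref{r:pilorbit}) only require some bijection, your map serves every purpose the paper's does, and in the composed bijection $\Psi\circ\Gamma$ of Remark \ref{r:pilorbit} it even matches the $K$-orbit strata.
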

\begin{proof} 
 Let $\Sigma\in PIL(n)$ with $\Sigma=\{\sigma_{1},\, \sigma_{2},\,\dots, \, \sigma_{t}\},$ let $\ell(\sigma_j)=k_j$ and let $\sigma_{j}=(i_{j,1},\dots, i_{j, k_{j}}).$  
 The lists $\sigma_j$ in $\Sigma$ may be ordered uniquely so that $i_{1,k_{1}}>i_{2,k_{2}}>\dots>i_{t-1, k_{t-1}}$ and $n\in\sigma_{t}$.   If we order $\Sigma$ in this way, 
 and define $\Gamma(\Sigma)$ to be the flag
 \begin{equation}\label{eq:Gammadef}
 \begin{split}
 \Gamma(\Sigma)=&(e_{i_{1,1}}\subset e_{i_{1,2}}\subset e_{i_{1,k_{1}-1}}\subset\he_{i_{1,k_{1}}}\subset  e_{i_{2,1}}\subset\dots\subset \he_{i_{2,k_{2}}}\subset\dots\\
 &\dots\subset e_{i_{t-1,1}}\dots\subset \dots \subset \he_{i_{t-1,k_{t-1}}}\subset e_{i_{t,1}}\subset \dots\subset e_{i_{t,k_{t}}}),
 \end{split}
 \end{equation}
then the flag $\Gamma(\Sigma)$ is in standard form.   We now define a map $\Lambda:\{\mbox{Complete flags in standard form}\}\to PIL(n)$ inverse to $\Gamma.$   Let 
$\mathcal{F}$ be a flag in standard form with $\mathcal{F}=(v_{1}\subset \dots \subset v_{n})$.  Let $1\leq k_{1}<\dots< k_{r}< n$ be the subsequence of 
$\{1,\dots, n\}$ such that $\{v_{k_{1}},\dots, v_{k_{r}}\}$ consists of all the hat vectors in the given flag.   It follows that each $v_{k_{i}}=\he_{i_{k_{i}}}$ and $v_{j}=e_{i_{j}}$ for $j\notin\{k_{1},\dots, k_{r}\}$. 
Then we define
\begin{equation}\label{eq:Lambdadefn}
\Lambda(\mathcal{F}):=\{(i_{1},\dots ,i_{k_{1}-1}, i_{k_{1}}), (i_{k_{1}+1},\dots ,i_{k_{2}}),\dots, (i_{k_{1}+\dots+k_{r-1}+1},\dots  ,i_{k_{r}}),(i_{k_{1}+\dots+k_{r}+1}, \dots, i_{n})\}.  
\end{equation}
It is routine to check that $\Lambda$ and $\Gamma$ are mutual inverses.
\end{proof}

We need a preliminary result concerning the span of vectors in a flag in standard form.   For $\mathcal{F}=(v_1 \subset \dots \subset v_n)$, let $V_m = \lspan\{ v_1, \dots, v_m \}.$   If ${\pi}_m:\B_n \to \Gr(m,n)$ is the natural projection, then $\pi_m(\mathcal{F})=V_m.$  We say $j_i$ is the index of $v_i$ if $v_i = e_{j_i}$ or $\he_{j_i}$.

\begin{lem}\label{l:spanstandard}
Let $\mathcal{F}=(v_1 \subset \dots \subset v_n)$ be a flag in standard form.
\par\noindent (i) If $v_{i}=e_{j_{i}}$ with $j_{i}\leq n$ for all $i=1,\dots,m$, 
then $V_{m}=\mbox{span}\{e_{j_{1}},\dots, e_{j_{m}}\}$.
\par\noindent (ii) If  $v_i=e_n$ for some $i$, then for all $m \ge i$, the space $V_m = \lspan\{e_{r_1}, \dots, e_{r_{m-1}}, e_{n}\}$ where  $\{ r_1, \dots, r_{m-1}, n \}$ are the indices of $v_1, \dots, v_m.$
\par\noindent (iii) Let $k_1 >\dots > k_s$ be the indices of the hat vectors appearing in the first $m$ vectors in $\mathcal{F}$ and let $r_1, \dots, r_{m-1}\in \{1,\dots, n-1\}$ be the indices of $v_i$ for $1\le i \le m$ besides $k_s$.   Then $B_{n-1}\cdot V_m = {\mathcal{O}}_{r_1, \dots, r_{m-1}, \hat{k}_{s}}.$
\end{lem}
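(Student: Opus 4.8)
The plan is to prove the three parts in order, since each is used to handle the next, and part (iii) is really the main point. The whole argument is just careful bookkeeping about which vectors appear in the first $m$ slots of a flag in standard form, combined with the orbit classification in Lemma \ref{l:AGrass}. Throughout, write $V_m = \lspan\{v_1,\dots,v_m\}$ and recall that by Remark \ref{r.glstandard} the indices $j_1,\dots,j_n$ are a permutation of $\{1,\dots,n\}$, and that condition (b) of Definition-Notation \ref{d:std} forces $e_n$ (if it appears among $v_1,\dots,v_m$) to be the \emph{only} vector among $v_1,\dots,v_m$ with index $n$, while condition (c) forces at most one hat vector to appear after the slot containing $e_n$ — in fact, once $e_n$ appears, conditions (a),(b) say all subsequent $v_k$ are honest basis vectors $e_{j_k}$, so all hat vectors occur strictly before the $e_n$-slot.

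For part (i): if every $v_i$ with $i \le m$ is an honest basis vector $e_{j_i}$, then $V_m$ is literally the span of those $e_{j_i}$; there is nothing to prove beyond unwinding Notation \ref{nota:standard}. For part (ii): suppose $v_{i_0}=e_n$ for some $i_0 \le m$. By conditions (a) and (b) of Definition-Notation \ref{d:std}, every $v_k$ with $k > i_0$ satisfies $v_k = e_{j_k}$ with $j_k \le n-1$; and every $v_k$ with $k < i_0$ is either $e_{j_k}$ or $\he_{j_k} = e_{j_k}+e_n$ with $j_k \le n-1$. Since $e_n \in V_m$ (it is $v_{i_0}$), we may subtract $e_n$ from every hat vector, so $V_m = \lspan\{e_{r_1},\dots,e_{r_{m-1}},e_n\}$ where $r_1,\dots,r_{m-1}$ are the indices of the $v_k$, $k\le m$, $k\ne i_0$. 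These indices lie in $\{1,\dots,n-1\}$ and, being a subset of the distinct indices $j_1,\dots,j_n$, are pairwise distinct; this is exactly the claimed form.

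For part (iii): we are in the remaining case where $e_n$ does \emph{not} appear among $v_1,\dots,v_m$ (if it did, there would be no hat vectors among the first $m$ vectors, by the remark above about condition (b), so the hypothesis that $k_1 > \cdots > k_s$ are the indices of hat vectors in the first $m$ slots is vacuous unless $e_n$ is absent). So each $v_i$, $i \le m$, is either $e_{j_i}$ with $j_i \le n-1$ or $\he_{j_i}$ with $j_i \le n-1$, and by condition (c) the hat-vector indices, read in the order the hat vectors appear, are strictly decreasing; hence $k_s$ is the index of the \emph{last} hat vector among $v_1,\dots,v_m$. In the notation of Lemma \ref{l:AGrass}, $U_1 = \lspan\{e_1,\dots,e_{n-1}\}$ and $U_2 = \C e_n$. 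I claim $\dim(V_m \cap U_1) = m-1$ and $U_2 \not\subset V_m$. Indeed, using $\he_{j} = e_j + e_n$, one sees $V_m$ is spanned by $e_{r_1},\dots,e_{r_{m-1}}$ (the non-$k_s$ indices, all honest now) together with the single vector $\he_{k_s} = e_{k_s} + e_n$; the $m-1$ vectors $e_{r_1},\dots,e_{r_{m-1}}$ lie in $U_1$ and are independent (distinct indices), while the only way to get something in $U_1$ from the full span is to take a combination of $e_{r_1},\dots,e_{r_{m-1}}$ alone (any nonzero coefficient on $\he_{k_s}$ leaves a nonzero $e_n$-component, as $k_s \notin \{r_1,\dots,r_{m-1}\}$). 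So $\dim(V_m \cap U_1) = m-1$ and $e_n \notin V_m$, putting us in case (3) of Lemma \ref{l:AGrass}. That lemma then gives $B_{n-1}\cdot V_m = {\mathcal O}_{r_1',\dots,r_{m-1}',\hat{j}'_m}$ for a unique subset $\{r_1',\dots,r_{m-1}',j'_m\}$ of $\{1,\dots,n-1\}$; tracing through the construction in the proof of Lemma \ref{l:AGrass} (where $W \cap U_1$ is the span of the honest vectors and $p_1(W)$ adjoins one more), the subset $\{r_1',\dots,r_{m-1}'\}$ is $\{r_1,\dots,r_{m-1}\}$ and $j'_m = k_s$, giving $B_{n-1}\cdot V_m = {\mathcal O}_{r_1,\dots,r_{m-1},\hat{k}_s}$ as claimed.

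The main obstacle is purely organizational: keeping straight the three mutually exclusive cases (no hat vectors and no $e_n$ — part (i); $e_n$ present — part (ii); hat vectors present, $e_n$ absent — part (iii)) and checking that the standard-form conditions (a),(b),(c) really force $e_n$ and the hat vectors into the configuration described, in particular that $e_n$ cannot coexist with a later hat vector. Once that case analysis is pinned down, each part reduces to a one-line linear-algebra observation plus a citation of Lemma \ref{l:AGrass}.
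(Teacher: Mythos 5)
Parts (i) and (ii) of your argument are correct and coincide with the paper's (for (ii), the whole point is the observation $\lspan\{\he_j,e_n\}=\lspan\{e_j,e_n\}$, which you use). In part (iii), however, there is a genuine gap: the claim that ``$V_m$ is spanned by $e_{r_1},\dots,e_{r_{m-1}}$ together with the single vector $\he_{k_s}$'' is false whenever two or more hat vectors occur among $v_1,\dots,v_m$. Take $n=3$ and the standard-form flag $\F=(\he_2\subset\he_1\subset e_3)$ with $m=2$: then $V_2=\lspan\{e_2+e_3,\,e_1+e_3\}$ and $e_2\notin V_2$, so $V_2\neq\lspan\{e_2,\he_1\}$. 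What is true is only that $V_m$ contains the differences $e_{k_i}-e_{k_s}$ for $i<s$, not the vectors $e_{k_i}$ themselves; the subspaces $V_m$ and $\lspan\{e_{r_1},\dots,e_{r_{m-1}},\he_{k_s}\}$ are merely $B_{n-1}$-conjugate, and establishing that conjugacy is precisely the content of (iii). The paper does this by writing $\lspan\{\he_{k_i},\he_{k_s}\}=\lspan\{e_{k_i}-e_{k_s},\he_{k_s}\}$ and applying the transvection that adds $e_{k_s}$ to $e_{k_i}$; this element lies in $B_{n-1}$ exactly because condition (c) of Definition-Notation \ref{d:std} gives $k_i>k_s$ (so the matrix is upper triangular), and it fixes the other vectors of the flag, so iterating over $i<s$ finishes the proof. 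Your argument never performs this straightening and never uses $k_i>k_s$ in the linear algebra, so the final step ``tracing through the construction in the proof of Lemma \ref{l:AGrass}'' is unjustified: case (3) of that lemma only says the orbit is some $\mathcal{O}_{r'_1,\dots,r'_{m-1},\hat{j}'}$, and identifying the label requires knowing the $B_{n-1}$-Schubert cells of $V_m\cap U_1$ and $p_1(V_m)$, whereas your implicit computation of $V_m\cap U_1$ as $\lspan\{e_{r_1},\dots,e_{r_{m-1}}\}$ is wrong for $s\ge 2$ (in the example above, $V_2\cap U_1=\C(e_2-e_1)$, not $\C e_2$). The intermediate facts you want, $\dim(V_m\cap U_1)=m-1$ and $e_n\notin V_m$, are true (e.g.\ by looking at the $e_n$-coefficient functional on $V_m$ and at the $e_j$-coefficients for $j\le n-1$), so the placement in case (3) can be salvaged; it is the determination of the label that needs either the paper's conjugation argument or a direct computation of the intersection profile of $V_m\cap U_1$ with the standard flag, and either route uses $k_i>k_s$ in an essential way.

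A smaller slip in the same part: you argue that $e_n$ cannot occur among $v_1,\dots,v_m$ in case (iii) because condition (b) would then exclude hat vectors among the first $m$ vectors. That is not what (b) says: hat vectors may occur \emph{before} the slot containing $e_n$ (for instance $(\he_2\subset e_4\subset e_1\subset e_3)$ is in standard form for $GL(4)$, and with $m=2$ the first two vectors contain both a hat vector and $e_n$), as your own opening paragraph correctly observes. That configuration is outside the scope of (iii) only because the hypothesis there puts all the indices $r_1,\dots,r_{m-1}$ in $\{1,\dots,n-1\}$, i.e.\ $e_n$ is assumed not to occur among $v_1,\dots,v_m$; it is covered instead by part (ii).
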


\begin{proof} The proof of (i) is trivial.  The proof of (ii) is based on the observation that $\lspan \{ \he_j , e_n\} = \lspan\{ e_j, e_n \}.$  To prove (iii), note that
by part (c) of Definition \ref{d:std}, $k_i > k_s$ for $i < s$.  Then the span of
$\he_{k_i}$ and $\he_{k_s}$ coincides with the space $\lspan \{e_{k_i}-e_{k_s}, \he_{k_s}\}$, and this is $B_{n-1}$-conjugate to $\lspan\{e_{k_i}, \he_{k_s}\}$ without altering any of the other vectors in $\mathcal{F}.$  Assertion (iii) follows by repeated application of this assertion.
\end{proof}

\begin{thm}\label{thm:std}
The map  
\begin{equation}\label{eq:flagassoc} 
\Psi:\mathcal{F} \to B_{n-1}\cdot \F
\end{equation}
 is a bijection between flags in standard form and $B_{n-1}$-orbits on $\B_n$.
\end{thm}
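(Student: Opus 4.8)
The plan is to prove that $\Psi$ is a well-defined bijection by establishing surjectivity and injectivity separately, using a counting argument to reduce the work. First I would observe that by Proposition \ref{p:AflagsandPILS} there is a bijection $\Gamma: PIL(n) \longleftrightarrow \{\text{flags in standard form}\}$, and by Theorem \ref{thm:counttypeA} we have $|B_{n-1}\backslash \B_n| = |PIL(n)|$. Hence the set of flags in standard form and the set $B_{n-1}\backslash \B_n$ have the same (finite) cardinality. Therefore it suffices to prove that $\Psi$ is \emph{injective}, i.e., that two flags in standard form lying in the same $B_{n-1}$-orbit must coincide; surjectivity then follows automatically by the pigeonhole principle.

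Next I would set up the injectivity argument. Suppose $\mathcal{F} = (v_1 \subset \dots \subset v_n)$ and $\mathcal{F}' = (v_1' \subset \dots \subset v_n')$ are flags in standard form with $B_{n-1}\cdot \mathcal{F} = B_{n-1}\cdot \mathcal{F}'$. The strategy is to show inductively on $m$ that the $B_{n-1}$-orbit on the Grassmannian $\Gr(m,n)$ through $V_m = \lspan\{v_1,\dots,v_m\}$ together with the combinatorial data (the index $j_m$ of $v_m$ and whether $v_m$ is a hat vector) determine $v_1,\dots,v_m$ completely, and moreover agree for $\mathcal{F}$ and $\mathcal{F}'$. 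The key input here is Lemma \ref{l:AGrass}, which classifies $B_{n-1}$-orbits on $\Gr(m,n)$ by the triple consisting of the intersection dimensions with $U_1, U_2$ and an explicit subset of $\{1,\dots,n\}$ (or $\{1,\dots,n-1\}$), together with Lemma \ref{l:spanstandard}, which computes $B_{n-1}\cdot V_m$ in each of the three cases of a standard flag (all $e$-vectors so far, an $e_n$ has appeared, or a hat vector is present). Since $B_{n-1}\cdot \mathcal{F} = B_{n-1}\cdot \mathcal{F}'$ forces $B_{n-1}\cdot V_m = B_{n-1}\cdot V_m'$ for every $m$ (the projection $\pi_m:\B_n \to \Gr(m,n)$ is $B_{n-1}$-equivariant), Lemma \ref{l:AGrass} then pins down the orbit invariant $\mathcal{O}_{\dots}$ uniquely at each stage, and comparing the invariant at level $m$ with that at level $m-1$ recovers the index $j_m$ and the type of $v_m$.

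The main obstacle I anticipate is carefully disentangling the combinatorics in case (3) of Lemma \ref{l:AGrass}, where $V_m$ contains exactly one hat vector contribution: here the orbit invariant $\mathcal{O}_{r_1,\dots,r_{m-1},\hat{k}_s}$ only records the \emph{smallest} hat index $k_s$ appearing among $v_1,\dots,v_m$ (by Lemma \ref{l:spanstandard}(iii), since the larger hat vectors get absorbed), so at a step where a new hat vector $v_m = \he_{j_m}$ is introduced one must argue that condition (c) of Definition \ref{d:std} — that hat indices strictly decrease along the flag — forces $j_m$ to be the new minimum, and conversely that the jump in the recorded invariant from level $m-1$ to level $m$ identifies $j_m$ unambiguously. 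Once this bookkeeping is in place, an induction on $m$ from $1$ to $n$ shows $v_m$ and $v_m'$ have the same index and the same type (plain versus hat), hence $v_m = v_m'$; this gives $\mathcal{F} = \mathcal{F}'$, completing the proof of injectivity and therefore of the theorem.
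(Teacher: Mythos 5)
Your proposal is correct and follows essentially the same route as the paper's proof: the same counting reduction via Proposition \ref{p:AflagsandPILS} and Theorem \ref{thm:counttypeA} to reduce everything to injectivity of $\Psi$, and the same induction along the flag comparing $B_{n-1}\cdot V_{m}$ with $V_{m-1}$ using Lemmas \ref{l:AGrass} and \ref{l:spanstandard}. In particular, your handling of the hat-vector case, where only the smallest hat index is recorded in the orbit label and condition (c) of Definition \ref{d:std} forces a newly introduced hat index to become the new minimum, is exactly the argument given in the paper.
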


\begin{proof}
  By Theorem \ref{thm:counttypeA}, the set $PIL(n)$ has the same cardinality as
the set of $B_{n-1}$-orbits on $\B_n$, and by Proposition \ref{p:AflagsandPILS}, 
it follows that the number of flags in standard form coincides with the number of $B_{n-1}$-orbits on $\B_n.$  Thus, it suffices to show that the map $\Psi$ in (\ref{eq:flagassoc}) is injective.  
To this effect, let $Q=B_{n-1}\cdot \F$, and $Q^{\prime}=B_{n-1}\cdot \F^{\prime}$ where $\F=(v_{1}\subset v_{2}\subset \dots\subset v_{i}\subset \dots \subset v_n )$, 
and  $\mathcal{F}^{\prime}=(v_{1}^{\prime}\subset v_{2}^{\prime}\subset \dots\subset v_{i}^{\prime}\subset \dots v_n^{\prime})$ are flags in standard form.  
Suppose that $Q=Q^{\prime}$.   We prove that $v_{i}=v_{i}^{\prime}$ for all $i$.   The proof proceeds by induction on $i$ using the classification 
of $B_{n-1}$-orbits on $\Gr(i, n)$ given in Lemma \ref{l:AGrass}.   
Since $Q=Q^{\prime}$, we conclude that
 $B_{n-1}\cdot V_{i}=B_{n-1}\cdot V_{i}^{\prime}$ for all $i$.

To begin the induction,  consider the orbit $\mathcal{O}_{1}=B_{n-1}\cdot V_{1}=B_{n-1}\cdot V_{1}^{\prime}$.   Then we must be in one of the three cases of Lemma \ref{l:AGrass}.  In case (1), $v_1=e_{j_1}$ and $v_1^{\prime}=e_{k_1}$ with $j_1, k_1 < n$, and by Remark \ref{r.schubert}, $j_1=k_1$.  In case (2), $v_1=e_n=v_1^{\prime}$.  In case (3), $v_1 = \he_{j_1}$ and $v_1^{\prime}=\he_{k_1}$ with $j_1, k_1 < n$, and since the $B_{n-1}$-orbits through the lines $p_{1}(V_1)$ and $p_{1}(V_1^{\prime})$ coincide, we conclude by Remark \ref{r.schubert} that $j_1=k_1.$  Thus, in any case $v_1=v_1^{\prime}.$ For the inductive step, we may assume 
 that $v_{i}=v_{i}^{\prime}$ for $i=1,\dots, m-1$, so that $V_{m-1}=V_{m-1}^{\prime}$.
We prove that $v_m = v_m^{\prime}$ using our assumption that $B_{n-1}\cdot V_m = B_{n-1}\cdot V_m^{\prime}.$  To prove this, we argue based on the case of
$B_{n-1}\cdot V_{m-1}$ described in Lemma \ref{l:AGrass}.  In case (1), $V_{m-1}\subset U_1$ and $V_{m-1}=\lspan\{e_{j_1}, \dots, e_{j_{m-1}}\}$.  If $v_m = e_{j_m}$ with $1 \le j_m \le n$, then $V_m = \lspan\{e_{j_1}, \dots, e_{j_m}\}$ and $B_{n-1}\cdot V_m = {\mathcal{O}}_{j_1, \dots, j_m}$ or ${\mathcal{O}}_{j_1, \dots, j_{m-1}, n}.$  If $v_m = \he_{j_m}$, then $V_m = \lspan\{e_{j_1}, \dots, e_{j_{m-1}}, \he_{j_m}\}$ and 
$B_{n-1}\cdot V_m = {\mathcal{O}}_{j_1, \dots, j_{m-1}, \hat{j}_m}.$
 Thus, we can recover $v_m$ from $V_{m-1}$
 and $B_{n-1}\cdot V_m,$ whence $v_{m}=v_{m}^{\prime}$.  In case (2), $B_{n-1}\cdot V_{m-1}=\mathcal{O}_{j_{1},\dots, j_{m-2}, n}$.  By Lemma \ref{l:spanstandard}, it follows that $v_i = e_n$ for some $i < m$, and hence by the definition of a standard flag, $v_m = e_{j_m}$ for some $j_m < n.$    Another application of Lemma \ref{l:spanstandard} yields $V_m = \lspan\{e_{j_1}, \dots, e_{j_m}\}$ and $j_{m}$ is the unique index that appears in  ${\mathcal{O}}_{j_1, \dots, j_{m-2}, j_{m}, n} = B_{n-1}\cdot V_m$ that does not appear in $B_{n-1}\cdot V_{m-1}.$   In case (3), $B_{n-1}\cdot V_{m-1}=\mathcal{O}_{j_{1},\dots, j_{m-2}, \hat{k}_{s}}$, so by Lemma \ref{l:spanstandard}, the vector $\he_{k_s}$ must be $v_r$ for some $r \le m-1$ and $v_i \not= e_n$ for any $i \le m-1.$  Then if $v_m = e_{j_m}$ with $j_m < n$, then $B_{n-1}\cdot V_m=\mathcal{O}_{j_{1},\dots, j_{m-2}, j_{m}, \hat{k}_{s}}$,  and we can recover $j_m$ from $V_{m-1}$ and $B_{n-1}\cdot V_m.$   If $v_m = e_n$, then $V_m$ is the span of $\{ e_{j_i} : i \le m-2 \},$ $e_n$, and $e_{k_s}$ and we can recover $e_n$ from $V_{m-1}$ and $V_m.$  If $v_m = \he_{j_m}$, then by the definition of a flag in standard form, we know that $j_m < k_s$.  Using Lemma \ref{l:spanstandard}, we see easily that $B_{n-1}\cdot V_m={\mathcal{O}}_{j_1, \dots, j_{m-2}, k_s,\hat{j}_m,}.$  Hence, $v_m=\he_{j_m}$ is the hat index in the label of $B_{n-1}\cdot V_m$  which replaces $\hat{k}_s$ in $B_{n-1}\cdot V_{m-1}$ and thus is uniquely determined by $B_{n-1}\cdot V_m$ and $V_{m-1}.$  Thus in any case, $v_m$ is uniquely determined by $B_{n-1}\cdot V_m$ and $V_{m-1}$, and this establishes the theorem in the final case.
\end{proof} 

\begin{rem}\label{r:pilorbit}
By composing the bijections $\Gamma$ and $\Psi$ from Proposition \ref{p:AflagsandPILS} and Theorem \ref{thm:std}, we obtain a bijection $\Psi \circ \Gamma: PIL(n) \to B_{n-1}\backslash \B_n.$  This bijection improves on the numerical equality of Theorem \ref{thm:counttypeA}, and in particular, reflects the twisting aspect of the bundle structure of orbits in a better way.   It would be interesting to see whether this equivalence between elements of $PIL(n)$ and orbits is useful in understanding the geometry of orbit closures.   We note that similar remarks apply in the orthogonal cases discussed later in this section, but we will omit them.
\end{rem}

\subsection{Representatives in the case of $SO(2\ell)$}\label{ss:Dreps}

In this subsection, we give explicit representatives for $B_{n-1}$-orbits on $\B_n$ in the case where $G=SO(2\ell).$  We omit some arguments which are similar to the case of $GL(n)$.   Recall the description of $\B_{2\ell}$ in terms of isotropic flags from Section \ref{ss:Borels}.

\begin{dfn-nota}\label{d:stdD} 
\begin{enumerate}
\item For $i=1,\dots, \ell-1$, we define $\he_{i}:=e_{-i}+e_{-\ell}$, and $\te_{i}:=e_{-i}+e_{\ell}$, which we refer to as hat vectors and tilde vectors respectively in $\C^{2\ell}.$
  
\item We say that the isotropic flag
\begin{equation}\label{eq:basicflagD}
\mathcal{F}:=(v_{1}\subset \dots \subset v_{i}\subset\dots\subset v_{\ell-1})
\end{equation}
is in \emph{standard form} if $v_{i}$ is either a standard basis vector, a tilde vector, or a hat vector,
and $\mathcal{F}$ satisfies the following conditions:
\begin{enumerate}
\item If $v_{i}=e_{\pm\ell}$, then $v_{k}=e_{j_{k}}$ for $k>i$.  
\item If $\F$ contains a tilde vector, then $e_{\ell}$ must also occur in $\F$.
\item If $v_{i}=\he_{j_{i}}$ and $v_{k}=\he_{j_{k}}$ for $i<k$, then $j_{i}<j_{k}$, and the analogous condition holds for tilde vectors. 
\item The vectors $\he_{i}$ and $e_{-i}$ (respectively $\te_{i}$ and $e_{-i}$) do not both occur in $\F$.
\end{enumerate}
\end{enumerate}

We call such an isotropic flag a $SO(2\ell)$-flag in standard form.
If $v_i = e_{j_i}, \he_{j_i},$ or $\te_{j_i}$, then we say the index of $v_i$ is $|j_i|.$
\end{dfn-nota}

\begin{rem}\label{r.standardD}
It follows from definitions and the isotropic nature of the flag that the set of indices of $v_1, \dots, v_{\ell-1}$ consists of $\ell - 1$ distinct indices from $1$ to $\ell$.  Further, hat vectors and tilde vectors cannot occur in the same flag in standard form, and if some hat vector $\he_i$ (resp. tilde vector $\te_i$) occurs, then $e_{\ell}$ (resp. $e_{-\ell}$) cannot also occur.  
\end{rem}

\begin{rem}\label{r.outerD}
In Section 2.2 of \cite{CE21I}, we discuss an element $\sigma_{2\ell}$ of the orthogonal group $O(2\ell)$ which interchanges $e_{\ell}$ and $e_{-\ell}.$  The automorphism $\theta$ induced by conjugation by $\sigma_{2\ell}$ defines $SO(2\ell-1)$ in $SO(2\ell)$ and stabilizes $B_{2\ell-1}.$  If we considered orbits of the semidirect product of $B_{2\ell-1}$ and $\theta$, then we would not need tilde vectors.
\end{rem}

\begin{prop}\label{p:flagsandPILSD}
There is a bijection 
\begin{equation}\label{eq:GammaD}
\Gamma:\{SPIL(\ell)\}\longleftrightarrow \{ SO(2\ell)-\mbox{Flags in standard form}\}.
\end{equation}

\end{prop}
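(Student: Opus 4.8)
The plan is to mirror the proof of Proposition \ref{p:AflagsandPILS}: construct $\Gamma$ by an explicit block-by-block recipe on the lists of an SPIL, construct an explicit inverse $\Lambda$ reading an SPIL off a flag in standard form, and check the two maps are mutually inverse. Throughout I keep the convention from the proof of Theorem \ref{thm:orthocount} that $\pm\ell$ occurs in $\sigma_1$, and I write $\sigma_1=(c_1,\dots,c_{p-1},\varepsilon\ell,c_{p+1},\dots,c_q)$ with $\varepsilon\in\{\pm1\}$ and $p$ the position of $\pm\ell$. This integer $p$ will be the index of the $K$-orbit $Q_p$ of \eqref{eq:typeDflag} containing $B_{n-1}\cdot\Gamma(\Sigma)$, playing the role that the position of $n$ in $\sigma_1$ plays in the general linear case; the remaining lists $\sigma_2,\dots,\sigma_k$ are ordered so that their (nonnegative) last entries strictly increase, which is dictated by condition (c) of Definition-Notation \ref{d:stdD} (opposite to the $GL(n)$ convention).

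To define $\Gamma(\Sigma)$, concatenate, in this order: (i) the plain vectors $e_{c_1}\subset\dots\subset e_{c_{p-1}}$ from the prefix of $\sigma_1$, with the convention $e_c:=e_{-|c|}$ for $c<0$, so that signs of non-final entries select between $e_j$ and $e_{-j}$; (ii) for each $\sigma_j=(d_{j,1},\dots,d_{j,m_j})$, $j\ge2$, the block $e_{d_{j,1}}\subset\dots\subset e_{d_{j,m_j-1}}\subset v$, where $v=\he_{d_{j,m_j}}$ is a hat vector when $\varepsilon=-1$ or when $\ell$ is the last entry of $\sigma_1$, and $v=\te_{d_{j,m_j}}$ is a tilde vector otherwise; (iii) the single vector $e_{\varepsilon\ell}$, included exactly when $\ell$ is not the last entry of $\sigma_1$; (iv) the plain vectors $e_{c_{p+1}}\subset\dots\subset e_{c_{q-1}}$ from the part of $\sigma_1$ strictly between $\ell$ and its last entry. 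The last entry $c_q$ of $\sigma_1$ is not used as a vector: $|c_q|$ is exactly the unique index missing from $\Gamma(\Sigma)$ (compare Remark \ref{r.standardD}). One then checks that $\Gamma(\Sigma)$ is an $SO(2\ell)$-flag in standard form: (a) holds since $e_{\pm\ell}$ occurs only in block (iii) and is followed only by the plain vectors of (iv); (b) holds since a tilde vector is produced only when $\ell$ is not last in $\sigma_1$, which is exactly when $e_{\varepsilon\ell}=e_\ell$ is present; (c) is built into the ordering of blocks; and (d) holds because, across $\sigma_1$ and the other lists, every absolute value occurs once and its sign is consumed either as $\pm e_j$ in a prefix or plain-tail slot or by decoration into $\he_j$ or $\te_j$, never both. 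The inverse $\Lambda$ undoes each step: it recovers $\varepsilon$ and whether $\ell$ is last in $\sigma_1$ from the presence or absence of $e_{\pm\ell}$ and the hat/tilde type of the decorated vectors, recovers $p$ and the signed prefix from the vectors preceding the first decorated vector, rebuilds $\sigma_2,\dots,\sigma_k$ from the decorated blocks, fills in $c_{p+1},\dots,c_{q-1}$ from the plain vectors following $e_{\pm\ell}$, and reinstates $c_q$ as the missing index; that $\Lambda\circ\Gamma$ and $\Gamma\circ\Lambda$ are the identity is then routine bookkeeping, exactly parallel to Proposition \ref{p:AflagsandPILS}.

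The step I expect to be the main obstacle is the verification of conditions (b) and (d) in tandem with the hat-versus-tilde dichotomy: one must check that an index $j$ occurring in the prefix or plain tail of $\sigma_1$ can never collide with a decorated vector $\he_j$ or $\te_j$ coming from a different list, that the boundary case "$\ell$ is the last entry of $\sigma_1$" (no $e_{\pm\ell}$ appears and hat vectors are forced) is consistent with Remark \ref{r.standardD}, and that the ordering of the blocks is forced rather than merely possible. This is precisely where the orthogonal case departs from the cleaner general linear argument, and where the exclusions in condition (d) and the direction of the inequality in condition (c) of Definition-Notation \ref{d:stdD} were chosen so as to make $\Gamma$ a bijection. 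Lemma \ref{l:shift} can be used as an organizing device for the part of the construction near the $\ell$-block, but it is not logically essential.
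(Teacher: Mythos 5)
Your overall strategy (explicit $\Gamma$, explicit inverse $\Lambda$, bookkeeping check) is the same as the paper's, but the specific recipe you give for $\Gamma$ is not injective, so the proof as written fails. The problem is exactly where you split the distinguished list $\sigma_1$ around $\pm\ell$: the entries of $\sigma_1$ preceding $\pm\ell$ become plain basis vectors placed \emph{before} the decorated blocks coming from $\sigma_2,\dots,\sigma_k$, and each of those blocks also begins with plain basis vectors. The resulting flag therefore cannot remember where the prefix of $\sigma_1$ ends and the first block begins. Concretely, take $\ell=4$ and the two distinct elements $\Sigma_A=\{(1,-4,2),(3)\}$ and $\Sigma_B=\{(-4,2),(1,3)\}$ of $SPIL(4)$. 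Your recipe sends both to the flag $(e_{1}\subset \he_{3}\subset e_{-4})$: for $\Sigma_A$ the vector $e_1$ comes from the prefix of $\sigma_1$ and $\he_3$ from the singleton list $(3)$, while for $\Sigma_B$ both $e_1$ and $\he_3$ come from the list $(1,3)$. Consequently your proposed $\Lambda$, which ``recovers $p$ and the signed prefix from the vectors preceding the first decorated vector,'' is not well defined as an inverse, and no local repair of the bookkeeping fixes this, since the collision happens at the level of the flag itself.

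The paper's construction avoids this by never splitting the list containing $\pm\ell$: the lists are ordered so that the one containing $\pm\ell$ comes \emph{last}, its final entry is the omitted (missing) index, and \emph{all} of its remaining entries appear as plain vectors after every hat/tilde vector, with $e_{\pm\ell}$ (when present) sitting inside that final plain block. Then the decorated vectors unambiguously mark the boundaries between lists, and the inverse is read off by cutting the flag at the hat/tilde vectors and appending the unique missing index (with the hat/tilde dichotomy, as in your proposal, recording whether $e_{\ell}$ occurs). If you reorganize your $\Gamma$ so that the whole $\pm\ell$-list is emitted after the decorated blocks rather than wrapped around them, the rest of your verification of conditions (a)--(d) of Definition-Notation \ref{d:stdD} goes through essentially as you describe.
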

\begin{proof}
Let  $\Sigma=\{\sigma_{1},\, \sigma_{2},\,\dots, \, \sigma_{t}\},$ let $\ell(\sigma_j)=k_j$ and let $\sigma_{j}=(i_{j,1},\dots, i_{j, k_{j}}).$  We may choose the ordering of the $\sigma_j$ in a unique way so that
$i_{1,k_{1}}<\dots <i_{t-1,k_{t-1}}$ and $\pm \ell \in \sigma_{t}$.  We say $\Sigma$ with this ordering is in case 1 if either  $i_{t,k_{t}}=\ell$ or $-\ell\in\sigma_{t}$, and if so,  
then $\Gamma(\Sigma)$ is given by Equation \eqref{eq:Gammadef} with the last vector $e_{i_{t,k_{t}}}$ omitted.   
We say $\Sigma$ is in case 2 if $\ell\in \sigma_{t}$ and $\ell\neq i_{t,k_{t}}$, then we define $\Gamma(\Sigma)$  by (\ref{eq:Gammadef}), but with the hat vectors replaced by tilde vectors, and again omit the final vector $e_{i_{t,k_{t}}}$.  Note that in case 1, the vector $e_{\ell}$ does not occur in $\Gamma(\Sigma)$ and in case 2, the vector $e_{\ell}$ does occur in $\Gamma(\Sigma)$.  We note that in either case $\Gamma(\Sigma)$ is in standard form, as follows from the definition and Remark \ref{r.standardD}.  We define the inverse map $\Lambda$ as follows.   For a $SO(2\ell)$-flag $\F$ in standard form, then exactly one index in $\{ 1, \dots, \ell \}$ does not occur in $\F$ by Remark \ref{r.standardD}.   If the vector $e_{\ell}$ does not occur in $\F$, then we use the formula of Equation \eqref{eq:Lambdadefn}, except the last index $i_{\ell}$ is the index in $\{ 1, \dots, \ell \}$ not appearing among the indices in $\F$.   If the vector $e_{\ell}$ does occur in $\F$, then we use the same prescription as in the last case, but with tilde vectors replacing hat vectors.   It is routine to check that $\Lambda$ and $\Gamma$ are inverse bijections.
\end{proof}

As in the type A case, we need to describe $B_{n-1}$-orbits on the space 
$\OGr(i,n)$ consisting of isotropic $i$-planes on $\C^{n}$ for $1\le i \le n- 1.$  Let $B=B_{n}$ consist of the upper triangular matrices in $SO(n)$.

\begin{rem}\label{r:schubertD}
Let $n=2\ell$ or $2\ell + 1$.   By the Bruhat decomposition, the distinct $B$-orbits on $\OGr(i,n)$  are given by the
orbits through the planes  $\lspan \{ e_{j_1}, \dots, e_{j_i} \}$ where $\{j_{1},\dots, j_{i-1}, \, j_{i}\}\subset \{\pm 1,\dots, \pm \ell \}$, 
with $j_{k}\neq \pm j_{m}$.  
\end{rem}

In the case where $n=2\ell$, for $j=1, \dots, \ell$, we let $V_{< j}=\lspan\{e_{1},\dots, e_{j-1}\}$
and for $j=0, \dots, \ell-2$, let $V_{<-j}=\lspan\{ e_{-(j+1)},\dots, e_{-(l-1)} \}.$  Let $V_{+}=V_{<\ell}$ and $V_{-}=V_{<0}.$

\begin{rem}\label{r:bvectororbit}
Let $N_{2\ell-1}$ be the derived subgroup of $B_{2\ell-1}.$
\begin{enumerate}
\item For $j=1, \dots, \ell,$ the affine set  $e_j + V_{<j} = N_{2\ell-1}\cdot e_j$ and $e_{-\ell} + V_{+} = N_{2\ell-1}\cdot e_{-\ell}.$
\item For $j=1, \dots, \ell -1,$  the set $$I_{-j}:=\{ v \in e_{-j} + V_{<-j} + \C(e_{\ell} + e_{-\ell}) + V_{+}: \beta(v,v)=0 \} = N_{2\ell-1}\cdot e_{-j}.$$
\end{enumerate}
  Indeed, to prove the second assertion, we first note that $I_{-j}$ is irreducible of dimension $2\ell - j - 2$.  The variety $I_{-j}$ is clearly a quadratic hypersurface in the irreducible $2\ell-j-1$ dimensional affine set $e_{-j} + V_{<-j} + \C(e_{\ell} + e_{-\ell}) + V_{+}$.  To see that $I_{-j}$ is irreducible, we compute the equation $\beta(v,v)=0$.  For $v\in I_{-j}$, we write 
$v=e_{-j}+\sum_{k=j+1}^{\ell-1} c_{-k}e_{-k}+\lambda(e_{\ell}+e_{-\ell})+\sum_{k=1}^{\ell-1} c_{k}e_{k}$.  Then 
$$
\beta(v,v)=0 \Leftrightarrow \displaystyle \sum_{k=j+1}^{\ell-1} c_{k}c_{-k}+2\lambda^{2}+c_{j}=0.
$$
The polynomial on the right-hand side is clearly irreducible. 

Now note that $N_{2\ell-1}$ maps $I_{-j}$ to itself.  By computing the tangent space of the $N_{2\ell-1}$-orbit $N_{2\ell-1}\cdot e_{-j}$ on $I_{-j}$, we can see that $\dim(N_{2\ell-1}\cdot e_{-j})=\dim(I_{-j}).$  It follows that $N_{2\ell-1}\cdot e_{-j}=I_{-j}$ since orbits of unipotent groups on affine varieties are closed.  The proof of the first assertion follows by a simpler version of this argument.
\end{rem}

\begin{lem}\label{l:GrassD}
For $i \in \{ 1, \dots, \ell -1 \}$, the distinct $B_{2\ell-1}$-orbits on $\OGr(i,2\ell)$ are given by the following list:

\noindent 

\begin{equation}\label{eq:firstsubspaceD}
\mathcal{O}_{j_1, \dots, j_i}:=B_{2\ell-1}\cdot \mbox{span}\{e_{j_{1}},\dots, e_{j_{i}}\}.
\end{equation}

\begin{equation}\label{eq:secondsubspaceD}
\mathcal{O}_{j_1, \dots, j_{i-1}, \ell } :=B_{2\ell-1}\cdot \mbox{span}\{e_{j_{1}},\dots, e_{j_{i-1}}, e_{\ell}\}. 
\end{equation}

\begin{equation}\label{eq:thirdsubspaceD}
\mathcal{O}_{j_1, \dots, j_{i-1}, -\ell } :=B_{2\ell-1}\cdot \mbox{span}\{e_{j_{1}},\dots, e_{j_{i-1}}, e_{-\ell}\}. 
\end{equation}

\begin{equation}\label{eq:fourthsubspaceD}
\mathcal{O}_{j_1, \dots, j_{i-1}, \tilde{|j_i|}}:=B_{2\ell-1}\cdot \mbox{span}\{e_{j_{1}},\dots, e_{j_{i-1}}, \te_{|j_{i}|}\},
\end{equation}
where $\{j_{1},\dots, j_{i-1}, \, j_{i}\}\subset \{\pm 1,\dots, \pm (\ell-1)\}$, 
with $j_{k}\neq \pm j_{m}$.   
\end{lem}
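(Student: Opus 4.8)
The plan is to follow the strategy of Lemma~\ref{l:AGrass}, the new wrinkle being that the $K$-complement to $U_1$ is now an \emph{anisotropic} line.  Recall from Remark~\ref{r.outerD} that $\theta$ is conjugation by $\sigma_{2\ell}$; its $(+1)$-eigenspace is the $(2\ell-1)$-dimensional nondegenerate orthogonal space $U_1=\lspan\{e_1,\dots,e_{\ell-1},\,e_\ell+e_{-\ell},\,e_{-(\ell-1)},\dots,e_{-1}\}$, on which $K=SO(2\ell-1)$ acts as the full special orthogonal group with Borel $B_{2\ell-1}$ and fixes $U_2=\C(e_\ell-e_{-\ell})$ pointwise; note $U_2$ is anisotropic, with $\beta(e_\ell-e_{-\ell},e_\ell-e_{-\ell})=-2$.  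For $W\in\OGr(i,2\ell)$ the integer $\dim(W\cap U_1)$ is a $B_{2\ell-1}$-invariant; since $W$ is isotropic and $U_2$ is anisotropic we have $W\cap U_2=0$, so $\dim(W\cap U_1)\in\{i-1,i\}$ (in particular the analogue of case~(2) of Lemma~\ref{l:AGrass} cannot arise).  I would split into Case~A, $W\subseteq U_1$, and Case~B, $\dim(W\cap U_1)=i-1$ and $W\not\subseteq U_1$.

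In Case~A, $W$ is an isotropic $i$-plane in $U_1$, so by the Bruhat decomposition for $SO(U_1)$ (Remark~\ref{r:schubertD}) its orbit is the orbit through a coordinate isotropic subspace; the ``middle'' basis vector $e_\ell+e_{-\ell}$ of $U_1$ is anisotropic and so never appears, giving exactly the orbits $\mathcal{O}_{j_1,\dots,j_i}$ of \eqref{eq:firstsubspaceD} with $j_k\in\{\pm1,\dots,\pm(\ell-1)\}$.  In Case~B, write $W'=W\cap U_1$ and $W=W'\oplus\C w$ with $w\notin U_1$; after scaling $w$ so that its $U_2$-component is $e_\ell-e_{-\ell}$, the isotropy of $W$ forces $w=\xi+(e_\ell-e_{-\ell})$ with $\xi\in U_1$, $\beta(\xi,\xi)=2$ and $\xi\perp W'$, and $\xi$ is then determined modulo $W'$.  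Thus the $B_{2\ell-1}$-orbit of $W$ is encoded by the pair consisting of the $B_{2\ell-1}$-orbit of $W'$ together with the $\mathrm{Stab}_{B_{2\ell-1}}(W')$-orbit of the class $\bar\xi$, where $\bar\xi$ ranges over the vectors of norm $2$ in the $(2(\ell-i)+1)$-dimensional nondegenerate space $((W')^{\perp}\cap U_1)/W'$.  Reducing $W'$ to a coordinate plane $\lspan\{e_{j_1},\dots,e_{j_{i-1}}\}$ via the Case~A classification and then applying Remark~\ref{r:bvectororbit} to these norm-$2$ vectors, I expect the representatives $\xi=e_\ell+e_{-\ell}$, $\xi=-(e_\ell+e_{-\ell})$, and $\xi=(e_\ell+e_{-\ell})+2e_{-m}$ for each $m\in\{1,\dots,\ell-1\}\setminus\{|j_1|,\dots,|j_{i-1}|\}$, which give $w\in\C e_\ell$, $w\in\C e_{-\ell}$ and $w\in\C\te_m$, that is, the orbits \eqref{eq:secondsubspaceD}, \eqref{eq:thirdsubspaceD} and \eqref{eq:fourthsubspaceD} respectively.

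For distinctness and exhaustion I would note that $\dim(W\cap U_1)$ separates \eqref{eq:firstsubspaceD} from the rest; within Case~B the orbit of $W'=W\cap U_1$ is a complete invariant for the ``$j$'s'' by Case~A, and the coefficient of $e_\ell+e_{-\ell}$ in the normalized $\xi$ (which is $B_{2\ell-1}$-invariant because $B_{2\ell-1}$ fixes this middle, weight-zero coordinate, using that it fixes $U_2$ pointwise) separates $\mathcal{O}_{\dots,\ell}$ from $\mathcal{O}_{\dots,-\ell}$ and, combined with the $\mathrm{Stab}$-orbit of $\bar\xi$, distinguishes the various $\mathcal{O}_{\dots,\tilde{|j_i|}}$.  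The main obstacle is the norm-$2$ orbit count in Case~B: one must carry out the linear-algebra normalization of $w$ (equivalently $\xi$) while tracking how $(W')^{\perp}\cap U_1$ and the available coordinate directions depend on the signs of the $j_k$, and verify via Remark~\ref{r:bvectororbit} that the stabilizer of each coordinate $W'$ produces exactly these $\ell-i+2$ orbits and no more; a secondary subtlety is that $e_\ell+e_{-\ell}$ and its negative lie in distinct $B_{2\ell-1}$-orbits (the torus fixes the middle vector while the unipotent radical can only add strictly higher weight vectors), which is precisely what makes \eqref{eq:secondsubspaceD} and \eqref{eq:thirdsubspaceD} genuinely different.
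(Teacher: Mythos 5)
Your setup is the same as the paper's (the decomposition $\C^{2\ell}=U_1\oplus U_2$ with $U_2$ anisotropic, the dichotomy $\dim(W\cap U_1)\in\{i-1,i\}$, and Case A via Remark \ref{r:schubertD}), and your reformulation of Case B through pairs $(W',\bar\xi)$ with $\bar\xi$ a norm-$2$ class in $((W')^{\perp}\cap U_1)/W'$ is a legitimate repackaging of the paper's parametrization $u=v_++\mu e_\ell+\lambda e_{-\ell}+v_-$. The problem is that the heart of the lemma is precisely the step you defer: proving that the stabilizer of a coordinate $W'$ in $B_{2\ell-1}$ has \emph{exactly} the $\ell-i+2$ listed orbits on norm-$2$ classes. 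The paper carries this out by an explicit normalization with elements of $B_{2\ell-1}$ supplied by Remark \ref{r:bvectororbit} (including the key step $b_2\cdot e_{-s}=e_{-s}-\lambda(e_\ell+e_{-\ell})+\lambda^2e_s$ that removes the $e_{-\ell}$-component, and the identification \eqref{eq:hattildeorbit} of the hat and tilde spans, which is needed because the normalization naturally produces $\he_s$ when $\mu=0$, $\lambda\neq0$). Writing ``I expect the representatives \dots; the main obstacle is the norm-$2$ orbit count'' names the required computation but does not do it, and the dependence of the image of $\mathrm{Stab}_{B_{2\ell-1}}(W')$ in the orthogonal group of the quotient on the signs of the $j_k$ (which you flag yourself) is exactly where the work lies; so as written the exhaustion half of the lemma is not proved.

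The distinctness half also has a genuine flaw. The coefficient of $e_\ell+e_{-\ell}$ in $\xi$ is \emph{not} an orbit invariant: for $b\in B_{2\ell-1}$ one has $\beta(b\xi,e_\ell+e_{-\ell})=\beta(\xi,b^{-1}(e_\ell+e_{-\ell}))=\beta(\xi,e_\ell+e_{-\ell})+\beta(\xi,v_+)$ for some $v_+\in\lspan\{e_1,\dots,e_{\ell-1}\}$, which changes whenever $\xi$ has a nonzero component in $\lspan\{e_{-1},\dots,e_{-(\ell-1)}\}$ --- for instance on your tilde representative $(e_\ell+e_{-\ell})+2e_{-m}$; the fact that $B_{2\ell-1}$ fixes $U_2$ pointwise says nothing about this functional on $U_1$. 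The weight argument does separate $\pm(e_\ell+e_{-\ell})$, hence \eqref{eq:secondsubspaceD} from \eqref{eq:thirdsubspaceD}, but you give no non-circular argument separating the orbits \eqref{eq:fourthsubspaceD} from each other, from \eqref{eq:secondsubspaceD} and \eqref{eq:thirdsubspaceD}, or from the Case A orbit $\mathcal{O}_{j_1,\dots,j_{i-1},-|j_i|}$; ``combined with the Stab-orbit of $\bar\xi$'' merely restates what must be shown. The paper's uniqueness argument is different and complete: each listed $B_{2\ell-1}$-orbit lies in a single $B_{2\ell}$-Schubert cell on $\OGr(i,2\ell)$ (Remark \ref{r:schubertD}), with $\mathcal{O}_{j_1,\dots,j_{i-1},\tilde{|j_i|}}\subset B\cdot\lspan\{e_{j_1},\dots,e_{j_{i-1}},e_{-|j_i|}\}$, so the only possible coincidence is with $\mathcal{O}_{j_1,\dots,j_{i-1},-|j_i|}$, and that is excluded because one orbit is of type (BD) and the other of type (AD). To complete your proof you would need either to adopt that Schubert-cell argument or to produce genuine invariants for the stabilizer orbits on norm-$2$ classes, in addition to actually performing the normalization in Case B.
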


\begin{proof}
We must show each orbit is one of the listed orbits, and no two of the listed orbits coincide.  
We first decompose $\C^{2\ell}=U_{1}\oplus U_{2}$, where $U_{1}=\mbox{span}\{e_{\pm 1},\dots, e_{\pm(\ell-1)}, e_{\ell}+e_{-\ell}\}$ is the fixed set of the involution $\sigma_{2\ell}$ from Section 2.2 of \cite{CE21I}, and $U_{2}=\mbox{span}\{e_{\ell}-e_{-\ell}\}$ is the $-1$-eigenspace of $\sigma_{2\ell}.$  For a plane $V\in \OGr(i, 2\ell)$, then either
\begin{equation}\label{eq:2conds}
\begin{split}
&(AD)\;   V\subset U_{1} \mbox{ or }\\
&(BD)\; \dim V\cap U_{1}=i-1.   \\
\end{split}
\end{equation}
Note that both of these conditions are $B_{2\ell-1}$-stable.  
First, suppose that $U$ satisfies (AD).  We recall that $K=SO(2\ell-1)$ is 
realized as determinant $1$ orthogonal transformations of the space 
$U_{1}$ (see Remark 2.1 of \cite{CE21I}).  Thus, the classification of $B_{2\ell-1}-$orbits on the space of $V$ in $\OGr(i, 2\ell)$  satisfying condition (AD) is equivalent to the classification of $B_{2\ell-1}$-orbits on $\OGr(i, 2\ell - 1)$. By Remark \ref{r:schubertD}, this is given by the list in Equation \eqref{eq:firstsubspaceD}.

We now consider orbits on the set of $V$ satisfying condition (BD).  For each such $V$, the subspace $V \cap U_1 \in \OGr(i-1, 2\ell -1 )$, and so by Remark \ref{r:schubertD}, we may conjugate it by $B_{2\ell-1}$  to the span of  $e_{j_1}, \dots, e_{j_{i-1}}$ where each $j_k \in \{ \pm 1, \dots, \pm (\ell - 1) \}$ and the absolute values of the $j_k$ are distinct.  Hence,  up to $B_{2\ell-1}$-conjugation 
$U=\mbox{span}\{e_{j_{1}},\dots, e_{j_{i-1}}, \, u\}$ for $\{j_{1},\dots, j_{i-1}\}\,$ as above and $u\notin V_{1}$.   We can write $u$ as 
\begin{equation}\label{eq:udecomp}
u=v_{+}+\mu e_{\ell}+\lambda e_{-\ell} +v_{-},
\end{equation}  
where $v_{+}\in V_{+},$
$v_{-}\in V_{-},$
 and $\mu\neq \lambda$.   Further, since the subspace $V$ is isotropic, we can assume that the coefficient of any $e_{\pm j_{k}}$ is zero in $v_{+}$ and $v_{-}$.  

Suppose that $v_{-}=0$.  Then since $u$ is an isotropic vector, at least
one of $\mu$ or $\lambda$ must be equal to zero.  If $\lambda=0$, 
then by Remark \ref{r:bvectororbit} there is $b\in B_{2\ell-1}$ such $b\cdot e_{\ell}=e_{\ell} + v_+$ and $b\cdot e_{j_{k}}=e_{j_{k}}$ 
for $k=1,\dots, i-1$.  Thus, $b\cdot U=\lspan\{e_{j_{1}},\dots, e_{j_{i-1}}, e_{\ell}\}$ is one of 
the orbits in Equation (\ref{eq:secondsubspaceD}).  If $\mu=0$, we can show similiarly that 
$B_{2\ell-1}\cdot U$ is one of the orbits in (\ref{eq:thirdsubspaceD}).  

Now suppose that $v_{-}\neq 0$ in Equation (\ref{eq:udecomp}), and let us assume that $\mu\neq 0$.  
Since $\beta(u, u)=0$, it follows that $\beta(v_{+}, v_{-})=-\lambda\mu$.
Let $v_{-} = \sum c_j e_{-j}$ and let $s$ be minimal so $c_s\not= 0.$ 
We claim that there is $b\in B_{2\ell-1}$ such that $b\cdot u$ is a nonzero scalar multiple of $ e_{-s} + e_{\ell}$
and $b$ fixes each $e_{j_k}$ in our basis of $U\cap V_1.$   Let $SO(2\ell - 2)$ be the subgroup of $SO(2\ell)$ fixing pointwise the subspace spanned by $e_{\ell}$ and $e_{-\ell}$, and let $B_{2\ell-2} = B_{2\ell-1} \cap SO(2\ell - 2).$   By Remark \ref{r:bvectororbit}, there is $b_1\in B_{2\ell-2}$ such that $b_1\cdot v_- = e_{-s}.$  By the same remark, there is $b_2$ in $N_{2\ell-1}$ such that $b_2 \cdot e_{-s} = e_{-s} - \lambda(e_{\ell} + e_{-\ell}) + \lambda^2 e_s.$     Then $b_2\cdot b_1\cdot u=w_+ + (\mu - \lambda)e_{\ell} + e_{-s}$ for some $w_+ \in V_{+}.$
Again by Remark \ref{r:bvectororbit}, there is $b_3 \in B_{2\ell-2}$ such that 
$b_3 \cdot e_{-s} =e_{-s} - w_+.$ Then there is an element $h$ in the diagonal torus of $B_{2\ell-1}$ so that  $b=h b_3b_2b_1$ satisfies the claim, and  we may assume $b$ stabilizes each $e_{j_k}$ in our basis of $U \cap V_1$ by working in the special orthogonal group of the span of the $e_k$ with $k \not= \pm j_r$ for $r=1, \dots, i-1$.
If we had assumed instead that $\lambda\neq 0$ and $\mu=0$, then we can argue similarly 
to obtain $B_{2\ell-1}\cdot U=B_{2\ell-1}\cdot\mbox{span}\{e_{j_{1}},\dots, e_{j_{i-1}}, \he_{s}\}$.  Using again Remark \ref{r:bvectororbit},
is easy to see that 
\begin{equation}\label{eq:hattildeorbit}
B_{2\ell-1}\cdot \mbox{span}\{e_{j_{1}},\dots, e_{j_{i-1}}, \tilde{e}_{s}\}=B_{2\ell-1}\cdot \mbox{span}\{e_{j_{1}},\dots, e_{j_{i-1}}, \hat{e}_{s}\}.
\end{equation}

For uniqueness, note that for each $i$ element subset $\{ r_1, \dots, r_i \}$ of
$\{ \pm 1, \dots, \pm l \}$ with distinct absolute values, each of the $B_{2\ell-1}$-orbits $\mathcal{O}_{r_1, \dots, r_i}$ is contained in $B\cdot \lspan \{ e_{r_1}, \dots, e_{r_i} \}.$  Further, the $B_{2\ell-1}$-orbit $\mathcal{O}_{j_1, \dots, j_{i-1}, \tilde{|j_i|}}$ is contained in $B\cdot \lspan \{ e_{j_1}, \dots, e_{j_{i-1}}, e_{-|j_i|}\}.$  From Remark \ref{r:schubertD}, the only possible coincidences are between orbits $\mathcal{O}_{j_1, \dots, j_{i-1}, \tilde{|j_i|}}$ and $\mathcal{O}_{j_1, \dots, j_{i-1},-|j_i|}$ with $j_1, \dots, j_i$ elements with distinct absolute values in $\{ \pm 1, \dots, \pm (\ell - 1) \}.$ But the first orbit is of type (BD) and the second is of type (AD), so they cannot coincide.

\end{proof}

\begin{lem}\label{l:spanstandardD}
Let $\mathcal{F}=(v_1 \subset \dots \subset v_{\ell-1})$ be a $SO(2\ell)$-flag in standard form and let $V_{m}=\lspan\{v_{1},\dots, v_{m}\}$.  
\par\noindent (i) If for all $i \le m$, $v_i = e_{j_i}$ with $j_i \in \{ \pm 1, \dots, \pm \ell \}$, then $V_m = \lspan\{ e_{j_1}, \dots, e_{j_m} \}.$
\par\noindent (ii) Suppose $v_{i}=e_{-\ell}$ (resp. $v_{i}=e_{\ell}$) for some $i$.  Then for all $m\geq i$, 
$V_{m}=\lspan\{\overline{v_{1}},\dots, \overline{v_{m}}\}$, where $\overline{v_{k}}=v_{k}$ 
if $v_{k}=e_{j_{k}}$ is a standard basis vector, and $\overline{v_{k}}=e_{-j_{k}}$ if $v_{k}=\he_{j_{k}}$ 
(resp. if $v_{k}=\tilde{e}_{j_{k}}$) for $k=1,\dots, m$.  
\par\noindent (iii)  Suppose that for all $j\leq m$, $v_{j}\neq e_{\pm \ell}$ and at least one 
of the vectors $v_{1},\dots, v_{m}$ is a tilde or hat vector.  Let $1\leq k_{1}<\dots <k_{r}\leq m$ be 
the subsequence of $\{1,\dots, m\}$ such that $v_{k_{i}}=\he_{i_{k_{i}}}$ or $v_{k_{i}}=\tilde{e}_{i_{k_{i}}}.$ 
Let $s_{1}<\dots <s_{m-r}$ be the complementary subsequence with $v_{s_{i}}=e_{i_{s_{i}}}$.  Then 
\begin{equation}\label{eq:hatandtilde}
B_{2\ell-1}\cdot V_{m}=B_{2\ell-1}\cdot\mbox{span}\{e_{i_{s_{1}}},\dots, e_{i_{s_{m-r}}}, e_{-i_{k_{1}}},\dots, e_{-i_{k_{r-1}}}, \tilde{e}_{i_{k_{r}}}\}.
\end{equation}
\end{lem}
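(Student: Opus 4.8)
Parts (i) and (ii) are the straightforward ones. For (i) there is nothing to do, since $V_m:=\lspan\{v_1,\dots,v_m\}$ by definition. For (ii), suppose $v_i=e_{-\ell}$. By part (a) of Definition~\ref{d:stdD}, every $v_k$ with $k>i$ is a standard basis vector, so all hat vectors of $\mathcal{F}$ occur at positions $<i$ (and by Remark~\ref{r.standardD} no tilde vector occurs at all). Since $\he_j=e_{-j}+e_{-\ell}$ and $e_{-\ell}=v_i\in V_m$ for every $m\ge i$, each hat vector $v_k=\he_{j_k}$ appearing among $v_1,\dots,v_m$ may be replaced by $\he_{j_k}-e_{-\ell}=e_{-j_k}=\overline{v_k}$ without changing $\lspan\{v_1,\dots,v_m\}$, which is exactly the claim. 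The case $v_i=e_\ell$ is identical, using $\te_j=e_{-j}+e_\ell$ in place of $\he_j$.

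For (iii), which is the substantive case, the plan is to transport $V_m$ by a single element of $B_{2\ell-1}$ to the stated span. By Remark~\ref{r.standardD} the vectors $v_{k_1},\dots,v_{k_r}$ are either all hat vectors or all tilde vectors; I would treat the tilde case first. By part (c) of Definition~\ref{d:stdD} the indices increase, $i_{k_1}<\dots<i_{k_r}$; put $p:=i_{k_r}$. Since $\te_a-\te_b=e_{-a}-e_{-b}$, a row reduction gives
\[
V_m=\lspan\{e_{i_{s_1}},\dots,e_{i_{s_{m-r}}}\}+\lspan\{e_{-i_{k_1}}-e_{-p},\dots,e_{-i_{k_{r-1}}}-e_{-p}\}+\C\,\te_p .
\]
Now for each $i<r$ one has $i_{k_i}<p$, so $\eps_{i_{k_i}}-\eps_p$ is a positive root of $\fso(2\ell)$ for $\fb_{2\ell}$; let $X_i$ be the corresponding root vector, normalized so that $X_i\,e_{-i_{k_i}}=e_{-p}$. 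Then $X_i\,e_p$ is a scalar multiple of $e_{i_{k_i}}$, $X_i$ kills $e_{\pm\ell}$ and every other standard basis vector, and $X_i$ lies in $\fso(2\ell-1)$ (it preserves the subspace $U_1$ of Lemma~\ref{l:GrassD} and kills $e_\ell-e_{-\ell}$), so $\exp(X_i)\in B_{2\ell-1}$. The roots $\eps_{i_{k_i}}-\eps_p$ $(i<r)$ pairwise sum to non-roots, so the $X_i$ commute and $g:=\exp\bigl(\sum_{i<r}X_i\bigr)\in B_{2\ell-1}$. A direct computation then shows $g$ carries $e_{-i_{k_i}}-e_{-p}$ to $e_{-i_{k_i}}$ for each $i<r$, fixes $\te_p$, and fixes each $e_{i_{s_j}}$ (whose index is distinct from all the $i_{k_\nu}$ by Remark~\ref{r.standardD} and from $\pm\ell$ by hypothesis). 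Hence $g\,V_m=\lspan\{e_{i_{s_1}},\dots,e_{i_{s_{m-r}}},e_{-i_{k_1}},\dots,e_{-i_{k_{r-1}}},\te_p\}$, which gives Equation~\eqref{eq:hatandtilde}. In the hat case the same $g$ (which also fixes $e_{-\ell}$, hence $\he_p$) produces $g\,V_m=\lspan\{e_{i_{s_1}},\dots,e_{i_{s_{m-r}}},e_{-i_{k_1}},\dots,e_{-i_{k_{r-1}}},\he_p\}$, and one final application of Equation~\eqref{eq:hattildeorbit} replaces $\he_p$ by $\te_p$, again yielding \eqref{eq:hatandtilde}.

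The hard part is the requirement that the reduction element $g$ lie in $B_{2\ell-1}$ rather than merely in $SO(2\ell)$: this is precisely where part (c) of Definition~\ref{d:stdD} (monotonicity of the hat/tilde indices along the flag) enters, together with the ordering $e_1,\dots,e_\ell,e_{-\ell},\dots,e_{-1}$ defining $\fb_{2\ell}$, which together force $\eps_{i_{k_i}}-\eps_{i_{k_r}}$ to be a positive root. Everything else is bookkeeping: tracking the distinctness of indices via Remark~\ref{r.standardD}, keeping the hat/tilde dichotomy straight, and invoking \eqref{eq:hattildeorbit} at the end of the hat case.
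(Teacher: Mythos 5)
Your proposal is correct and follows essentially the same route as the paper: parts (i) and (ii) by the same elementary span manipulations, and part (iii) by using condition (c) of Definition \ref{d:stdD} to row-reduce the earlier hat/tilde vectors to $e_{-i_{k_j}}$ via an upper-triangular element of $K$ (which you simply make explicit as $\exp\bigl(\sum X_i\bigr)\in B_{2\ell-1}$, whereas the paper argues pairwise by analogy with Lemma \ref{l:spanstandard}), finishing the hat case with Equation \eqref{eq:hattildeorbit}.
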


\begin{proof}
Assertions (i) and (ii) are similar to the proof of Lemma \ref{l:spanstandard}, as is the proof of (iii) when $v_{j_m}$ is a tilde vector.  In case $v_{j_m}$ is a hat vector, a similar argument proves Equation \eqref{eq:hatandtilde} with $\tilde{e}_{j_m}$ replaced with $\he_{j_m}$.  By Equation \eqref{eq:hattildeorbit}, the Remark follows.
\end{proof}

\begin{thm}\label{thm:stdD}
The map  
\begin{equation}\label{eq:flagassocD} 
\Psi:\mathcal{F} \to B_{2\ell-1}\cdot \F
\end{equation}
 is a bijection between $SO(2\ell)$-flags in standard form and $B_{2\ell-1}$-orbits on $\B_{2\ell}$.
\end{thm}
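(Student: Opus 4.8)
The plan is to mirror the proof of Theorem \ref{thm:std} for $GL(n)$, replacing each ingredient by its orthogonal analogue. First, by Part (1) of Theorem \ref{thm:orthocount} together with the bijection $\Gamma$ of Proposition \ref{p:flagsandPILSD}, the number of $SO(2\ell)$-flags in standard form equals $|SPIL(\ell)| = |B_{2\ell-1}\backslash\B_{2\ell}|$. Since both sets are finite and of the same cardinality, it suffices to prove that $\Psi$ is injective, which then forces surjectivity as well. So I would suppose $\mathcal{F} = (v_1 \subset \dots \subset v_{\ell-1})$ and $\mathcal{F}' = (v_1' \subset \dots \subset v_{\ell-1}')$ are $SO(2\ell)$-flags in standard form lying in the same $B_{2\ell-1}$-orbit. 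Writing $V_m = \lspan\{v_1, \dots, v_m\}$ and $V_m' = \lspan\{v_1', \dots, v_m'\}$, the hypothesis gives $B_{2\ell-1}\cdot V_m = B_{2\ell-1}\cdot V_m'$ in $\OGr(m, 2\ell)$ for every $m$, and I will deduce $v_m = v_m'$ for all $m$ by induction on $m$.

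The induction is driven by the classification of $B_{2\ell-1}$-orbits on $\OGr(m,2\ell)$ in Lemma \ref{l:GrassD} and the computation of the spaces $V_m$ for standard flags in Lemma \ref{l:spanstandardD}, just as in the $GL(n)$ case. For $m = 1$ the orbit $B_{2\ell-1}\cdot V_1$ is one of $\mathcal{O}_{j_1}$, $\mathcal{O}_{\ell}$, $\mathcal{O}_{-\ell}$, $\mathcal{O}_{\tilde{|j_1|}}$; these are pairwise distinct by the uniqueness part of Lemma \ref{l:GrassD}, which separates orbits of type (AD) from those of type (BD) and records the ambient $B_{2\ell}$-orbit (Remark \ref{r:schubertD}). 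Each such orbit is compatible with exactly one admissible $v_1$ once the standard-form conditions of Definition \ref{d:stdD} are imposed; in particular the alternative $v_1 = \he_s$ versus $v_1 = \te_s$ is resolved, via Remark \ref{r.standardD} and the case-1/case-2 dichotomy of Proposition \ref{p:flagsandPILSD}, by whether $e_\ell$ occurs in $\mathcal{F}$. For the inductive step I assume $v_i = v_i'$ for $i < m$, so that $V_{m-1} = V_{m-1}'$, and show that $v_m$ is determined by the pair $(V_{m-1}, B_{2\ell-1}\cdot V_m)$, splitting into cases according to which type of orbit $B_{2\ell-1}\cdot V_{m-1}$ is in Lemma \ref{l:GrassD}: (i) if $V_{m-1} \subset U_1$ (with $U_1$ as in the proof of Lemma \ref{l:GrassD}), then Lemma \ref{l:spanstandardD}(i),(iii) allow one to read off whether $v_m$ is a standard basis vector --- possibly $e_{\pm\ell}$ --- or a hat or tilde vector, and which index it carries; (ii) if some $e_{\pm\ell}$ already appears among $v_1, \dots, v_{m-1}$, then condition (a) of Definition \ref{d:stdD} forces $v_m$ to be a standard basis vector, and Lemma \ref{l:spanstandardD}(ii) recovers it as the unique index occurring in $B_{2\ell-1}\cdot V_m$ but not $B_{2\ell-1}\cdot V_{m-1}$; (iii) if a hat or tilde vector appears among $v_1, \dots, v_{m-1}$ but no $e_{\pm\ell}$, one proceeds as in case (3) of the proof of Theorem \ref{thm:std}, using Lemma \ref{l:spanstandardD}(iii) and Equation \eqref{eq:hattildeorbit}, recovering $v_m$ either as the new standard index in the label of $B_{2\ell-1}\cdot V_m$ or as the hat/tilde index that has replaced the previous one. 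In each case $v_m = v_m'$, which completes the induction and the proof.

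I expect the main obstacle to be the bookkeeping in case (iii), which is more delicate than in the $GL(n)$ case because there are now two distinguished basis vectors $e_{\ell}$ and $e_{-\ell}$ and because, by Equation \eqref{eq:hattildeorbit}, the orbit $B_{2\ell-1}\cdot V_m$ alone does not record whether a vector of $\mathcal{F}$ is a hat vector or a tilde vector. The way around this is to exploit the global structure of a standard flag: by Remark \ref{r.standardD} hat and tilde vectors never occur together, and a flag containing hat vectors cannot contain $e_{\ell}$ while a flag containing tilde vectors must contain $e_{\ell}$, so the ambiguity is resolved by the presence or absence of $e_{\ell}$; the ordering conditions Definition \ref{d:stdD}(c),(d) then pin down the index assignment. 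With that disambiguation in hand the remaining steps are routine linear algebra parallel to the $GL(n)$ argument, and similar remarks will apply to the $SO(2\ell+1)$ case treated next.
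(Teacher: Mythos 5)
Your proposal is correct and follows essentially the same route as the paper: reduce to injectivity via Theorem \ref{thm:orthocount}(1) and Proposition \ref{p:flagsandPILSD}, then induct on $m$ using Lemma \ref{l:GrassD} and Lemma \ref{l:spanstandardD}, with the hat-versus-tilde ambiguity from Equation \eqref{eq:hattildeorbit} resolved by the presence or absence of $e_{\ell}$ in the flag. The one step you leave implicit --- that the occurrence of $e_{\ell}$ is itself forced to agree for the two flags, because at the level where $e_{\ell}$ appears the orbit $B_{2\ell-1}\cdot V_k$ is of type \eqref{eq:secondsubspaceD}, which by Lemma \ref{l:spanstandardD} cannot arise from a standard flag lacking $e_{\ell}$ --- is exactly the contradiction the paper spells out, and it follows directly from the tools you already cite.
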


\begin{proof}
By the same reasoning as in the proof of Theorem \ref{thm:std}, and using Theorem \ref{thm:orthocount} (i) and Proposition \ref{p:flagsandPILSD}, it suffices to show that the map $\Psi$ is injective. As in that proof, we let $Q=B_{2\ell-1}\cdot \F$, and $Q^{\prime}=B_{2\ell-1}\cdot \F^{\prime}$ where $\F=(v_{1}\subset v_{2}\subset \dots\subset v_{i}\subset \dots \subset v_{\ell - 1} )$, 
and  $\mathcal{F}^{\prime}=(v_{1}^{\prime}\subset v_{2}^{\prime}\subset \dots\subset v_{i}^{\prime}\subset \dots v_{\ell - 1}^{\prime})$ are $SO(2\ell)$-flags in standard form.  We prove by induction on $i$ that $v_i = v_{i}^{\prime}$ for $i=1, \dots, \ell - 1.$

To begin the induction, consider the orbit $\mathcal{O}_{1}=B_{2\ell-1}\cdot V_{1}=B_{2\ell-1}\cdot V_{1}^{\prime}$.   In the case the orbit is one of those from Equations
\eqref{eq:firstsubspaceD},  \eqref{eq:secondsubspaceD}, or \eqref{eq:thirdsubspaceD}, it follows from Lemma \ref{l:GrassD} and the definition of standard form that $v_{1} = v_{1}^{\prime}.$  The remaining case is when $\mathcal{O}_1=\mathcal{O}_{\te_{j_1}}.$  In this case, by using Lemma \ref{l:GrassD},
it suffices to show that assuming that $v_1=\te_{j_1}$ and $v_{1}^{\prime}=\he_{j_1}$ leads to a contradiction.  By the definition of standard form, some $v_k=e_{\ell}$ for $k > \ell$.  By Lemma \ref{l:spanstandardD},
it follows that $B_{2\ell-1}\cdot V_k$ is one of the orbits appearing in Equation \eqref{eq:secondsubspaceD}.  Since $\he_{j_i}$ occurs in $\F^{\prime}$, then $e_{\ell}$ does not occur in $\F^{\prime}$ by Remark \ref{r.standardD}.  Thus, either $e_{-\ell}$ occurs in $\F^{\prime}$ or neither of $e_{\pm \ell}$ occurs in $\F^{\prime}.$  But then we can use Lemma \ref{l:spanstandardD} to see that we cannot obtain one of the orbits in Equation \eqref{eq:secondsubspaceD}, and this is a contradiction.  Hence, $v_1 = v_1^{\prime}.$
 For the inductive step, we may assume 
 that $v_{i}=v_{i}^{\prime}$ for $i=1,\dots, m-1$, so that $V_{m-1}=V_{m-1}^{\prime}$.
We prove that $v_m = v_m^{\prime}$ using our assumption that $B_{2\ell-1}\cdot V_m = B_{2\ell-1}\cdot V_m^{\prime}.$  To prove this, we argue based on the case of
$B_{2\ell-1}\cdot V_{m-1}$ described in Lemma \ref{l:GrassD}.  If we are in the case of Equation \eqref{eq:firstsubspaceD}, then recovering $v_m$ from $B_{2\ell-1}\cdot V_m$ is similar to the $m=1$ case.  If we are in the case of Equation \eqref{eq:secondsubspaceD}, then by Lemma \ref{l:spanstandardD}, one of the vectors $v_k = e_{\ell}$ for $k \le m-1$.  Thus, from the definition of standard form, we see that $v_m = e_{j_m}$ with $j_m \not= \pm \ell$, and then by computing $V_{m-1} + \C v_m$ we see that $v_m = v_m^{\prime}.$   If we are in the case of Equation \eqref{eq:thirdsubspaceD}, then we argue similarly using $e_{-\ell}$ in place of $e_{\ell}.$   
Finally, suppose that 
$B_{2\ell-1}\cdot V_{m-1}$ is one of the orbits in (\ref{eq:fourthsubspaceD}).  Then by Lemma \ref{l:spanstandardD} some vectors $v_{k}$ 
with $k\leq m-1$ are either tilde vector or hat vectors and $v_{k}\neq e_{\pm \ell}$ for all $k\leq m-1$.  Let $B_{2\ell-1}\cdot V_{m-1}=\mbox{span}\{e_{j_{1}},\dots, e_{j_{m-2}}, \tilde{e}_{j_{m-1}}\}$.  If $v_{m}=e_{j_{m}}$ or $e_{\pm \ell}$, then $v_{m}=v_{m}^{\prime}$ by Lemmas \ref{l:GrassD} and \ref{l:spanstandardD}.
If $v_{m}=\tilde{e}_{j_{m}}$ or $v_{m}=\he_{j_{m}}$, then condition (c) in the definition of the standard form and Equation (\ref{eq:hatandtilde}) yield 
$B_{2\ell-1}\cdot V_{m}=\mbox{span}\{e_{j_{1}},\dots, e_{j_{m-2}}, \, e_{-j_{m-1}}, \tilde{e}_{j_{m}}\}.$   Since tilde and 
hat vectors cannot both occur in the standard form, we are forced to have $v_{m}=v_{m}^{\prime}$ in this case as well. 
\end{proof}

\subsection{Representatives in the case of $SO(2\ell + 1)$}\label{ss:Breps}

In this subsection, we give explicit representatives for $B_{n-1}$ on $\B_n$ in the case where $G=SO(n)$ with $n=2\ell + 1$ odd.  We omit most proofs, since they are generally simpler versions of the proofs in the type D case in Section \ref{ss:Dreps}.  Recall the description of $\B_n$ in terms of isotropic flags from Section \ref{ss:Borels}.

\begin{dfn-nota}\label{d:stdB}

\begin{enumerate} 
\item For $i=1,\dots, \ell$, define  $\he_{i}:=e_{i}+\sqrt{2}e_{0}-e_{-i}$.  For $1\leq j<i\leq \ell$, define 
$\he_{i,-j}:=\he_{i}+e_{-j}$.  We refer to a vector of the form $\he_{i}$ as a hat vector of the first kind 
and a vector of the from $\he_{i,-j}$ as a hat vector of the second kind.  We say that the index of both $\he_i$ and of the standard basis vector $e_{i}$ is $|i|$,  and the index of a hat vector of the second kind $\he_{i,-j}$ is $j.$

\item We say that the isotropic flag
\begin{equation}\label{eq:basicflagB}
\mathcal{F}:=(v_{1}\subset \dots \subset v_{i}\subset\dots\subset v_{\ell})
\end{equation}
is in \emph{standard form} if $v_{i}$ is either a standard basis vector, a hat vector of the first kind, or a 
hat vector of the second kind, and $\mathcal{F}$ satisfies the following conditions:
\begin{enumerate}
\item  If $v_{i}=\he_{j_{i}}$ is a hat vector of the first kind, then $v_{k}=e_{j_{k}}$ for all $k > i$.
\item If $i < k$, $v_{i}=\he_{a,-j_{i}}$ and $v_{k}=\he_{a, -j_{k}}$,  then $j_{i}<j_{k}$.  
\item  Each integer in $\{ 1, \dots, \ell\}$ is the index of exactly one vector $v_i$ occurring in $\F.$   
\end{enumerate}
\end{enumerate}
\end{dfn-nota}

\begin{rem}\label{r:firstkind}  
We assume that a hat vector of the second kind $\he_{a,-j}$ occurs in a $SO(2\ell + 1)$-standard flag $\F.$   Then the hat vector of the first kind $\he_a$ must occur in $\F$, and if $\he_{b,-k}$ is another hat vector of the second kind in $\F$, then $b=a$.   Indeed, assume that $\he_{a,-j}$ occurs in $\F$.  Then the vectors $e_{\pm a}$ does not occur in  $\F$ since $\F$ is isotropic, and for the same reason, $\he_{b, -a}$ does not occur.  By property (c) in the definition of a $SO(2\ell+1)$-standard flag, the index $a$ must occur, and hence $\he_a$ occurs in $\F$.  If $b\not= a$, then the vector $\he_b$ cannot occur in our flag since it is isotropic, and hence by the first observation, $\he_{b,-k}$ cannot occur in $\F$ for any $k$.
\end{rem}

\begin{prop}\label{p:flagsandPILSB}
There is a bijection 
\begin{equation}\label{eq:GammaB}
\Gamma:\SPILzero\longleftrightarrow \{SO(2\ell + 1)-\mbox{Flags in standard form}\}.
\end{equation}

\end{prop}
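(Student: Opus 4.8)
The plan is to imitate Propositions \ref{p:AflagsandPILS} and \ref{p:flagsandPILSD}: I will write down explicit maps $\Gamma$ and $\Lambda$ in the two directions and check that they are mutually inverse. Relative to the type D case, the one genuinely new ingredient is that there are now two kinds of hat vectors, and the combinatorics/geometry dictionary is controlled by Remark \ref{r:firstkind}, which says that in a standard $SO(2\ell+1)$-flag all hat vectors of the second kind that occur share a common first index $a$, and that $\he_a$ (of the first kind) then also occurs.

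To define $\Gamma$, I take $\Sigma = \{\sigma_1, \dots, \sigma_k\} \in \SPILzero$, normalize the labelling so that $0 \in \sigma_1$ (as in the proof of Theorem \ref{thm:orthocount}) and order the remaining lists $\sigma_2, \dots, \sigma_k$ by increasing absolute value of their last entries, exactly as in Proposition \ref{p:flagsandPILSD}. If $\sigma_1 = (0)$, then $\Gamma(\Sigma)$ is built from $\Sigma \setminus \{(0)\}$ as a flag of signed standard basis vectors by the recipe of Equation \eqref{eq:Gammadef} (with every vector realized as an $e_{\pm r}$ and nothing omitted). If $\sigma_1 \neq (0)$, the list $\sigma_1$ records, in order, where the first-kind hat vector $\he_a$ sits and which index $a$ it carries, and the nonzero entries of the other lists $\sigma_2, \dots, \sigma_k$ are turned into standard basis vectors except for their last entries, which become hat vectors of the second kind $\he_{a,-j}$; the vectors are then concatenated in the order prescribed by the ordering of the lists, following the pattern of Equation \eqref{eq:Gammadef}. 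In either case one checks directly that $\Gamma(\Sigma)$ satisfies the three conditions of Definition-Notation \ref{d:stdB}: condition (a) is the statement that $\he_a$ comes from $\sigma_1$, which has been placed last, so that it is followed only by standard basis vectors; condition (b) is the monotone ordering of $\sigma_2, \dots, \sigma_k$ by last entry together with Remark \ref{r:firstkind}; and condition (c) expresses that $\{0,1,\dots,\ell\}$ is partitioned, so that each of $1, \dots, \ell$ is the index of exactly one vector.

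For the inverse $\Lambda$, given a standard flag $\F$ I read off the first-kind hat vector $\he_a$ and all the second-kind hat vectors $\he_{a,-j}$ (which by Remark \ref{r:firstkind} have the same $a$) to reconstruct the list containing $0$, group the remaining vectors into lists as in Equation \eqref{eq:Lambdadefn}, and record the signs of the non-final entries; Remark \ref{r:firstkind} and the definition of standard form guarantee this is well defined and lands in $\SPILzero$. Checking $\Lambda \circ \Gamma = \id$ and $\Gamma \circ \Lambda = \id$ is then a routine bookkeeping verification, parallel to the end of the proof of Proposition \ref{p:flagsandPILSD}.

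The main obstacle is purely organizational: one must fix the conventions carefully enough that the three defining conditions of a standard $SO(2\ell+1)$-flag correspond exactly to conditions \eqref{eq:Sigmaconds} defining an $SPIL$, and so that the common index $a$ of the second-kind hat vectors (equivalently, the placement of $0$ in $\Sigma$) is tracked consistently, keeping Remark \ref{r:firstkind} satisfied throughout. No individual step is deep; as the section preamble notes, this is a somewhat more intricate variant of the type D argument.
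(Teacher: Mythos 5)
Your overall strategy (explicit maps $\Gamma$ and $\Lambda$ checked to be mutually inverse) is the same as the paper's, but the dictionary you set up is not the right one, and it genuinely fails to give a bijection. The decisive point is which elements of $\SPILzero$ should correspond to flags containing no hat vectors. In your recipe these are the $\Sigma$ whose $0$-list is the singleton $(0)$, and for such $\Sigma$ you realize all remaining entries as standard basis vectors ``with nothing omitted''; but this forgets the grouping of $\Sigma\setminus\{(0)\}$ into lists (for $\ell=2$ both $\{(0),(1),(2)\}$ and $\{(0),(1,2)\}$ would produce $(e_1\subset e_2)$), and a count shows the case split itself cannot work: for $\ell=2$ there are $2^{\ell}\,\ell!=8$ standard flags without hat vectors and $17-8=9$ with hat vectors, while your two cases have sizes $|SPIL(2)|=5$ and $12$. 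In the paper's bijection the hat-free flags correspond instead to the SPILs consisting of a single list $(\lambda_1,\dots,\lambda_\ell,0)$, and these do number $2^{\ell}\,\ell!$.

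The second defect is the source of the index $a$. You extract the first-kind hat vector $\he_a$ (its position and its index) from the $0$-containing list. This loses information, since the entry of that list immediately preceding $0$ may be negative while $\he_a$ carries no sign: already for $\ell=1$ the SPILs $\{(1,0)\}$ and $\{(-1,0)\}$ collide and the flag $(e_{-1})$ is never hit. It also breaks well-definedness: the second-kind hat vectors $\he_{a,-j}$ that you attach to the last entries of the other lists require $j<a$, and nothing in your convention guarantees this. In the paper's construction the roles are reversed: the $0$-list contributes only the trailing run of standard basis vectors (its nonzero entries, signs included, with the $0$ simply dropped); the list whose final entry is largest among the remaining lists supplies $\he_a$, with $a$ equal to that final entry, so that $a$ automatically dominates the second indices; and the other lists end in $\he_{a,-j}$. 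Correspondingly, the inverse $\Lambda$ is Equation \eqref{eq:Lambdadefn} with $0$ adjoined to the final segment, not a list rebuilt out of the hat vectors as you describe, so your $\Lambda$ does not invert your $\Gamma$ either. The ``organizational'' step you defer as routine bookkeeping is exactly where the content of the proof lies, and with your conventions the maps are neither well defined nor mutually inverse.
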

\begin{proof}
For $\Sigma=\{\sigma_{1},\, \sigma_{2},\,\dots, \, \sigma_{t}\} \in \SPILzero,$ let $\ell(\sigma_j)=k_j$ and let $\sigma_{j}=(i_{j,1},\dots, i_{j, k_{j}}).$  We may choose the ordering of the $\sigma_j$ uniquely so that
$i_{1,k_{1}}<\dots <i_{t-1,k_{t-1}}$ and $0 \in \sigma_{t}$.  Let $a:=i_{t-1,k_{t-1}}.$  Then we define 
\begin{equation}\label{eq:GammaBdef}
\begin{split}
\Gamma(\Sigma):= 
&(e_{i_{1,1}}\subset e_{i_{1,2}}\subset\dots\subset \he_{a,-i_{1,k_{1}}}\subset e_{i_{2,1}}\subset\dots\subset \he_{a, -i_{2,k_{2}}}\subset\dots\\
&\subset e_{i_{t-1,1}}\subset \dots \subset \he_{a}
\subset e_{i_{t,1}}\subset\dots\subset e_{i_{t,k_t - 1}}).
\end{split}
\end{equation}
By construction, this is a $SO(2\ell + 1)$-flag in standard form.
To define the inverse $\Lambda$, note that for a $SO(2\ell + 1)$-flag $(v_1\subset \dots\subset v_{\ell})$ in standard form,  if we let $k_1 < \dots < k_r$ be the subsequence consisting of hat vectors, then by Remark \ref{r:firstkind} it follows that 
$v_{k_{i}}=\he_{a, i_{k_{i}}}$ for $i=1,\dots, r-1$ and $v_{k_{r}}=\he_{a}$.
We define $\Lambda(\F) \in \SPILzero$  by Equation (\ref{eq:Lambdadefn}) with  zero adjoined at the end of the last list, and it is routine to check that $\Gamma$ and $\Lambda$ are inverse bijections.
\end{proof}


\begin{lem}\label{l:GrassB}
Let $i \in \{ 1, \dots, \ell \}$. The distinct $B_{2\ell}$-orbits on $\OGr(i,2\ell+1)$ are given by the orbits

\begin{equation}\label{eq:firstsubspaceB}
\mathcal{O}=B_{n-1}\cdot \mbox{span}\{e_{j_{1}},\dots, e_{j_{i}}\}.
\end{equation}

\begin{equation}\label{eq:secondsubspaceB}
\mathcal{O}=B_{n-1}\cdot \mbox{span}\{e_{j_{1}},\dots, e_{j_{i-1}}, \he_{|j_{i}|}\},
\end{equation}
where $\{j_{1},\dots, j_{i}\}\subset \{\pm 1,\dots, \pm \ell\}$, and with $j_{k}\neq \pm j_{m}$.
\end{lem}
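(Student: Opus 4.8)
The plan is to run the argument of Lemma \ref{l:GrassD} with only cosmetic changes.  First decompose $\C^{2\ell+1}=U_1\oplus U_2$, where $U_1=\lspan\{e_{\pm 1},\dots,e_{\pm\ell}\}$ is the natural module for $K=SO(2\ell)$ (equivalently the subspace $e_0^{\perp}$ fixed by the involution $\sigma_{2\ell+1}$ of \cite{CE21I} defining $K$ inside $SO(2\ell+1)$) and $U_2=\C e_0$ is a non-degenerate line.  Since $\dim U_2=1$, each $V\in\OGr(i,2\ell+1)$ satisfies exactly one of the two $B_{2\ell}$-stable conditions: (AB) $V\subset U_1$, or (BB) $\dim(V\cap U_1)=i-1$.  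Because $U_2$ meets every coordinate hyperplane of $\C^{2\ell+1}$ trivially, the type B case is genuinely simpler than type D: there is no analogue of the tilde-vector family, nor of the degenerate subcases of Lemma \ref{l:GrassD}.

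In case (AB), classifying $B_{2\ell}$-orbits on isotropic $i$-planes contained in $U_1$ is identical to classifying $B_{2\ell}$-orbits on $\OGr(i,U_1)=\OGr(i,2\ell)$, which by the Bruhat decomposition (Remark \ref{r:schubertD} with $n=2\ell$) yields exactly the orbits of \eqref{eq:firstsubspaceB}.

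In case (BB), since $V\cap U_1\in\OGr(i-1,U_1)$, by Remark \ref{r:schubertD} one may conjugate by $B_{2\ell}$ so that $V\cap U_1=\lspan\{e_{j_1},\dots,e_{j_{i-1}}\}$ with the $j_k$ of distinct absolute value, and then write $V=\lspan\{e_{j_1},\dots,e_{j_{i-1}},u\}$ with $u=v_0+c\,e_0$, $v_0\in U_1$, $c\neq 0$.  Replacing $u$ by $u$ minus its projection onto $W:=\lspan\{e_{j_1},\dots,e_{j_{i-1}}\}$ (which remains in $V$) and using that $V$ is isotropic, I may assume that $v_0$ lies in $\lspan\{e_m:\ |m|\notin\{|j_1|,\dots,|j_{i-1}|\}\}$ and satisfies $\beta(v_0,v_0)=-c^2\neq 0$.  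The hard part will be to show, by an argument parallel to Remark \ref{r:bvectororbit}, that the Borel of the smaller even orthogonal group $SO(2(\ell-i+1))$ realised on $\lspan\{e_m:\ |m|\notin\{|j_1|,\dots,|j_{i-1}|\}\}$ — which sits inside $B_{2\ell}$ and fixes each $e_{j_k}$ — carries $v_0$ to $\lambda(e_m-e_{-m})$ for a single index $m\notin\{|j_1|,\dots,|j_{i-1}|\}$, with $2\lambda^2=c^2$ forced by the norm condition; concretely, one exhibits the relevant unipotent orbit as a closed, full-dimensional subvariety of the quadric cut out by $\beta(u,u)=0$ inside an affine space, and then rescales by the diagonal torus to normalise the coefficient of $e_0$.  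This brings $u$ to a nonzero scalar multiple of $\he_m=e_m+\sqrt 2\,e_0-e_{-m}$, so $V$ lies in $B_{2\ell}\cdot\lspan\{e_{j_1},\dots,e_{j_{i-1}},\he_m\}$, an orbit of \eqref{eq:secondsubspaceB}.

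For uniqueness, recall that $B_{2\ell}=K\cap B_{2\ell+1}\subseteq B_{2\ell+1}$.  I would observe that each orbit of type \eqref{eq:firstsubspaceB} lies in the $B_{2\ell+1}$-orbit through $\lspan\{e_{j_1},\dots,e_{j_i}\}$, while each orbit of type \eqref{eq:secondsubspaceB} lies in the $B_{2\ell+1}$-orbit through $\lspan\{e_{j_1},\dots,e_{j_{i-1}},e_{-|j_i|}\}$: the unipotent element of $SO(2\ell+1)$ acting on $\lspan\{e_{|j_i|},e_0,e_{-|j_i|}\}$ by $e_0\mapsto e_0+\sqrt 2\,e_{|j_i|}$ and $e_{-|j_i|}\mapsto e_{-|j_i|}-\sqrt 2\,e_0-e_{|j_i|}$, and trivially on the orthogonal complement, lies in $B_{2\ell+1}$ and sends $\C e_{-|j_i|}$ to $\C\he_{|j_i|}$ while fixing each $e_{j_k}$.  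By the Bruhat decomposition of $SO(2\ell+1)$ (Remark \ref{r:schubertD} with $n=2\ell+1$), distinct coordinate labels give distinct $B_{2\ell+1}$-orbits, so the only possible coincidence among the listed $B_{2\ell}$-orbits is between $B_{2\ell}\cdot\lspan\{e_{j_1},\dots,e_{j_{i-1}},\he_{|j_i|}\}$, which satisfies (BB), and $B_{2\ell}\cdot\lspan\{e_{j_1},\dots,e_{j_{i-1}},e_{-|j_i|}\}$, which satisfies (AB); since (AB) and (BB) are $B_{2\ell}$-invariant, no two listed orbits coincide.
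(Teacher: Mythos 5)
Your proof is correct and follows essentially the same route as the paper: the paper establishes the two $N_{2\ell}$-orbit computations (the analogue of Remark \ref{r:bvectororbit}, including the orbit through $e_{-s}-e_s$) and then runs the argument of Lemma \ref{l:GrassD} with the decomposition $\C^{2\ell+1}=e_0^{\perp}\oplus \C e_0$, which is exactly your (AB)/(BB) analysis, normal form $\he_m$ for the anisotropic part, and Bruhat-label uniqueness argument. Your uniqueness step is at the same level of detail as the paper's type D model; to be fully airtight one should also observe that two listed orbits of type (BB) sharing the same $B_{2\ell+1}$-label (e.g.\ those through $\lspan\{e_{-1},\he_{2}\}$ and $\lspan\{e_{-2},\he_{1}\}$) are separated by the $B_{2\ell}$-orbit of $V\cap U_1$, an easy addition that matches how the paper treats the corresponding point in type D.
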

  
To prove this lemma, it is useful to recall that $K=SO(2\ell)$ is realized in $G$ as the elements fixing $e_0$ (\cite{CE21I}, Remark 2.1) and to prove the following analogue of Remark \ref{r:bvectororbit}. Recall that $N_{2\ell}$ is the derived subgroup of $B_{2\ell}.$
 First, for $1 \le j \le \ell$, $N_{2\ell}\cdot e_j =  e_j + \sum_{k < j} \C e_k$ and 
\begin{equation}\label{eq:odd4.13one}
N_{2\ell}\cdot e_{-j} = \{ v\in e_{-j} +\sum_{k > j} \C e_{-k} + \sum_{r=1}^{\ell} \C e_r : \beta(v,v)=0\}.
\end{equation}
Second, for $1 \le j \le \ell$, 
$$
N_{2\ell}\cdot (e_{-s} - e_s) = \{ v\in e_{-s} + \sum_{k > s} \C e_{-k} + \sum_{r=1}^{\ell} \C e_r : \beta(v,v)=-2\}.
$$
These assertions follow similarly to the proof of Remark \ref{r:bvectororbit}, and the proof of the Lemma is similar to the proof of Lemma \ref{l:GrassD}.

\begin{thm}\label{thm:stdB}
The map  
\begin{equation}\label{eq:flagassocB} 
\Psi:\mathcal{F} \to B_{2\ell}\cdot \F
\end{equation}
 is a bijection between $SO(2\ell+ 1)$-flags in standard form and $B_{2\ell}$-orbits on $\B_{2\ell + 1}$.
\end{thm}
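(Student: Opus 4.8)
The plan is to follow the two-step argument used for Theorems \ref{thm:std} and \ref{thm:stdD}. By Part (2) of Theorem \ref{thm:orthocount} together with the bijection $\Gamma$ of Proposition \ref{p:flagsandPILSB}, the set of $SO(2\ell+1)$-flags in standard form is in bijection with $\SPILzero$, which has the same cardinality as $B_{2\ell}\backslash\B_{2\ell+1}$. Hence it suffices to prove that $\Psi$ is injective: if $\F=(v_1\subset\dots\subset v_\ell)$ and $\F'=(v_1'\subset\dots\subset v_\ell')$ are $SO(2\ell+1)$-flags in standard form with $B_{2\ell}\cdot\F=B_{2\ell}\cdot\F'$, then $v_i=v_i'$ for $i=1,\dots,\ell$.

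Writing $V_m=\lspan\{v_1,\dots,v_m\}$ and $V_m'=\lspan\{v_1',\dots,v_m'\}$ and using the projections $\B_{2\ell+1}\to\OGr(m,2\ell+1)$, the hypothesis gives $B_{2\ell}\cdot V_m=B_{2\ell}\cdot V_m'$ for every $m$. I would prove $v_m=v_m'$ by induction on $m$. The two ingredients are the classification of $B_{2\ell}$-orbits on $\OGr(m,2\ell+1)$ in Lemma \ref{l:GrassB} and a span lemma which is the type $B$ analogue of Lemma \ref{l:spanstandardD}: it records, for a flag in standard form, which orbit among \eqref{eq:firstsubspaceB} and \eqref{eq:secondsubspaceB} the space $B_{2\ell}\cdot V_m$ is, in terms of the indices of $v_1,\dots,v_m$. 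This span lemma is proved exactly as Lemma \ref{l:spanstandardD}, straightening a hat vector of the second kind $\he_{a,-j}$ by elements of $B_{2\ell}$ via the $N_{2\ell}$-orbit descriptions following Lemma \ref{l:GrassB} (the analogue of Equation \eqref{eq:hattildeorbit}), the crucial feature being that, by Remark \ref{r:firstkind}, all hat vectors of the second kind occurring in a standard flag share the same first index $a$ and are ordered by condition (b) of Definition \ref{d:stdB}, so straightening one against the others does not disturb the earlier spans. For the base case $m=1$, Lemma \ref{l:GrassB} gives $B_{2\ell}\cdot V_1$ as either $B_{2\ell}\cdot\lspan\{e_{j_1}\}$ or $B_{2\ell}\cdot\lspan\{\he_{|j_1|}\}$; the uniqueness assertion in that lemma, together with the constraint from Definition \ref{d:stdB} and Remark \ref{r:firstkind} on where a hat vector of the second kind may appear, pins down $v_1=v_1'$.

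For the inductive step, assuming $V_{m-1}=V_{m-1}'$, I would argue by cases according to which orbit of Lemma \ref{l:GrassB} equals $B_{2\ell}\cdot V_{m-1}$, and within each, according to whether $v_m$ is a standard basis vector, a hat vector of the first kind, or a hat vector of the second kind. In each subcase the span lemma shows that both the index of $v_m$ and its type are recovered from the pair $(V_{m-1},B_{2\ell}\cdot V_m)$ — namely, by which index in the label of $B_{2\ell}\cdot V_m$ is new relative to $B_{2\ell}\cdot V_{m-1}$, and whether it appears there as $e_{j_m}$ or as $\he_{|j_m|}$. Conditions (a)--(c) of Definition \ref{d:stdB} are precisely what eliminates the residual ambiguity: (a) forbids a hat vector of the second kind once $\he_a$ has been recorded, (c) forces the missing index to appear, and Remark \ref{r:firstkind} guarantees the common first index $a$, so that the straightening of a newly added hat vector of the second kind against the earlier ones leaves $V_1,\dots,V_{m-1}$ untouched. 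This gives $v_m=v_m'$ and completes the induction.

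The main obstacle, exactly as in type $D$, is the bookkeeping for hat vectors of the second kind: one must check that the span lemma correctly identifies $B_{2\ell}\cdot V_m$ when several $\he_{a,-j}$ occur together with $\he_a$, and that the straightening moves supplied by the $N_{2\ell}$-orbit computations preserve the spans already fixed by the induction. This is where the single shared index $a$ of Remark \ref{r:firstkind} and the ordering condition (b) of Definition \ref{d:stdB} do the real work; the remainder is a transcription of the proof of Theorem \ref{thm:stdD}.
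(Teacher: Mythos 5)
Your proposal is correct and follows essentially the same route as the paper: reduce to injectivity of $\Psi$ via the count $|\SPILzero|=|B_{2\ell}\backslash\B_{2\ell+1}|$ from Theorem \ref{thm:orthocount} and Proposition \ref{p:flagsandPILSB}, then induct on the flag steps using Lemma \ref{l:GrassB} together with a type $B$ span/straightening lemma — which is exactly the content of the paper's two stated assertions, $B_{2\ell}\cdot\he_{a,-j}=B_{2\ell}\cdot\he_{j}$ and $B_{2\ell}\cdot\lspan\{\he_{a,-k_i},v_2\}=B_{2\ell}\cdot\lspan\{e_{-k_i},v_2\}$ for $v_2=\he_{a,-k_j}$ or $\he_a$ with $k_j>k_i$, fed into the argument of Theorem \ref{thm:stdD}. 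The bookkeeping you flag (common first index $a$ via Remark \ref{r:firstkind} and the ordering in Definition \ref{d:stdB}(b)) is precisely what the paper relies on, so no gap.
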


To prove this theorem, it is useful to compute $V_{m-1} + \C v_m$ when
$(v_1 \subset \dots \subset v_\ell)$ is a $SO(2\ell+1)$-flag in standard form and $V_k=\lspan \{ v_1, \dots, v_k \}.$   In particular, it is useful to note the following assertions.  First, $B_{2\ell} \cdot \he_{a,-j}=B_{2\ell}\cdot \he_{j}$ for $1 \le j \le \ell$.   Secondly, if $k_j > k_i$, then 
$B_{2\ell} \cdot \lspan\{ \he_{a,-k_i}, v_2 \} = B_{2\ell}\cdot \lspan \{ e_{-k_i}, v_2 \}$ when $v_2 = \he_{a,-k_j}$ or $\he_a.$   Given these assertions, the proof follows the same steps as the proof of Theorem \ref{thm:stdD}.

\section{Monoid action on standard forms}\label{s:monoid}

 
An essential aspect of the geometry of closures of $B_{n-1}$-orbits in $\B_n$ is an understanding of the action of the monoid given by simple roots of $\fg$ and of $\fk=\fg_{n-1}$, which gives information about the relation between different orbits.   In this section, we compute the monoid action for roots of $\fg.$

 \subsection{Monoid Action by simple $\fg$-roots}\label{ss:monoid}

Let $R$ be a complex, reductive algebraic group, let $\B=\B_{R}$ be the flag 
variety of $R$, and let $M$ be an algebraic subgroup of $R$ acting on $\B$ with finitely many orbits. 
 Let $S$ be the simple reflections of the Weyl group of $R$, and recall  the operator $m(s)$ on $M\backslash \B$ from \cite{RS} and other sources.  For this, let $s=s_{\alpha}$ for a simple root $\alpha$ and let $\pi_{\alpha}:\B \to {\mathcal{P}}_{\alpha}$ be the canonical projection from the flag variety to the variety of parabolics of $R$ of type $\alpha.$   Then for $Q_{M}\in M\backslash \B$, recall that 
$m(s)*Q_{M}$ is the unique $M$-orbit open and dense in $\pi_{\alpha}^{-1}(\pi_{\alpha}(Q_{M})).$ Further, $Q_{M} \not= m(s)*Q_{M}$ if and only if $\dim(m(s)*Q_{M})=\dim(Q_{M})+1.$  
If $\alpha$ is a simple root of $\fr$, recall that $\alpha$ can be either real, non-compact imaginary, compact imaginary, or complex for $Q_{M}$, depending on how $Q_{M}$ and $m(s)*Q_{M}$ meet a fibre $\pi_{\alpha}^{-1}(\pi_{\alpha}(\fb))$ for $\fb \in Q_{M}$ (see Section 2.3 of \cite{CE21I} for details).  

For the remainder of this section, we let $M=K=G_{n-1}$ or $M=B_{n-1}$ and $R=G$  with $G=GL(n)$ or $SO(n).$  To compute the monoid action explicitly, it is useful to make the following standard observations, which may be taken to be the definition of the monoid action in these cases.

 Let $\F_{+}\in \B_{n}$ be the standard upper triangular flag given in Equation (\ref{eq:upper}).  
\begin{prop}\label{prop:monoid}
Let $Q_M=M\cdot \Ad(v)\F_{+}$ be an $M$-orbit, where $v\in G$, and let $\alpha\in \Pi_{\fg}$ be a standard simple root.  
Let $s_{\alpha}\in W$ be the simple reflection determined by $\alpha$, and let $u_{\alpha}\in G$ be the Cayley transform with respect to the root $\alpha$ defined in Notation 2.5 of \cite{CE21I}.
\begin{enumerate}
\item The root $\alpha$ is non-compact imaginary or real for $Q_{M}$ if the variety 
$\pi_{\alpha}^{-1}(\pi_{\alpha}(Q_{M}))$
consists of exactly three orbits.  The root $\alpha$ is non-compact imaginary if $\dim (m(s_{\alpha})*Q_{M}) = \dim(Q_{M})+1$ and 
real if $m(s_{\alpha})*Q_{M}=Q_{M}$. If $\alpha$ is non-compact imaginary then
$m(s_{\alpha})*Q_{M}=M\cdot \Ad(vu_{\alpha})\F_{+}$.
\item We say $\alpha$ is complex for $Q_{M}$ if the variety 
$$\pi_{\alpha}^{-1}(\pi_{\alpha}(Q_{M}))=Q_{M}\cup M\cdot \Ad(v)s_{\alpha}(\F_{+}).$$
The root $\alpha$ is  \emph{complex stable} if $m(s_{\alpha})*Q_{M}\neq Q_{M}$ and \emph{complex unstable} if 
$m(s_{\alpha})*Q_{M}=Q_{M}$.
\item For $M=K$, we say that $\alpha$ is compact imaginary for $Q_{K}$ if
$$
\pi_{\alpha}^{-1}(\pi_{\alpha}(Q_{K}))=Q_{K}.
$$
\end{enumerate}
\end{prop}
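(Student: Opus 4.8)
The plan is to derive all three parts from two ingredients: the structure of $\pi_\alpha\colon\B_n\to\mathcal{P}_\alpha$ as a $\mathbb{P}^1$-bundle, and the general formalism of the $m(s)$-operator from \cite{RS} together with the definitions of the root types recalled in Section 2.3 of \cite{CE21I}. The proposition is essentially a dictionary translating those abstract definitions into the concrete geometry available for $G=GL(n),SO(n)$ and $M=K,B_{n-1}$, so the real content is to check that the two descriptions agree and, in the non-compact imaginary case, to identify the Cayley representative with the orbit produced by $m(s_\alpha)$.

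First I would set $\fb=\Ad(v)\F_+\in Q_M$ and let $P=P_\alpha$ be the type-$\alpha$ parabolic with $\pi_\alpha(\fb)=P$. Since $\pi_\alpha$ is a $\mathbb{P}^1$-bundle and $\pi_\alpha(Q_M)=M\cdot P\cong M/(M\cap P)$ is irreducible, the set $Z:=\pi_\alpha^{-1}(\pi_\alpha(Q_M))$ is irreducible, $M$-stable, a finite union of $M$-orbits, and $\dim Z\in\{\dim Q_M,\dim Q_M+1\}$; by definition its unique open dense $M$-orbit is $m(s_\alpha)*Q_M$. Identifying $Z\cong M\times_{M\cap P}\mathbb{P}^1$, the $M$-orbits on $Z$ correspond bijectively to the $(M\cap P)$-orbits on the fibre $\pi_\alpha^{-1}(P)\cong\mathbb{P}^1$, with $Q_M$ corresponding to the orbit $O_0$ through $[\fb]$. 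Now an algebraic group acting on $\mathbb{P}^1$ with finitely many orbits has at most three orbits, the orbit partition being one of $\{\mathbb{P}^1\}$, $\{\mathbb{A}^1,\{\mathrm{pt}\}\}$, $\{\mathbb{G}_m,\{0\},\{\infty\}\}$, or $\{\mathbb{G}_m,\{0,\infty\}\}$, so there are only finitely many shapes for the pair $(\mathbb{P}^1,O_0)$ to examine, and I would go through them one by one.

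If $O_0=\mathbb{P}^1$, then $Z=Q_M$; this forces $M=K$ (when $M=B_{n-1}$ the group $M\cap P$ is solvable, and a solvable group cannot act transitively on $\mathbb{P}^1$), the cross action is trivial, and a fibre is entirely contained in $Q_K$, which is the compact imaginary case of (3). If $Z$ has exactly three $M$-orbits, the fibre partition is $\{\mathbb{G}_m,\{0\},\{\infty\}\}$ and $O_0$ is either $\mathbb{G}_m$ or one of the two points; in the first case $Q_M$ is dense in $Z$, a fibre meets $Q_M$ in $\mathbb{P}^1$ minus two points and $m(s_\alpha)*Q_M=Q_M$, which Section 2.3 of \cite{CE21I} identifies as real; in the second case $Q_M$ has codimension one in $Z$, a fibre meets $Q_M$ in a single point, $m(s_\alpha)*Q_M$ is the dense orbit with $\dim(m(s_\alpha)*Q_M)=\dim Q_M+1$, and this is the non-compact imaginary case, which gives the dimension criteria of (1). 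In the remaining situations $Z$ has two $M$-orbits; when the orbit other than $Q_M$ is $M\cdot\Ad(v)s_\alpha(\F_+)$, one notes that $s_\alpha(\F_+)$ lies in the fibre $\pi_\alpha^{-1}(\pi_\alpha(\F_+))$ (since $s_\alpha$ fixes the type-$\alpha$ parabolic) at the $\fh$-fixed point other than $[\F_+]$, so $\Ad(v)s_\alpha(\F_+)$ is the second $\Ad(v)\fh$-fixed point of the fibre through $\fb$; this is the complex case of (2), which is stable if that orbit is the dense one and unstable if $Q_M$ is.

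The only point that needs a genuine computation is the formula $m(s_\alpha)*Q_M=M\cdot\Ad(vu_\alpha)\F_+$ in the non-compact imaginary case, and here I would use the defining property of the Cayley transform from Notation 2.5 of \cite{CE21I}: $u_\alpha$ lies in the $SL_2$ attached to $\alpha$, so $\Ad(u_\alpha)\F_+$ is a point of the fibre $\pi_\alpha^{-1}(\pi_\alpha(\F_+))$ that is neither $[\F_+]$ nor $[s_\alpha(\F_+)]$, i.e.\ not fixed by the torus; translating by $v$ and checking that $\Ad(vu_\alpha)\F_+$ also avoids the third orbit's point on its fibre places it in the open $(M\cap P)$-orbit, hence in $m(s_\alpha)*Q_M$. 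I expect the main obstacle to be organizational rather than deep: one must confirm that the four types are exhaustive and mutually exclusive for the orbits $Q_M$ that actually occur here, in particular that the degenerate fibre partition $\{\mathbb{G}_m,\{0,\infty\}\}$ either does not arise for these $M$ or is sorted correctly by the dimension criterion, and that no two-orbit fibre configuration other than the complex one occurs; and one must check that the above identifications of the cross-action orbit and of the Cayley orbit match the conventions fixed in \cite{CE21I}. Once the fibre dictionary and the Cayley transform property are in place, each individual assertion of the proposition is immediate.
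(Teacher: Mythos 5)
The paper itself offers no proof of this statement: it is introduced with the remark that these are ``standard observations, which may be taken to be the definition of the monoid action in these cases,'' the details being delegated to \cite{RS} and Section 2.3 of \cite{CE21I}, and it is later even cited as a Proposition-Definition. So there is no argument of the authors to compare yours against; what you are doing is supplying the standard justification, and your dictionary (the identification $\pi_{\alpha}^{-1}(\pi_{\alpha}(Q_M))\cong M\times_{M\cap P_{\alpha}}\mathbb{P}^1$, the bijection between $M$-orbits there and $(M\cap P_{\alpha})$-orbits on the fibre, and the four possible orbit partitions of $\mathbb{P}^1$) is exactly the mechanism the cited references rely on. Note also that your worry about the degenerate partition $\{\mathbb{G}_m,\{0,\infty\}\}$ is harmless for the statement as printed: parts (1)--(3) are conditional clauses and no exhaustiveness is claimed here; exhaustiveness only matters for later statements such as Lemma \ref{l:cplxvncpt}, which the paper handles by reducing to an $SL(2)$ computation.

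The one genuine gap is the Cayley step, and it is not merely organizational. Knowing that $\Ad(u_{\alpha})\F_{+}$ is not an $H$-fixed point of the fibre through $\F_{+}$ does not, after translating by an arbitrary $v$, place $\Ad(vu_{\alpha})\F_{+}$ in the open $(M\cap P_{\alpha})$-orbit: the two closed $(M\cap P_{\alpha})$-orbits in the fibre through $\Ad(v)\F_{+}$ need not coincide with the two $\Ad(v)H$-fixed points. Worse, the conclusion genuinely depends on the choice of $v$ within its coset: since $u_{\alpha}$ lies in the big cell $B\dot{s}_{\alpha}B$, one can choose $b\in B$ with $bu_{\alpha}\in \dot{s}_{\alpha}B$, and replacing $v$ by $vb$ (which does not change the point $\Ad(v)\F_{+}$, hence not the orbit $Q_M$) turns $M\cdot\Ad(vu_{\alpha})\F_{+}$ into the cross-action orbit $M\cdot\Ad(v)s_{\alpha}(\F_{+})$, which in the three-orbit configuration is a closed orbit, not $m(s_{\alpha})*Q_M$. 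So the phrase ``checking that $\Ad(vu_{\alpha})\F_{+}$ avoids the third orbit's point'' is precisely the step that must be carried out, and it requires a normalization of the representative, e.g.\ in the $M=K$ case that $\Ad(v)\fh$ be a $\theta$-stable Cartan, so that $\Ad(v)H\cap M$ surjects onto the maximal torus of the image of $M\cap P_{\alpha}$ acting on the fibre and the two closed fibre orbits are exactly the translated fixed points. The paper sidesteps this by only ever invoking the formula for such normalized representatives (Weyl-group representatives and flags in standard form); to close your argument you should either impose and verify this hypothesis on $v$, or prove the formula for one good representative and observe that the conclusion is a statement about orbits.
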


The following observation will be useful for computing the monoid action on standard forms.  
\begin{lem}\label{l:cplxvncpt}
Let $Q_{M}=M\cdot \Ad(v)\F_{+}$ for $v\in G$ and let $\alpha\in\Pi_{\fg}$ be a simple root 
with $m(s_{\alpha})*Q_{M}\neq Q_{M}$.  Then $\alpha$ is non-compact for $Q_{M}$ if and only if $M\cdot \Ad(vu_{\alpha})\F_{+}\neq M\cdot \Ad(v)s_{\alpha}(\F_{+})$. 

\end{lem}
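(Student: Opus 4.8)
The plan is to analyze the two possible types of the simple root $\alpha$ for $Q_M$ under the standing hypothesis that $m(s_\alpha) * Q_M \neq Q_M$. Since $\alpha$ could a priori be noncompact imaginary, complex stable, real, complex unstable, or compact imaginary for $Q_M$, but the condition $\dim(m(s_\alpha)*Q_M) = \dim(Q_M) + 1$ (equivalently $m(s_\alpha)*Q_M \neq Q_M$, by the recollection in Section \ref{ss:monoid}) rules out real, complex unstable, and compact imaginary, we are left with exactly two cases: $\alpha$ is noncompact (imaginary) for $Q_M$, or $\alpha$ is complex stable for $Q_M$. So the statement reduces to showing that these two cases are distinguished precisely by whether $M\cdot\Ad(vu_\alpha)\F_+$ equals $M\cdot\Ad(v)s_\alpha(\F_+)$ or not.

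First I would handle the noncompact case. If $\alpha$ is noncompact imaginary for $Q_M$, then by Part (1) of Proposition \ref{prop:monoid} we have $m(s_\alpha)*Q_M = M\cdot\Ad(vu_\alpha)\F_+$, and the fibre $\pi_\alpha^{-1}(\pi_\alpha(Q_M))$ consists of exactly three $M$-orbits. I would then argue that the orbit $M\cdot\Ad(v)s_\alpha(\F_+)$ also lies in this fibre — indeed $s_\alpha(\F_+)$ and $\F_+$ project to the same parabolic of type $\alpha$, so $\Ad(v)s_\alpha(\F_+)$ and $\Ad(v)\F_+$ lie in the same $\pi_\alpha$-fibre, hence $M\cdot\Ad(v)s_\alpha(\F_+) \subset \pi_\alpha^{-1}(\pi_\alpha(Q_M))$. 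In the imaginary case the three orbits in the fibre are the closed orbit $Q_M$ itself, the open orbit $m(s_\alpha)*Q_M$, and one further orbit; the point is that when $\alpha$ is imaginary, $\F_+$ and $s_\alpha(\F_+)$ lie in the \emph{same} $M$-orbit within the fibre over $\pi_\alpha(\fb)$ (this is the standard characterization of imaginary roots — the fibre $\PR^1$ meets $Q_M$ in a single point fixed under the relevant reflection, so the Weyl-translate stays in $Q_M$). Thus $M\cdot\Ad(v)s_\alpha(\F_+) = Q_M \neq m(s_\alpha)*Q_M = M\cdot\Ad(vu_\alpha)\F_+$, giving the "only if" direction: noncompact implies the two orbits are unequal.

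Next I would handle the complex stable case. If $\alpha$ is complex for $Q_M$, then by Part (2) of Proposition \ref{prop:monoid} the fibre $\pi_\alpha^{-1}(\pi_\alpha(Q_M)) = Q_M \cup M\cdot\Ad(v)s_\alpha(\F_+)$ consists of just two orbits, and since we are in the complex \emph{stable} subcase, $m(s_\alpha)*Q_M = M\cdot\Ad(v)s_\alpha(\F_+)$. On the other hand, the Cayley transform $u_\alpha$ is designed so that $\Ad(vu_\alpha)\F_+$ still lies in the same $\pi_\alpha$-fibre as $\Ad(v)\F_+$ (the Cayley transform acts within the $\PR^1$-fibre); hence $M\cdot\Ad(vu_\alpha)\F_+$ is one of the two orbits $Q_M$ or $m(s_\alpha)*Q_M$. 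In the complex case the generic point of the fibre — which is what $\Ad(vu_\alpha)\F_+$ is, since $u_\alpha$ moves $\F_+$ off the $s_\alpha$-fixed locus — lands in the open orbit, so $M\cdot\Ad(vu_\alpha)\F_+ = m(s_\alpha)*Q_M = M\cdot\Ad(v)s_\alpha(\F_+)$. This gives the contrapositive of the "if" direction: complex stable implies the two orbits are equal. Combining the two cases with the dichotomy established at the outset completes the proof.

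The main obstacle I anticipate is pinning down precisely \emph{where in the $\PR^1$-fibre} the vectors $\Ad(v)\F_+$, $\Ad(v)s_\alpha(\F_+)$, and $\Ad(vu_\alpha)\F_+$ sit, and in particular verifying cleanly that (a) in the imaginary case $\F_+$ and $s_\alpha(\F_+)$ are $M$-conjugate within the fibre while $u_\alpha\cdot\F_+$ escapes to a different orbit, and (b) in the complex case $u_\alpha\cdot\F_+$ and $s_\alpha\cdot\F_+$ land in the \emph{same} (open) orbit. This is exactly the content of the $\SL_2$- (or $\PR^1$-) local picture of the Richardson–Springer operator, and the cleanest route is probably to reduce to a rank-one computation inside the minimal parabolic of type $\alpha$, invoking the definitions of imaginary/complex roots and of the Cayley transform $u_\alpha$ from Notation 2.5 and Section 2.3 of \cite{CE21I}; the bookkeeping that $u_\alpha$ and $s_\alpha$ preserve the $\pi_\alpha$-fibre is routine but must be stated.
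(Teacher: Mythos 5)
Your opening dichotomy (non-compact imaginary versus complex stable) and your treatment of the complex stable case are essentially sound, modulo the positional claims you yourself flag at the end. But the non-compact case rests on a false assertion: it is not true that for an imaginary root the Weyl translate stays in the orbit, i.e.\ that $M\cdot \Ad(v)s_{\alpha}(\F_{+})=Q_{M}$. In the model rank-one situation $G=SL(2)$, $M$ the diagonal torus, $v=e$, $Q_{M}=\{\fb_{+}\}$, the root $\alpha$ is non-compact imaginary, yet $\Ad(\dot{s}_{\alpha})\fb_{+}=\fb_{-}$ is the \emph{other} torus-fixed point of $\mathbb{P}^{1}$, a closed $M$-orbit distinct from $Q_{M}$: the point $\fb_{+}$ is fixed by the torus, not by the reflection, and in the three-orbit (non-compact) picture the translate by $s_{\alpha}$ interchanges the two closed orbits in the fibre rather than preserving them. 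The same phenomenon appears in the paper's own $GL(3)$ graph: for $Q_{M}=B_{2}\cdot(e_{1}\subset e_{2}\subset e_{3})$ the root $\beta$ is non-compact, $M\cdot\Ad(vu_{\beta})\F_{+}=B_{2}\cdot(e_{1}\subset \he_{2}\subset e_{3})$, while $M\cdot\Ad(v)s_{\beta}(\F_{+})=B_{2}\cdot(e_{1}\subset e_{3}\subset e_{2})\neq Q_{M}$. So the conclusion you want (the two orbits differ) is true, but for a different reason than the one you give: one must show that in the non-compact case $\Ad(v)s_{\alpha}(\F_{+})$ lands in one of the two closed orbits of the fibre rather than in the open orbit $M\cdot\Ad(vu_{\alpha})\F_{+}$, and that positional fact is not a formal consequence of the definition of ``imaginary''; it is exactly the content of the rank-one verification.

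This is where your proof diverges from the paper's. The paper's argument is a reduction: $\pi_{\alpha}^{-1}(\pi_{\alpha}(Q_{M}))\to\pi_{\alpha}(Q_{M})$ is an $M$-homogeneous $\mathbb{P}^{1}$-bundle, so the statement reduces to $G=SL(2)$ with $M$ a subgroup having finitely many orbits on $\B_{\fsl(2)}$, where the locations of $\Ad(v)\F_{+}$, $\Ad(v)s_{\alpha}(\F_{+})$, and $\Ad(vu_{\alpha})\F_{+}$ in the fibre are checked directly case by case. Your closing paragraph correctly identifies this $SL(2)$-local computation as the crux, but the argument you actually supply for the imaginary case substitutes for it a claim that is false in precisely the relevant situation, so as written the ``only if'' direction has a genuine gap; the complex stable direction also leans on the (easier, but still unproved) claim that $Q_{M}$ meets the fibre only in the point $\Ad(v)\F_{+}$, which the same rank-one reduction would settle.
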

\begin{proof}
Since the variety $\pi_{\alpha}^{-1}(\pi_{\alpha}(Q_{M}))\mapsto \pi(Q_{M})$ is an $M$-homogeneous $\mathbb{P}^{1}=\B_{\fsl(2)}$-bundle, it suffices to prove the statement when $G=SL(2)$ and $M\subset SL(2)$ is a subgroup with finitely many orbits on $\B_{\fsl(2)}$, which is an easy computation. 

\end{proof}

\begin{nota}
If $Q=B_{n-1}\cdot\F$ with $\F$ a flag in standard form and $\alpha\in\Pi_{\fg}$, then we denote by $m(s_{\alpha})*\F$ the unique flag in standard form contained in $m(s_{\alpha})*Q$.  
\end{nota}

 \subsection{Computing monoid actions}\label{ss:monoidcompute}

Let $Q$ be a $B_{n-1}$-orbit contained in a $K$-orbit  $Q_{K}\in \B_{n}$.
  In Theorems 4.11 and 4.12 of \cite{CE21I}, we describe the monoid action on $\Borbitspace$ by roots $\alpha\in \Pi_{\fg}$ for which $\alpha$ is compact imaginary for $Q_{K}$ as defined in Part (3) of Proposition \ref{prop:monoid}.
  We now compute the monoid action by roots $\alpha\in \Pi_{\fg}$ on orbits $Q\in\Borbitspace$ such that $m(s_{\alpha})*Q_{K}\neq Q_{K}$.  It follows from Proposition 4.8 of \emph{loc.cit} that for such an $\alpha$,  $m(s_{\alpha})*Q\neq Q$. 

We being with the type $A$ case.  Recall our labelling of the elements of $K\backslash\B_{n}$ by $Q_i$ or $Q_{i,j}$ from Section \ref{ss:Korbits}.
Recall the $K$-stable decomposition of $\C^{n}$, 
$\C^{n}=U_{1}\oplus U_{2}$ where $U_{1}=\mbox{span}\{e_{1},\dots, e_{n-1}\}$ 
and $U_{2}=\mbox{span}\{e_{n}\}$ given after Remark \ref{r.schubert}. Let 
$p_{1}:\C^{n}\to U_{1}$ be the projection onto $U_{1}$ off of $U_{2}$.  
Let $\F =(v_1 \subset \dots\subset v_n)\in\B_{n}$ be a flag in standard form. By Remark \ref{r.glstandard}, exactly one vector $p_{1}(v_i)=0$ and 
if we let $p_1(\F)$ be the sequence  $(p_{1}(v_1) \subset \dots\subset p_{1}(v_n))$ with $p_{1}(v_i)$ omitted, then $p_1(\F)$ is in $\B_{n-1}$
 and is stabilized by the standard diagonal Cartan subalgebra of $\fk$.  

\begin{lem}\label{l:5.1}
Let $\F=(v_{1}\subset v_{2}\subset\dots\subset v_{n})$ be a flag in standard form.  
\begin{enumerate}
\item The flag $\F\in Q_{j}$ if and only if $\F$ contains no hat vectors and $v_{j}=e_{n}$. 
\item  The flag $\F\in Q_{i,j}$ with $i<j$ if and only if $v_{i}$ is a hat vector, 
$v_{k}$ is a standard basis vector for all $k<i$, and $v_{j}=e_{n}$.  
\end{enumerate}
\end{lem}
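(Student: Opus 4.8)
The plan is to attach to every flag two $K$-invariant integers that already determine its $K$-orbit, and then to read those integers off the combinatorial shape of a standard-form flag. For $\F = (v_1 \subset \dots \subset v_n)$ with $V_m = \lspan\{v_1, \dots, v_m\}$, I would set
\[
a(\F) = \min\{m : V_m \not\subset U_1\}, \qquad b(\F) = \min\{m : e_n \in V_m\},
\]
where $\C^n = U_1 \oplus U_2$ is the $K$-stable decomposition introduced after Remark~\ref{r.schubert}, so that $U_1 = \lspan\{e_1, \dots, e_{n-1}\}$ and $U_2 = \C e_n$. Since $K$ stabilizes both $U_1$ and $U_2$, the pair $(a(\F), b(\F))$ depends only on the $K$-orbit of $\F$, and visibly $1 \le a(\F) \le b(\F) \le n$.

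First I would verify on the representatives of Section~\ref{ss:Korbits} that $\mathcal{F}_i$ of \eqref{eq:typeAflagclosed} has invariant $(i,i)$ and $\mathcal{F}_{i,j}$ of \eqref{eq:typeAflag} has invariant $(i,j)$; these are short direct computations. Because $K\backslash \B_n$ and the set $\{(i,j) : 1 \le i \le j \le n\}$ both have $\binom{n+1}{2}$ elements, and the $K$-invariant map $\F \mapsto (a(\F), b(\F))$ is onto (via these representatives, with the convention $Q_{i,i} = Q_i$), it descends to a bijection. Hence $\F \in Q_{i,j}$ if and only if $(a(\F), b(\F)) = (i,j)$, which reduces the lemma to computing $a$ and $b$ for a flag $\F$ in standard form.

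This last computation is the heart of the argument and is where Definition~\ref{d:std} enters. For $a$: a vector $v_k$ lies in $U_1$ precisely when $v_k = e_{j_k}$ with $j_k \ne n$, so $V_m \subset U_1$ iff none of $v_1, \dots, v_m$ is a hat vector or equals $e_n$; combined with condition (b) of Definition~\ref{d:std} (every hat vector precedes $e_n$), this shows $a(\F)$ is the position of the first hat vector of $\F$ if one occurs, and otherwise the position of $e_n$. For $b$, I claim it equals the position $q$ of the vector $e_n$ itself, which exists by condition (a) and is unique by Remark~\ref{r.glstandard}; one inclusion is trivial, and for the other I would assume $e_n$ is a linear combination of $v_1, \dots, v_m$ with $m < q$, use that each such $v_k$ is $e_{j_k}$ or $\he_{j_k}$ with the indices $j_k$ pairwise distinct (Remark~\ref{r.glstandard}), and compare coordinates: the $e_{j_k}$-coordinate equations force the coefficients of all hat vectors to vanish, after which the $e_n$-coordinate equation reads $0 = 1$, a contradiction. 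I expect this coordinate bookkeeping to be the only genuinely delicate point; alternatively it can be extracted from Lemma~\ref{l:spanstandard}.

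Finally both parts follow by combining these formulas with the orbit--invariant dictionary from the second paragraph. If $\F$ has no hat vector then $a(\F) = b(\F) = q$, so $\F \in Q_q$ with $v_q = e_n$; conversely $\F \in Q_j$ gives $a(\F) = b(\F) = j$, forcing $v_j = e_n$ and ruling out hat vectors (which would make $a(\F) < b(\F)$), which proves (1). If $\F$ has a hat vector, with first occurrence at position $i$, then $a(\F) = i < q = b(\F)$, the vectors before position $i$ are standard basis vectors by minimality of $i$, and $v_q = e_n$; conversely $\F \in Q_{i,j}$ with $i < j$ gives $a(\F) = i \ne j = b(\F)$, hence $\F$ has a hat vector first appearing at position $i$, preceded only by standard basis vectors, and $v_j = e_n$, which proves (2).
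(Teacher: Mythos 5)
Your proof is correct, but it takes a genuinely different route from the paper. The paper argues by explicit conjugation: using that $W_K$ acts by permutations fixing $e_n$ and that $p_1$ is $K$-equivariant, it moves a standard-form flag satisfying the stated conditions onto the representatives \eqref{eq:typeAflagclosed} and \eqref{eq:typeAflag}, cleaning up intermediate hat vectors via the observation that $\lspan\{\he_{j-1},\he_{t-1}\}=\lspan\{\he_{j-1},e_{t-1}-e_{j-1}\}$ is $K$-conjugate to $\lspan\{\he_{j-1},e_{t-1}\}$; necessity then follows because every standard flag satisfies exactly one of the sufficient conditions. You instead build the complete $K$-invariant $(a(\F),b(\F))=(\min\{m:V_m\not\subset U_1\},\,\min\{m:U_2\subset V_m\})$, check it on the representatives, use the known count $|K\backslash\B_n|=\binom{n+1}{2}$ (or simply that the listed orbits are exhaustive and get pairwise distinct invariants) to conclude the invariant classifies orbits, and then read the invariant off a standard flag; the coordinate argument showing $b(\F)$ is the position of $e_n$, and the use of condition (b) of Definition \ref{d:std} to locate $a(\F)$, are exactly the delicate points and you handle them correctly (your appeal to Remark \ref{r.glstandard} for distinctness of indices is what makes the coefficient comparison work). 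What each approach buys: yours avoids explicit conjugations, gives a membership criterion for $Q_{i,j}$ valid for arbitrary flags (it is essentially the $\dim(V_m\cap U_1)$ data underlying Lemma \ref{l:AGrass}), and makes necessity automatic rather than by exhaustion; the paper's hands-on conjugation, by contrast, produces an explicit element of $K$ carrying the flag to $\F_{i,j}$, and the hat-vector span manipulation it uses is recycled later (Lemma \ref{l:spanstandard} and the monoid computations in Section \ref{s:monoid}), so the explicit method pays dividends elsewhere in the paper.
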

\begin{proof}
We first prove the sufficiency of both (1) and (2).  Suppose that $\F$ contains no hat vectors and that $v_{j}=e_{n}$.  Since $W_K$ acts on $\B_n$ as permutation matrices fixing $e_n$,  there exists a $w\in W_{K}$ such that $w(p_{1}(\F))=\F_{+,n-1}$ 
where $\F_{+,n-1}$ is the standard upper triangular flag in $\C^{n-1}$ (see Equation (\ref{eq:upper})).  
Since the map $p_{1}$ is $K$-equivariant it follows that $w(\F)=\F_{j}$,  where $\F_{j}$ is the flag in Equation (\ref{eq:typeAflagclosed}) with $i=j$.   
Now suppose that $\F=(v_{1}\subset \dots\subset v_{i}\subset\dots \subset v_{j}\subset \dots\subset v_{n})$ with $v_{i}$ a hat vector, $v_{k}$ a standard basis vector for all $k<i$, and $v_{j}=e_{n}$.  Then arguing as above, we can find a $\sigma\in W_{K}$ such that $\sigma(p_{1}(\F))=\F_{i,j}^{\prime}$, where 
$$
\F_{i,j}^{\prime}=(e_{1}\subset \dots \subset e_{i-1}\subset \underbrace{e_{j-1}}_{i}\subset e_{i}\subset \dots \subset \underbrace{e_{j-2}}_{j-1}\subset\underbrace{e_{j}}_{j}\subset \dots \subset e_{n-1}). 
$$
As above, it follows that 
$$
\Ad(\dot{\sigma})\F=(e_{1}\subset \dots\subset e_{i-1}\subset\underbrace{\he_{j-1}}_{i}\subset v_{i+1}^{\prime}\subset \dots \subset v_{j-1}^{\prime}\subset \underbrace{e_{n}}_{j}\subset e_{j}\subset\dots\subset e_{n-1}),
$$
where $v_{t}^{\prime}=e_{t-1}$ or $\he_{t-1}$ for all $t=i+1, \dots, j-1$.  Now if $v_{t}^{\prime}=e_{t-1}$ for all $t$, then 
$\Ad(\dot{\sigma})\F=\F_{i,j}$, where $\F_{i,j}$ is the flag in Equation (\ref{eq:typeAflag}) whence $\F_{i,j}\in Q_{i,j}$.  
If, on the other hand, $v_{t}^{\prime}=\he_{t-1}$ for some $t$, then consider the space 
$\mbox{span}\{\he_{j-1}, \, \he_{t-1}\}=\mbox{span}\{\he_{j-1}, e_{t-1}-e_{j-1}\}.$   
This space is $K$-conjugate to $\mbox{span}\{\he_{j-1}, e_{t-1}\}$ without altering any other vectors 
in $\Ad(\dot{\sigma})\F$.  Thus, there exists $k\in K$ such that $\Ad(k \dot{\sigma})\F=\F_{i,j}$ and $\F_{i,j}\in Q_{i,j}$ and this completes the proof of sufficiency.  Necessity now follows since every flag in standard form satisfies exactly one of the sufficient conditions in the Lemma.
\end{proof}

 Let $\alpha \in \Pi_{\fg}.$  Note that if $Q_K=Q_i$ is a closed $K$-orbit in $\B_n$,  then by Proposition 4.9 of \cite{CE21I}, $m(s_{\alpha})*Q_i\not= Q_i$ if and only if $\alpha=\alpha_{i-1}$ or $\alpha_i$.   By the same result, if $Q_K=Q_{i,j}$ is a non-closed $K$-orbit in $\B_n$, then $m(s_{\alpha})*Q_{i,j} \not= Q_{i,j}$ if and only if $\alpha=\alpha_{i-1}$ or $\alpha_j.$

\begin{prop}\label{p:monoidactiononPILS}
Let $\fg=\fgl(n)$ and $Q=B_{n-1}\cdot \F$ and let $Q$ be in the $K$-orbit $Q_{K}$.\\
\noindent Case I:  Suppose $Q_{K}$ is not closed so that $Q_{K}=Q_{i,j}$ with $1\leq i<j\leq n$. Then $\mathcal{F}$ has the form
\begin{equation}\label{eq:baseflagmonoid}
\F=(e_{\ell_{1}}\subset e_{\ell_{2}}\subset \dots\subset e_{\ell_{i-1}}\subset\underbrace{\he_{\ell_{i}}}_{i}\subset v_{i+1}\subset\dots \subset v_{j-1}\subset\underbrace{e_{n}}_{j}\subset e_{\ell_{j+1}}\subset\dots\subset e_{\ell_{n-1}}).
\end{equation}

\begin{enumerate}
\item For the simple root $\alpha_{i-1},$  
\begin{enumerate}
\item if $\ell_{i-1}<\ell_{i}$, then $\alpha_{i-1}$ is complex stable for $Q$ and 
\begin{equation}\label{eq:i-1cplx}
m(s_{\alpha_{i-1}})*\mathcal{F}=(e_{\ell_{1}}\subset\dots\subset e_{\ell_{i-2}}\subset\underbrace{\he_{\ell_{i}}}_{i-1}\subset \underbrace{e_{\ell_{i-1}}}_{i}\subset v_{i+1}\subset\dots\subset v_{j-1}\subset e_{n}\subset \dots \subset e_{\ell_{n-1}}).
\end{equation}
\item If $\ell_{i-1}>\ell_{i}$, then $\alpha_{i-1}$ is non-compact for $Q$ and 
\begin{equation}\label{eq:i-1nc}
m(s_{\alpha_{i-1}})*\mathcal{F}=(e_{\ell_{1}}\subset\dots\subset e_{\ell_{i-2}}\subset\underbrace{\he_{\ell_{i-1}}}_{i-1}\subset\underbrace{\he_{\ell_{i}}}_{i}\subset v_{i+1}\subset \dots\subset v_{j-1}\subset e_{n}\subset \dots\subset e_{\ell_{n-1}}).
\end{equation}
\end{enumerate}
\item For the simple root $\alpha_{j}$, 
let $k:=\min(\{\ell_{i}\}\cup\{t_{m}|\, v_{m}=\he_{t_{m}}, \, m=i+1,\dots, j-1.\}).$
\begin{enumerate}
\item If $\ell_{j+1}>k$, then $\alpha_{j}$ is complex stable for $Q$ and 
\begin{equation}\label{eq:jcplx}
m(s_{\alpha_{j}})*\mathcal{F}=(e_{\ell_{1}}\subset \dots\subset e_{\ell_{i-1}} \subset \he_{\ell_{i}}\subset v_{i+1}\subset\dots\subset  v_{j-1}\subset \underbrace{e_{\ell_{j+1}}}_{j}\subset\underbrace{e_{n}}_{j+1}\subset \dots\subset e_{\ell_{n-1}}).
\end{equation}
\item If $k>\ell_{j+1}$, then $\alpha_{j}$ is non-compact for $Q$ and 
\begin{equation}\label{eq:jnc}
m(s_{\alpha_{j}})*\mathcal{F}=(e_{\ell_{1}}\subset \dots\subset e_{\ell_{i-1}}\subset\he_{\ell_{i}}\subset v_{i+1}\subset \dots \subset v_{j-1} \subset\underbrace{\he_{\ell_{j+1}}}_{j}\subset \underbrace{e_{n}}_{j+1} \subset\dots\subset e_{\ell_{n-1}}).
\end{equation}
\end{enumerate}
\end{enumerate}

\noindent Case II: Let $Q_{K}$ be closed, so $Q_{K}=Q_{i}$ for $i=1,\dots, n$.
 Then 
$$
\F=(e_{\ell_{1}}\subset\dots\subset e_{\ell_{i-1}}\subset \underbrace{e_{n}}_{i} \subset e_{\ell_{i+1}}\subset\dots\subset e_{\ell_{n-1}}).
$$
The roots $\alpha_{i-1}$ and $\alpha_{i}$ are both non-compact for $Q$ and 
\begin{equation}\label{eq:typeAclosed}
\begin{split}
m(s_{\alpha_{i-1}})*\F&=(e_{\ell_{1}}\subset\dots\subset \underbrace{\he_{\ell_{i-1}}}_{i-1}\subset \underbrace{e_{n}}_{i} \subset e_{\ell_{i+1}}\subset\dots\subset e_{\ell_{n-1}}), \\  m(s_{\alpha_{i}})*\F&=(e_{\ell_{1}}\subset\dots\subset e_{\ell_{i-1}}\subset \underbrace{\he_{\ell_{i+1}}}_{i}\subset \underbrace{e_{n}}_{i+1} \subset e_{\ell_{i+2}}\subset\dots\subset e_{\ell_{n-1}}).
\end{split}
\end{equation}
\end{prop}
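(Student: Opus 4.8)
The plan is to reduce every assertion to the ``$SL(2)$-model'' description of the monoid action recorded in Proposition \ref{prop:monoid} and Lemma \ref{l:cplxvncpt}, combined with the explicit identification of standard forms inside $K$-orbits from Lemma \ref{l:5.1}. For a given standard-form flag $\F$ with $Q = B_{n-1}\cdot\F$, I first need to exhibit an element $v \in G$ with $\F = \Ad(v)\F_+$, or at least enough of one that the two test flags $\Ad(vu_\alpha)\F_+$ and $\Ad(v)s_\alpha(\F_+)$ can be computed; since $\alpha = \alpha_{i-1}$ or $\alpha_j$ acts on a flag only by modifying the two vectors $v_{i-1}, v_i$ (resp. $v_j, v_{j+1}$) in positions adjacent to the simple reflection, the relevant computation is genuinely local. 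Concretely, $s_{\alpha_{i-1}}(\F_+)$ swaps the vectors in positions $i-1$ and $i$, while $u_{\alpha_{i-1}}$ (the Cayley transform of Notation 2.5 of \cite{CE21I}) acts on that same $2$-plane. So for each case I would: (a) write down $\Ad(v)s_\alpha(\F_+)$ by swapping the two relevant vectors of $\F$; (b) write down $\Ad(vu_\alpha)\F_+$ by applying the Cayley transform to the $2$-plane they span; (c) put each of these two flags into standard form by the straightforward $B_{n-1}$-conjugation moves of Remark \ref{r.schubert} and Lemma \ref{l:AGrass} (e.g. $\lspan\{\he_a,\he_b\}$ is $B_{n-1}$-conjugate to $\lspan\{e_a,\he_b\}$ when $a>b$); and (d) compare. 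By Lemma \ref{l:cplxvncpt}, $\alpha$ is non-compact for $Q$ exactly when the standard forms of (a) and (b) differ, and it is complex-stable exactly when they agree; in the latter case $m(s_\alpha)*\F$ is the common standard form, and in the former case $m(s_\alpha)*\F = B_{n-1}\cdot\Ad(vu_\alpha)\F_+$ by Part (1) of Proposition \ref{prop:monoid}, again after putting it in standard form.

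The first thing I would do is handle Case II (closed $K$-orbit $Q_i$), since there $\F$ has no hat vectors and the computation is cleanest: in position $i$ we have $e_n$, and the $2$-plane in positions $i-1,i$ is $\lspan\{e_{\ell_{i-1}},e_n\}$. Applying the Cayley transform $u_{\alpha_{i-1}}$ to this plane produces the hat vector $\he_{\ell_{i-1}}$ sitting in position $i-1$ and $e_n$ still in position $i$ (this is the standard ``$e_j, e_n \mapsto \he_j, e_n$'' computation underlying part (ii) of Lemma \ref{l:spanstandard}), whereas $s_{\alpha_{i-1}}(\F_+)$ merely swaps to give $e_n$ in position $i-1$ and $e_{\ell_{i-1}}$ in position $i$ — a flag \emph{not} in standard form (it violates condition (b) of Definition \ref{d:std}) whose standard form is readily seen to differ from the Cayley transform's output. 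Hence $\alpha_{i-1}$ is non-compact and Equation \eqref{eq:typeAclosed} follows; the $\alpha_i$ case is identical with $i, i+1$ in place of $i-1, i$. This also confirms $m(s_\alpha)*Q_K \neq Q_K$ for these two roots, as already noted from Proposition 4.9 of \cite{CE21I}.

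For Case I the mechanics are the same but there are more sub-cases. For $\alpha_{i-1}$: the relevant $2$-plane of $\F$ is $\lspan\{e_{\ell_{i-1}},\he_{\ell_i}\}$. The swap $s_{\alpha_{i-1}}$ gives $\he_{\ell_i}$ in position $i-1$ and $e_{\ell_{i-1}}$ in position $i$, which is already in standard form (a single hat vector, earlier than any later hat vector coming from the unchanged $v_{i+1},\dots,v_{j-1}$ segment, since those have smaller index by condition (c)) — this gives \eqref{eq:i-1cplx}. The Cayley transform on that plane produces $\he_{\ell_i}$ in position $i-1$ and $\he_{\ell_{i-1}}$ in position $i$; when $\ell_{i-1}<\ell_i$ this violates condition (c) and its standard form (swap the two hat vectors, convert one to a plain basis vector) coincides with the swap's output, so $\alpha_{i-1}$ is complex-stable; when $\ell_{i-1}>\ell_i$ it is \emph{already} in standard form and differs from \eqref{eq:i-1cplx}, so $\alpha_{i-1}$ is non-compact, giving \eqref{eq:i-1nc}. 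For $\alpha_j$: here the $2$-plane is $\lspan\{e_n, e_{\ell_{j+1}}\}$, but the subtlety is that within the span $V_{j+1}$ the vector $e_n$ interacts with all the earlier hat vectors $\he_{\ell_i}, \he_{t_m}$ via Lemma \ref{l:spanstandard}(ii)–(iii); the effective ``index competing with $e_{\ell_{j+1}}$'' is the minimum $k$ of all those hat indices, which is exactly the quantity defined in the statement. The swap gives $e_{\ell_{j+1}}$ in position $j$, $e_n$ in position $j+1$, in standard form, yielding \eqref{eq:jcplx}; the Cayley transform yields a hat vector whose effective index after the standard-form reductions is $\min(k,\ell_{j+1})$, so it agrees with the swap's output iff $\ell_{j+1}<k$ (complex-stable, \eqref{eq:jcplx}) and differs iff $k<\ell_{j+1}$ (non-compact, $\he_{\ell_{j+1}}$ survives in position $j$, \eqref{eq:jnc}).

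The main obstacle I anticipate is bookkeeping rather than conceptual: carefully tracking, in the $\alpha_j$ case, how the Cayley-transform output interacts with the whole middle block $v_{i+1}\subset\dots\subset v_{j-1}$ of possibly-hat vectors when one forms the span $V_{j+1}$ and reduces to standard form — this is precisely where Lemma \ref{l:spanstandard}(iii) must be invoked repeatedly, and where the quantity $k = \min(\{\ell_i\}\cup\{t_m\})$ enters. One must check that after the Cayley transform the resulting flag's standard form genuinely has $\he_{\min(k,\ell_{j+1})}$ in position $j$ (and that the middle block's hat vectors are reorganized correctly), and that this is consistent with conditions (a)–(c) of Definition \ref{d:std}; the rest of the argument is a direct, if slightly tedious, case check using the $B_{n-1}$-conjugation moves already established in Section \ref{s:std}. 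Finally, once all standard forms have been identified, the non-compact-versus-complex dichotomy in each case is read off from Lemma \ref{l:cplxvncpt}, and the dimension bookkeeping (that $m(s_\alpha)*Q$ has dimension one more than $Q$ in the non-compact case) is automatic from the general theory recalled before Proposition \ref{prop:monoid}.
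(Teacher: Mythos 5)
Your overall strategy is the same as the paper's: use Lemma \ref{l:5.1} to get the shape \eqref{eq:baseflagmonoid}, then for each root compare the two test orbits $B_{n-1}\cdot\Ad(v)s_{\alpha}(\F_{+})$ and $B_{n-1}\cdot\Ad(vu_{\alpha})\F_{+}$ via Lemma \ref{l:cplxvncpt}, reducing each to standard form and invoking Theorem \ref{thm:std} and Proposition \ref{prop:monoid}. However, two of the central computations in your sketch are wrong, and they are exactly the computations the proposition turns on. First, in Case I for $\alpha_{i-1}$, the Cayley transform does not produce the flag with both positions $i-1$ and $i$ replaced by hat vectors. Since the Cayley transform only changes the line in position $i-1$ inside the fixed $2$-plane $V_{i-2}+\lspan\{e_{\ell_{i-1}},\he_{\ell_{i}}\}$, the correct representative has $e_{\ell_{i-1}}+\he_{\ell_{i}}$ in position $i-1$ and $\he_{\ell_{i}}$ in position $i$; the line $\C(e_{\ell_{i-1}}+e_{\ell_{i}}+e_{n})$ is $B_{n-1}$-equivalent to $\C\,\he_{\max(\ell_{i-1},\ell_{i})}$, and this is what produces the dichotomy \eqref{eq:i-1cplx} versus \eqref{eq:i-1nc}. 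Your claimed representative is not in the Cayley orbit in the complex-stable case (its position-$(i-1)$ line lies in the wrong $B_{n-1}$-orbit), and your condition-(c) analysis of it is internally inconsistent (a flag with earlier hat index $\ell_{i-1}$ and later hat index $\ell_{i}$ violates (c) exactly when $\ell_{i-1}<\ell_{i}$, which is the case you call ``already standard''), so the argument as written does not establish (1)(a)--(b).

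Second, and more seriously, your dichotomy for $\alpha_{j}$ is inverted. The Cayley representative has $\he_{\ell_{j+1}}$ in position $j$ and $e_{n}$ in position $j+1$; this hat can be stripped (replacing $\he_{\ell_{j+1}}$ by $e_{\ell_{j+1}}$ via $\lspan\{\he_{k},\he_{\ell_{j+1}}\}=\lspan\{\he_{k},e_{\ell_{j+1}}-e_{k}\}\sim_{B_{n-1}}\lspan\{\he_{k},e_{\ell_{j+1}}\}$) precisely when some earlier hat index is \emph{smaller}, i.e.\ when $\ell_{j+1}>k$, in which case the two test orbits coincide and $\alpha_{j}$ is complex stable; when $k>\ell_{j+1}$ the flag \eqref{eq:jnc} is already in standard form, the two standard forms differ, and $\alpha_{j}$ is non-compact. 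You assert the opposite (``agrees with the swap's output iff $\ell_{j+1}<k$''), which would prove the negation of parts (2)(a)--(b). A small further slip: in Case II the swapped flag $(\dots\subset e_{n}\subset e_{\ell_{i-1}}\subset\dots)$ does not violate condition (b) of Definition \ref{d:std}; it is already in standard form, and the non-compactness follows simply because it differs from the standard form $(\dots\subset\he_{\ell_{i-1}}\subset e_{n}\subset\dots)$ of the Cayley orbit. So the plan is the right one, but the case analysis must be redone with the correct Cayley representatives and with the inequalities in part (2) reversed.
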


\begin{proof}
We  prove the statements when $Q_{K}=Q_{i,j}$.    
Equation (\ref{eq:baseflagmonoid}) follows from Lemma \ref{l:5.1}. 
Let $\F= \Ad(v)\F_{+}$ for some $v\in G$.  By Proposition 4.8 of \emph{loc. cit.}, 
we know that $m(s_{\alpha_{j}})*Q\neq Q.$ 
We need only determine whether $\alpha_{j}$ is non-compact or complex stable for $Q$.  
For this, it suffices by Lemma \ref{l:cplxvncpt} to compare the orbits $Q_{1}:=B_{n-1}\cdot\Ad(v) s_{\alpha_{j}}(\F_{+})$ and $Q_{2}:=B_{n-1}\cdot \Ad(v)\Ad(u_{\alpha_{j}})\F_{+}$.  Let $\F_{1}$ and $\F_{2}$ be the unique flags in standard form in the orbits $Q_{1}$ and $Q_{2}$ respectively.  
Then straightforward computations with flags show that $\F_{1}$ is the flag in (\ref{eq:jcplx}) 
and $Q_{2}$ contains the flag in (\ref{eq:jnc}) (though this flag need not be in standard form).  Let $k$ be given as in Part (2) of Case I of the Proposition. 
Now if $\ell_{j+1}>k$, then as in the proof of Lemma  \ref{l:spanstandard}, the space $\mbox{span}\{\hat{e}_{k}, \hat{e}_{\ell_{j+1}}\}$ is $B_{n-1}$-conjugate to $\mbox{span}\{\hat{e}_{k}, e_{\ell_{j+1}}\}$, where the conjugation leaves all other vectors in the flag unchanged.  Thus, $\F_{2}=\F_{1}$, so $Q_{2}=Q_{1}$.  
Lemma \ref{l:cplxvncpt} now implies $\alpha_{j}$ is complex stable for $Q$ and $m(s_{\alpha_{j}})*\F=\F_{1}$ 
is given by (\ref{eq:jcplx}) by Proposition \ref{prop:monoid}.  
On the other hand, if $\ell_{j+1}<k$ then the flag in (\ref{eq:jnc}) is in standard form 
so that $\F_{2}$ is given by (\ref{eq:jnc}).  Thus, $\F_{1}\neq \F_{2}$, and it follows 
from Theorem \ref{thm:std} that $Q_{1}\neq Q_{2}$.  
Now Lemma \ref{l:cplxvncpt} implies that $\alpha_{j}$ is non-compact for $Q$ 
and $m(s_{\alpha_{j}})*\F=\F_{2}$ is given by (\ref{eq:jnc}) by Proposition \ref{prop:monoid}.

Now we consider the simple root $\alpha_{i-1}$.  Let $Q_{1}:=B_{n-1}\cdot\Ad(v) s_{\alpha_{i-1}}(\F_{+})$ and $Q_{2}:=B_{n-1}\cdot \Ad(v)\Ad(u_{\alpha_{i-1}})\F_{+}$ and let $\F_{1}$ and $\F_{2}$ be as above.  In this case,
$\F_{1}$ is the flag in Equation (\ref{eq:i-1cplx}) and $Q_{2}$ contains the flag (not in standard form)
\begin{equation}\label{eq:helperflag}
(e_{\ell_{1}}\subset\dots\subset e_{\ell_{i-2}}\subset\underbrace{e_{\ell_{i-1}}+\hat{e}_{\ell_{i}}}_{i-1}\subset\underbrace{\he_{\ell_{i}}}_{i}\subset v_{i+1}\subset \dots\subset v_{j-1}\subset e_{n}\subset \dots\subset e_{\ell_{n-1}}).
\end{equation}
Now if $\ell_{i}>\ell_{i-1}$, then the flag in (\ref{eq:helperflag}) is $B_{n-1}$-conjugate to the flag 
in (\ref{eq:i-1cplx}) where the conjugation maps $e_{\ell_{i}}$ to $e_{\ell_{i}}-e_{\ell_{i-1}}$ and fixes all other vectors.  So $Q_{1}=Q_{2}$ and Lemma \ref{l:cplxvncpt} implies that the root $\alpha_{i-1}$ is complex stable for $Q$.  The flag $m(s_{\alpha_{i-1}})*\F$ is then given by (\ref{eq:i-1cplx}) by Proposition \ref{prop:monoid}.  
If, on the other hand, $\ell_{i}< \ell_{i-1}$, then the flag in (\ref{eq:helperflag}) is
$B_{n-1}$-conjugate to the flag in Equation (\ref{eq:i-1nc}), which is in standard form.  
It follows that $Q_{1}\neq Q_{2}$, so that $\alpha_{i-1}$ is non-compact for $Q$ by Lemma \ref{l:cplxvncpt}, and 
$m(s_{\alpha_{i-1}})*\F$ is given by Equation (\ref{eq:i-1nc}) by Proposition \ref{prop:monoid}.  The proof when $Q_{K}$ is closed is simpler and we leave the details to the reader.
\end{proof}

%


\noindent We now state the analogues of Proposition \ref{p:monoidactiononPILS} for 
types D and B.  The proofs are analogous to the type A case.   
The analogue of Lemma \ref{l:5.1} can be proven using Equation (\ref{eq:typeDflag}) in the type D case 
and Equation (\ref{eq:typeBflag}) in the type B case.  The assertions about the monoid action are proven using Lemma \ref{l:cplxvncpt} and computations with isotropic flags using Remark \ref{r:bvectororbit} in the type D case and Equation (\ref{eq:odd4.13one}) in the type B case.  The details are left to the reader.

Let $\fg=\fso(2\ell)$ and $\alpha\in\Pi_{\fg}$.  If $Q_{K}=Q_{+}$ is the unique closed $K$-orbit, 
then by Proposition 4.9 of \cite{CE21I}, $m(s_{\alpha})*Q_{+}\neq Q_{+}$ if and only if $\alpha=\alpha_{\ell-1},\, \alpha_{\ell}$.  If $Q_{K}=Q_{i}$ with $i=1,\dots, \ell-1$ is not closed, then the same result implies 
that $m(s_{\alpha})*Q_{i}\neq Q_{i}$ if and only if $\alpha=\alpha_{i-1}$.  

\begin{prop}\label{p:monoidactiononPILSD}
Let $\fg=\fso(2\ell)$ and $Q=B_{2\ell-1}\cdot \F$ with $Q\subset Q_{K}$.\\

\noindent Case I:  $Q_{K}=Q_{i}$, $i=1, \dots, \ell-1$ is not closed. 
In this case, $\F$ has the form:
$$
\F=(e_{j_{1}}\subset \dots \subset e_{j_{i-1}}\subset v_{i}\subset \dots\subset v_{\ell-1}),
$$
where $v_{i}=e_{\ell},\, e_{-\ell},\, \he_{j_{i}},\, \mbox{or } \te_{j_{i}}$.    
There are two cases to consider:
\begin{enumerate}
\item $v_{i}=e_{\pm\ell}$.
\begin{enumerate}
\item If $j_{i-1}>0$, then $\alpha_{i-1}$ is complex stable for $Q$, and 
\begin{equation}\label{eq:cplxDroot}
m(s_{\alpha_{i-1}})*\F=(e_{j_{1}}\subset \dots \subset e_{j_{i-2}}\subset\underbrace{e_{\pm \ell}}_{i-1}\subset e_{j_{i-1}} \subset v_{i+1}\subset \dots\subset v_{\ell-1}).
\end{equation}
\item If $j_{i-1}<0$, then $\alpha_{i-1}$ is non-compact for $Q$, and 
\begin{equation}\label{eq:noncptDroot}
m(s_{\alpha_{i-1}})*\F=(e_{j_{1}}\subset \dots \subset e_{j_{i-2}}\subset\underbrace{v_{i-1}}_{i-1}\subset v_{i} \subset v_{i+1}\subset \dots\subset v_{\ell-1}),
\end{equation}
where $v_{i-1}=\te_{|j_{i-1}|}$ if $v_{i}=e_{\ell}$ and $v_{i-1}=\he_{|j_{i-1}|}$ if $v_{i}=e_{-\ell}.$
\end{enumerate}
\item Let $v_{i}=\te_{j_{i}} \mbox{ or } \he_{j_{i}}$.   
\begin{enumerate}
\item If $j_{i-1}>0$ or $|j_{i-1}|>j_{i}$, then $\alpha_{i-1}$ is complex stable for $Q$ and $m(s_{\alpha_{i-1}})*\F$ is given by replacing $e_{\pm \ell}$ with $v_i$ in Equation (\ref{eq:cplxDroot}).
\item If $j_{i-1}<0$ and $|j_{i-1}|<j_{i}$, then $\alpha_{i-1}$ is non-compact and $m(s_{\alpha_{i-1}})*\F$ is given by (\ref{eq:noncptDroot}),
where  $v_{i-1}=\te_{|j_{i-1}|}$ if $v_{i}=\te_{j_{i}}$ and $v_{i-1}=\he_{|j_{i-1}|}$ if $v_{i}=\he_{j_{i}}.$
\end{enumerate}
\end{enumerate}
\noindent Case II: Suppose that $Q_{K}=Q_{+}$ is the unique closed $K$-orbit on $\B$. 
In this case, $\F$ is of the form
$$
\F=(e_{j_{1}}\subset e_{j_{2}}\subset\dots\subset e_{j_{\ell-1}}),
$$
where $j_{k}\neq \pm \ell$ for $k=1,\dots,\ell-1$. 

\begin{enumerate}
\item If $j_{\ell-1}>0$, then $\alpha_{\ell-1}$ and $\alpha_{\ell}$ are complex stable for $Q$.  However, their monoid actions are different.
We have 
\begin{equation}\label{eq:closedD1}
m(s_{\alpha_{\ell-1}})*\F=(e_{j_{1}}\subset \dots \subset e_{j_{\ell-2}}\subset e_{\ell})
\end{equation}
and 
\begin{equation}\label{eq:closedD2}
m(s_{\alpha_{\ell}})*\F=(e_{j_{1}}\subset\dots\subset e_{j_{\ell-2}}\subset e_{-\ell}).
\end{equation}
\item If $j_{\ell-1}<0$, then the roots $\alpha_{\ell-1}$ and $\alpha_{\ell}$ are both non-compact, and 
$$
m(s_{\alpha_{\ell-1}})*\F=m(s_{\alpha_{\ell}})*\F=(e_{j_{1}}\subset \dots\subset e_{j_{\ell-2}}\subset\he_{|j_{\ell-1}|} ).
$$
\end{enumerate}

\end{prop}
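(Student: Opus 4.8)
The plan is to mirror the proof of Proposition~\ref{p:monoidactiononPILS}, systematically replacing the linear algebra of flags by the corresponding manipulations of isotropic flags, and using the orbit classifications of Section~\ref{ss:Dreps}.

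First I would establish the type D analogue of Lemma~\ref{l:5.1}: a $SO(2\ell)$-flag $\F=(v_1\subset\dots\subset v_{\ell-1})$ in standard form lies in $Q_+$ if and only if every $v_k$ is a standard basis vector of index $<\ell$, and lies in $Q_i$ if and only if $v_k=e_{j_k}$ with $|j_k|<\ell$ for all $k<i$ and $v_i\in\{e_{\pm\ell},\he_{j_i},\te_{j_i}\}$. This follows the argument of Lemma~\ref{l:5.1}: using the decomposition $\C^{2\ell}=U_1\oplus U_2$ and the $K$-equivariant projection $p_1:\C^{2\ell}\to U_1$ from the proof of Lemma~\ref{l:GrassD}, together with the fact that $W_K$ acts on $U_1$ by signed permutations of $\{e_{\pm 1},\dots,e_{\pm(\ell-1)}\}$, one conjugates $\F$ to the representative $\mathcal F_i$ (resp.\ $\mathcal F_+$) of Equation~\eqref{eq:typeDflag}, absorbing surplus hat/tilde vectors via $\lspan\{\he_a,\he_b\}=\lspan\{e_a-e_b,\he_b\}$ and Equation~\eqref{eq:hattildeorbit}; necessity is automatic since the stated conditions are mutually exclusive and exhaust all standard forms. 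This also gives the asserted shapes of $\F$ in Cases I and II.

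Next, by Propositions~4.8 and~4.9 of \cite{CE21I}, for $Q\subseteq Q_K$ one has $m(s_\alpha)*Q\neq Q$ precisely when $m(s_\alpha)*Q_K\neq Q_K$, which occurs for $\alpha=\alpha_{i-1}$ if $Q_K=Q_i$ and for $\alpha\in\{\alpha_{\ell-1},\alpha_\ell\}$ if $Q_K=Q_+$; for every other root $m(s_\alpha)*Q=Q$. For each remaining root I would apply Lemma~\ref{l:cplxvncpt}: writing $\F=\Ad(v)\F_+$, put $Q_1=B_{2\ell-1}\cdot\Ad(v)s_\alpha(\F_+)$ and $Q_2=B_{2\ell-1}\cdot\Ad(v)\Ad(u_\alpha)\F_+$ with standard forms $\F_1,\F_2$; then $\alpha$ is non-compact for $Q$ iff $Q_1\neq Q_2$, and by Proposition~\ref{prop:monoid}, $m(s_\alpha)*\F$ equals $\F_1$ in the complex-stable case and $\F_2$ in the non-compact case. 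Here $\Ad(v)s_\alpha(\F_+)$ is obtained from $\F$ by the exchange dictated by $\alpha$ — swapping $v_{i-1}$ and $v_i$ for $\alpha_{i-1}=\eps_{i-1}-\eps_i$, and replacing the final vector by $e_\ell$ (resp.\ $e_{-\ell}$) for $\alpha_{\ell-1}$ (resp.\ $\alpha_\ell$) in Case II — so that re-standardizing produces $\F_1$ as in \eqref{eq:cplxDroot}, \eqref{eq:closedD1}, \eqref{eq:closedD2}; and $\Ad(u_\alpha)$, the Cayley transform of Notation~2.5 of \cite{CE21I}, inserts a hat or tilde vector in that slot, yielding the raw flag underlying \eqref{eq:noncptDroot} (a hat vector if the neighbouring special vector is $e_{-\ell}$ and a tilde vector if it is $e_\ell$, the alternative being forced by Remark~\ref{r.standardD}).

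The main obstacle is the final step: deciding in each subcase whether $\F_2=\F_1$, which is exactly what the sign conditions encode. When $j_{i-1}>0$ (resp.\ $j_{i-1}>0$ or $|j_{i-1}|>j_i$ in Case~I(2), or $j_{\ell-1}>0$ in Case~II) the surplus hat/tilde vector in the raw Cayley-transformed flag can be cleared by subtracting an earlier basis vector as in Lemma~\ref{l:spanstandardD}, after which Remark~\ref{r:bvectororbit} and Equation~\eqref{eq:hattildeorbit} give $Q_1=Q_2$, so $\alpha$ is complex stable and $m(s_\alpha)*\F=\F_1$; with the sign reversed the raw flag is already in standard form and visibly carries a hat or tilde vector absent from $\F_1$ — equivalently it is of type (BD) rather than (AD) in the notation of the proof of Lemma~\ref{l:GrassD} — so $Q_1\neq Q_2$ by Theorem~\ref{thm:stdD} and $\alpha$ is non-compact with $m(s_\alpha)*\F=\F_2$. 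The real content is the bookkeeping: one must check in each subcase that the claimed flag satisfies conditions (a)--(d) of Definition~\ref{d:stdD} — in particular that a tilde vector is accompanied by $e_\ell$ and a hat vector by $e_{-\ell}$, which pins down the hat-versus-tilde alternatives in \eqref{eq:noncptDroot} and forces the common answer $\he_{|j_{\ell-1}|}$ in Case~II(2), where $e_\ell$ is absent so no tilde vector is admissible and \eqref{eq:hattildeorbit} identifies the two orbits — and that the $B_{2\ell-1}$-conjugations used leave $v_1,\dots,v_{i-2}$ untouched, which is precisely the force of Remark~\ref{r:bvectororbit}. For the fork roots one further observes that $s_{\alpha_{\ell-1}}$ and $s_{\alpha_\ell}$ act on $\C^{2\ell}$ by the two distinct signed transpositions of $e_{\pm\ell}$, separating \eqref{eq:closedD1} from \eqref{eq:closedD2}.
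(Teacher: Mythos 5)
Your overall strategy is exactly the one the paper intends for this proposition (the paper only sketches it): establish the type D analogue of Lemma \ref{l:5.1} from the representatives \eqref{eq:typeDflag}, reduce to the roots singled out by Propositions 4.8 and 4.9 of \cite{CE21I}, and then decide complex-stable versus non-compact by comparing the standard forms of $B_{2\ell-1}\cdot\Ad(v)s_{\alpha}(\F_{+})$ and $B_{2\ell-1}\cdot\Ad(v u_{\alpha})\F_{+}$ via Lemma \ref{l:cplxvncpt}, Proposition \ref{prop:monoid}, Remark \ref{r:bvectororbit}, Equation \eqref{eq:hattildeorbit}, and Theorem \ref{thm:stdD}. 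Case I of your sketch carries through on these lines.

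There is, however, a genuine gap at the fork roots in Case II, precisely at the step you dispatch with ``$s_{\alpha_{\ell-1}}$ and $s_{\alpha_{\ell}}$ act by the two signed transpositions of $e_{\pm\ell}$, separating \eqref{eq:closedD1} from \eqref{eq:closedD2}.'' To identify $\Ad(v)s_{\alpha_{\ell-1}}(\F_{+})$ you must know the line $\C v(e_{\ell})$, and this is not free: any $v\in SO(2\ell)$ with $\Ad(v)\F_{+}=\F=(e_{j_{1}}\subset\dots\subset e_{j_{\ell-1}})$ must preserve the quadratic form and have determinant one, so it interchanges the lines $\C e_{\ell}$ and $\C e_{-\ell}$ exactly when an odd number of the indices $j_{1},\dots,j_{\ell-1}$ are negative. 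Equivalently, the $\alpha_{\ell-1}$-fibre through $\F$ varies the last subspace inside the maximal isotropic completion of $\lspan\{e_{j_{1}},\dots,e_{j_{\ell-1}}\}$ lying in the same $SO(2\ell)$-family as $\lspan\{e_{1},\dots,e_{\ell}\}$, and which completion that is depends on this sign parity. Concretely, for $\fg=\fso(6)$ and $\F=(e_{-1}\subset e_{2})$ (so $j_{\ell-1}=2>0$), the $\alpha_{2}$-fibre consists of the flags $(e_{-1}\subset V')$ with $V'\subset\lspan\{e_{-1},e_{2},e_{-3}\}$, and a direct computation with a unipotent element of $B_{5}$ shows the open orbit there is $B_{5}\cdot(e_{-1}\subset e_{-3})$, not $B_{5}\cdot(e_{-1}\subset e_{3})$; so the recipe ``$\alpha_{\ell-1}\mapsto e_{\ell}$, $\alpha_{\ell}\mapsto e_{-\ell}$'' cannot be read off from the reflections alone and in fact flips on such orbits. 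Any complete argument for Case II(1) must therefore track the parity of negative entries among $j_{1},\dots,j_{\ell-2}$ (the unconditional formulas \eqref{eq:closedD1}--\eqref{eq:closedD2} themselves are only correct when that number is even, which covers the all-positive flags used later in the proof of Theorem \ref{thm:bigthm}); note that Case II(2) is unaffected, since there neither $e_{\pm\ell}$ occurs and Equation \eqref{eq:hattildeorbit} forces the common answer $\he_{|j_{\ell-1}|}$, as you observe.
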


Let $\fg=\fso(2\ell+1)$ and $\alpha\in\Pi_{\fg}$.  If $Q_{K}=Q_{+}$ or $Q_{-}$ is a closed $K$-orbit, 
then by Proposition 4.9 of \cite{CE21I}, $m(s_{\alpha})*Q_{K}\neq Q_{K}$ if and only if $\alpha=\alpha_{\ell}$.  If $Q_{K}=Q_{i}$ with $i=0,\dots, \ell-1$ is not closed, then the same result implies 
that $m(s_{\alpha})*Q_{i}\neq Q_{i}$ if and only if $\alpha=\alpha_{i}$.

\begin{prop}\label{p:monoidactiononPILSB}
Let $\fg=\fso(2\ell+1)$ and let $Q=B_{2\ell}\cdot \F$ with $Q\subset Q_{K}$.  

\noindent Case I: $Q_{K}=Q_{i}$, $i=0,\dots, \ell-1$ is not closed.  
In this case, $\F$ is of the form
\begin{equation}\label{eq:typeBflagi}
\F=(e_{j_{1}}\subset \dots \subset e_{j_{i}}\subset v_{i+1}\subset \dots\subset v_{\ell}),
\end{equation}
where $v_{i+1}=\he_{j_{i+1}}$ or $v_{i+1}=\he_{a,-j_{i+1}}$.  
\begin{enumerate}
\item If $j_{i}>0$ or $|j_{i}|> j_{i+1}$, then $\alpha_{i}$ is complex stable and $m(s_{\alpha_{i}})*\F$ is given by changing the sequence $e_{j_i} \subset v_{i+1}$
to $v_{i+1} \subset e_{j_i}.$
\item If $j_{i}<0$ and $|j_{i}|<j_{i+1}$, then $\alpha_{i}$ is non-compact and
$$
m(s_{\alpha_{i}})*\F=(e_{j_{1}}\subset\dots\subset e_{j_{i-1}}\subset\ v_{i}\subset v_{i+1}\subset\dots\subset v_{\ell}),
$$
\end{enumerate}
where $v_{i}=\he_{j_{i+1}, \,j_{i}}$ if $v_{i+1}=\he_{j_{i+1}}$, and $v_{i}=\he_{a,\,j_{i}}$ if $v_{i+1}=\he_{a,-j_{i+1}}$. 

\noindent Case II: $Q_{K}=Q_{+}\mbox{ or } Q_{-}$ is closed.  
In this case, 
$$
\F=(e_{j_{1}}\subset \dots \subset e_{j_{\ell}}).
$$
consists entirely of standard basis vectors.  For $Q\subset Q_{K}$, the root $\alpha_{\ell}$ is non-compact for $Q$ and 
\begin{equation}\label{eq:closedB}
m(s_{\alpha_{\ell}})*\F= (e_{j_{1}}\subset \dots \subset e_{j_{\ell-1}}\subset \he_{|j_{\ell}|}).
\end{equation}

\end{prop}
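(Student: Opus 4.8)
The plan is to mirror the proof of Proposition~\ref{p:monoidactiononPILS} for type~$A$, substituting the isotropic-flag machinery of Section~\ref{ss:Breps} for the linear-algebra input. As a first step I would establish the type~$B$ analogue of Lemma~\ref{l:5.1}: using the $K$-orbit representatives in Equation~(\ref{eq:typeBflag}) together with Lemma~\ref{l:GrassB} and Theorem~\ref{thm:stdB}, one checks that a flag $\F$ in standard form lies in the non-closed orbit $Q_i$ precisely when its first hat vector (of either kind) occupies position $i+1$ and all earlier entries are standard basis vectors, and that $\F$ lies in $Q_+$ or $Q_-$ precisely when every entry is a standard basis vector. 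This yields the shapes~(\ref{eq:typeBflagi}) in Case~I (so in particular $v_{i+1}$ is $\he_{j_{i+1}}$ or $\he_{a,-j_{i+1}}$) and the all-basis-vector form in Case~II.

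Second, by Proposition~4.8 of~\cite{CE21I} and the remarks preceding the Proposition, for the relevant root --- $\alpha_i$ in Case~I (where $i\ge 1$; for $i=0$ no simple $\fg$-root moves $Q$) and $\alpha_\ell$ in Case~II --- we already know $m(s_\alpha)*Q\neq Q$. Thus the only things left to pin down are whether $\alpha$ is non-compact or complex stable for $Q$ and which flag in standard form represents $m(s_\alpha)*Q$. Writing $\F=\Ad(v)\F_+$, Lemma~\ref{l:cplxvncpt} reduces this to comparing $Q_1:=B_{2\ell}\cdot \Ad(v)s_\alpha(\F_+)$ and $Q_2:=B_{2\ell}\cdot \Ad(v)\Ad(u_\alpha)\F_+$: they agree iff $\alpha$ is complex stable, in which case $m(s_\alpha)*\F$ is the standard form of $Q_1$; if they differ, $\alpha$ is non-compact and $m(s_\alpha)*\F$ is the standard form of $Q_2$, by Proposition~\ref{prop:monoid}.

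The heart of the matter is computing the standard forms of $Q_1$ and $Q_2$. In Case~I the flag $s_{\alpha_i}(\F_+)$ interchanges the entries in positions $i$ and $i+1$, so $Q_1$ is the orbit of the flag obtained from~(\ref{eq:typeBflagi}) by replacing $e_{j_i}\subset v_{i+1}$ with $v_{i+1}\subset e_{j_i}$, which by Remark~\ref{r:schubertD} and Lemma~\ref{l:GrassB} is already in standard form; this is the asserted complex-stable value of $m(s_{\alpha_i})*\F$. The Cayley transform $u_{\alpha_i}$ instead adds the two coordinates, so $Q_2$ contains a flag in which $v_{i+1}$ is replaced by a hat vector of the second kind ($\he_{j_{i+1},j_i}$ when $v_{i+1}=\he_{j_{i+1}}$, and $\he_{a,j_i}$ when $v_{i+1}=\he_{a,-j_{i+1}}$) and $e_{j_i}$ is absorbed into the corresponding span; this flag need not be in standard form. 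To decide which case occurs I would use the span identities $B_{2\ell}\cdot\he_{a,-j}=B_{2\ell}\cdot\he_j$ and $B_{2\ell}\cdot\lspan\{\he_{a,-k_i},v\}=B_{2\ell}\cdot\lspan\{e_{-k_i},v\}$ recorded before Theorem~\ref{thm:stdB}, together with the $N_{2\ell}$-orbit description in Equation~(\ref{eq:odd4.13one}): when $j_i>0$ or $|j_i|>j_{i+1}$ the new second-kind hat vector straightens back and $Q_2=Q_1$ (complex stable), while when $j_i<0$ and $|j_i|<j_{i+1}$ the hat index genuinely grows and $Q_2$ is the orbit of the flag asserted in the non-compact part of Case~I, which one verifies is in standard form by checking conditions~(a)--(c) of Definition-Notation~\ref{d:stdB} and invoking Remark~\ref{r:firstkind} to see that all second-kind hat vectors still share the common index $a$. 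Case~II is easier: $u_{\alpha_\ell}$ converts the last entry $e_{j_\ell}$ into the first-kind hat vector $\he_{|j_\ell|}$, the resulting flag is automatically in standard form, and it is distinct from the standard form of $Q_1$ (a flag of standard basis vectors) by Theorem~\ref{thm:stdB}; hence $\alpha_\ell$ is non-compact and~(\ref{eq:closedB}) follows from Proposition~\ref{prop:monoid}.

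The step I expect to be the main obstacle is the third one: carrying out the explicit Cayley-transform computation with isotropic flags and confirming that the straightening of the newly created hat vectors of the second kind respects the constraint of Remark~\ref{r:firstkind}, so that the dichotomy between complex stable and non-compact is precisely the stated sign-and-magnitude condition on $j_i$ versus $j_{i+1}$ and the output flag genuinely lies in standard form.
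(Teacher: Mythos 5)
Your proposal is correct and follows essentially the same route the paper indicates: the paper leaves the type~$B$ case as an analogue of Proposition \ref{p:monoidactiononPILS}, proved via the type~$B$ version of Lemma \ref{l:5.1}, the dichotomy of Lemma \ref{l:cplxvncpt} comparing $B_{2\ell}\cdot\Ad(v)s_{\alpha}(\F_{+})$ with $B_{2\ell}\cdot\Ad(v)\Ad(u_{\alpha})\F_{+}$, and explicit isotropic-flag computations using Equation (\ref{eq:odd4.13one}) and the span identities preceding Theorem \ref{thm:stdB} (plus the Cayley-transform computation of Remark 2.11 of \cite{CE21I} for Equation (\ref{eq:closedB})), which is exactly what you outline.
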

\noindent We note that the computation of Remark 2.11 of \cite{CE21I} is useful in the proof of (\ref{eq:closedB}).


To fully describe the monoid action by simple roots of $\fg$ it remains to compute $m(s_{\alpha})*Q$ when 
$Q\subset Q_{K}$ and $\alpha$ is either real or complex unstable for $Q_{K}$.  This can be done 
using Proposition \ref{prop:monoid} along with Propositions \ref{p:monoidactiononPILS}, \ref{p:monoidactiononPILSD}, and \ref{p:monoidactiononPILSB}.  We omit the calculations because we will not require these monoid actions in this paper.  
\section{The Monoid action and the closure ordering on $B_{n-1}\backslash\B_{n}$}\label{s:closure}

In this section, we prove one of the main results of the paper which states that the closure ordering coincides with a certain standard order defined by Richardson and Springer in \cite{RS}.   This result enables us to determine the closure ordering in terms of a monoid action using simple roots both of $\fg$ and $\fk$.

\subsection{The monoid action and the standard order}\label{ss:RS}

Recall from Section 5.1 the action of a subgroup $M$ of a group $R$ on the flag variety $\B:=\B_{R}$ with finitely many orbits and the monoid action by a simple reflection $s\in S$ from the Weyl group of $R$ on $M\backslash \B$.  Given a sequence $\vec{s}=(s_{1},\dots, s_{k})$ of elements in $S$ and 
an $M$-orbit $Q\in\Morbitspace$, we let 
\begin{equation} \label{eq:Ssequence}
m(\vec{s})*Q:=m(s_{k})*\dots *m(s_{1})*Q \mbox{ if } k > 0, m(\vec{s})*Q=Q \mbox{ if } k = 0.
\end{equation}
 Then $\mathfrak{M}:=\{ m(\vec{s}): \vec{s} \mbox{ a sequence} \} $ is a finite monoid with $1$, which follows from the well-known fact that the operators $m(s)$ satisfy both braid relations and $m(s)^{2}=m(s)$.

The \emph{weak order } $\leq_w$ is defined by the property that if $Q, Q^{\prime} \in M\backslash \B$, then $Q \leq_w Q^{\prime}$ if and only if $Q^{\prime}=m(\vec{s})*Q$ for some sequence $\vec{s}$ as above.  It is the weakest partial order such that $Q$ is less than or equal to $m(s)*Q$ for each $s\in S$ and $Q \in M\backslash \B.$

\begin{dfn}\label{dfn:minimal}
We say $Q\in\Morbitspace$ is minimal if $Q\in \mathfrak{M}*Q^{\prime}$ implies 
$Q=Q^{\prime}$ for every $Q^{\prime}\in\Morbitspace$. 
\end{dfn}
The minimal orbits are the minimal elements of $\Morbitspace$ in the weak order. We need the following assumption about minimal elements in $\Morbitspace.$
\begin{equation}\label{eq:minimalcond}
\mbox{The minimal elements } Q \in \Morbitspace \mbox{ are exactly the orbits of minimal dimension } d.
\end{equation}

Given the assumption in (\ref{eq:minimalcond}), we define a length function $\ell: \Morbitspace\to \mathbb{Z}_{\geq 0}$ by 
\begin{equation}\label{eq:lengthfn}
\ell(Q):=\dim Q-d
\end{equation}
so the minimal orbits are exactly the orbits of length $0.$

There are two other partial orderings that we consider on $\Morbitspace$ called the closure order and the standard order.
The closure order (sometimes called the Bruhat order) is defined by 
$$
Q^{\prime}\leq Q\mbox{ if and only if } Q^{\prime}\subset \overline{Q}.
$$
The standard order is denoted by $\preceq$ and is defined in stages.  
We first define a relation $\dashv$ on $\Morbitspace$ as follows.   
Let $Q,\, Q^{\prime}\in\Morbitspace$.  We say that $Q^{\prime}\dashv Q$ if 
there exists a sequence $\vec{s}=(s_{1},\dots, s_{k})$ of elements of $S$, $t\in S$, and 
$Q^{\prime\prime}\in \Morbitspace$ such that the following two conditions hold:
\begin{equation}\label{eq:stdconditions}
\begin{split}
&(i)\; Q^{\prime}=m(\vec{s})*Q^{\prime\prime} \mbox{ and } \ell(Q^{\prime})=\ell(Q^{\prime\prime})+k.\\
&(ii) \; Q=m(\vec{s})*(m(t)*Q^{\prime\prime}) \mbox{ and } \ell(Q)=\ell(Q^{\prime\prime})+k+1.
\end{split}
\end{equation}
\noindent The conditions in (\ref{eq:stdconditions}) can be easily visualized in the  following diamond diagram:  
\begin{center}
 \begin{tikzpicture} [scale=1.5,auto=center,every node/.style={rectangle,fill=white!20}]
  \node (a1) at  (0,5) {$Q^{\prime\prime}$}; \node (a2) at (-1,4) {$m(t)*Q^{\prime\prime}$}; \node (a3) at (1,4) {$Q^{\prime} $}; 
 \node (a4) at (0,3) {$ Q$};  \draw [-stealth] (a1) -- (a2) node[midway, above] {$t$}; \draw [-stealth] (a1) -- (a3) node[midway, above] {$\vec{s}$}; \draw [-stealth] (a2) -- (a4) node[midway, above] {$\vec{s}$}; \draw [-stealth] [dashed](a3) -- (a4) node[near start, below] {$\vdash$}; 
  \end{tikzpicture} \end{center}

The standard order is defined by the property that
$Q^{\prime}\preceq Q$ if there exists a sequence $\vec{Q}=(Q_{0},\dots, Q_{k})$, $k\geq 0$, 
with $Q_{0}=Q^{\prime}$ and $Q_{k}=Q$ such that $Q_{i-1}\dashv Q_{i}$ for all $i=1,\dots, k$.

Assuming Equation (\ref{eq:minimalcond}), a partial order $\leq_O$ is said to be compatible with the  
action of the monoid $\mathfrak{M}$ on $\Morbitspace$ if for all $s\in S$ and $Q, Q^{\prime} \in \Morbitspace$, we have
\begin{equation}\label{eq:compatible}
\begin{split}
&\; (1)\; Q\leq_{O} m(s)*Q.\\
&\; (2) \mbox{ If } Q^{\prime}\leq_{O} Q, \mbox{ then } m(s)*Q^{\prime}\leq_{O} m(s) *Q.\\ 
& \;(3) \mbox{ If } Q^{\prime}\leq_{O} Q \mbox{ and } \ell(Q^{\prime})\geq \ell(Q),\, \mbox{then } Q^{\prime}=Q.
\end{split}
\end{equation}

It follows easily from definitions that the closure order is compatible with the $\mathfrak{M}$-action.

\begin{thm}\label{thm:closureorder}[Theorem 7.11 in \cite{RS}]
Under the assumption in (\ref{eq:minimalcond}) the closure order on 
$\Morbitspace$ coincides with the standard order. 
\end{thm}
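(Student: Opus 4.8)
The statement is quoted verbatim from Richardson--Springer \cite{RS}, so the plan is really to explain how the proof of \cite[Theorem 7.11]{RS} goes and to note that its hypotheses are met in our setting. The key is that both orders are partial orders compatible with the $\mathfrak{M}$-action on $\Morbitspace$ in the sense of \eqref{eq:compatible}, that they agree on minimal elements (trivially, by \eqref{eq:minimalcond} the minimal elements for each are exactly the orbits of minimal dimension), and that any two compatible orders that agree on minimal elements must coincide. The plan is to prove this last implication by a double induction and then invoke it.

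\textbf{Step 1: the standard order is a compatible partial order.} First I would check $\preceq$ is a partial order: reflexivity and transitivity are immediate from the definition via chains $Q_0 \dashv \dots \dashv Q_k$, and antisymmetry follows because $Q' \dashv Q$ forces $\ell(Q') < \ell(Q)$ (the diamond diagram gives $\ell(Q) = \ell(Q'') + k + 1 > \ell(Q'') + k = \ell(Q')$ using that $m(\vec s)$ raises length by at most the number of steps, combined with the length equalities imposed in \eqref{eq:stdconditions}), so any non-trivial $\preceq$-chain strictly increases length. Compatibility condition (3) of \eqref{eq:compatible} for $\preceq$ is then immediate. Condition (1), $Q \preceq m(s)*Q$, is the case $\vec s = ()$, $t = s$, $Q'' = Q$ of the relation $\dashv$ (when $m(s)*Q \neq Q$; otherwise it is trivial by reflexivity). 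Condition (2), that $Q' \preceq Q$ implies $m(s)*Q' \preceq m(s)*Q$, is the heart of this step and is exactly where the braid relations and idempotency $m(s)^2 = m(s)$ enter: given a diamond for $Q' \dashv Q$ one produces a diamond (or a degenerate configuration) for $m(s)*Q' \dashv m(s)*Q$ by a case analysis on whether $m(s)$ fixes the various orbits in the diagram; this is the combinatorial core of Richardson--Springer's argument and I would reproduce it.

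\textbf{Step 2: the closure order is a compatible partial order.} This is asserted in the excerpt ("It follows easily from definitions that the closure order is compatible with the $\mathfrak{M}$-action"), so conditions (1)--(3) of \eqref{eq:compatible} for $\le$ may be taken as known. The one extra ingredient needed is that $\ell(Q) = \dim Q - d$ is strictly monotone for $\le$, i.e. $Q' < Q$ implies $\dim Q' < \dim Q$, which is clear since a proper closed subvariety has strictly smaller dimension.

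\textbf{Step 3: uniqueness of a compatible order with given minimal elements.} Now I would prove: if $\le_1$ and $\le_2$ are two partial orders on $\Morbitspace$ both compatible with the $\mathfrak{M}$-action, then $\le_1 = \le_2$. Note first that compatibility forces the weak order to refine each $\le_i$ (by repeated use of (1)), and that $\le_i$ is graded-compatible with $\ell$ by (3). Suppose $Q' \le_1 Q$; I argue $Q' \le_2 Q$ by induction on $\ell(Q)$, and for fixed $\ell(Q)$ by descending induction on $\ell(Q')$. If $\ell(Q') = \ell(Q)$ then $Q' = Q$ by (3) and we are done. Otherwise $\ell(Q') < \ell(Q)$; since $Q$ is not minimal there is $s \in S$ and $Q_1 \in \Morbitspace$ with $m(s)*Q_1 = Q$, $\ell(Q_1) = \ell(Q) - 1$, and $Q_1 \neq Q$. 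Applying the idempotent $m(s)$ to $Q' \le_1 Q$ and using (2) gives $m(s)*Q' \le_1 m(s)*Q = Q$; one then shows, using properties of the monoid action (the dichotomy that $m(s)*Q'$ either equals $Q'$ or is the unique orbit covering $Q'$ in the fibre $\pi_\alpha^{-1}\pi_\alpha(Q')$, and that $m(s)$ restricted to the set of orbits in a given $\pi_\alpha$-fibre has image of size one or two), that either $Q' \le_1 Q_1$ or $Q'$ is itself obtained from an orbit $\le_1 Q_1$ by one application of $m(s)$. In the first case the outer induction on $\ell(Q)$ gives $Q' \le_2 Q_1 \le_2 m(s)*Q_1 = Q$. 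In the second case one gets $Q'' \le_1 Q_1$ with $m(s)*Q'' = Q'$, applies the inductive hypothesis to get $Q'' \le_2 Q_1$, and then applies (2) and (1) on the $\le_2$ side: $Q' = m(s)*Q'' \le_2 m(s)*Q_1 = Q$. This is the step I expect to be the main obstacle, since the precise bookkeeping of how $m(s)$ acts on the chains --- exactly the argument in \cite[\S7]{RS} --- requires care with the cases where $m(s)$ fixes one or more of the orbits involved.

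\textbf{Conclusion.} By Steps 1 and 2 both $\le$ and $\preceq$ are partial orders on $\Morbitspace$ compatible with the $\mathfrak{M}$-action, and under the assumption \eqref{eq:minimalcond} they have the same minimal elements, namely the orbits of minimal dimension $d$. By Step 3 any two such orders coincide, so $\le \ =\ \preceq$, which is the assertion of the theorem. In our intended application (proved in the next section) one takes $M = B_{n-1}$, $R = G$, and verifies \eqref{eq:minimalcond} --- that the minimal orbits for the weak order are exactly the zero-dimensional orbits --- using the extended monoid action and the explicit standard-form representatives of Sections \ref{s:std} and \ref{s:monoid}.
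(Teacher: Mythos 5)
There is a genuine gap, and it sits exactly at the centre of your Step 3. The claim that \emph{any} two partial orders compatible with the $\mathfrak{M}$-action (in the sense of (\ref{eq:compatible})) and having the same minimal elements must coincide is not what Richardson--Springer prove, and it is not available abstractly: compatibility conditions (1)--(3) only yield the easy inclusion, namely that the standard order is contained in every compatible order (apply (1) once and (2) repeatedly to the diamond defining $\dashv$). For the reverse inclusion your induction needs the ``descent'' dichotomy you assert in the middle of Step 3 --- that $Q'\leq_1 Q=m(s)*Q_1$ forces either $Q'\leq_1 Q_1$ or $Q'=m(s)*Q''$ for some $Q''\leq_1 Q_1$ --- and this does not follow from (1)--(3). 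Indeed one can write down monoid actions with a length function satisfying (\ref{eq:minimalcond}) and a compatible order strictly larger than the standard order (adjoin an extra relation from an $m(s)$-fixed minimal element into one branch; conditions (1)--(3) are preserved), so any proof of your dichotomy must use more than the compatibility axioms.

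What your dichotomy really is, for the closure order, is the \emph{one-step property}
$$\overline{m(s)*Q}=\bigcup_{Q'\leq Q} p(s,Q'), \qquad p(s,Q')=\{Q''\in M\backslash\B:\ Q''\subset \pi_{\alpha}^{-1}(\pi_{\alpha}(Q'))\},$$
and this is precisely the content identified in Remark \ref{r:rsthm711}: it is a geometric statement, proved in Lemmas 7.5 and 7.6 of \cite{RS} (using $P_{\alpha}$-stability and irreducibility of $\overline{P_{\alpha}\cdot Q}$), whose proof must be checked to carry over from the symmetric-subgroup case $M=H$ to the present situation $M=B_{n-1}$; the combinatorial half of the argument (Lemma 6.1 and Propositions 6.3--6.5 of \cite{RS}) then says that a compatible order with the right minimal elements \emph{which satisfies the one-step property} coincides with the standard order. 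So your plan is missing the essential geometric ingredient (the one-step property for the closure order), and the purely order-theoretic uniqueness statement you substitute for it is false in general; to repair the proposal you should prove (or verify the transfer of) the one-step property and then invoke the RS \S 6 combinatorics, which is the route the paper takes.
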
 

\begin{rem}\label{r:rsthm711}
To be precise, this theorem is only proved in \cite{RS} in the case where $M$ is the fixed points $H$ of an algebraic involution of $R$.  However, the proof given in \cite{RS} applies in the more general situation of  Theorem \ref{thm:closureorder}.  In more detail, Richardson and Springer introduce a property on a partial order on $\Morbitspace$ called the one-step property which in the setting of  Theorem \ref{thm:closureorder} means the following.  For a $M$-orbit $Q$ and $s\in S$ corresponding to a simple root $\alpha$, let $p(s,Q)=\{ Q^{\prime} \in M\backslash \B : Q^{\prime} \subset \pi_{\alpha}^{-1}(\pi_{\alpha}(Q))\}.$  Then the one-step property is said to hold if $$\overline{m(s)*Q}=\displaystyle\bigcup_{ Q^{\prime} \leq Q} p(s, Q^{\prime})$$ for all $s\in S$ and $Q\in \Morbitspace.$   The one-step property is shown to hold for the closure order when $M=H$ in Lemmas 7.5 and 7.6 of \cite{RS}.  The same proof applies in our more general context.   In Propositions 6.3, 6.4, and 6.5 and Lemma 6.1 of \cite{RS}, the authors prove that if we consider a  partial order compatible with the $\mathfrak{M}$-action and with a length function such that the minimal elements are precisely the elements of length $0$, then if the partial order satisfies the one-step property, then it coincides with the standard order.  This proves the theorem.
\end{rem}

\subsection{Applications to $\Borbitspace$}\label{ss:Bmonoid}

We require an extended monoid action to study $\Borbitspace.$  For this, let
$K_{\Delta}=\{ (x,x) \in K \times G \}$ and note that the map $\Borbitspace \to 
K_{\Delta}\backslash (\B_{\fk}\times  \B_{n})$ given by 
$Q\mapsto K_{\Delta}\cdot (\fb_{n-1}, Q)$
is bijective. Further,  $\dim(K_{\Delta}\cdot (\fb_{n-1}, Q))=\dim(\B_{\fk}) + Q$ and this bijection preserves the closure order.  If we let the group $R=K\times G$ in Section \ref{ss:monoid}, then we have a monoid action by simple roots from 
$\Pi_{\fk \oplus \fg} = \Pi_{\fk} \cup \Pi_{\fg}$ on the orbits in
$K_{\Delta}\backslash (\B_{\fk}\times  \B_{n}).$  We use the above bijection to obtain monoid actions of $\Pi_{\fk}$ and $\Pi_{\fg}$ on $\Borbitspace.$  We refer
to this monoid action by $\Pi_{\fk}$ and $\Pi_{\fg}$ as the \emph{extended monoid action}.   In particular, if $\alpha \in \Pi_{\fk}$, and $Q \in \Borbitspace$, then
$m(s_{\alpha})*Q$ is the $B_{n-1}$-orbit in $\B_n$ such that 
$K_{\Delta}\cdot (eB_{n-1}, m(s_{\alpha})*Q)$ is the orbit given by 
$m(s_{\alpha})*K_{\Delta} \cdot (eB_{n-1}, Q)$ where $\alpha$ is regarded as a simple root for $\fk\oplus \fg$.
We will prove in this section that the collection of $B_{n-1}$-orbits in $\B_n$ satisfies the minimal condition in Equation (\ref{eq:minimalcond}) for this extended monoid action.  On the other hand, even in the case of $GL(3)$, the $B_{n-1}$-orbits do not satisfy this minimal condition if we only use the simple roots of $\Pi_{\fg}$ (see Example \ref{exam:bigone} below).  

Let $Q=B_{n-1}\cdot \Ad(v)\F_{+}$ be an orbit in $\Borbitspace.$   Let $\alpha \in \Pi_{\fk}.$  By Equation (4.1) of \cite{CE21I}, 
\begin{equation}\label{eq:leftmonoid}
m(s_{\alpha})*Q \mbox{ is the unique open $B_{n-1}$-orbit in } P_{K, \alpha}\cdot \Ad(v) \F_{+},
\end{equation}
where $P_{K, \alpha}\supset B_{n-1}$ is the standard parabolic subgroup of $K$ determined by the root $\alpha$.  We refer to this monoid action as ``the left monoid action.''  Let $\alpha \in \Pi_{\fg}.$  By Equation (4.2) of \cite{CE21I}, 
\begin{equation}\label{eq:rightmonoidagain}
m(s_{\alpha})*Q\mbox{ is the unique open $B_{n-1}$-orbit in } 
B_{n-1}\cdot\Ad(v P_{\alpha})\F_{+},
\end{equation}
where $P_{\alpha}\supset B_{n}$ is the standard parabolic subgroup of $G$ determined by the root $\alpha$.   We refer to this monoid action as ``the right monoid action'' and note that it coincides with the monoid action by $m(s_{\alpha})$ on $\Borbitspace$ from Section \ref{ss:monoid}.

\begin{rem}\label{r:kexplicit}
An analogue of Proposition \ref{prop:monoid} also holds for the left monoid action except in this case  the simple reflection $s_{\alpha}$ and the Cayley transform $u_{\alpha}$ act on the left of the representative of the orbit $Q$ instead of on the right.  That is to say that if $Q=B_{n-1}\cdot \Ad(v)\F_{+}$ for $v\in G$ and $\alpha\in\Pi_{\fk}$ is complex stable (resp. non-compact) for $Q$ then $m(s_{\alpha})*Q=B_{n-1}\cdot \Ad(\dot{s}_{\alpha}v)\F_{+}$ (resp $m(s_{\alpha})*Q=B_{n-1}\cdot  \Ad(u_{\alpha}^{-1}v)\F_{+})$.

\end{rem}

\begin{rem}\label{r:kmodpright}
Let $P^{\prime}$ be a parabolic subgroup of $K$ containing $B_{n-1}.$
By a construction similar to the one above,  there is a left monoid action by roots in $\Pi_{\fk}$ on $B_{n-1}\backslash K/P^{\prime}$ (see Part (1) of Remark 4.5 from \cite{CE21I}).  It is determined by the property that for $\alpha \in \Pi_{\fk}$ and an orbit $Q \in B_{n-1}\backslash K/P^{\prime}$,
\begin{equation}\label{eq:monoidpartial}
m(s_{\alpha})*Q \mbox{ is the unique open  $B_{n-1}$-orbit in } P_{\alpha}\cdot Q.
\end{equation}
By fundamental properties of the Weyl group, the only minimal orbit for the monoid action by $\Pi_{\fk}$ on $B_{n-1}\backslash K/P^{\prime}$ is the one point orbit
$eP^{\prime}=B_{n-1}\cdot eP^{\prime}.$
\end{rem}

In the notation 
of Section \ref{ss:RS}, let $S:=\{s_{\alpha}:\, \alpha\in \Pi_{\fk}\cup\Pi_{\fg}\},$ and let $\mathfrak{M}=\{ m(\vec{s})|\, \vec{s}=(s_{1},\dots, s_{k}),\,s_{i}\in S,\, k\geq 0\}$.  
Then $\mathfrak{M}$ acts on $\Borbitspace$ via the extended monoid action described above.  
It then follows that Theorem \ref{thm:closureorder} holds 
for $\Borbitspace$ as long as (\ref{eq:minimalcond}) holds for the action of the monoid $\mathfrak{M}$.  To that effect, we prove

\begin{thm}\label{thm:bigthm}
The minimal elements for the action of the monoid $\mathfrak{M}$ on $\Borbitspace$ 
(see Definition \ref{dfn:minimal}) are precisely the zero dimensional $B_{n-1}$-orbits.  
\end{thm}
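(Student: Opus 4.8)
The strategy is to show both inclusions: every zero-dimensional orbit is minimal, and every minimal orbit is zero-dimensional. The first direction is essentially immediate from the definition of the extended monoid action, since $m(\vec s)*Q'$ always has dimension $\geq \dim Q'$ (each $m(s)$ either fixes an orbit or strictly increases dimension by one), so a zero-dimensional orbit cannot lie in $\mathfrak{M}*Q'$ for any $Q'$ of positive dimension, and the only orbit of dimension $0$ that maps to a zero-dimensional orbit is itself. So the content is the reverse implication: if $Q$ is minimal for $\mathfrak{M}$, then $\dim Q = 0$. Equivalently, I would show that any $B_{n-1}$-orbit $Q$ of positive dimension lies in $\mathfrak{M}*Q'$ for some $Q'$ with $\dim Q' < \dim Q$; iterating drives the dimension down to $0$.

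\textbf{Main argument.} The plan is to use the bundle/fibration description of $B_{n-1}$-orbits together with the two monoid actions (left by $\Pi_{\fk}$, right by $\Pi_{\fg}$). Fix a $B_{n-1}$-orbit $Q$ of positive dimension contained in the $K$-orbit $Q_K$. There are two cases. \emph{Case 1: $Q_K$ is not closed.} Here I would use Case I of Propositions \ref{p:monoidactiononPILS}, \ref{p:monoidactiononPILSD}, \ref{p:monoidactiononPILSB}: for the appropriate simple root $\alpha \in \Pi_{\fg}$ with $m(s_\alpha)*Q_K \neq Q_K$, when $\alpha$ is non-compact for $Q$ the orbit $Q = m(s_\alpha)*Q'$ with $Q'$ of strictly smaller dimension lying in a smaller $K$-orbit (the hat/tilde vector structure of the standard form gives this explicitly), so $Q \in \mathfrak{M}*Q'$ and we are done by dimension descent; if instead $\alpha$ is real or complex-unstable for $Q_K$ one descends within the same $K$-orbit using the explicit standard-form representatives. \emph{Case 2: $Q_K$ is closed.} Then $Q_K \cong \B_{\fk}$, so $B_{n-1}\backslash Q_K \cong B_{n-1}\backslash K/B_{n-1}$, i.e. $Q$ corresponds to a double coset $B_{n-1} w B_{n-1}$, and by Remark \ref{r:kmodpright} (with $P' = B_{n-1}$) the left $\Pi_{\fk}$-monoid action on $B_{n-1}\backslash K/B_{n-1}$ has a unique minimal element, the identity coset, which corresponds to $\F_+$. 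Thus if $\ell(w) > 0$ one can write $Q = m(s_\alpha)*Q'$ for $\alpha \in \Pi_{\fk}$ and $\ell(Q') = \ell(Q)-1$, descending to the orbit $B_{n-1}\cdot \F_+$. Finally the zero-dimensional orbit $B_{n-1}\cdot\F_+$ is minimal by the first paragraph; this ties the two cases together, since any positive-dimensional orbit is shown to dominate, via $\mathfrak{M}$, an orbit of strictly smaller dimension, hence eventually a zero-dimensional one, and the zero-dimensional ones are exactly the minimal elements.

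\textbf{The main obstacle.} The delicate point is verifying that in Case 1, for a positive-dimensional orbit $Q$, one can always \emph{find} a simple root $\alpha$ (of $\fg$ or of $\fk$) that is non-compact for $Q$ and for which the resulting predecessor $Q'$ genuinely has smaller length — i.e. ruling out the possibility that $Q$ is ``trapped'' with every available root being real, compact imaginary, or complex unstable. This requires combining the explicit monoid computations in Propositions \ref{p:monoidactiononPILS}--\ref{p:monoidactiononPILSB} with the classification of $K$-orbits and their lengths (Remark \ref{r.glstandard}, Remark \ref{r:Korbitlength}): if $Q$ sits in a non-closed $Q_K$, its standard form contains a hat or tilde vector, and peeling that off via the appropriate $\alpha_{i-1}$ or $\alpha_j$ (or $\alpha_i$ in types B, D) as in the non-compact cases of those propositions lowers the length of the ambient $K$-orbit; if $Q$ sits in a closed $Q_K$ but is not the one associated to $\F_+$, the left action handles it. Assembling these into a clean induction on $\dim Q$ — simultaneously across the three types and tracking both monoid actions — is the bulk of the work, but no single step is hard once the explicit descriptions from Section \ref{s:monoid} are in hand.
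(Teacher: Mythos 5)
Your easy direction is fine, and your Case 2 (closed $Q_K$) is essentially the paper's base case $m=\ell(Q_K)=0$: there $Q$ is identified with its base $Q_{\fr}$ and the left $\Pi_{\fk}$-action is matched with the monoid action on $B_{n-1}\backslash \B_{\fk}$ (Equation (\ref{eq:leftequiv}), via Theorem 4.7 of \cite{CE21I}), whose unique minimal element is the point $\fb_{n-1}$. The genuine gap is in your Case 1. The step you rely on --- that an orbit $Q$ in a non-closed $Q_K$ can always be written as $m(s_{\alpha})*Q'$ with $Q'$ in a shorter $K$-orbit by ``peeling off'' a hat/tilde vector using the roots computed in Propositions \ref{p:monoidactiononPILS}--\ref{p:monoidactiononPILSB} --- is false for general $Q$. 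Take $\fg=\fgl(5)$ and $Q=B_{4}\cdot\F$ with $\F=(\he_{3}\subset e_{4}\subset e_{2}\subset e_{5}\subset e_{1})$, so $Q\subset Q_{1,4}$ by Lemma \ref{l:5.1}. The only simple roots of $\fg$ carrying a shorter $K$-orbit onto $Q_{1,4}$ are $\alpha_{1}$ (from $Q_{2,4}$) and $\alpha_{3}$ (from $Q_{1,3}$), and by Proposition \ref{p:monoidactiononPILS} every orbit in the image of $m(s_{\alpha_{1}})$ from $Q_{2,4}$ has, immediately after its first hat vector, either another hat vector or a standard vector of smaller index, while every orbit in the image of $m(s_{\alpha_{3}})$ from $Q_{1,3}$ has, immediately before $e_{5}$, either a hat vector or a standard vector of index larger than the minimal hat index. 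Here $v_{2}=e_{4}$ (index $4>3$) and $v_{3}=e_{2}$ (index $2<3$), so neither happens: $Q$ is exactly the kind of ``trapped'' orbit you acknowledge as the main obstacle. Any descent from such a $Q$ must first go down inside $Q_{1,4}$ itself, using the left action by $\Pi_{\fk}$ and/or right actions by roots that are compact imaginary (or real/complex unstable) for $Q_{1,4}$; but Section \ref{s:monoid} does not provide these on standard forms (the paper explicitly omits the real/complex-unstable computations and never computes the left action on standard forms inside a non-closed $K$-orbit), so the claim that ``no single step is hard once the explicit descriptions from Section \ref{s:monoid} are in hand'' is where the proposal breaks down.

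What the paper does instead is organize the descent through the bundle description $Q=\mathcal{O}(Q_{\fr},Q_{\fl})$ and a double induction, on $n$ and on $m=\ell(Q_K)$. First the base is pushed down to $X^{K\cap R}_{e}$ using only the left action (Equation (\ref{eq:baseaction})); then the fibre, viewed through the correspondence $Q_{\fl}\leftrightarrow Q_{\fl}^{op}\in B_{k(m)-1}\backslash\B_{k(m)}$, is pushed down to a zero-dimensional orbit by the inductive hypothesis in the smaller rank $k(m)$, and these weak-order relations are transported back to $\Borbitspace$ by Equations (\ref{eq:transfer}), (\ref{eq:lefttoright}) and (\ref{eq:weakorderclaim}); this is where both halves of the extended action enter for orbits in a fixed non-closed $Q_K$. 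Only for the resulting short explicit family $\mathcal{O}(X^{K\cap R}_{e},Q_{\fl})$ with $\dim Q_{\fl}^{op}=0$, whose standard flags are written out (e.g.\ Equation (\ref{eq:closedflag})), does one peel off a single root of $\Pi_{\fg}$ via Propositions \ref{p:monoidactiononPILS}--\ref{p:monoidactiononPILSB} to land in a $K$-orbit of length $m-1$, closing the induction on $m$. Your plan needs this (or an equivalent) reduction; the induction you defer as ``the bulk of the work'' is precisely the content of the paper's proof, and the tools you cite do not suffice to carry it out for arbitrary standard flags.
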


We  review some results about the fibre bundle structure of 
 the $B_{n-1}$-orbits on $\B_{n}$ developed in \cite{CE21I} which will be needed in the proof of this Theorem, beginning with results concerning the $K$-orbits $Q_K.$  Let $Q\in\Borbitspace$ and let $Q_K = K\cdot Q$ be the $K$-orbit containing $Q$.  
In Theorem 3.1 and Remark 3.3 of \cite{CE21I}, we 
associate to the $K$-orbit $Q_{K}$ a parabolic subgroup $R\subset G$ with a Levi factor
$L$ containing the standard Cartan subgroup of diagonal matrices $H$.  
In more detail, we denote by $\fp_{S}\supset\fb_{n}$ the standard 
parabolic subalgebra of $\fg$ given by a subset $S\subset\Pi_{\fg}$ of standard simple roots.  Recall the explicit description of $K$-orbits on $\B_n$ given in  Section \ref{ss:Korbits}.  
Then $\fr=\mbox{Lie}(R)=\tilde{w}(\fp_{S})$ where $S\subset \Pi_{\fg}$ and $\tilde{w}\in W$  depend on $Q_K$ according to the formulas
\begin{equation}\label{eq:rootsforLevi}
S=\left\lbrace\begin{array}{lll}  \{\alpha_{i},\dots, \alpha_{j-1}\} & \mbox{ for } \fg=\fgl(n), & Q_{K}=Q_{i,j},\, i<j\\
\{\alpha_{i+1},\dots, \alpha_{\ell}\} & \mbox{ for } \fg=\fso(2\ell+1), & Q_{K}=Q_{i}\\ 
\{\alpha_{i},\dots , \alpha_{\ell}\} & \mbox{ for } \fg=\fso(2\ell), & Q_{K}=Q_{i}\\
\emptyset  &\mbox{ for } \fg=\fgl(n) \mbox{ or }\fso(n),  &Q_{K}\mbox{ closed} \end{array}\right\rbrace_{,}
\end{equation} 
and
\begin{equation}\label{eq:guyinW}
\tilde{w}=\left\lbrace\begin{array}{llll} w_{i}=(n, n-1, \dots , i+1, i) & \mbox{ if } \fg=\fgl(n),   &\mbox { and } &Q_{K}=Q_{i,j} \mbox{ or } Q_{i}\\
\mbox{ e }&  \mbox{ if } \fg=\fso(n),  &\mbox{ and }& Q_{K}=Q_{i} \mbox{ or } \,Q_{+}\\ 
 s_{\alpha_{l}}&  \mbox{ if } \fg=\fso(2\ell+1),& \mbox{ and }  &Q_{K}=Q_{-}\end{array}\right\rbrace_{.}
 \end{equation}
To describe a Levi factor $\fl$ of $\fr$, consider the index 
\begin{equation}\label{eq:km}
k(m):=\left\lbrace\begin{array}{ll}  m & \mbox{if }  \fg=\fgl(n)\\
2m & \mbox{if } \fg=\fso(2\ell+1)\\
2m+1 & \mbox{if } \fg=\fso(2\ell)\end{array}\right.
\end{equation}
Let $\fg_{k(m)}=\fgl(k(m))$ or $ \fso(k(m))$ in the general linear and orthogonal cases respectively.  Then $\fl\cong \fg_{k(m)+1}+ \fz(\fl),$ 
where $\fz(\fl)$ is the centre of $\fl$.
In addition, the group $K\cap R$ is a standard parabolic subgroup of $K$ with Levi factor $K\cap L=G_{k(m)}\cdot Z$ where $Z$ is the connected group with Lie algebra $\fk\cap\fz(\fl)$.  Under these identifications, in the $GL(n)$-case,  $G_{k(m)+1}$ is identified as the group of all invertible linear transformations of the space spanned by $\{e_{i}, \dots, e_{j-1}, e_{n}\}$.  
In the $SO(2\ell+1)$-case (resp. $SO(2\ell)$-case), $G_{k(m)+1}$ is identified with the group of 
all orthogonal transformations of determinant one of the space spanned by $\{ e_{\pm (i+1)}, \dots, e_{\pm \ell}, e_0 \}$ (resp. $\{ e_{\pm i}, \dots, e_{\pm \ell}\}$) with respect to the restriction of the form $\beta$ defined in Equation (\ref{eq:beta}).  Then the subgroup $G_{k(m)}=G_{k(m)+1}\cap K$ is embedded in $G_{k(m)+1}$ as in Section \ref{ss:Korbits}.  Note that $\B_{\fl}\cong \B_{k(m)+1}$ and $\B_{\fl\cap\fk}\cong \B_{k(m)}$ and $B_{n-1}\cap L$ is a Borel subgroup of $K\cap L$. Thus, the $K\cap L$-action and $B_{n-1}\cap L$-actions on $\B_{\fl}$ may be identified with the $G_{k(m)}$ and $B_{k(m)}$-actions on $\B_{k(m) + 1}$ and we use these identifications freely in the future.   We let $\tilde{Q}_{K\cap L}$ denote the open $K\cap L$-orbit on $\B_{\fl}$, which we identify with the open $G_{k(m)}$-orbit on $\B_{k(m)+1}.$  In Theorem 3.8 of \cite{CE21I}, we prove that there is a point ${\F}_{\fl}$ in $\tilde{Q}_{K\cap L}$ with stabilizer in $K\cap L$ corresponding to the upper triangular matrices $B_{k(m)-1}$ in $G_{k(m)-1},$ where,
in the $GL(n)$-case, $G_{k(m)-1}\subset G_{k(m)}$ is the subgroup preserving the subspace spanned by 
$\{e_{i},\dots, e_{j-2}\}$ and fixing the vectors $e_{j-1}$ and $e_{n}$, and in the orthogonal cases the embedding of $G_{k(m)-1}$ as a subgroup of 
$G_{k(m)}$ is described in Remark 2.1 of \emph{loc.cit.}  Explictly, in Equation (4.22) of \emph{loc.cit.}, we note that 
  \begin{equation}\label{eq:basepoint}
\F_{\fl}:=\left\lbrace\begin{array}{lll} (\hat{e}_{j-1}\subset e_{i}\subset e_{i+1}\subset\dots\subset e_{j-2}\subset e_{n})  & \mbox{ for } \fg=\fgl(n), & Q_{K}=Q_{i,j}, i<j\\
(\hat{e}_{\ell}\subset e_{i+1}\subset \dots\subset e_{\ell-1})& \mbox{ for } \fg=\fso(2\ell+1), & Q_{K}=Q_{i}\\ 
(e_{\ell}\subset e_{i}\subset\dots\subset e_{\ell-2}) & \mbox{ for } \fg=\fso(2\ell), & Q_{K}=Q_{i}\end{array}\right\rbrace_{,}
\end{equation}
so that $B_{k(m)-1}$ stabilizes the flag in $\fg_{k(m)-1}$ given by
\begin{equation*}
\F_{+,k(m)-1}:=\left\lbrace\begin{array}{lll} (e_{i}\subset e_{i+1}\subset \dots \subset e_{j-2}) &  \mbox{ for } \fg=\fgl(n), & Q_{K}=Q_{i,j}, i<j\\
(e_{i+1}\subset\dots\subset e_{\ell-1}) & \mbox{ for } \fg=\fso(2\ell+1), & Q_{K}=Q_{i}\\ 
(e_{i}\subset \dots\subset e_{\ell-2})& \mbox{ for } \fg=\fso(2\ell), & Q_{K}=Q_{i}\end{array}\right\rbrace_{.}
\end{equation*}

We now discuss the $B_{n-1}$-orbit structure.  By
Theorem 3.5 and Notation 3.6 of \cite{CE21I}, the canonical projection 
$\pi_{R}:\B_{n}\to G/R$ makes $Q$ into a $B_{n-1}$-equivariant fibre bundle, $Q=\mathcal{O}(Q_{\fr}, Q_{\fl})$ with base a $B_{n-1}$-orbit $Q_{\fr}=\pi_{R}(Q)\cap \fk$ on the partial flag variety $K/K\cap R$ of $\fk$ and fibre a $B_{n-1}\cap L$-orbit $Q_{\fl}$ contained in the above open orbit $\tilde{Q}_{K\cap L}$ on $\B_{\fl}.$   Since $B_{n-1}$ is a Borel subgroup of $K$, the orbits $Q_{\fr}$ are Schubert cells in $K/K\cap R$, and we denote them by $X_w^{K\cap R}=B_{n-1}w K\cap R$ in $K/K\cap R$.   To understand the orbits $Q_{\fl}$, given the above comments about the stabilizer of the point ${\F}_{\fl}$, we note that $B_{n-1}\cap L$-orbits on
$\tilde{Q}_{K\cap L}$ correspond to double cosets in $B_{k(m)}\backslash G_{k(m)}/B_{k(m)-1}$, and the map $B_{k(m)}\ell B_{k(m)-1} \mapsto B_{k(m)-1}\ell^{-1} B_{k(m)}$ is a bijection on double cosets.   In this correspondence,
\begin{equation}\label{eq:op}
 Q_{\fl}=B_{k(m)}\cdot \Ad(\ell) {\F}_{\fl} \Leftrightarrow Q_{\fl}^{op} := B_{k(m)-1}\cdot \Ad(\ell^{-1}) {\F}_{+, k(m)}, 
\end{equation}
 where 
${\F}_{+,k(m)}$ is the flag stabilized by the upper triangular matrices $B_{k(m)}$ in $G_{k(m)}$.  

\begin{rem}\label{r:Borelintersect}
We can make this correspondence more explicit as follows.  Let $Q=\mathcal{O}(X_w^{K\cap R},Q_{\fl})$.   The following assertions are restatements of Remark 3.7 of \cite{CE21I}.
\par\noindent Type A:  Consider the point ${\F}_{\fl} \in \B_{\fl}$ in the case whre $\fg=\fgl(n)$ where $Q_K=Q_{i,j}$ from Equation \eqref{eq:basepoint}.   Suppose that $Q_{\fl}=(B_{n-1}\cap L)\cdot \Ad(\ell){\F}_{\fl}$ for $\ell \in K\cap L.$  Then $Q=B_{n-1}\cdot \Ad(\dot{w}\ell){\F}_{i,j}$, where ${\F}_{i,j}$ is from Equation \eqref{eq:typeAflag}.
\par\noindent Type B:  Consider the point ${\F}_{\fl} \in \B_{\fl}$ in the case $\fg=\fso(2\ell + 1)$ and $Q_K=Q_i$ from Equation \eqref{eq:basepoint}.  Suppose that $Q_{\fl}=(B_{n-1}\cap L)\cdot \Ad(\ell){\F}_{\fl}$ for $\ell \in K\cap L.$  Then $Q=B_{n-1}\cdot \Ad(\dot{w}\ell){\F}_{i}$, where ${\F}_{i}$ is from Equation \eqref{eq:typeBflag}.
\par\noindent Type D:  Consider the point ${\F}_{\fl} \in \B_{\fl}$ in the case $\fg=\fso(2\ell)$ where $Q_K=Q_i$ from Equation \eqref{eq:basepoint}.  Suppose that $Q_{\fl}=(B_{n-1}\cap L)\cdot \Ad(\ell){\F}_{\fl}$ for $\ell \in K\cap L.$  Then $Q=B_{n-1}\cdot \Ad(\dot{w}\ell){\F}_{i}$, where ${\F}_{i}$ is from Equation \eqref{eq:typeDflag}.
\end{rem}

\begin{rem}\label{r:opencorrespondence}
When $Q_{K}$ is the open $K$-orbit on $\B_{n}$, the parabolic subalgebra $\fr$ coincides with $\fg$, so that $\fl=\fg$ and the partial flag variety $K/K\cap R$ is a point.  In this case, the orbit $\tilde{Q}_{K\cap L}=Q_{K}$ 
and any $Q\in B_{n-1}\backslash \tilde{Q}_{K}$ corresponds to a unique $Q^{op}\in B_{n-2}\backslash \B_{n-1}$.   
In the examples in Section \ref{s:graphs}, we will demonstrate how the Bruhat graph of $ B_{n-2}\backslash \B_{n-1}$ can be realized as a subgraph of the Bruhat graph of $B_{n-1}\backslash \B_{n}$ using this correspondence.
\end{rem}

To discuss the action of the monoid $\mathfrak{M}$ on orbits $\mathcal{O}(X_w^{K\cap R}, Q_{\fl})$, we recall the left monoid action by roots of
$\Pi_{\fk}$ on $B_{n-1}\backslash K/K\cap R$ from Remark \ref{r:kmodpright}.
Suppose $\alpha\in\Pi_{\fk}$ is a root of the Levi subalgebra $w(\fl\cap\fk).$
It follows by (\ref{eq:monoidpartial}) that
$m(s_{\alpha})*X_w^{K\cap R}=X_w^{K\cap R}$.  Hence,  if $\alpha\in\Pi_{\fk}$ is a root such that $m(s_{\alpha})*X_w^{K\cap R}\neq X_w^{K\cap R}$,
then $\alpha$ is not a root of $w(\fk\cap\fl)$.  Combining this observation with Equation (4.7) of Theorem 4.7 of \cite{CE21I} we obtain
\begin{equation}\label{eq:baseaction}
\mbox{If } m(s_{\alpha})*X_w^{K\cap R}\neq X_w^{K\cap R},\mbox{ then }  m(s_{\alpha})*\mathcal{O}(X_w^{K\cap R},Q_{\fl})=\mathcal{O}(m(s_{\alpha})*X_w^{K\cap R}, Q_{\fl}).
\end{equation}

We now study the extended monoid action on the orbits in the fibre.   In particular, we explain how the 
results of \cite{CE21I} may be applied to transfer monoid actions on the orbit $Q_{\fl}^{op}$ in  $B_{k(m)-1}\backslash \B_{k(m)}$ to monoid actions on the orbit $Q_{\fl}$ in $(B_{n-1}\cap L)\backslash\tilde{Q}_{K\cap L}$.  The set of orbits $B_{k(m)-1}\backslash\B_{k(m)}$ has a right monoid action via 
the simple roots of $\Pi_{\fg_{k(m)}}$ and a left monoid action via the simple roots
$\Pi_{\fg_{k(m)-1}}.$   Explicitly,    
\begin{equation}\label{eq:smallerroots}
\Pi_{\fg_{k(m)-1}}=\left\lbrace\begin{array}{lll}  \{\alpha_{i},\dots, \alpha_{j-3}\} &\mbox{ for } \fg=\fgl(n), & Q_{K}=Q_{i,j}, \, i<j\\
\{\alpha_{i+1},\dots , \alpha_{\ell-2}, \alpha_{\ell-1}\} &\mbox{ for } \fg=\fso(2\ell+1), & Q_{K}=Q_{i}\\ 
\{\alpha_{i},\dots, \alpha_{\ell-2}, \alpha_{\ell-1}\} &\mbox{ for } \fg=\fso(2\ell), & Q_{K}=Q_{i}\end{array}\right\rbrace_{.}
\end{equation}
Let $S^{\prime}\subset \Pi_{\fg}$ be the subset of simple roots 
\begin{equation}\label{eq:Sprime}
S^{\prime}:=\{\alpha_{k+1}: \, \alpha_{k}\in \Pi_{\fg_{k(m)-1}}\}.
\end{equation}
Then $S^{\prime}\subset S$ where $S$ is given in (\ref{eq:rootsforLevi}), and  
Proposition 4.9 of \cite{CE21I} implies that $S^{\prime}$ consists of precisely the roots in $S$ which are compact 
imaginary for the $K$-orbit $Q_{K}$ (see Part (3) of Proposition-Definition \ref{prop:monoid}.).
Then Equation (4.24) of \emph{loc.cit.} implies that
\begin{equation}\label{eq:transfer}
\mbox{for } \alpha_{k}\in\Pi_{\fg_{k(m)-1}},\, m(s_{\alpha_{k}})*Q_{\fl}^{op}=(m(s_{\tilde{w}(\alpha_{k+1})})*Q_{\fl})^{op},
\end{equation}
where $\tilde{w}$ is given by Equation (\ref{eq:guyinW}).  The monoid action 
on the left-hand side of (\ref{eq:transfer}) is a left monoid action and the monoid action on the right-hand
side is a right monoid action.  Note that on the right-hand side of
(\ref{eq:transfer}) we think of $\alpha_{k+1}\in S^{\prime}\subset S$, so that $\tilde{w}(\alpha_{k+1})\in \Pi_{\fl}$ and 
$\tilde{w}(\alpha_{k+1})$ is compact imaginary for the $K\cap L$-orbit $\tilde{Q}_{K\cap L}$, whence $m(s_{\tilde{w}(\alpha_{k+1})})*Q_{\fl}$ is a well-defined right monoid action on the space $(B_{n-1}\cap L)\backslash\tilde{Q}_{K\cap L}$.  By Equation (4.23) of \emph{loc. cit.} we know that 
\begin{equation}\label{eq:lefttoright}
\mbox{for } \alpha\in\Pi_{\fg_{k(m)}},\; m(s_{\alpha})*Q_{\fl}^{op}=(m(s_{\alpha})*Q_{\fl})^{op},
\end{equation}
 where the left-hand side is a right monoid 
action and the right-hand side is a left monoid action.

We will apply these observations in the following situation. Let $Q^{\prime}=\mathcal{O}(Q_{\fr}, Q_{\fl}^{\prime})$ and $Q=\mathcal{O}(Q_{\fr}, Q_{\fl})$ with 
$Q_{\fr}=X_{e}^{K\cap R}$ the unique zero dimensional $B_{n-1}$-orbit on $K/K\cap R$.  
 We claim that
\begin{equation}\label{eq:weakorderclaim}
(Q_{\fl}^{\prime})^{op}\leq_{w} Q_{\fl}^{op}\Rightarrow Q^{\prime}\leq_{w} Q.
\end{equation}
By definition of the weak order, it suffices to consider the case where 
$Q_{\fl}^{op}=m(s_{\alpha})*(Q_{\fl}^{\prime})^{op}$, where 
$\alpha\in \Pi_{\fg_{k(m)-1}}\cup\Pi_{\fg_{k(m)}}$.  
 First, consider $\alpha\in \Pi_{\fg_{k(m)}}$.  It follows from (\ref{eq:lefttoright}) that the orbit $Q_{\fl}=m(s_{\alpha})*Q_{\fl}^{\prime}$.  If we take $w=e$ in Equation (4.6) of Theorem 4.7 of \cite{CE21I}, we obtain 
 $$
 m(s_{\alpha})*Q^{\prime}=m(s_{\alpha})*\mathcal{O}(X_{e}^{K\cap R}, Q_{\fl}^{\prime})=\mathcal{O}(X_{e}^{K\cap R}, m(s_{\alpha})*Q_{\fl}^{\prime})=\mathcal{O}(X_{e}^{K\cap R}, Q_{\fl})=Q,
 $$
yielding (\ref{eq:weakorderclaim}) in this case.  For $\alpha=\alpha_{k}\in \Pi_{\fg_{k(m)-1}}$, Equation (\ref{eq:transfer}) implies $Q_{\fl}=m(s_{\tilde{w}(\alpha_{k+1})})*Q_{\fl}^{\prime}$.  Since $\alpha_{k+1}\in S^{\prime}\subset S$ where $S^{\prime}$ is given in (\ref{eq:Sprime}), we can apply Equation (4.14) of Part (1) of Theorem 4.11 of \cite{CE21I} to obtain
$$
m(s_{\alpha_{k+1}})*Q^{\prime}=m(s_{\alpha_{k+1}})*\mathcal{O}(X_{e}^{K\cap R}, Q_{\fl}^{\prime})=\mathcal{O}(X_{e}^{K\cap R}, m(s_{\tilde{w}(\alpha_{k+1})})*Q_{\fl}^{\prime})=\mathcal{O}(X_{e}^{K\cap R}, Q_{\fl})=Q,
$$
establishing (\ref{eq:weakorderclaim}) in this case as well.  
    

\begin{lem}\label{l:zerodim}
Each closed $K$-orbit in $\B_n$ has a unique $B_{n-1}$-fixed point, and every $B_{n-1}$-fixed point in $\B_n$ is contained in a unique closed $K$-orbit.
\end{lem}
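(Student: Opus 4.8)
The statement has two halves: existence and uniqueness of a $B_{n-1}$-fixed point in each closed $K$-orbit, and the fact that every $B_{n-1}$-fixed point in $\B_n$ lies in a closed $K$-orbit. The first half is mostly a matter of recalling that a closed $K$-orbit $Q_K$ is $K$-equivariantly isomorphic to the flag variety $\B_{\fk}$ (Remark \ref{r:Korbitlength}), so $B_{n-1}$-orbits on $Q_K$ correspond to $B_{n-1}$-orbits on $\B_{\fk}$, i.e. to Schubert cells, and there is a unique closed one, namely the point $\fb_{n-1}$. Concretely, in each type one exhibits the $B_{n-1}$-fixed flag in $Q_K$ explicitly: in type A, the flag $\mathcal{F}_i$ of Equation (\ref{eq:typeAflagclosed}) is fixed by $B_{n-1}$ since $B_{n-1}$ consists of matrices preserving $\lspan\{e_1,\dots,e_k\}$ for each $k\le n-1$ and fixing the line $\C e_n$ up to scalar; analogously $\mathcal{F}_{\pm}$ in the orthogonal cases. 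Uniqueness within $Q_K$ follows because a $B_{n-1}$-fixed point is a zero-dimensional $B_{n-1}$-orbit, and the closed orbit $Q_K\cong\B_{\fk}$ has exactly one such (the dense Borel orbit is open, so the unique $B_{n-1}$-fixed point corresponds to $e\cdot B_{n-1}$).

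For the second half --- that a $B_{n-1}$-fixed point must lie in a closed $K$-orbit --- the cleanest route is a dimension/valuative argument: if $\fb\in\B_n$ is $B_{n-1}$-fixed, then $\{\fb\}$ is a complete $B_{n-1}$-stable subvariety, hence $K\cdot\fb$ contains a complete $B_{n-1}$-stable subvariety of dimension $0$; but for a non-closed $K$-orbit $Q_K$, I would show using the bundle description $Q=\orbitbundle$ from \cite{CE21I} (recalled in the paragraphs before the lemma) that every $B_{n-1}$-orbit in $Q_K$ is noncompact, or directly that a non-closed $K$-orbit contains no $B_{n-1}$-fixed point. In fact, the explicit representatives suffice: by Section \ref{ss:Korbits} and the standard-form classification (Theorems \ref{thm:std}, \ref{thm:stdD}, \ref{thm:stdB}), the zero-dimensional $B_{n-1}$-orbits correspond exactly to flags in standard form containing no hat (or tilde) vectors, and by Lemma \ref{l:5.1} (and its type B, D analogues) such a flag lies in a closed $K$-orbit $Q_j$ or $Q_{\pm}$. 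So one can simply count: the number of closed $K$-orbits is $n$ (type A) or $2$ (type B), $1$ (type D), and in each case there are exactly that many standard-form flags with only standard basis vectors --- matching, via Lemma \ref{l:5.1}, the closed $K$-orbits, and each such orbit is $\cong\B_{\fk}$ hence has a single $B_{n-1}$-fixed point.

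Assembling this: I would (i) state that $Q_K$ closed $\Rightarrow Q_K\cong\B_{\fk}$ and invoke that a Borel acting on its own flag variety has a unique fixed point, giving existence and within-orbit uniqueness; (ii) observe conversely that a $B_{n-1}$-fixed flag $\fb$ gives a zero-dimensional $B_{n-1}$-orbit, and by the standard-form classification this orbit's representative contains no hat/tilde vectors; (iii) apply Lemma \ref{l:5.1} (type A) and its orthogonal analogues to conclude $\fb$ lies in one of the closed $K$-orbits; and (iv) note that two distinct closed $K$-orbits are disjoint, so the containing closed orbit is unique. The main obstacle is making step (ii)--(iii) clean in the orthogonal cases, where one must be careful about the tilde vectors (type D) and hat vectors of the second kind (type B) and verify via Definitions \ref{d:stdD}, \ref{d:stdB} that a standard-form flag with no non-basis vectors really is forced into $Q_+$, $Q_-$; but Lemma \ref{l:5.1} and Remarks \ref{r.standardD}, \ref{r:firstkind} handle exactly this, so the argument goes through type by type with no new computation.
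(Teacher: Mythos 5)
Your first half is fine and is essentially the paper's argument: a closed $K$-orbit is $K$-equivariantly a copy of $\B_{\fk}$, and the Borel fixed-point theorem (plus the fact that a Borel has a unique fixed point on its own flag variety) gives existence and uniqueness of the $B_{n-1}$-fixed point in each closed orbit. The problem is the converse. The paper handles it by the standard one-line argument you never invoke: if $x\in\B_n$ is $B_{n-1}$-fixed, then its stabilizer $K_x$ contains the Borel subgroup $B_{n-1}$ of $K$, hence is parabolic, hence $K\cdot x\cong K/K_x$ is complete and therefore closed in $\B_n$. Your combinatorial substitute, as written, does not close this step. The assertion that ``the zero-dimensional $B_{n-1}$-orbits correspond exactly to flags in standard form containing no hat (or tilde) vectors'' is false: $(e_2\subset e_1\subset e_3)$ for $GL(3)$, or $(e_{-1})$ for $SO(4)$, are standard-form flags with only basis vectors whose $B_{n-1}$-orbits are positive-dimensional (they sit in rows below the top of the Bruhat graphs in Section \ref{s:graphs}). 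The counting claim is also false: in type A there are $n!$ standard-form flags built only from standard basis vectors, not $n$ (and in type D there are many more than one), so you cannot match them bijectively with closed $K$-orbits or with fixed points.

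The direction you actually need is: a $B_{n-1}$-fixed flag (which, being its own orbit, is the standard-form representative) contains no hat/tilde vectors, so that Lemma \ref{l:5.1} and its analogues place it in a closed $K$-orbit. But Theorems \ref{thm:std}, \ref{thm:stdD}, \ref{thm:stdB} say nothing about orbit dimensions, and deducing it from ``non-closed $K$-orbits contain no fixed points'' is circular, since that is precisely the statement being proved. The gap is fillable — e.g.\ in type A, if $v_i=\he_j$ then the torus element of $B_{n-1}$ scaling $e_j$ sends $\he_j$ to $2e_j+e_n$, and since $e_n\notin V_i=\lspan\{v_1,\dots,v_i\}$ (the indices are distinct by Remark \ref{r.glstandard}), the flag is genuinely moved, so the orbit is not a point; a similar check works for tilde vectors and hat vectors of the second kind — but you must supply such an argument, or simply use the parabolic-stabilizer argument the paper relies on.
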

\begin{proof}
Let $Q_K$ be a closed $K$-orbit.  By the Borel fixed-point theorem, the Borel
subgroup $B_{n-1}$ of $K$ has a unique fixed point on $Q_K.$  Conversely, any $B_{n-1}$-fixed point in $\B_n$ is in a closed $K$-orbit.
\end{proof}

\begin{rem}\label{r:numberclosed}
It follows from Lemma \ref{l:zerodim} and our description of $K\backslash\B_{n}$ in Section \ref{ss:Korbits} 
that there are $n$ zero dimensional $B_{n-1}$-orbits on $\B_{n}$ for $\fg=\fgl(n)$, two for $\fg=\fso(2\ell+1)$, 
and one for $\fg=\fso(2\ell)$.  By Theorem 4.10 of \cite{CEexp},  these orbits are all of the form $B_{n-1}\cdot w(\F_{+})$ where $w\in W$ is given as follows.  For $\fg=\fgl(n)$, $w$ is a representative of the cycle $w=(n, n-1, \dots, i)$ for some $i=1,\dots, n$.  For 
$\fg=\fso(2\ell+1)$, $w=e$ or $w=s_{\alpha_{\ell}}$, and for $\fg=\fso(2\ell)$, $w=e$.



\end{rem}


\begin{proof}[Proof of Theorem \ref{thm:bigthm}]
Let $Q$ be a $B_{n-1}$-orbit on $\B_n$.  We must show that there is a zero dimensional orbit $Q_0$ with $Q_0 \leq_{w} Q.$  We prove the result by induction on $n$.  In the initial cases $n=2$ when 
$\fg=\fgl(n)$ and $n=3$ when $\fg=\fso(n)$, the group $B_{n-1}=\C^{\times}$ and 
$\B_{n}=\mathbb{P}^{1}$ and the assertion is elementary.  For the inductive step,
we use a second induction on $m=\ell(Q_K)$ where $Q_K=K\cdot Q$.  In the case $m=0$, $Q_K$ is closed, and by Equation (\ref{eq:rootsforLevi}), $K\cap L$ is abelian, $Q_{\fl}$ is a point, $Q\cong Q_{\fr}$ and $K/K\cap R \cong \B_{\fk}.$  By Theorem 4.7 of \cite{CE21I}, it follows that for $\alpha\in\Pi_{\fk}$, 
\begin{equation}\label{eq:leftequiv}
m(s_{\alpha})*Q\cong m(s_{\alpha})*Q_{\fr},
\end{equation}
where the action on the right-hand side is the action of the monoid $\mathfrak{M}_{\fk}$ generated by simple roots of $\fk$  on $B_{n-1}\backslash \B_{\fk}$ in (\ref{eq:monoidpartial}).  By Remark \ref{r:kmodpright}, there is a unique minimal element in $B_{n-1}\backslash \B_{\fk}$ for the $\mathfrak{M}_{\fk}$-action, and the minimal orbit is the zero dimensional orbit corresponding to $\fb_{n-1}.$  It follows easily that this orbit is minimal for the action of the full monoid $\mathfrak{M}.$

Now suppose that $m>0$ and let $Q=\mathcal{O}(Q_{\fr},Q_{\fl}).$  By Remark 
\ref{r:kmodpright}, there is a sequence $\vec{s}=(s_{\alpha_{i_{1}}}, \dots, s_{\alpha_{i_{k}}})$ with 
$\alpha_{i_{j}}\in\Pi_{\fk}$ and a sequence of distinct
$B_{n-1}$-orbits in $K/K\cap R$, $\vec{Q}_{\fr}=(Q_{0, \fr},\dots, Q_{k,\fr})$ with 
$Q_{0,\fr}= X_e^{K\cap R}$, $Q_{k,\fr}=Q_{\fr}$ such that $m(s_{\alpha_{i_{j}}})*Q_{j-1, \fr}=Q_{j,\fr}$, so that $m(\vec{s})*(X_{e}^{K\cap R})=Q_{\fr}$ (see Equation \eqref{eq:Ssequence}).   Applying (\ref{eq:baseaction}) iteratively, we obtain
$$
m(\vec{s})*\mathcal{O}(X_e^{K\cap R}, Q_{\fl})=\mathcal{O}(m(\vec{s})*(X_e^{K\cap R}), Q_{\fl})=\mathcal{O}(Q_{\fr}, Q_{\fl}),
$$
so $\mathcal{O}(X_e^{K\cap R}, Q_{\fl})\leq_{w} Q$. 
  By our above remarks, $Q_{\fl}$ corresponds to a $B_{k(m)-1}$-orbit $Q_{\fl}^{op}$  on $\B_{k(m)}$ where $k(m)$ is given in (\ref{eq:km}).  By induction on $n$, there exists 
a zero dimensional $B_{k(m)-1}$-orbit $\{\fb_{k(m)}\}\subset \B_{k(m)}$ with 
$\{\fb_{k(m)}\}\leq_{w} Q_{\fl}^{op}$.  Let $Q_{\fl}^{\prime}$ be the unique $B_{k(m)}$-orbit in $\tilde{Q}_{K\cap L}$ such that $(Q_{\fl}^{\prime})^{op}=\fb_{k(m)}$ and let $Q^{\prime}=\mathcal{O}(X_e^{K\cap R},Q_{\fl}^{\prime}).$
Equation (\ref{eq:weakorderclaim}) now implies 
$\mathcal{O}(X_e^{K\cap R}, Q_{\fl}^{\prime})\leq_{w}\mathcal{O}(X_e^{K\cap R}, Q_{\fl})$, whence $Q^{\prime}\leq_{w} Q$.


Thus, it suffices to prove the Theorem for orbits $Q=\mathcal{O}(X_{e}^{K\cap R}, Q_{\fl})$ with $\dim Q_{\fl}^{op}=0$.  For this, we claim that there is a $B_{n-1}$-orbit $Q^{\prime}$ such that $Q^{\prime}\leq_{w} Q$ where $Q^{\prime}\subset Q_{K}^{\prime}$ with 
$\ell(Q_{K}^{\prime})<m$.  
It then follows by induction on $m$ that there exists a Borel subalgebra 
$\fb\in\B_{n}$ with $\fb_{n-1}\subset \fb$ and $\{\fb\}\leq_{w} Q^{\prime}$, whence 
$\{\fb\}\leq_{w} Q$ and the proof is complete.  The $B_{n-1}$-orbit $Q^{\prime}$ is constructed 
on a case-by-case basis using the results of Section \ref{s:monoid}. 

The first step is to describe the flag in standard form $\mathcal{F}$ contained 
in an orbit $Q=\mathcal{O}(X_{e}^{K\cap R}, Q_{\fl})$ with $\dim Q_{\fl}^{op}=0$.  
We begin with the case $\fg=\fgl(n)$.  
Since $m=\ell(Q_K)>0$, then by Remark \ref{r:Korbitlength}, the orbit
 $Q_{K}=Q_{i,j}$ with $i<j$.  
We claim that $Q=B_{n-1}\cdot \F$, where  
\begin{equation}\label{eq:closedflag}
\F=(e_{1}\subset \dots\subset e_{i-1}\subset\underbrace{\he_{k}}_{i}\subset e_{i}\subset \dots\subset e_{k-1} \subset e_{k+1}\subset\dots\subset e_{j-1}\subset\underbrace{e_{n}}_{j}\subset e_{j}\subset\dots\subset e_{n-1})
\end{equation}
for some $k=i,\dots, j-1.$  Recall in this case the index $k(m)=m$.  The orbit $Q_{\fl}=(B_{n-1}\cap L)\cdot \Ad(\ell)\F_{\fl}$ 
for some $\ell\in K\cap L$ and where $\F_{\fl}$ is the flag in Equation (\ref{eq:basepoint}).  Since $Q=\mathcal{O}(X_e^{K\cap R}, Q_{\fl})$, then by Remark \ref{r:Borelintersect}, the orbit $Q=B_{n-1}\cdot \Ad(\ell) \F_{i,j}$ with $\F_{i,j}$ the flag in Equation (\ref{eq:typeAflag}).  Then by Equation \eqref{eq:op}, the orbit $Q_{\fl}^{op}=B_{m-1}\cdot \Ad(\ell^{-1}){\F}_{+,m}.$  By Lemma \ref{l:zerodim} and Remark \ref{r:numberclosed}, we may assume that $\ell^{-1}$ is a representative for the cycle $ (j-1, j-2,\dots, k+1, k)$ for 
some $k\in \{ i, \dots, j-1 \}.$  But then $Q = B_{n-1}\cdot \Ad(\ell){\F}_{i,j}$, so by Equation (\ref{eq:typeAflag}), the orbit $Q$ contains the flag in Equation \eqref{eq:closedflag}.  By Theorem \ref{thm:std}, this flag $\F$ is the unique flag in standard form in the orbit $Q$.

Using this result, we now
use our results in Section \ref{s:monoid} to construct a $B_{n-1}$-orbit $Q^{\prime}\subset Q_{K}^{\prime}$
with $\ell(Q_{K}^{\prime})<m$ and $m(s_{\alpha})*Q^{\prime}=Q$ for some $\alpha\in \Pi_{\fg}$.  
 First, suppose that $m>1$, so that $j>i+1$.  There are two subcases to consider.  

\noindent Case I:  $i\leq k<j-1$:  Let $Q^{\prime}:=B_{n-1}\cdot \F^{\prime}$, where 
$$
\mathcal{F}^{\prime}=(e_{1}\subset \dots\subset e_{i-1}\subset\underbrace{\he_{k}}_{i}\subset e_{i}\subset \dots\subset e_{k-1} \subset e_{k+1}\subset\dots\subset\underbrace{e_{n}}_{j-1}\subset e_{j-1}\subset e_{j}\subset\dots\subset e_{n-1}).
$$
By Lemma \ref{l:5.1}, the orbit
 $Q_{K}^{\prime}=Q_{i,j-1}$, so $\ell(Q_{K}^{\prime})=m-1$.  Further, it follows from Proposition \ref{p:monoidactiononPILS} that $\alpha_{j-1}$ is complex stable for $Q^{\prime}$ and $m(s_{\alpha_{j-1}})*Q^{\prime}=Q$ (see Equation (\ref{eq:jcplx})).    

\noindent Case II: $k=j-1$.  In this case, 
we let $Q^{\prime}=B_{n-1}\cdot \F^{\prime}$, where 
$$
\mathcal{F}^{\prime}=(e_{1}\subset \dots\subset e_{i-1}\subset e_{i}\subset\underbrace{\he_{j-1}}_{i+1}\subset e_{i+1}\subset \dots\subset e_{j-2}\subset\underbrace{e_{n}}_{j}\subset e_{j}\subset \dots\subset e_{n-1}).
$$
Again by Lemma \ref{l:5.1}, the orbit $Q_{K}^{\prime}=Q_{i+1,j}$.   Since $i<j-1$, Proposition \ref{p:monoidactiononPILS} implies $\alpha_{i}$ is complex stable for $Q^{\prime}$ and $m(s_{\alpha_{i}})*Q^{\prime}=Q$ (see Equation \eqref{eq:i-1cplx}).

Lastly, suppose $m=1$, so that $j=i+1$.  Then  
$$
\F=(e_{1}\subset \dots\subset e_{i-1}\subset\underbrace{\he_{i}}_{i}\subset e_{n}\subset e_{i+1}\subset\dots\subset e_{n-1}).  
$$
Let $Q^{\prime}=B_{n-1}\cdot\F^{\prime}$ where
$$
\mathcal{F}^{\prime}=(e_{1}\subset \dots \subset e_{i-1}\subset e_{n}\subset e_{i}\subset \dots\subset e_{n-1}).  
$$
Then $Q^{\prime}$ is closed and $Q^{\prime}\subset Q_{K}^{\prime}$, where 
$Q_{K}^{\prime}=Q_{i}$ is a closed $K$-orbit on $\B$.  Case II of Proposition \ref{p:monoidactiononPILS} implies that $\alpha_{i}$ is non-compact for $Q$ and 
$m(s_{\alpha_{i}})* Q^{\prime}=Q$. 
This establishes the claim in each of the three cases, which proves the Theorem for $\fg=\fgl(n).$


Now we consider the case $\fg=\fso(2\ell)$.  Since $m=\ell(Q_K)>0$, then by Remark \ref{r:Korbitlength}, the $K$-orbit $Q_K = Q_i$ with $i \in \{ 1, \dots, \ell - 1 \}.$  We first consider the case when $m > 1$
 so that $Q_{K}=Q_{i}$ with 
$i<\ell-1$.  In this case, we claim that our $B_{2\ell-1}$-orbit $Q$ in $Q_K$ contains one of the two flags
\begin{equation}\label{eq:closedflagD}
\F_{\pm, i}:=(e_{1}\subset\dots\subset e_{i-1}\subset e_{\pm \ell} \subset e_{i} \subset \dots \subset e_{\ell-2}).  
\end{equation}
Indeed, $Q=\mathcal{O}(X_e^{K\cap R}, Q_{\fl})$ and $k(m)=2m+1$, so that our
$B_{2m+1}$-orbit $Q_{\fl}$ corresponds to a zero dimensional $B_{2m}$-orbit $Q_{\fl}^{op}$ in $\B_{2m+1}.$  By Lemma \ref{l:zerodim} and Remark \ref{r:numberclosed}, there are exactly two such orbits.  If we let $\F_{+,2m+1}$ be the flag in $\B_{2m+1}$ stabilized by upper triangular matrices in $G_{2m+1}\cong SO(2m+1)$, then the two zero dimensional orbits are the points $\F_{+,2m+1}$ and $s_{\alpha_{\ell}}s_{\alpha_{\ell-1}} (\F_{+,2m+1})$ (note that $s_{\alpha_{\ell}}s_{\alpha_{\ell-1}}\in W_{K\cap L}\cong W_{G_{2m+1}}$ by Proposition 2.22 of \cite{CEspherical} and is a representative of the simple reflection determined by the short simple root of $\fg_{2m+1}= \fg_{2m+2}\cap \fk$).  It now follows as in the $\fgl(n)$-case using Remark \ref{r:Borelintersect}, Equation \eqref{eq:op}, and Equation \eqref{eq:typeDflag} that our orbit $Q$ contains one of the two standard flags ${\F}_{\pm, i}.$

To complete the proof for $\fg=\fso(2\ell)$ when $m > 1$, consider the $SO(2\ell)$-standard flag
$$
\F^{\prime}=(e_{1}\subset\dots\subset e_{i-1}\subset e_{i} \subset e_{\pm \ell} \subset \dots \subset e_{\ell-2}).
$$
Then it follows from Proposition \ref{p:monoidactiononPILSD} that $\alpha_{i}$ is complex stable 
for $Q^{\prime}=B_{2\ell-1}\cdot \F^{\prime}\subset Q_{i+1}$ and $m(s_{\alpha_{i}})*Q^{\prime}=Q$ (see Equation (\ref{eq:cplxDroot})).  

Now suppose that $m=1$, so that $Q_{K}=Q_{\ell-1}$.  In this case, by reasoning as in the previously discussed cases, we can prove that our $B_{2\ell-1}$-orbit $Q=\mathcal{O}(X_e^{K\cap R},Q_{\fl})$ contains exactly one of the standard flags
\begin{equation}\label{eq:length1Dflag}
\F_{\pm}=(e_{1}\subset\dots\subset e_{i} \subset \dots \subset e_{\ell-2}\subset e_{\pm \ell}).  
\end{equation}
Now we take $\F^{\prime}=\F_{+}$.  By Proposition \ref{p:monoidactiononPILSD}, if the last vector in (\ref{eq:length1Dflag}) is $e_{\ell}$,
then $m(s_{\alpha_{\ell-1}})*\F_{+}=\F$ (by Equation (\ref{eq:closedD1})), and if the last vector is $e_{-\ell}$, then $m(s_{\alpha_{\ell}})*\F_{+}=\F$ (by Equation (\ref{eq:closedD2})).

Finally, we consider the  case $\fg=\fso(2\ell+1)$, which turns out to be the easiest of the three cases.  
First, note that if $m>1$, then $Q_{K}=Q_{i}$ with $i=0,\dots,\ell-2$.  In this case, similar analysis shows that our $B_{2\ell}$-orbit $Q=\mathcal{O}(X_e^{K\cap R}, Q_{\fl})$ contains the standard flag
\begin{equation}\label{eq:closedflagB}
\F=(e_{1}\subset\dots\subset e_{i}\subset\underbrace{ \he_{\ell} }_{i+1}\subset e_{i+1}\subset\dots\subset e_{\ell-1}).
\end{equation}
The key point is that there is exactly one closed $B_{2m-1}$-orbit on $\B_{2m}$ for any $m$ by Lemma \ref{l:zerodim} and Remark \ref{r:numberclosed}.  We take as our $\F^{\prime}$ the flag
$$
\F^{\prime}=(e_{1}\subset\dots\subset e_{i}\subset e_{i+1}\subset \he_{\ell}\subset\dots \subset e_{\ell-1}) 
$$
and again let $Q^{\prime}=B_{2\ell}\cdot \F^{\prime}.$
Then $Q^{\prime}\subset Q_{i+1}$ and by Proposition \ref{p:monoidactiononPILSB}, $m(s_{\alpha_{i}})*Q^{\prime}=Q$.  
If $m=1$, i.e. $Q_{K}=Q_{\ell-1}$, then we take $\F^{\prime}=\F_{+}$.  In this case, $m(s_{\alpha_{\ell}})*Q^{\prime}=Q$ by Equation (\ref{eq:closedB}).

\end{proof} 

By Theorem \ref{thm:closureorder}, Theorem \ref{thm:bigthm} has the following consequence.

\begin{cor}\label{c:bruhatisstandard}
The Bruhat order on $B_{n-1}\backslash \B_n$ coincides with the standard action for the extended monoid action by simple roots from $\fk$ and $\fg$.
\end{cor}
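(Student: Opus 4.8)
The plan is to obtain the corollary as an immediate consequence of Theorem~\ref{thm:closureorder} (the Richardson--Springer theorem, in the generality of Remark~\ref{r:rsthm711}) together with Theorem~\ref{thm:bigthm}. Concretely, I would apply Theorem~\ref{thm:closureorder} with $R = K\times G$ and $M = K_{\Delta}$, transporting everything to $\Borbitspace$ via the order-preserving bijection $\Borbitspace \cong K_{\Delta}\backslash(\B_{\fk}\times \B_{n})$ recorded in Section~\ref{ss:Bmonoid}; under this bijection the closure order corresponds to the closure order and the $\mathfrak{M}$-action corresponds to the extended monoid action by simple roots of $\fk$ and $\fg$.

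First I would check the hypothesis \eqref{eq:minimalcond}. Since there exist $B_{n-1}$-fixed points on $\B_{n}$ (Lemma~\ref{l:zerodim}), the minimal possible dimension of a $B_{n-1}$-orbit is $d=0$, so the length function \eqref{eq:lengthfn} is simply $\ell(Q)=\dim Q$ and the orbits of minimal dimension are exactly the zero-dimensional ones. Theorem~\ref{thm:bigthm} states precisely that the minimal elements of $\Borbitspace$ for the $\mathfrak{M}$-action are the zero-dimensional $B_{n-1}$-orbits, which is exactly \eqref{eq:minimalcond}. (The dimension shift by $\dim\B_{\fk}$ built into the bijection above moves all dimensions uniformly and so does not affect which orbits are minimal.)

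Next I would record that the closure order on $\Borbitspace$ is compatible with the $\mathfrak{M}$-action in the sense of \eqref{eq:compatible} --- this is noted immediately after that display --- and then invoke Remark~\ref{r:rsthm711}, which explains that the proof of Theorem 7.11 of \cite{RS} (the one-step property for the closure order from Lemmas 7.5--7.6 of \cite{RS}, and the characterization of the standard order among $\mathfrak{M}$-compatible orders carrying a length function whose minimal elements are the length-zero orbits, from Propositions 6.3--6.5 and Lemma 6.1 of \cite{RS}) applies verbatim in the present setting. Combining these, Theorem~\ref{thm:closureorder} yields that the Bruhat order on $\Borbitspace$ coincides with the standard order for the extended monoid action.

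I do not expect a genuine obstacle here: the substantive content is Theorem~\ref{thm:bigthm}, whose proof is already in place. The only points requiring a line of care are that the bijection $\Borbitspace \to K_{\Delta}\backslash(\B_{\fk}\times\B_{n})$ intertwines the two closure orders and the two monoid actions, and that Remark~\ref{r:rsthm711} legitimately removes the hypothesis in \cite{RS} that $M$ be the fixed-point subgroup of an algebraic involution --- both of which have already been discussed in the excerpt.
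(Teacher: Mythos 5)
Your proposal is correct and is essentially the paper's own argument: the corollary is deduced by verifying hypothesis (\ref{eq:minimalcond}) via Theorem \ref{thm:bigthm} and then invoking Theorem \ref{thm:closureorder} in the generality explained in Remark \ref{r:rsthm711}, transported through the bijection $\Borbitspace \cong K_{\Delta}\backslash(\B_{\fk}\times\B_{n})$ set up in Section \ref{ss:Bmonoid}. Nothing further is needed.
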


\newpage

\section{Examples of Bruhat graphs}\label{s:graphs}
We present the Bruhat graphs for $B_{\fgl(2)}\backslash\B_{\fgl(3)}$, $B_{\fso(3)}\backslash \B_{\fso(4)}$, and $B_{\fso(4)}\backslash \B_{\fso(5)}$.  Each orbit 
is represented by the unique flag in standard form contained in the orbit.  The 
first row of orbits are zero dimensional, the second row of orbits are the one dimensional orbits, and so 
on.  A blue line connecting two orbits indicates
that the lower orbit is obtained from the upper orbit via the monoid action of the specified root and 
that the given root is complex stable for the upper orbit.  
A red line, on the other hand, indicates that the given root is non-compact for the upper orbit.  
A solid red or blue line denotes a right monoid action by a simple root of $\fg$ as in (\ref{eq:rightmonoidagain}) and
a dashed line denotes a left monoid action by a simple root of $\fk$ as in (\ref{eq:leftmonoid}).  When $\mbox{rank}(\fk)=1$, we do not label the dashed line.  Note that we do not exhibit all the non-trivial monoid actions, but we instead exhibit a path from a zero dimensional orbit to any orbit.  In the case where a monoid action via either a simple root of $\fk$ or $\fg$ can be used to move between orbits, the dashed line is omitted and the root of $\fk$ does not appear as a label except to illustrate Equations (\ref{eq:transfer}) and (\ref{eq:lefttoright}) in the correspondence between $B_{n-1}$-orbits contained in the open $K$-orbit on $\B_{n}$ and $B_{n-2}\backslash \B_{n-1}$ described in Remark \ref{r:opencorrespondence}.  Finally, a solid green line indicates that the two orbits are related in the standard order, but not the weak order.  The explicit computations of the monoid actions can be performed using Propositions \ref{p:monoidactiononPILS}, \ref{p:monoidactiononPILSD}, \ref{p:monoidactiononPILSB}, Proposition
\ref{prop:monoid}, and Remark \ref{r:kexplicit}.

\newpage
\begin{exam}\label{exam:bigone}
 It follows from Example \ref{ex:lists} and Theorem \ref{thm:counttypeA} that $|B_{\fgl(2)}\backslash\B_{\fgl(3)}|=13.$  We label the simple roots of $\fgl(3)$ as $\alpha=\alpha_{1}=\eps_{1}-\eps_{2}$ and 
$\beta=\alpha_{2}=\eps_{2}-\eps_{3}$ (see Section \ref{ss:realization}).  
\begin{center}
\begin{tikzpicture}  
 [scale=2.0,auto=center,every node/.style={rectangle,fill=white!20}] 
\node (a1) at  (-1,5) {$(e_{1}\subset e_{2}\subset e_{3})$};
\node (a2) at (1,5) {$(e_{1}\subset e_{3}\subset e_{2})$};
\node (a3) at (3,5) {$(e_{3}\subset e_{1}\subset e_{2}) $};
\node (a4) at (-2,3) {$(e_{2}\subset e_{1}\subset e_{3})$};
\node (a5) at (-0.5,3) {$(e_{1}\subset \hat{e}_{2}\subset e_{3})$};
\node (a6) at (1,3) {$(e_{2}\subset e_{3}\subset e_{1})$};
\node (a7) at (2.5,3) {$(\hat{e}_{1}\subset e_{3}\subset e_{2})$};
\node (a8) at (4,3) {$(e_{3}\subset e_{2}\subset e_{1})$};
\node (a9) at (-2,1) {$(e_{2}\subset \hat{e}_{1}\subset e_{3})$};
\node (a10) at (0,1) {$(\hat{e}_{2}\subset e_{1}\subset e_{3})$};
\node (a11) at (2,1){$(\hat{e}_{1}\subset e_{2}\subset e_{3})$} ;
\node (a12) at (4,1) {$(\hat{e}_{2}\subset e_{3}\subset e_{1})$} ;
\node (a13) at (1,-1) {$(\hat{e}_{2}\subset \hat{e}_{1}\subset e_{3})$};
\draw [blue] (a1) -- (a4) node[midway, above] {$\alpha$}; 
  \draw [red] (a1) -- (a5) node[midway, above] {$\beta$};  
 \draw [red] (a2) -- (a5) node[midway, above] {$\beta$};  
 \draw [dashed] [blue] (a2)--(a6);
 \draw [red] (a2)--(a7) node[midway, above] {$\alpha$};
  \draw [red] (a3)--(a7) node[midway, above] {$\alpha$};
   \draw [blue] (a3)--(a8) node[midway, above] {$\beta$};
    \draw [red] (a4)--(a9) node[midway, above] {$\beta$};
    \draw[green] (a4)--(a10); 
    \draw[dashed][blue] (a5)--(a9);
    \draw[blue] (a5) --(a10) node[near end, below]  {$\alpha$}; 
    \draw[green] (a5)--(a11); 
    \draw[red] (a6)--(a9) node[near end, above] {$\beta$}; 
    \draw[red](a6)--(a12) node[near end, above] {$\alpha$}; 
    \draw[green] (a7)--(a10);
    \draw[dashed][blue] (a7)--(a12);
    \draw [blue] (a7)--(a11) node[near end, below] {$\beta$};
    \draw [red] (a8)--(a12)node[near end, above] {$\alpha$};
    \draw[green] (a8)--(a11); 
    \draw[red](a9)--(a13) node[near end, above] {$\alpha$};
\draw[dashed][red](a10)--(a13); 
\draw[dashed][red](a11)--(a13) ; 
\draw[red](a12)--(a13)node[near end, above] {$\beta$};
\end{tikzpicture}
\end{center}

We examine a few portions of the above graph in more detail.  The orbit 
$Q_{0}=B_{\fgl(2)}\cdot (e_{2}\subset e_{3}\subset e_{1})$ is minimal if we do not 
use the simple roots of $\fk$, but not minimal for the extended monoid action from Section \ref{ss:Bmonoid}.  This is illustrated since there are no solid lines ending in $Q_{0}$, but there is a dotted line ending in $Q_{0}$.
 
Consider the orbit $Q^{\prime}=B_{\fgl(2)}\cdot(e_{2}\subset e_{1}\subset e_{3})$ in the first entry 
of the second row of the graph.  Let $Q^{\prime\prime}=B_{\fgl(2)}\cdot (e_{1}\subset e_{2}\subset e_{3})$, 
and let $Q=B_{\fgl(2)}\cdot(\hat{e}_{2}\subset e_{1}\subset e_{3}).$  In the notation of (\ref{eq:stdconditions}), let $t=s_{\beta}$ and $\vec{s}=s_{\alpha}$.  
We see that $Q^{\prime}\preceq Q$ in the standard order.  However, 
$Q^{\prime}$ and $Q$ are not related in the weak order.  
Indeed, the root $\alpha$ is complex unstable for $Q^{\prime}$ and so
is the root $\eps_{1}-\eps_{2}\in\Pi_{\fgl(2)}$ (see Proposition \ref{prop:monoid}.) 
The root $\beta$ is non-compact for $Q^{\prime}$ with $m(s_{\beta})*Q^{\prime}=B_{\fgl(2)}\cdot (e_{2}\subset \hat{e}_{1}\subset e_{3}).$  The other green lines in the graph can be analyzed in a similar fashion. 

Consider the following subgraph in the third and fourth rows: 
\begin{center}
\begin{tikzpicture}
[scale=2.0,auto=center,every node/.style={rectangle,fill=white!20}]
\node (a1) at (-1,4) {$(\he_{2}\subset e_{1}\subset e_{3})$};
\node (a2) at (1,4) {$(\he_{1}\subset e_{2}\subset e_{3})$};
\node (a3) at (0,3) {$ (\hat{e}_{2}\subset\he_{1}\subset e_{3})$};
\draw[dashed][red](a1)--(a3);
\draw[dashed][red](a2)--(a3);
\end{tikzpicture}
\end{center}
This is the Bruhat graph for the $B_{2}$-orbits contained in the open $GL(2)$-orbit on $\B_{\fgl(3)}$.
Taking $\fl=\fg=\fgl(3)$ as in Remark \ref{r:opencorrespondence} above, these orbits correspond with $B_{1}\cong \C^{\times}$-orbits on $\B_{\fgl(2)}$ with Bruhat graph:  
\begin{center}
\begin{tikzpicture}
[scale=2.0,auto=center,every node/.style={rectangle,fill=white!20}]
\node (a1) at (-1,4) {$(e_{1}\subset e_{2})$};
\node (a2) at (1,4) {$(e_{2}\subset e_{1})$};
\node (a3) at (0,3) {$ (\hat{e}_{1}\subset e_{2})$};
\draw[red](a1)--(a3);
\draw[red](a2)--(a3);
\end{tikzpicture}
\end{center}
Note that the left monoid actions by the single root of $\fk=\fgl(2)$ in the first graph correspond to 
right monoid actions in the second graph as predicted by Equation (\ref{eq:lefttoright}).
\end{exam}
\begin{exam}\label{ex:smallone}
It follows from Part (1) of Theorem \ref{thm:orthocount} and Example \ref{ex:spils} that $|B_{\fso(3)}\backslash \B_{\fso(4)}|=5$.  We label the simple roots of $\fso(4)$ as $\alpha=\alpha_{1}=\eps_{1}-\eps_{2}$ and 
$\beta=\alpha_{2}=\eps_{1}+\eps_{2}$.  
\begin{center}
\begin{tikzpicture}
[scale=2.0,auto=center,every node/.style={rectangle,fill=white!20}]
\node (a1) at  (0,5) {$(e_{1})$};
\node (a2) at (-1,4) {$(e_{2})$};
\node (a3) at (0,4) {$(e_{-1})$};
\node (a4) at (1,4) {$(e_{-2}) $};
\node (a5) at (0,3) {$ (\hat{e}_{1})$};

\draw [dashed] [blue] (a1)--(a3);
\draw [blue] (a1) -- (a2) node[midway, above] {$\alpha$};
\draw [blue] (a1) -- (a4) node[midway, above] {$\beta$};
\draw[dashed] [red] (a2) -- (a5) ;
\draw [red] (a3) -- (a5) node[near end, above] {$\alpha, \beta$};
\draw[dashed] [red] (a4) -- (a5);

\end{tikzpicture}
\end{center}

We observe that the orbit $B_{\fso(3)}\cdot (e_{-1})$ is minimal 
with respect to the right monoid action and one dimensional.   Thus, the left monoid action is also necessary in the orthogonal setting for Theorem \ref{thm:bigthm}
to hold.  
\end{exam}
\begin{exam}\label{ex:bigorthoexam}
It follows from Part (2) of Theorem \ref{thm:orthocount} and Example \ref{ex:spils} that 
$|B_{\fso(4)}\backslash \B_{\fso(5)}|=17$.  We label the simple roots of $\fg=\fso(5)$ 
as $\alpha=\alpha_{1}=\eps_{1}-\eps_{2}$ and $\beta=\alpha_{2}=\eps_{2}$.  We label the simple
roots of $\fk=\fso(4)$ as $\tilde{\alpha}_{1}=\eps_{1}-\eps_{2}$ and $\tilde{\beta}=\eps_{1}+\eps_{2}$.

\begin{center}
\begin{tikzpicture}  
 [scale=2.0,auto=center,every node/.style={rectangle,fill=white!20}] 
\node (a1) at  (0,5) {$(e_{1}\subset e_{2})$};
\node (a2) at (3,5) {$(e_{1}\subset e_{-2})$};
\node (a3) at (-1,3) {$(e_{-2}\subset e_{-1} )$};
\node (a4) at (0.25,3) {$(e_{2}\subset e_{1})$};
\node (a5) at (1.5,3) {$(e_{1}\subset \hat{e}_{2})$};
\node (a6) at (2.75,3) {$(e_{-2}\subset e_{1})$};
\node (a7) at (4,3) {$(e_{2}\subset e_{-1})$};
\node (a8) at (-1,1) {$(e_{-1}\subset e_{-2})$};
\node (a9) at (0.25,1) {$(e_{-2}\subset \hat{e}_{1})$};
\node (a10) at (1.5,1) {$(\hat{e}_{2}\subset e_{1})$};
\node (a11) at (2.75,1){$(e_{2}\subset \hat{e}_{1})$} ;
\node (a12) at (4,1) {$(e_{-1}\subset e_{2})$} ;
\node (a13) at (-1,-1) {$(e_{-1}\subset \hat{e}_{2})$};
\node (a14) at (0.25,-1) {$(\hat{e}_{1}\subset e_{2})$};
\node (a15) at (1.5, -1) {$(\hat{e}_{2}\subset e_{-1})$}; 
\node (a16) at (2.75, -1) {$(\hat{e}_{1}\subset e_{-2})$}; 
\node (a17) at (1.5,-3) {$(\hat{e}_{2,-1}\subset \hat{e}_{2})$}; 

\draw [dashed][blue] (a1)--(a3) node[midway, above]{$\tilde{\beta}$}; 
\draw [blue] (a1)--(a4) node[midway, above]{$\alpha$}; 
\draw [red] (a1)--(a5) node[midway, above]{$\beta$};
\draw [red] (a2)--(a5) node[midway, above]{$\beta$};
\draw [blue] (a2)--(a6) node[midway, above]{$\alpha$}; 
\draw [dashed][blue] (a2)--(a7) node[midway, above]{$\tilde{\alpha}$}; 
\draw [blue] (a3)--(a8) node[midway, above]{$\alpha$}; 
\draw [red] (a3)--(a9) node[near end, below]{$\beta$};
\draw [dashed][blue] (a4)--(a8) node[near end, above]{$\tilde{\beta}$}; 
\draw [red] (a4)--(a11) node[near end, below]{$\beta$};
\draw [green] (a4)--(a10) ;
\draw [dashed][blue] (a5)--(a9) node[near end, above]{$\tilde{\beta}$}; 
\draw [dashed][blue] (a5)--(a11) node[near end, above]{$\tilde{\alpha}$}; 
\draw [blue] (a5)--(a10) node[near end, above]{$\alpha$}; 
\draw [dashed] [blue] (a6)--(a12) node[near end, above]{$\tilde{\alpha}$}; 
\draw[green] (a6)--(a10); 
\draw [red] (a6)--(a9) node[near end, below]{$\beta$};
\draw [red] (a7)--(a11) node[near end, below]{$\beta$};
\draw [blue] (a7)--(a12) node[midway, above]{$\alpha$}; 
\draw [red] (a8)--(a13) node[near end, below]{$\beta$};
\draw [green] (a8)--(a16);
\draw [dashed][blue] (a9)--(a13) node[near end, above]{$\tilde{\alpha}$}; 
\draw [blue] (a9)--(a16) node[near end, above]{$\alpha$}; 
\draw[green] (a9)--(a15); 
\draw [dashed][blue] (a11)--(a13) node[near end, above]{$\tilde{\beta}$}; 
\draw [blue] (a11)--(a14) node[near end, above]{$\alpha$};
\draw [green] (a11)--(a15);  
\draw [dashed][blue] (a10)--(a14) node[near start, above]{$\tilde{\alpha}$}; 
\draw [blue] (a10)--(a15) node[near start, above]{$\beta$};
\draw [dashed][blue] (a10)--(a16) node[near start, above]{$\tilde{\beta}$}; 
\draw [red] (a12)--(a13) node[near end, below]{$\beta$};
\draw[green] (a12)--(a14);
\draw [red] (a13)--(a17) node[midway, above]{$\alpha$}; 
\draw [red] (a14)--(a17) node[midway, above]{$\beta$}; 
\draw [dashed][red] (a15)--(a17) node[midway, above]{$\tilde{\alpha},\tilde{\beta}$}; 
\draw [red] (a16)--(a17) node[midway, above]{$\beta$}; 
\end{tikzpicture}
\end{center}

Consider the orbits $Q=B_{\fso(4)}\cdot (\hat{e}_{1}\subset e_{-2})$ and 
$Q^{\prime}=B_{\fso(4)}\cdot (e_{-1}\subset e_{-2})$ (the last orbit in the fourth row and the first orbit in the third row respectively).  We claim 
$Q^{\prime}\preceq Q$ in the standard order, but $Q^{\prime}\not \leq_{w} Q$.  
Indeed, if we let $Q_{+}=B_{\fso(4)}\cdot (e_{1}\subset e_{2})$, then 
$Q^{\prime}=m(\vec{s})*Q_{+}$, where $\vec{s}$ is the sequence 
$\vec{s}=(s_{\alpha},\, s_{\tilde{\beta}})$ (see Equation (\ref{eq:Ssequence})).  
The orbit $Q$ is $Q=m(\vec{s})*m(s_{\beta})*Q_{+}$.  So it follows
from the definition of the standard order (\ref{eq:stdconditions}) that 
$Q^{\prime}\preceq Q$, but from the diagram it is clear that $Q^{\prime}\not\leq_{w} Q$.  


Consider the subgraph in the bottom right corner:  
\begin{center}
\begin{tikzpicture}
[scale=2.0,auto=center,every node/.style={rectangle,fill=white!20}]
\node (a1) at  (0,5) {$(\hat{e}_{2}\subset e_{1})$};
\node (a2) at (-1,3.5) {$(\hat{e}_{1}\subset e_{2})$};
\node (a3) at (0,3.5) {$(\hat{e}_{2}\subset e_{-1})$};
\node (a4) at (1,3.5) {$(\hat{e}_{1}\subset e_{-2})$};
\node (a5) at (0,2) {$ (\hat{e}_{2,-1}\subset \hat{e}_{1})$};

\draw [dashed] [blue] (a1) -- (a2) node[midway, above] {$\tilde{\alpha}$};
\draw  [blue] (a1)--(a3) node[midway, above] {$\beta$};
\draw [dashed] [blue] (a1) -- (a4) node[midway, above] {$\tilde{\beta}$};
\draw [red] (a2) -- (a5) node[midway, below] {$\beta$};
\draw [dashed] [red] (a3) -- (a5) node[near end, above] {$\tilde{\alpha},\tilde{ \beta}$};
\draw  [red] (a4) -- (a5) node[midway, below] {$\beta$};
\end{tikzpicture}
\end{center}
This graph is the graph of the weak order 
for $B_{\fso(4)}$-orbits contained in the open $K=SO(4)$-orbit on $\B_{\fso(5)}$.  
By Remark \ref{r:opencorrespondence}, these orbits are in one-to-one correspondence with  
$B_{\fso(3)}\backslash \B_{\fso(4)}$ whose Bruhat graph is given in Example \ref{ex:smallone}.  
But notice that the right monoid actions in Example \ref{ex:smallone} correspond to left monoid actions here, 
and the left monoid actions in Example \ref{ex:smallone} correspond to right monoid actions in this graph.  
This is precisely the phenomenon described in Equations (\ref{eq:transfer}) and (\ref{eq:lefttoright}) where in this example $\fl=\fg$ and the element $\tilde{w}\in W$ in (\ref{eq:guyinW}) is the identity.  Note that we also see the shift from the root $\eps_{1}-\eps_{2}$ of $\fso(3)$ in Example \ref{ex:smallone} to the root $\beta$ here described in Equation (\ref{eq:transfer}).

\end{exam}

\bibliographystyle{amsalpha.bst}

\bibliography{bibliography-1}

\end{document}